\documentclass[a4paper,12pt]{amsart}
\hoffset=-1truecm
\hsize=2truecm


\usepackage{mathrsfs}
\usepackage{amsfonts,amssymb,amscd,amsthm,amsmath,graphicx}
\usepackage{longtable}
\usepackage[all]{xy}

\theoremstyle{plain}
\newtheorem{theorem}{Theorem}[section]

\newtheorem{lemma}{Lemma}[section]
\newtheorem{proposition}{Proposition}[section]
\newtheorem{definition}{Definition}[section]
\newtheorem{question}{Question}[section]

\newtheorem{example}{Example}[section]

\def\A{\operatorname{A}}
\def\B{\operatorname{B}}
\def\BC{\operatorname{BC}}
\def\C{\operatorname{C}}
\def\D{\operatorname{D}}
\def\E{\operatorname{E}}
\def\F{\operatorname{F}}
\def\G{\operatorname{G}}
\def\GL{\operatorname{GL}}

\def\PO{\operatorname{PO}}

\def\PSp{\operatorname{PSp}}
\def\PSU{\operatorname{PSU}}
\def\PU{\operatorname{PU}}

\def\SL{\operatorname{SL}}
\def\SO{\operatorname{SO}}

\def\SU{\operatorname{SU}}
\def\U{\operatorname{U}}

\def\Ad{\operatorname{Ad}}

\def\Aut{\operatorname{Aut}}

\def\diag{\operatorname{diag}}

\def\det{\operatorname{det}}
\def\dim{\operatorname{dim}}

\def\exp{\operatorname{exp}}

\def\Fix{\operatorname{Fix}}

\def\Hom{\operatorname{Hom}}
\def\id{\operatorname{id}}

\def\Im{\operatorname{Im}}

\def\Int{\operatorname{Int}}

\def\large{\operatorname{large}}
\def\Lie{\operatorname{Lie}}

\def\rank{\operatorname{rank}}
\def\small{\operatorname{small}}
\def\span{\operatorname{span}}
\def\Spin{\operatorname{Spin}}
\def\Stab{\operatorname{Stab}}

\newcommand{\fra}{\mathfrak{a}}

\newcommand{\fre}{\mathfrak{e}}
\newcommand{\frf}{\mathfrak{f}}
\newcommand{\frg}{\mathfrak{g}}
\newcommand{\frh}{\mathfrak{h}}

\newcommand{\frk}{\mathfrak{k}}
\newcommand{\frl}{\mathfrak{l}}

\newcommand{\frt}{\mathfrak{t}}
\newcommand{\fru}{\mathfrak{u}}

\begin{document}

\title[Twisted root system]{Twisted root system of a $(\ast)$-subgroup}
\date{May 2018}
\thanks{}

\author{Jun Yu}
\address{BICMR, Peking University, No. 5 Yiheyuan Road, Haidian District, Beijing 100871, China.}
\email{junyu@bicmr.pku.edu.cn}

\abstract{We classify $(\ast)$-subgroups of compact Lie groups of adjoint type, and associate a twisted
root system to every $(\ast)$-subgroup. We study the structure of twisted root system in several aspects:
properties of the small Weyl group $W_{small}$ and its normal subgroups $W_{tiny}$ and $W_{f}$; properties
of finite root datum; structure of strips of infinite roots.}
\endabstract

\noindent\subjclass[2010]{22C05, 22E15.}

\noindent\keywords{$(\ast)$-subgroup, maximal abelian subgroup, twisted root system,
small Weyl group, finite root datum, strip.}

\maketitle

\tableofcontents

\section{Introduction}

Root system is a fundamental object in Lie theory. It plays an important role in many subjects. We call a
closed abelian subgroup $A$ of a compact Lie group $G$ a $(\ast)$-subgroup if it satisfies the following
condition, \[\dim Z_{G}(A)=\dim A. \tag{$\ast$}\] These particularly include maximal abelian subgroups of
$G$. In this paper we classify $(\ast)$-subgroups of compact semisimple Lie groups of adjoint type, and
associate with every $(\ast)$-subgroup a {\it twisted root system}. This is a generalization of the usual
root system associated to a maximal torus. The main goal of this paper is to study properties of twisted 
root systems. 

\smallskip

Let $A$ be a $(\ast)$-subgroup of a compact Lie group $G$. The twisted root system system is a set 
$R(G,A)$ of roots coming from the action of $A$ on the complexified Lie algebra $\frg$ of $G$. A root 
$\alpha\in R(G,A)$ with $\alpha|_{A^{0}}\neq 0$ is called an infinite root; a root $\alpha\in R(G,A)$ 
with $\alpha|_{A^{0}}=0$ is called a finite root. For an infinite root $\alpha$, we associate with it 
a coroot $\check{\alpha}\in X^{\ast}(A^{0})$; for a finite root $\alpha$, we associate with it a coroot 
group $R^{\vee}(\alpha)\subset\Hom(A/\ker\alpha,\ker\alpha)$. With them, we define reflections 
$s_{\alpha}$ and root transvections $s_{\alpha,\xi}$ ($\xi\in R^{\vee}(\alpha)$). These generate the small 
Weyl group $W_{small}(G,A)$, which is a subgroup of the Weyl group $W_{\large}(G,A)=N_{G}(A)/Z_{G}(A)$.

\smallskip 

Write $R'(G,A)$ for the set of projections to $X^{\ast}(A^{0})$ of infinite roots in $R(G,A)$. An immediate
consequence of the property of twisted root system is the following.

\begin{theorem}\label{T1}
$R'(G,A)$ is a root system in the lattice $X^{\ast}(A^{0})$.
\end{theorem}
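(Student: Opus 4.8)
The plan is to check that $R'(G,A)$ satisfies the axioms of a (possibly non-reduced) root system in the real vector space $V:=X^{\ast}(A^{0})\otimes_{\bbZ}\bbR$, using as input the data already attached to the twisted root system: for each infinite root $\alpha$ a coroot $\check\alpha$ normalised by $\langle\alpha|_{A^{0}},\check\alpha\rangle=2$, the reflection $s_{\alpha}\in W_{small}(G,A)\subseteq W_{large}(G,A)=N_{G}(A)/Z_{G}(A)$ acting on $X^{\ast}(A^{0})$ by $\lambda\mapsto\lambda-\langle\lambda,\check\alpha\rangle\,\alpha|_{A^{0}}$, and the integrality $\langle\beta|_{A^{0}},\check\alpha\rangle\in\bbZ$ for every root $\beta$.

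First I would dispose of the elementary axioms. By definition $R'(G,A)$ is the set of nonzero restrictions to $A^{0}$ of the weights of $A$ occurring in the finite-dimensional space $\frg$; hence it is finite and does not contain $0$. To see that it spans $V$, suppose a nonzero $\lambda\in X_{\ast}(A^{0})$ pairs trivially with every element of $R'(G,A)$. Then the one-parameter subgroup $\lambda(\U(1))\subseteq A^{0}$ acts trivially on each $A^{0}$-weight space of $\frg$, hence on all of $\frg$; as $A^{0}$ acts faithfully through $\Ad$ (the case of interest being $G$ of adjoint type), this forces $\lambda=0$, so $R'(G,A)$ spans $V$.

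Next I would produce, for each $\mu\in R'(G,A)$, a reflection preserving $R'(G,A)$. Choose an infinite root $\alpha$ with $\alpha|_{A^{0}}=\mu$ and represent $s_{\alpha}$ by an element $n\in N_{G}(A)$. Conjugation by $n$ is an automorphism of $A$, hence of its identity component $A^{0}$, and so induces a linear automorphism $\bar n$ of $V$ preserving the lattice $X^{\ast}(A^{0})$. Since $n$ normalises $A$, $\Ad(n)$ permutes the $A$-weight spaces of $\frg$, so $\bar n$ permutes their nonzero restrictions and fixes $0$; therefore $\bar n\bigl(R'(G,A)\bigr)=R'(G,A)$. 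By construction of $s_{\alpha}$, the map $\bar n$ is $\lambda\mapsto\lambda-\langle\lambda,\check\alpha\rangle\mu$, and because $\langle\mu,\check\alpha\rangle=2$ this is an involution negating $\mu$ and fixing the hyperplane $\{\langle\cdot,\check\alpha\rangle=0\}$ pointwise, i.e. a reflection $s_{\mu}$. Finally, for every $\nu\in R'(G,A)$ one has $s_{\mu}(\nu)-\nu=-\langle\nu,\check\alpha\rangle\mu$ with $\langle\nu,\check\alpha\rangle\in\bbZ$ by the integrality already established; this is the crystallographic condition. Assembling these facts gives all the root-system axioms for $R'(G,A)$ in $X^{\ast}(A^{0})$.

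One point that needs care is that $s_{\mu}$, and the coroot functional $\langle\,\cdot\,,\check\alpha\rangle$, must not depend on which infinite root $\alpha$ restricting to $\mu$ is chosen, as several may occur. I would settle this by the standard uniqueness lemma: a finite spanning subset of $V$ avoiding $0$ admits at most one reflection sending a prescribed $\mu$ to $-\mu$ while preserving the set. I expect the genuine difficulty to lie not in this bookkeeping but upstream, in the integrality $\langle\nu,\check\alpha\rangle\in\bbZ$: it does not follow from lattice-preservation alone, since $\mu$ need not be primitive in $X^{\ast}(A^{0})$, and instead relies on the $\frs\frl_{2}$-type construction of the coroots underlying the twisted root system, which here I invoke as an established property.
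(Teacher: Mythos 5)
Your argument is correct and is essentially the paper's own first proof: stability of $R'(G,A)$ under the reflections comes from realizing each $s_{\alpha}$ inside $N_{G}(A)$ via the rank-one subgroup $G_{[\alpha]}$ (Lemma \ref{L:reflection}), and the crystallographic condition comes from the fact that $\check{\alpha}$ is a cocharacter together with the identity $\lambda(\check{\alpha})=\frac{2(\lambda,\alpha)}{(\alpha,\alpha)}$ of Lemma \ref{L:infinity2}, which also settles the independence of $\check{\alpha}$ from the chosen lift of $\mu$ somewhat more directly than your uniqueness-of-reflection lemma. The one superfluous step is the spanning argument: the notion of root system in a lattice used here (\cite[Definition 2.2]{Yu-dimension}) does not require $R'(G,A)$ to span $X^{\ast}(A^{0})\otimes_{\mathbb{Z}}\mathbb{R}$ (and for a general compact $G$ it need not), so that axiom should be dropped rather than proved.
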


We call $A$ a symmetric $(\ast)$-subgroup of $G$ if there is an involutive automorphism
$\theta$ of $G$ such that the Lie algebra of $A$ is a maximal abelian subspace of $$(\frg_0^{A})^{-\theta}
\!=\!\{X\in\frg_0:\Ad(g)X\!=\!X (\forall g\in A)\textrm{ and }\theta(X)\!=\!-X\}.$$ The following is a 
generalization of Theorem \ref{T1}. 

\begin{theorem}\label{T2}
Let $A$ be a symmetric $(\ast)$-subgroup. Then the set $R(G,A^{0})$ of non-zero characters of $A^{0}$ for 
the conjugation action of $A^{0}$ on $\frg$ is a root system in the lattice $X^{\ast}(A^{0})$.
\end{theorem}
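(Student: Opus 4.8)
The plan is to verify the axioms of a (possibly non-reduced) root system for $\Sigma := R(G,A^{0}) \subset X^{\ast}(A^{0})$ by producing, for each $\alpha \in \Sigma$, an $\mathfrak{sl}_{2}$-triple whose neutral element is a coroot $\alpha^{\vee}$ lying along a cocharacter of $A^{0}$. The first thing I would record is the structural consequence of the $(\ast)$-condition. Since $\dim Z_{G}(A) = \dim A = \dim A^{0}$ and $A^{0} \subseteq Z_{G}(A)^{0}$, comparison of dimensions of connected groups gives $Z_{G}(A)^{0} = A^{0}$, hence the zero-weight space of the \emph{full} group $A$ is exactly $\frg_{0}^{A} = \fra$, where $\fra := \Lie(A^{0})$. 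The symmetric hypothesis, combined with $\frg_{0}^{A}=\fra$, forces $\theta|_{\fra} = -\id$, i.e. $\theta$ inverts $A^{0}$; thus both $\theta$ and the conjugation $\sigma$ of $\frg$ with respect to $\frg_{0}$ interchange the weight spaces $\frg_{\alpha} \leftrightarrow \frg_{-\alpha}$. This already makes $\Sigma$ finite, symmetric ($\Sigma=-\Sigma$), with $0 \notin \Sigma$, and spanning $\fra^{\ast}$ (no nonzero element of $\fra$ is $\frg$-central, as $\frg$ is semisimple).

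The heart of the argument is the coroot construction, and this is where I expect the main obstacle. Fix $\alpha \in \Sigma$. Because $A$ is abelian and acts unitarily it preserves each $A^{0}$-weight space, so $\frg_{\alpha}$ decomposes into joint $A$-eigenspaces $\frg_{\tilde\alpha}$ with $\tilde\alpha|_{A^{0}} = \alpha$. For a nonzero $E \in \frg_{\tilde\alpha}$ I would set $F := \sigma E \in \frg_{-\tilde\alpha}$ and $H := [E,\sigma E]$. The decisive point is that $H$ is $\Ad(A)$-invariant: for $a \in A$ one has $\Ad(a)E = \tilde\alpha(a)E$ and $\Ad(a)\sigma E = \tilde\alpha(a)^{-1}\sigma E$, so the scalars cancel in the bracket. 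By the identity $\frg^{A} = \fra \otimes \bbC$ this gives $H \in \fra \otimes \bbC$, and then the standard computation $B([E,F],H') = \alpha(H')\,B(E,F)$ for $H' \in \fra$ identifies $H$ as the multiple $B(E,\sigma E)\,H_{\alpha}$ of the $B$-dual $H_{\alpha}$ of $\alpha$; nonvanishing follows since $B(E,\sigma E) = -\langle E,E\rangle \neq 0$ for the positive-definite form $\langle X,Y\rangle = -B(X,\sigma Y)$. After rescaling $E$ this yields a genuine $\mathfrak{sl}_{2}$-triple $(E,\sigma E,\alpha^{\vee})$ with $\alpha^{\vee}$ in the real span of $\fra$. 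I emphasize that this is exactly the step where the classical restricted-root proof invokes maximality of $\fra$ in $\frp_{0}$, which forces $[E,\theta E] \in \frp_{0}\cap Z_{\frg_{0}}(A^{0}) = \fra$. In the present generality that maximality can fail, so bracketing with $\theta E$ only lands $H$ in the possibly larger space $Z_{\frp_{0}}(A^{0})$; the remedy is to bracket with $\sigma E$ instead, which respects the grading by the \emph{full} group $A$, and to invoke the $(\ast)$-identity $\frg_{0}^{A} = \fra$ in place of maximality.

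With the triples in hand the remaining axioms follow from $\mathfrak{sl}_{2}$-theory. Integrality $\langle \beta,\alpha^{\vee}\rangle \in \bbZ$ for $\beta \in \Sigma$ holds because $\bigoplus_{n} \frg_{\beta + n\alpha}$ is a finite-dimensional module for the triple, on which $\alpha^{\vee}$ acts with integer eigenvalues; exponentiating the $\mathfrak{su}(2)$ spanned by the triple gives a homomorphism into $G$ carrying the diagonal torus into $A^{0}$, so $\alpha^{\vee} \in X_{\ast}(A^{0})$. For the reflection I would exponentiate a suitable real element of this $\mathfrak{su}(2)$ to obtain $n_{\alpha} \in G$; since the orthogonal complement $\ker\alpha \subset \fra$ commutes with the triple while $\alpha^{\vee} \mapsto -\alpha^{\vee}$, one gets $\Ad(n_{\alpha})|_{\fra} = s_{\alpha}$, so $n_{\alpha}$ normalizes $A^{0}$ and permutes the weight spaces by $\frg_{\beta} \mapsto \frg_{s_{\alpha}\beta}$, whence $s_{\alpha}(\Sigma) = \Sigma$. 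Finiteness, spanning, $s_{\alpha}$-invariance and integrality are precisely the defining properties of a root system in $X^{\ast}(A^{0})$, which completes the proof.
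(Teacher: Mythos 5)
The decisive step of your argument --- the one you yourself single out --- rests on the identity $\frg^{A}=\fra\otimes\bbC$, and that identity is not available under the stated hypothesis. A symmetric $(\ast)$-subgroup is \emph{not} required to satisfy $\dim Z_{G}(A)=\dim A$: the defining condition is that $\fra_{0}$ is a maximal abelian subspace of $(\frg_{0}^{A})^{-\theta}$, which is equivalent to $(\frg_{0}^{A})^{-\theta}=\fra_{0}$ and imposes nothing on the $\theta$-fixed part of $\frg_{0}^{A}$. Already in the motivating special case $A=\exp(\fra_{0})$ with $\fra_{0}$ maximal abelian in $\frg_{0}^{-\theta}$ --- the case the paper points to, where $R(G,A^{0})$ is the classical restricted root system of $(\frg_{0},\theta)$ --- one has $\frg_{0}^{A}=Z_{\frg_{0}}(\fra_{0})=\mathfrak{m}_{0}\oplus\fra_{0}$ with $\mathfrak{m}_{0}=Z_{\frg_{0}^{\theta}}(\fra_{0})$ typically nonzero (e.g.\ $G=\SO(5)$ with the rank-one involution, where $\mathfrak{m}_{0}\cong\mathfrak{so}(3)$). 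Consequently your $H=[E,\sigma E]$ only lies in $(\mathfrak{m}_{0}\oplus\fra_{0})\otimes\bbC$; its $\mathfrak{m}_{0}$-component has no reason to vanish, so $(E,\sigma E,H)$ need not be an $\mathfrak{sl}_{2}$-triple whose semisimple element lies along a cocharacter of $A^{0}$, and the coroot construction collapses. (Had the identity $\frg^{A}=\fra\otimes\bbC$ held, every nonzero $A^{0}$-character would be the restriction of an infinite root of $A$ and Theorem \ref{T2} would reduce to Theorem \ref{T1}; the content of Theorem \ref{T2} lies precisely in the cases where it fails.)

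The repair is the opposite of the one you chose: keep the Cartan-type bracket. Since $\theta(a)=a^{-1}$ on $A$, $\theta$ maps $\frg_{\tilde\alpha}$ to $\frg_{-\tilde\alpha}$, so $[E,\theta E]$ is $\Ad(A)$-invariant; moreover $\theta[E,\theta E]=-[E,\theta E]$, whence $[E,\theta E]\in(\frg^{A})^{-\theta}=\fra\otimes\bbC$ --- this is exactly where the maximality hypothesis enters, just as in Knapp's Proposition 6.52, which is the route the paper takes (its alternative argument reduces instead to the rank-one symmetric subgroup $Z_{G}(\ker\alpha)$). Nonvanishing is arranged by choosing $E$ in the real form of $\frg_{\tilde\alpha}$ fixed by the conjugate-linear involution $\sigma\theta$, which preserves $\frg_{\tilde\alpha}$; then $\theta E=\sigma E$ and $B(E,\theta E)=-\langle E,E\rangle\neq 0$. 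With that substitution the remainder of your $\mathfrak{sl}_{2}$ and reflection argument goes through; as written, however, your proof establishes only the degenerate case in which $A$ is a genuine $(\ast)$-subgroup.
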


Theorems \ref{T1} and \ref{T2} give many tori $B$ in $G$ such that the set of non-zero characters of $B$ 
from the conjugation action of $B$ on $\frg$ is a root system. This has independent interest. 

\smallskip

We define several natural subgroups of $W_{\small}=W_{small}(G,A)$: $W_{tiny}$ (generated by reflections),
$W_{f}$ (generated by root transvections), $W_1$ ($\!=\!\{w\in W_{small}:w|_{A/A^0}\!=\!\id\}$), $W_{0}$ 
($\!=\!\{w\in W_{small}:w|_{A^0}\!=\!\id\}$) and $W'$ ($\!=\!W_1\cap W_0$). These are all normal subgroups 
of $W_{large}(G,A)$. The following statement is nontrivial.  

\begin{theorem}\label{T3}
We have $W_{0}=W_{f}$.
\end{theorem}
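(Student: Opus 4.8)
The plan is to prove the two inclusions $W_{f}\subseteq W_{0}$ and $W_{0}\subseteq W_{f}$ separately, the first being routine and the second carrying all the content. For the easy inclusion, recall that a root transvection $s_{\alpha,\xi}$ (for a finite root $\alpha$ and $\xi\in R^{\vee}(\alpha)\subseteq\Hom(A/\ker\alpha,\ker\alpha)$) acts by $a\mapsto a\cdot\xi(a\bmod\ker\alpha)$; since $\alpha|_{A^{0}}=0$ we have $A^{0}\subseteq\ker\alpha$, so $s_{\alpha,\xi}$ fixes $A^{0}$ pointwise and hence lies in $W_{0}$. As $W_{0}$ is a subgroup, $W_{f}=\langle s_{\alpha,\xi}\rangle\subseteq W_{0}$.

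For the reverse inclusion I would first set up the projection $\pi\colon W_{small}\to\GL(X^{\ast}(A^{0}))$ recording the action on $A^{0}$. By construction $\pi(s_{\alpha})$ is the reflection attached to $\alpha|_{A^{0}}\in R'(G,A)$ while $\pi(s_{\alpha,\xi})=\id$, so $\pi$ has image the Weyl group $W(R'(G,A))$ of the root system of Theorem \ref{T1}, and kernel exactly $W_{0}$. Since $W_{f}$ is normal in $W_{small}$ and $W_{small}=\langle W_{tiny},W_{f}\rangle$, we have $W_{small}=W_{tiny}\cdot W_{f}$; writing any $w\in W_{0}$ as $w=tf$ with $t\in W_{tiny}$ and $f\in W_{f}\subseteq W_{0}$, it follows that $t=wf^{-1}\in W_{tiny}\cap W_{0}$. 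Thus everything reduces to the claim $W_{tiny}\cap W_{0}\subseteq W_{f}$, i.e. to describing the kernel of the surjection $\pi|_{W_{tiny}}\colon W_{tiny}\to W(R'(G,A))$.

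To control this kernel I would use the standard normal-generation principle: after choosing one distinguished infinite root $\alpha_{0}$ in each strip over a given $\bar\alpha\in R'(G,A)$, the kernel is generated as a normal subgroup of $W_{tiny}$ by two kinds of elements, namely (a) the differences $s_{\alpha}s_{\alpha'}^{-1}$ of two reflections lying in a common strip, and (b) the lifts $s_{\alpha_{0}}^{2}$ and $(s_{\alpha_{0}}s_{\beta_{0}})^{m}$ of the Coxeter relators of $W(R'(G,A))$. For (a), writing the reflection in its standard form $s_{\alpha}(a)=a\,\check\alpha(\alpha(a))^{-1}$ and using that roots in one strip share a coroot, a direct computation gives $s_{\alpha}s_{\alpha'}(a)=a\cdot\check\alpha\big((\alpha'-\alpha)(a)\big)$. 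Here $\gamma:=\alpha'-\alpha$ is a torsion character (a finite-root direction) with $\langle\gamma,\check\alpha\rangle=0$, so $a\mapsto\check\alpha(\gamma(a))$ factors through $A/\ker\gamma$ and takes values in $\ker\gamma$, exhibiting $s_{\alpha}s_{\alpha'}^{-1}$ as the transvection $s_{\gamma,\xi}$ with $\xi=\check\alpha\circ\gamma$. Hence all elements of type (a) lie in $W_{f}$, and as $W_{f}$ is normal their normal closure does too.

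The main obstacle is therefore type (b): showing that the lifted Coxeter relators lie in $W_{f}$, equivalently that in $W_{small}/W_{f}$ the common image of the reflections in each strip satisfies exactly the Coxeter relations of $W(R'(G,A))$. The relator $s_{\alpha_{0}}^{2}$ should be dispatched quickly, since each reflection is an involution on $A^{0}$, so its square lies in $W_{0}$ and is visibly either trivial or a transvection of the type just analyzed. The braid relators $(s_{\alpha_{0}}s_{\beta_{0}})^{m}$ are the genuinely hard case: I expect to restrict to the rank-two subsystem of $R(G,A)$ spanned by the strips over $\bar\alpha$ and $\bar\beta$ and to analyze the resulting dihedral configuration using the structure theory of strips, verifying in each case that $(s_{\alpha_{0}}s_{\beta_{0}})^{m}$ differs from the identity by a product of transvections. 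Once (a) and (b) are in hand, the normal-generation statement yields $W_{tiny}\cap W_{0}\subseteq W_{f}$, and combined with the reduction $w=tf$ above and the easy inclusion $W_{f}\subseteq W_{0}$ we conclude $W_{0}=W_{f}$.
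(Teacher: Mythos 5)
Your easy inclusion $W_{f}\subseteq W_{0}$ and the reduction to showing that products of reflections attached to roots in a common strip lie in $W_{f}$ are both sound and match the paper's Proposition \ref{P:W0}. But the proposal has two genuine gaps, and the first one is exactly where all the content of the theorem sits. In step (a) you compute $s_{\alpha}s_{\alpha'}(a)=a\,\check{\alpha}(\gamma(a))$ with $\gamma=\alpha'-\alpha$ and conclude that this is ``the transvection $s_{\gamma,\xi}$ with $\xi=\check{\alpha}\circ\gamma$'', hence in $W_{f}$. That last step does not follow: $W_{f}$ is generated by $s_{\gamma,\xi}$ only for $\gamma$ an actual finite root and $\xi$ in the coroot group $R^{\vee}(\gamma)=\Hom(A/\ker\gamma,\,A\cap G_{[\gamma]})$, which is in general a proper subgroup of $\Hom(A/\ker\gamma,\ker\gamma)$. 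So you must prove (i) that $\gamma$ is a root, i.e.\ $\frg_{\gamma}\neq 0$, and (ii) that $\check{\alpha}(\mu_{n})\subseteq A\cap G_{[\gamma]}$, where $n$ is the order of $\gamma$. Neither is formal; the paper's Lemma \ref{L:W0} devotes its entire proof to exactly these two points, by passing to $G'=Z_{G}(\ker\alpha\cap\ker\alpha')$, reducing via the classification of $(\ast)$-subgroups with $2$-generated character group (Lemma \ref{L:TwoGenerated2}) to the cases $\frh_0=\mathfrak{su}(2)^{n}$ and $\frh_0=\mathfrak{su}(3)^{n/2}$, and computing $T^{sc}\cap H^{sc}_{[\gamma]}$ explicitly in each. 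Without this verification you have only shown that $W_{0}$ is generated by $W_{f}$ together with maps of transvection shape, which is essentially the trivial part of the statement.

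The second gap is step (b). You leave the lifted braid relators as something you ``expect'' to verify case by case; that is never done, and the normal-generation principle you invoke would in any case need a presentation of $W(R'(G,A))$ on the full set of reflections (one per strip), not merely the Coxeter presentation on simple ones. The paper sidesteps (b) entirely: it lifts a simple system $\{\alpha'_{i}\}$ of $R'(G,A)$ to roots $\alpha_{i}\in R(G,A)$, sets $B=\bigcap_{i}\ker\alpha_{i}$, and observes that the sub-root system $R'$ generated by the $\alpha_{i}$ maps isomorphically onto $R'(G,A)$; hence $W_{R'}$ is an honest complement, $W_{small}=W_{0}\rtimes W_{R'}$, and $W_{0}$ is generated by $W_{f}$ together with the strip products $s_{\beta_1}s_{\beta_2}$ --- no relator ever needs to be lifted. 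I recommend adopting this coherent-section device in place of your (b), and then supplying the coroot-group membership in (a) along the lines of Lemma \ref{L:W0}.
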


\smallskip

We call a finite character $\lambda$ of $A$ a generalized finite root if $\lambda$ is generated by its
multiples $k\lambda$ ($k\in\mathbb{Z}-\{0\}$) with $k\lambda$ a root. We associate with every generalized 
finite root $\lambda$ a coroot group $R^{\vee}(\lambda)$. A counting formula of Vogan showed in 
\cite{Han-Vogan} extends to the setting of $(\ast)$-subgroups.

\begin{theorem}\label{T4}
Let $A$ be a maximal abelian subgroup of $G$, and $\lambda$ be a generalized finite root of $A$ with
order $n$. Then \begin{equation}|R^{\vee}(\lambda)|=\prod_{p^{k}|n, p\textrm{ prime}, k\geq 1}
p^{\dim\frg_{\frac{n}{p^{k}}\lambda}}.\end{equation}
\end{theorem}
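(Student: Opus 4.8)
\section*{Proof proposal for Theorem \ref{T4}}

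The plan is to transport the problem into the centralizer $L=\Z_{G}(\ker\lambda)$, reinterpret $R^{\vee}(\lambda)$ there as a group of twisted coboundaries, and evaluate its order by a determinant computation that an elementary cyclotomic identity turns into the stated product. First I would set up the reduction. Any $g\in N_{G}(A)$ realizing a transvection $s_{\lambda,\xi}$ fixes $\ker\lambda$ pointwise, hence lies in $L$; moreover $\ker\lambda\subseteq\Z(L)$, and maximality of $A$ forces $\Z(L)=\ker\lambda$. Choose $a_{0}\in A$ generating $A/\ker\lambda\cong\bbZ/n$ and set $\sigma=\Ad(a_{0})$. The defining property of a generalized finite root — that the roots among the $k\lambda$ generate $\langle\lambda\rangle$ — ensures $\sigma$ has order exactly $n$, and $\frl_{\bbC}=\bigoplus_{j\bmod n}\frg_{j\lambda}$ is its eigenspace decomposition, with $\sigma$ acting on $\frg_{j\lambda}$ by $\zeta^{j}$ for a fixed primitive $n$-th root $\zeta=\lambda(a_{0})$ and with $\frl^{\sigma}_{\bbC}=\fra_{\bbC}$. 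Since the commutator $g\sigma(g)^{-1}=ga_{0}g^{-1}a_{0}^{-1}$ is central whenever $g$ induces a transvection, one computes $ga_{0}g^{-1}=a_{0}\cdot g\sigma(g)^{-1}$, so $\xi\mapsto g\sigma(g)^{-1}$ identifies $R^{\vee}(\lambda)$ with the group of central twisted coboundaries $\{g\sigma(g)^{-1}:g\in L\}\cap\Z(L)$. As $\sigma$ is inner it acts trivially on $\Z(L)$, so this is the kernel of the connecting map $H^{1}(\langle\sigma\rangle,\Z(L))\to H^{1}(\langle\sigma\rangle,L)$ inside $H^{1}(\langle\sigma\rangle,\Z(L))=\Z(L)[n]$.

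Next I would count these coboundaries. Write $\frl_{\bbC}=\fra_{\bbC}\oplus\frn$ with $\frn=\bigoplus_{j\neq 0}\frg_{j\lambda}$. The twisted conjugacy class $C_{e}=\{g\sigma(g)^{-1}\}$ is a submanifold of $L^{0}$ of dimension $\dim\frn$ (its stabilizer at $e$ is $L^{\sigma}$, of dimension $\dim\fra$), while $\Z(L)^{0}=A^{0}$ has the complementary dimension $\dim\fra$; the central coboundaries are exactly the finite transverse intersection $C_{e}\cap\Z(L)$. A local Jacobian computation identifies the intersection multiplicity at each point with the determinant of $1-\sigma$ in the transverse directions, giving
\[|R^{\vee}(\lambda)|=\bigl|\det\bigl(1-\sigma\,\big|\,\frn\bigr)\bigr|=\prod_{j=1}^{n-1}(1-\zeta^{j})^{\dim\frg_{j\lambda}}.\]
Equivalently, one evaluates the connecting map above after reducing $H^{1}(\langle\sigma\rangle,L)$ through a $\sigma$-stable maximal torus of $L^{0}$.

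Finally I would collapse the product by number theory. Grouping the factors according to $d=\gcd(j,n)$ and using $\prod_{\gcd(j,n)=d}(1-\zeta^{j})=\Phi_{n/d}(1)$, which equals $p$ when $n/d=p^{k}$ is a prime power and $1$ otherwise, together with the fact that $\dim\frg_{j\lambda}$ depends only on $\gcd(j,n)$, reduces the product to exactly $\prod_{p^{k}\mid n}p^{\dim\frg_{(n/p^{k})\lambda}}$. The required invariance of the multiplicities along Galois orbits comes from $\sigma$ stabilizing a maximal torus of $L^{0}$, on which it acts integrally, refined by the symmetry of the twisted root system that makes $a_{0}$ and $a_{0}^{c}$ (for $\gcd(c,n)=1$) interchangeable.

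The main obstacle will be the determinant step. Making rigorous that the number of central coboundaries equals $|\det(1-\sigma|_{\frn})|$ requires controlling the disconnectedness of $L$ (the element $a_{0}$ lies in a nontrivial component precisely when $L^{0}$ is nonabelian), the component group of the twisted stabilizer $L^{\sigma}=\Z_{L}(a_{0})$ that governs the fibres of $g\mapsto g\sigma(g)^{-1}$, the verification that the positive-dimensional center $A^{0}$ contributes no continuous coroots so that the intersection is genuinely finite, and the agreement of all local intersection signs. This is exactly the point at which I would adapt Vogan's counting argument from \cite{Han-Vogan} to the present $(\ast)$-subgroup setting; securing the Galois-invariance of the multiplicities is the second input that must be established along the way.
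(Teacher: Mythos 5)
Your skeleton is the right one---pass to $L=Z_{G}(\ker\lambda)$, identify $R^{\vee}(\lambda)$ with a set cut out by the twisted action of a generator $y$ of $A/\ker\lambda$, compute $|\det(1-\sigma)|$ on $\frg_{[\lambda]}=\bigoplus_{j\neq 0}\frg_{j\lambda}$, and collapse the product using $\Phi_{p^{k}}(1)=p$ and $\Phi_{d}(1)=1$ for $d$ not a prime power; that last step is verbatim the paper's Lemma~\ref{L:cardinality1}. But the step you yourself flag as ``the main obstacle''---that the number of central twisted coboundaries equals $|\det(1-\sigma|_{\frn})|$---is exactly where the content of the theorem lies, and your proposal does not close it. The transversality/Jacobian route you sketch (intersection multiplicities of the twisted conjugacy class with $\Z(L)$, local signs, component groups of $L$ and of the twisted stabilizer) is left unproved, and all of the difficulties you enumerate (disconnectedness of $L$, possible continuous coroots from $A^{0}$, Galois-invariance of the multiplicities $\dim\frg_{j\lambda}$) dissolve once one structural fact is in hand that your argument never establishes: $\frg^{[\lambda]}=Z_{\frg}(\ker\lambda)$ is \emph{abelian}. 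The paper proves this (Lemmas~\ref{L:abelian1} and~\ref{L:abelian2}) from the $(\ast)$-condition: $\fra=Z_{\frg}(A)=(\frg^{[\lambda]})^{y}$ forces the semisimple part of $\frg^{[\lambda]}$ to have no $y$-fixed vector, hence to vanish by Borel's theorem.

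Once that is known, $G_{[\lambda]}$ is a torus meeting $A$ in a finite group (Lemma~\ref{L:closed2}) and the count is pure lattice linear algebra: $R^{\vee}(\lambda)=\Hom(A/\ker\lambda,\,A\cap G_{[\lambda]})$ has order $|A\cap G_{[\lambda]}|$ (the target is $n$-torsion), maximality of $A$ gives $A\cap G_{[\lambda]}=(G_{[\lambda]})^{y}$, and $|(G_{[\lambda]})^{y}|=|\det(I-y_{\ast})|$ computed on $X_{\ast}(G_{[\lambda]})$; integrality of $y_{\ast}$ on this lattice makes its characteristic polynomial a product of cyclotomic polynomials, which is precisely the Galois-invariance you proposed to establish separately. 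Note also that your identification of $R^{\vee}(\lambda)$ with $\{g\sigma(g)^{-1}:g\in L\}\cap\Z(L)$ replaces the paper's definition by an a priori different group: Lemma~\ref{L:transvection2} only shows every $\xi$ is realized by a coboundary coming from the torus $G_{[\lambda]}$, and the reverse containment for arbitrary $g\in L$ would again require the structure theory that is missing from your write-up. In short: correct endgame, but a genuine gap at the counting step, which the paper fills with the abelianness of $Z_{\frg}(\ker\lambda)$ rather than with an intersection-theoretic argument.
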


By this theorem, only finite roots of order a prime power contribute to coroot groups. Assume that $A$ is 
a finite maximal abelian subgroup, we show that the direct sum of root spaces of finite roots with order 
a prime power is the Lie algebra of a closed subgroup $G'$. A consequence of Theorem \ref{T4} is that the
twisted root systems associated to $A$ as a $(\ast)$-subgroup of $G'$ and as a $(\ast)$-subgroup of $G$ 
are the same. By this, the twisted root system doe not determine the Lie algebra of $G$.

\smallskip

We call an ideal $\frh$ of $\frg$ an $A$-ideal if it is furthermore stable under the conjugation action
of $A$, and we call $\frg$ A-simple if it has no nonzero and proper $A$-ideal. We define two $A$-ideals 
$\frg_{\infty}$ and $\frg_{f}$ of $\frg$ using root spaces of the conjugation action of $A$ on $\frg$. 

\begin{theorem}\label{T5}
Both $\frg_{\infty}$ and $\frg_{f}$ are $A$-ideals of $\frg$, and $$\frg=\frg_{\infty}\oplus\frg_{f}.$$
\end{theorem}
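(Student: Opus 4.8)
The plan is to reduce the statement to the decomposition of $\frg$ into simple ideals, sorted according to how the central torus $A^{0}$ acts. I would begin by recording two facts. Since $A$ is abelian and acts on $\frg$ by conjugation, every weight space $\frg_{\alpha}$ is $A$-stable (conjugation by $a\in A$ fixes each character of $A$), so every sum of weight spaces is automatically $A$-stable; hence for such subspaces the only content in the notion of an $A$-ideal is being an ideal. Moreover the $(\ast)$-condition forces $Z_{G}(A)^{0}=A^{0}$, so the zero weight space is $\frg_{0}=\frg^{A}=\Lie(A^{0})_{\bbC}=:\frt$; in particular $\frg_{0}$ is abelian and fixed pointwise by $A$.

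Next, as $G$ is compact, $\frg$ is reductive; write $\frg=\frz(\frg)\oplus\bigoplus_{i}\frg_{i}$ with the $\frg_{i}$ simple. Acting by automorphisms, $A$ permutes the $\frg_{i}$ and fixes $\frz(\frg)$ pointwise, while $A^{0}$ acts trivially on $\frz(\frg)$. The elementary but crucial lemma is that triviality of the $A^{0}$-action on a simple factor is constant along $A$-orbits: if $\Ad(a)\frg_{i}=\frg_{j}$ and $t\in A^{0}$, then for $x=\Ad(a)y\in\frg_{j}$ one has $\Ad(t)x=\Ad(a)\Ad(a^{-1}ta)y=\Ad(a)\Ad(t)y$ because $A$ is abelian, so $A^{0}$ acts trivially on $\frg_{j}$ if and only if it does on $\frg_{i}$. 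I would then set $\frg_{\infty}$ to be the sum of those simple factors on which $A^{0}$ acts nontrivially and let $\frg_{f}$ be the sum of $\frz(\frg)$ and the remaining simple factors; each is a sum of $A$-orbits of ideals (together with $\frz(\frg)$ for $\frg_{f}$), hence an $A$-ideal, and $\frg=\frg_{\infty}\oplus\frg_{f}$.

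It then remains to identify these with the definitions given through root spaces. By construction $A^{0}$ acts trivially on $\frg_{f}$, so every nonzero weight occurring in $\frg_{f}$ restricts trivially to $A^{0}$; that is, $\frg_{f}$ contains only finite roots. Conversely, each infinite root space $\frg_{\alpha}$ has nonzero $A^{0}$-weight, and since $\frg_{\infty}$ and $\frg_{f}$ are $A^{0}$-stable, $\frg_{\alpha}$ splits through them with $\frg_{\alpha}\cap\frg_{f}=0$, whence $\frg_{\alpha}\subseteq\frg_{\infty}$. Thus $\frg_{\infty}$ is precisely the ideal generated by the infinite root spaces, and $\frg_{f}$ is its Killing-orthogonal complement (an $A$-ideal since $A$ preserves the Killing form). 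Finally, taking $A$-invariants is additive across $\frg=\frg_{\infty}\oplus\frg_{f}$, so $\frt=\frg^{A}=(\frt\cap\frg_{\infty})\oplus(\frt\cap\frg_{f})$, which shows the toral directions split correctly and no weight space is double counted.

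The step I expect to be the main obstacle is the reconciliation in the last paragraph, namely that the explicit root-space definition of $\frg_{\infty}$ really closes up into an ideal. The concrete difficulty is that a bracket $[\frg_{\alpha},\frg_{\beta}]\subseteq\frg_{\alpha+\beta}$ of two infinite root spaces can land in a finite root space, precisely when $\alpha|_{A^{0}}=-\beta|_{A^{0}}$; consequently $\frg_{\infty}$ must contain certain finite root spaces and certain toral directions beyond the infinite root spaces themselves, and the work is to verify that exactly these extra pieces belong to the nontrivial-$A^{0}$ simple factors, so that $\frg_{f}$ retains no infinite root while remaining an ideal. This is what the simple-factor sorting accomplishes, using the orthogonality of distinct simple ideals under the Killing form and the splitting of $\frt$ recorded above.
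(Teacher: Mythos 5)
Your framework---splitting $\frg$ into its center and simple ideals, observing that triviality of the $A^{0}$-action is constant along $A$-orbits of simple factors, and taking the sum $\Sigma$ of the factors with nontrivial $A^{0}$-action as the candidate for $\frg_{\infty}$---is sound, and it does produce an $A$-ideal decomposition of $\frg$ with every infinite root space on one side. The gap is in the reconciliation step, and you have identified the wrong direction as the hard one. The paper defines $\frg_{\infty}$ as the \emph{explicit} subspace $(\bigoplus_{\alpha'}\frg'_{\alpha'})\oplus(\sum_{\alpha'}[\frg'_{\alpha'},\frg'_{-\alpha'}])$, not as ``the ideal generated by the infinite root spaces.'' The containment of this subspace in $\Sigma$ is immediate (each $\frg'_{\alpha'}$ lies in $\Sigma$ and $\Sigma$ is a subalgebra); what the theorem actually requires is the \emph{reverse} containment: each simple factor $\frs$ on which $A^{0}$ acts nontrivially must be exhausted by the infinite root spaces together with the specific brackets $[\frg'_{\alpha'},\frg'_{-\alpha'}]$. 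Concretely one must show $\frs^{A^{0}}\subseteq\sum_{\alpha'}[\frg'_{\alpha'},\frg'_{-\alpha'}]$, i.e.\ that the $A^{0}$-fixed part of $\frs$ (a Levi subalgebra, possibly large) is spanned by brackets of opposite-$A^{0}$-weight vectors. Your sentence ``Thus $\frg_{\infty}$ is precisely the ideal generated by the infinite root spaces'' asserts this identification without proof, and your closing paragraph describes the remaining work as checking that the extra bracket pieces land in the nontrivial-$A^{0}$ factors---which is the trivial containment. Without the reverse containment you cannot conclude that the paper's $\frg_{\infty}$ is an ideal at all: any ideal containing all infinite root spaces contains the ideal they generate, which is exactly $\Sigma$, so $\frg_{\infty}$ is an ideal if and only if it equals $\Sigma$.

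The missing step can be supplied within your framework: for each such $\frs$, the subspace $J=\bigoplus_{\mu\neq 0}\frs_{\mu}\oplus\sum_{\mu\neq 0}[\frs_{\mu},\frs_{-\mu}]$ (weights taken for $A^{0}$) is a nonzero ideal of the simple algebra $\frs$, hence equals $\frs$; since $\frs_{\mu}\subseteq\frg'_{\mu}$ and $[\frs_{\mu},\frs_{-\mu}]\subseteq[\frg'_{\mu},\frg'_{-\mu}]$, this yields $\frs\subseteq\frg_{\infty}$. But verifying that $J$ is an ideal is a Jacobi-identity computation, and that computation is precisely the content of the paper's proof, which checks directly that $\frg_{\infty}$ is stable under $\ad\frg'_{\mu'}$ for every $\mu'$ and under $\ad\frg'_{0}$, with no appeal to simple factors; the complement and the compact-real-form statement then follow. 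So your route is workable, and arguably makes the existence of the complementary ideal and the $A$-stability more transparent, but as written it omits the one computation that carries the weight of the theorem.
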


The following theorem shows the importance of the root system $R'(G,A)$ in understanding the $A$-action
on $\frg$.
\begin{theorem}\label{T6}
Suppose $\frg=\frg_{\infty}$. Then, $\frg$ is $A$-simple if and only if $R'(G,A)$ is an irreducible root
system.
\end{theorem}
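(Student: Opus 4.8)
The plan is to set up a dictionary between $A$-ideals of $\frg$ and unions of irreducible components of $R'(G,A)$, and then read off the equivalence. First I would decompose $\frg$ into weight spaces for the conjugation action of the torus $A^0$,
\[\frg=\frg[0]\oplus\bigoplus_{\beta\in R'(G,A)}\frg[\beta],\qquad \frg[\beta]=\{X\in\frg:\Ad(t)X=\beta(t)X\ (\forall t\in A^0)\},\]
where $\frg[0]$ is the centralizer of $A^0$ (containing $\frt:=\Lie(A^0)\otimes_{\bbR}\bbC$ and all finite root spaces), and the nonzero weights occurring are exactly the elements of $R'(G,A)$ by the definition of the projection. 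Because $A$ is abelian, $A$ centralizes $A^0$, so $\Ad(a)$ preserves each $\frg[\beta]$ for $a\in A$; thus any sum of weight spaces is automatically $A$-stable, and being an $A$-ideal reduces to being an ideal for the bracket. The hypothesis $\frg=\frg_{\infty}$ is used here in two ways: the coroots $\check\beta$ ($\beta\in R'$) span $\frt$ (so $R'$ has full rank in $X^{\ast}(A^0)$), and every finite root space lies in the subalgebra generated by the nonzero weight spaces, so that no ideal is supported purely on $\frg[0]$.

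Next I would record the two bracket facts that drive everything: $[\frg[\beta],\frg[\gamma]]\subseteq\frg[\beta+\gamma]$, and, since $R'$ is a root system (Theorem \ref{T1}), if $\beta$ and $\gamma$ lie in \emph{different} irreducible components then $\beta+\gamma\notin R'$ and $\beta\neq-\gamma$, so this bracket vanishes. Hence for any union $S$ of irreducible components of $R'$ the subalgebra $\frg_S$ generated by $\{\frg[\beta]:\beta\in S\}$ is contained in $\bigl(\bigoplus_{\beta\in S}\frg[\beta]\bigr)\oplus\frg[0]$ and is closed under bracketing with every weight space: cross terms with components outside $S$ die by orthogonality, while the $\frg[0]$-part of $\frg_S$ acts on $\frg[\gamma]$ ($\gamma\notin S$) through pairings $\la\gamma,\check\beta\ra=0$. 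Thus $\frg_S$ is an $A$-ideal, nonzero and proper whenever $S$ is a proper nonempty union of components. Applying this when $R'$ is reducible produces a nonzero proper $A$-ideal, proving the contrapositive of the ``only if'' direction.

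For the converse, suppose $\frg=\frh_1\oplus\frh_2$ with both $\frh_i$ nonzero $A$-ideals. Each $\frh_i$ is $A^0$-stable, hence a sum of weight spaces, and since $\frg=\frg_{\infty}$ a nonzero ideal must meet some infinite root space, so each of $R'_i:=\{\beta\in R':\frg[\beta]\cap\frh_i\neq 0\}$ is nonempty and $R'_1\cup R'_2=R'$. From $[\frh_1,\frh_2]=0$ together with the bracket facts above I would deduce that $R'_1$ and $R'_2$ are orthogonal: if $\beta\in R'_1$ and $\gamma\in R'_2$ were non-orthogonal, the $\beta$-string through $\gamma$ would force a nonzero bracket between $\frg[\beta]$ and $\frg[\gamma]$, contradicting $[\frh_1,\frh_2]=0$. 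It would then follow that $R'=R'_1\sqcup R'_2$ is a decomposition into orthogonal nonempty subsystems, so $R'$ is reducible.

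The step that needs the most care — and which I expect to be the main obstacle — is showing that a weight space cannot be split between the two ideals, i.e. that $\frg[\beta]\cap\frh_1\neq 0$ forces $\frg[\beta]\subseteq\frh_1$, so that $R'_1\cap R'_2=\emptyset$ and the orthogonality argument applies cleanly. Because the $\frg[\beta]$ are in general higher-dimensional in the twisted setting, this indecomposability is not formal; it is exactly where the fine structure of the twisted root datum enters — the coroot groups $R^{\vee}(\alpha)$ and the root transvections $s_{\alpha,\xi}$ generating $W_{f}$, which act on each $\frg[\beta]$ and which I expect to act irreducibly enough that iterated brackets starting from a single nonzero vector of $\frg[\beta]\cap\frh_i$ regenerate all of $\frg[\beta]$ inside the ideal. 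The parallel delicate point is the bookkeeping of $\frg[0]$: one must check that $\frt$ and the finite root spaces distribute among the $\frh_i$ consistently with the partition of the infinite roots, with no residual summand left over, and this is precisely what the hypothesis $\frg=\frg_{\infty}$ (equivalently $\frg_{f}=0$, by Theorem \ref{T5}) secures.
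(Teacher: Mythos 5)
Your overall strategy is sound, and your ``only if'' direction is essentially the paper's argument: given $R'(G,A)=R'_1\sqcup R'_2$ orthogonal and nonempty, the paper forms $\frg_{i}=(\sum_{\alpha|_{A^{0}}\in R'_{i}}\frg_{\alpha})\oplus(\sum_{\alpha|_{A^{0}}=\beta|_{A^{0}}\in R'_{i}}[\frg_{\alpha},\frg_{-\beta}])$ and checks via the Jacobi identity that each is a nonzero proper $A$-ideal. One imprecision in your version of this step: the degree-zero part of $\frg_S$ is not just toral --- it contains finite root spaces, since $[\frg_{\alpha},\frg_{-\alpha'}]\subseteq\frg_{\alpha-\alpha'}$ for distinct $\alpha,\alpha'$ in the same strip --- so the vanishing of $[\frg_S\cap\frg[0],\frg[\gamma]]$ for $\gamma\notin S$ cannot be justified by the pairings $\la\gamma,\check\beta\ra=0$ alone. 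You need the Jacobi identity: $[[\frg_{\alpha},\frg_{-\alpha'}],\frg_{\delta}]\subseteq[[\frg_{\alpha},\frg_{\delta}],\frg_{-\alpha'}]+[\frg_{\alpha},[\frg_{-\alpha'},\frg_{\delta}]]$, and both inner brackets land in weight spaces of non-roots, hence vanish. This is exactly how the paper closes that step, so the fix is routine.

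The genuine gap is in your converse, and it is of an unusual kind: the ``main obstacle'' you flag --- that $\frg[\beta]$ might split between $\frh_1$ and $\frh_2$, to be repaired by irreducibility of the $W_{f}$-action through coroot groups and transvections --- is not an obstacle, and the machinery you propose is the wrong tool; as written the step is left as an expectation, so the proof is incomplete precisely there. The resolution is elementary. By Lemma \ref{L:infinity3} each individual infinite root space $\frg_{\alpha}$ is one-dimensional, so $\frg_{\alpha}=(\frg_{\alpha}\cap\frh_1)\oplus(\frg_{\alpha}\cap\frh_2)$ forces $\frg_{\alpha}$ to lie wholly in one summand; the only question is whether two roots $\alpha,\alpha'$ in the same strip can land in different summands. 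They cannot: if $\frg_{\alpha}\subseteq\frh_1$, then since $\frh_1$ is an ideal the coroot element $H_{\alpha}\in[\frg_{-\alpha},\frg_{\alpha}]$ of the $\mathfrak{sl}_2$ from Lemma \ref{L:infinity1} lies in $\frh_1$, while $[H_{\alpha},\frg_{\alpha'}]=\alpha'(\check{\alpha})\frg_{\alpha'}$ with $\alpha'(\check{\alpha})=\alpha(\check{\alpha})=2\neq 0$ by Lemma \ref{L:infinity2}; this contradicts $[\frh_1,\frh_2]=0$ if $\frg_{\alpha'}\subseteq\frh_2$. This is just your orthogonality argument applied at $\gamma=\beta$, and it gives disjointness of $R'_1$ and $R'_2$ for free. (The paper compresses this whole direction into a citation of Proposition \ref{P:Asimple1}: decompose $\frg$ into $A$-simple ideals, note each must contain an infinite root space because $\frg=\frg_{\infty}$, and the pairing argument makes their restricted roots mutually orthogonal.) With that substitution your proof is complete; the appeal to $W_{f}$ should be deleted, since $W_{f}$ merely permutes the lines $\frg_{\alpha}$ within a strip and has no bearing on which ideal each line lies in.
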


For an infinite root $\alpha\in (G,A)$, the $\alpha$-strip is the set $$\{\beta\in R(G,A):\beta|_{A^{0}}
\!=\!\alpha|_{A^{0}}\}.$$ Set $$R_{0,\alpha}=\{\beta-\alpha:\beta\in R(G,A),\beta|_{A^{0}}=
\alpha|_{A^{0}}\}.$$ A large portion of this paper is in understanding the structure of strips of 
infinite roots. The following is an interesting result along this direction. 

\begin{theorem}\label{T7}
Suppose $\frg=\frg_{\infty}$, and $R'(G,A)$ is an irreducible root system which is simply-laced, or is
one of the types $\C_{n}$ ($n\geq 3$), $\F_4$, $\G_2$. Then $R_{0,\alpha}=R_{0}(G,A)\cup\{0\}$ for any
short root $\alpha\in R(G,A)-R_{0}(G,A)$, and it is a group. 
\end{theorem}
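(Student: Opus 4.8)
Throughout, write $\bar\mu=\mu|_{A^{0}}$ for the restriction of a character $\mu$, so that the $\alpha$-strip is $\{\beta\in R(G,A):\bar\beta=\bar\alpha\}$ and $R_{0,\alpha}=\{\gamma:\bar\gamma=0,\ \alpha+\gamma\in R(G,A)\}$, with $\gamma=0$ corresponding to $\beta=\alpha$. The plan is to prove the two inclusions $R_{0,\alpha}\subseteq R_{0}(G,A)\cup\{0\}$ and $R_{0}(G,A)\cup\{0\}\subseteq R_{0,\alpha}$ separately and then read off the group structure; essentially all of the weight of the theorem will sit in the second inclusion.

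The first inclusion I would obtain from $\mathfrak{sl}_2$-theory, and it needs neither shortness of $\alpha$ nor the restriction on types. Choose an $\mathfrak{sl}_2$-triple $(e,h,f)$ with $e\in\frg_{\alpha}$, $h=\check\alpha\in\Lie(A^{0})_{\bbC}$ and $f\in\frg_{-\alpha}$. Since $h$ lies in $\Lie(A^{0})_{\bbC}$, it acts on $\frg_\beta$ by the integer $\langle\bar\beta,\check{\bar\alpha}\rangle$; in particular every strip space $\frg_\beta$ (with $\bar\beta=\bar\alpha$) sits in the $h$-eigenvalue $2$, while every finite-root space sits in eigenvalue $0$. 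Every $\mathfrak{sl}_2$-constituent containing an $h$-weight-$2$ vector has even highest weight $\geq 2$, hence also contains $h$-weight-$0$ vectors, and on such a constituent $f$ maps the weight-$2$ space injectively into the weight-$0$ space. Thus $f$ is injective on the full $h$-weight-$2$ space, so for $0\neq X_{\beta}\in\frg_{\beta}$ with $\beta$ in the strip we get $0\neq[f,X_{\beta}]\in\frg_{\beta-\alpha}$; hence $\beta-\alpha\in R(G,A)\cup\{0\}$, and since $\overline{\beta-\alpha}=0$ it is a finite root or $0$.

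The reverse inclusion is the crux, and here shortness of $\alpha$, the hypothesis $\frg=\frg_\infty$, and the list of admissible types all enter. Concretely it amounts to showing that for every finite root $\gamma$ one has $\alpha+\gamma\in R(G,A)$, equivalently that $\ad(e)$ does not annihilate $\frg_\gamma$. Since $\langle\bar\gamma,\check{\bar\alpha}\rangle=0$ the $\alpha$-string through $\gamma$ is symmetric, so if $\alpha+\gamma\notin R(G,A)$ then also $\gamma-\alpha\notin R(G,A)$ and $\frg_\gamma$ centralizes the whole triple $(e,h,f)$. I would rule this out by a global argument: using $\frg=\frg_\infty$ (so that $\frg$ is generated as an $A$-ideal by the infinite root spaces, equivalently no nonzero $A$-ideal is centralized by $A^{0}$), together with irreducibility of $R'(G,A)$ and transitivity of the Weyl group of $R'(G,A)$ on short roots, one propagates a single vanishing $[e,\frg_\gamma]=0$ into vanishing of $\frg_\gamma$ against all infinite root spaces, which forces $\frg_\gamma$ into the center of the semisimple $\frg$ — a contradiction. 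The hard part, and the reason the theorem is restricted to the simply-laced types together with $\C_{n}$ ($n\geq 3$), $\F_4$ and $\G_2$, is that this propagation is not formal: shortness of $\bar\alpha$ is what controls the possible string lengths, and a residual rank-two analysis is needed to transport a relation $\alpha_{0}+\gamma\in R(G,A)$ — which $\frg=\frg_\infty$ guarantees for \emph{some} infinite root $\alpha_{0}$ — to the \emph{given} short root $\alpha$ while keeping the same finite root $\gamma$. This is exactly the step that fails for $\B_{n}$, and I expect it to be the main obstacle; I would organize it as a case check over the rank-two restricted subsystems generated by $\bar\alpha$ and the support of $\gamma$.

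Finally, the group structure follows from the two inclusions with no extra work. The set is symmetric because $R_{0}(G,A)=-R_{0}(G,A)$, and applying the reverse inclusion to $-\gamma$ shows $-\gamma\in R_{0,\alpha}$. For closure under addition, given finite roots $\gamma_{1},\gamma_{2}$, the element $\beta_{1}=\alpha+\gamma_{1}$ is again a root in the strip, hence again a short infinite root with $\bar\beta_{1}=\bar\alpha$; applying the reverse inclusion to $\beta_{1}$ and the finite root $\gamma_{2}$ gives $\beta_{1}+\gamma_{2}=\alpha+\gamma_{1}+\gamma_{2}\in R(G,A)$, so that $\gamma_{1}+\gamma_{2}\in R_{0,\alpha}=R_{0}(G,A)\cup\{0\}$. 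Together with $0\in R_{0,\alpha}$ this exhibits $R_{0,\alpha}=R_{0}(G,A)\cup\{0\}$ as a subgroup of the lattice of finite characters $\{\mu\in X^{\ast}(A):\bar\mu=0\}$.
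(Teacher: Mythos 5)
Your first inclusion ($R_{0,\alpha}\subseteq R_{0}(G,A)\cup\{0\}$ via $\mathfrak{sl}_2$-theory) is correct and is exactly the paper's Proposition \ref{P:strip-FRD1} (via Lemma \ref{L:strip1}), and your formal derivation of the group law from the two inclusions is also fine. The problem is the reverse inclusion $R_{0}(G,A)\subseteq R_{0,\alpha}$, which you correctly identify as the crux and then do not prove: you describe a ``propagation'' of a relation $\alpha_{0}+\gamma\in R(G,A)$ from \emph{some} infinite root $\alpha_{0}$ to the \emph{given} short root $\alpha$, concede that this propagation ``is not formal,'' and defer it to an unspecified case check over rank-two restricted subsystems. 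That deferred step is where the entire content of the theorem lives, so as written the proposal has a genuine gap. You have also not located the actual reason for the list of admissible types: it is not about string lengths, but about the fact that in each listed type the short roots of $R'(G,A)$ form a \emph{simply-laced irreducible} root system of rank $\geq 2$ ($\D_{n}$ inside $\C_{n}$, $\D_4$ inside $\F_4$, $\A_2$ inside $\G_2$), whereas in $\B_{n}$ and $\BC_{n}$ the short roots form $\A_1^{\,n}$, which is not irreducible.

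The paper's route avoids transporting a fixed finite root $\gamma$ between strips altogether. First, Theorem \ref{T:strip1} (resting on Proposition \ref{P:strip4}: if $\langle\alpha,\beta\rangle=\langle\beta,\alpha\rangle=-1$ then the cyclic inclusions $R_{0,\alpha}+R_{0,\beta}\subseteq R_{0,\gamma}$ for $\gamma=-(\alpha+\beta)$, and their two rotations, force $R_{0,\alpha}=R_{0,\beta}=R_{0,\gamma}$ and show this common set is a group) gives that $R_{0,\alpha}$ is one and the same subgroup $R_{0}$ of $X^{\ast}_{0}(A)$ for \emph{every} short infinite root $\alpha$ --- constancy and the group structure are established before $R_{0}(G,A)$ enters at all. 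The hypothesis $\frg=\frg_{\infty}$ is then used only to write every finite root space inside $\sum[\frg_{\mu},\frg_{-\nu}]$ with $\mu|_{A^{0}}=\nu|_{A^{0}}$ infinite, so that every finite root has the form $\mu-\nu\in R_{0,\nu}$; if $\nu$ is short this lies in $R_{0}$ by constancy, and if $\nu$ is long one splits $\mu=\mu_1+\mu_2$ and $\nu=\mu_1+\nu_2$ into short infinite roots to get $\mu-\nu=\mu_2-\nu_2\in R_{0,\nu_2}=R_{0}$. Combined with your first inclusion this yields $R_{0}=R_{0}(G,A)\cup\{0\}$. To repair your outline you would either have to carry out the rank-two propagation you postpone, or (better) prove the constancy statement for the simply-laced subsystem of short roots first.
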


Here, $R_{0}(G,A)$ is the set of finite roots in $R(G,A)$. In Table 1, we list the root system $R'(G,A)$
and length of strips of infinite roots in $R(G,A)$ for positive-dimensional $(\ast)$-subgroups $A$ in an
adjoint type compact simple Lie group $G$.

\smallskip

The content of this paper is organized as follows. In Section 2, we study $(\ast)$-groups acting on a 
torus by reducing to study $\mathbb{Q}$-representations of abelian groups, and we classify 
$(\ast)$-subgroups in any compact semisimple Lie group of adjoint type. In Section 3, we define twisted 
root system and its small Weyl group. We define five normal subgroups $W_{tiny}$, $W_{f}$, $W_0$, $W_1$, 
$W'$ of the small Weyl group and study their relations and propeties. In Section 4, we extend most 
results in \cite{Han-Vogan} about finite root datum to the setting of $(\ast)$-subgroups. In Section 5, 
we study strips of infinite roots. In Section 6, we discuss some other subjects about twisted root system.

\smallskip

\noindent{\it Notation and conventions.}

In this paper, $G$ always means a compact Lie group. Write $G^{0}$ for the neutral subgroup of
$G$, $\mathfrak{g}_{0}$ for the Lie algebra of $G$, and $\mathfrak{g}=\mathfrak{g}_{0}
\otimes_{\mathbb{R}}\mathbb{C}$ for the complexified Lie algebra of $G$. Let $\pi:G\rightarrow
\Aut(\frg_0)$ be the adjoint homomorphism. In this paper, we also use $\frg_{0}$ to denote the zero
weight space for an abelian group $A$ acting on $\frg$. It is easy for readers to distinguish this 
ambiguity.

Let $\frg_0$ be a compact Lie algebra. Write $z(\frg_0)$ for the center of $\frg_0$, and
$(\mathfrak{g}_{0})_{der}=[\frg_0,\frg_0]$ for the derived subalgebra. Then, $\frg_0=z(\frg_0)\oplus
(\frg_{0})_{s}$.

Let $A$ be a compact abelian group. Write $X^{\ast}(A)=\Hom(A,\U(1))$ for the character group of $A$,
and $X^{\ast}(A)=\Hom(\U(1),A)$ for the cocharacter group of $A$. Write $X^{\ast}_{0}(A)=
X^{\ast}(A/A^{0})$. It is clear that $X_{\ast}(A)=X_{\ast}(A^{0})$, and there is an exact sequence
$$0\rightarrow X^{\ast}_{0}(A)\rightarrow X^{\ast}(A)\rightarrow X^{\ast}(A^{0})\rightarrow 0.$$
As $\Hom(\U(1),\U(1))=\mathbb{Z}\cdot\id$, there is a paring $$X^{\ast}(A)\times X_{\ast}(A)
\rightarrow\mathbb{Z}.$$ For $\lambda\in X^{\ast}(A)$, $t\in X_{\ast}(A)$, write $\lambda(t)$ for the 
pairing between $\lambda$ and $t$.

For any $n\geq 1$, write $\mu_{n}=\{z\in\mathbb{C}^{\times}: z^{n}=1\}$ for the group of $n$-th roots 
of units. For any $n$ and $m$ with $n|m$, there is an injection $\mu_{n}\hookrightarrow\mu_{m}$ and a 
surjection $\mu_{m}\rightarrow\mu_{n}$ given by $$z\mapsto z^{\frac{m}{n}}, \forall z\in\mu_{m}.$$

\smallskip

\noindent{\it Acknowledgement.} A part of this paper was written when the author visited MPI Bonn in 
the summer of 2016. The author would like to thank MPI Bonn for the support and hospitality.

\section{$(\ast)$-subgroups of a compact Lie group}

Let $G$ be a compact Lie group. As in \cite{Yu-maximalE}, we say a closed abelian subgroup $A$ of $G$
satisfying the condition $(\ast)$ if $$\dim\frg_{0}^{A}=\dim A.$$ In this paper, we call these
$(\ast)$-subgroups. Maximal abelian subgroups and Jordan subgroups are $(\ast)$-subgroups.

In this section, we classify $(\ast)$-subgroups in a compact semisimple Lie group of adjoint type, and
study $(\ast)$-groups of automorphisms of a compact torus. In the last, we study $(\ast)$-subgroups $A$
with $X^{\ast}(A)$ generated by two elements.

\subsection{$(\ast)$-groups of automorphisms of a compact semisimple Lie algebra}\label{SS:ast-semisimple}

Let $\fru$ be a compact semisimple Lie algebra, and $G=\Aut(\fru)$. We classify $(\ast)$-subgroups of $G$
through three steps.

\smallskip

\noindent{\it Step 1, reduction to the transitive case.} As $\fru$ is a compact semisimple Lie algebra, it
has a unique decomposition $$\fru=\bigoplus_{1\leq i\leq n}\mathfrak{u}'_{i}$$ into a direct sum of compact
simple Lie algebras $\fru'_{i}$ ($1\leq i\leq n$). Then, the action of $A$ permutes the simple factors
$\fru'_{i}$ ($1\leq i\leq n$). Group $\{\fru'_{i}:1\leq i\leq n\}$ into orbits of $A$ and take the direct
sums of simple factors in each orbit, we may write $\fru$ in a unique way as $$\fru=\bigoplus_{1\leq j
\leq t}\mathfrak{u}_{j}$$ such that each $\fru_{j}$ is stable with respect to the action of $A$ and the
action of $A$ on $\fru_{j}$ permutes the simple factors of it transitively. Then, $$A\subset\prod_{1\le j
\leq t}\Aut(\fru_{j}).$$ It suffices to study the projection of $A$ to $\Aut(\fru_j)$, or to study the
action of $A$ on $\fru_{j}$.

With this reduction, we may assume that $A$ permutes the simple factors of $\fru$ transitively. Then, the
simple factors $\fru'_{i}$ ($1\leq i\leq n$) are isomorphic to each other. Write $\fru_0=\fru'_1$, and
identify each $\fru'_{i}$ with $\fru_0.$ Then, $$\fru=\underbrace{\fru_0\oplus\cdots\oplus\fru_0}_{n},$$
and the action of $A$ on $\fru$ permutes the $n$ simple factors $\fru_0$ transitively.

\smallskip

\noindent{\it Step 2, reduction to the case of a simple Lie algebra.} For $n\geq 1$, write $X=X_{n}=\{1,2,
\dots,n\}$ and identify the symmetric group $S_{n}$ with the permutation group on $X$. Let $n_{1},n_{2},
\dots,n_{s}$ be positive integers satisfying that $n_{i+1}|n_{i}$ for each $i$ ($1\leq i\leq s-1$) and
$n_{1}n_{2}\cdots n_{s}|n$. For each $i$, define $\sigma_{i}: X\rightarrow X$ by $\sigma_{i}(x)=y$
($x,y\in X_{n}$) if and only if $$[\frac{y-1}{n_{1}n_{2}\cdots n_{i}}]=[\frac{x-1}{n_{1}n_{2}\cdots
n_{i}}]$$ and $$y-x\equiv n_{1}n_{2}\cdots n_{i-1}\pmod{n_{1}n_{2}\cdots n_{i}}.$$ Then, each
$\sigma_{i}$ is the product of $\frac{n}{n_{i}}$ $n_{i}$-cycles, such $\sigma_{i}$ ($1\leq i\leq s$)
commutes with each other, and they generate an abelian subgroup of $S_{n}$ isomorphic to $C_{n_1}\times
\cdots\times C_{n_{s}}$. Moreover, $$Z_{S_{n}}(\langle\sigma_1,\dots,\sigma_{s}\rangle)\cong(C_{n_1}
\times\cdots\times C_{n_{s}})^{\frac{n}{n_{1}n_{2}\cdots n_{s}}}\rtimes S_{\frac{n}{n_{1}n_{2}\cdots
n_{s}}},$$ and $\langle\sigma_1,\dots,\sigma_{s}\rangle$ is a transitive subgroup of $S_{n}$ if and
only if $n_{1}n_{2}\cdots n_{s}=n$.

The following statement should be well-known.
\begin{lemma}\label{L:transitive-abelian}
Any transitive abelian subgroup $B$ of $S_{n}$ is of the form $B\cong C_{n_1}\times\cdots\times
C_{n_{s}}$ with generator of $C_{n_{i}}$ the product of $\frac{n}{n_{i}}$ $n_{i}$-cycles, where $n_{1},
n_{2},\dots,n_{s}$ are positive integers satisfying that $n_{i+1}|n_{i}$ ($1\leq i\leq s-1$) and
$n_{1}n_{2}\cdots n_{s}=n$.
\end{lemma}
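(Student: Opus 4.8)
The plan is to reduce the statement to the structure theorem for finite abelian groups, by first showing that a transitive abelian subgroup of $S_{n}$ acts \emph{regularly} on $X_{n}$. First I would fix a point $x\in X_{n}$ and consider its stabilizer $B_{x}$. Since $B$ is transitive, for any $y\in X_{n}$ there is $g\in B$ with $gx=y$, whence $B_{y}=gB_{x}g^{-1}=B_{x}$ because $B$ is abelian; thus all point stabilizers coincide. Their common value is the kernel of the action of $B$ on $X_{n}$, which is trivial since $B\subseteq S_{n}$ acts faithfully. Hence $B_{x}=\{e\}$, the action is free, and transitivity together with the orbit--stabilizer relation gives $|B|=n$. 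Fixing a base point $x_{0}$ and sending $b\mapsto bx_{0}$ identifies $X_{n}$ with the underlying set of $B$ in such a way that the action of $B$ on $X_{n}$ becomes left translation of $B$ on itself.

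Next I would invoke the invariant-factor form of the structure theorem for finite abelian groups to write $B\cong C_{n_{1}}\times\cdots\times C_{n_{s}}$ with $n_{i+1}\mid n_{i}$ for $1\leq i\leq s-1$; comparing orders gives $n_{1}n_{2}\cdots n_{s}=|B|=n$, which is exactly the required divisibility and product conditions. Under this decomposition the generator $b_{i}$ of the $i$-th factor is the element that is a generator in the $i$-th coordinate and trivial elsewhere, so its order in $B$ is precisely $n_{i}$.

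Finally I would compute the cycle type of each $b_{i}$ under the regular action identified in the first step. Left translation by an element $b\in B$ of order $m$ partitions $B$ into the left cosets of the cyclic subgroup $\langle b\rangle$, each of which is a single $\langle b\rangle$-orbit of size $m$; since there are $|B|/m=n/m$ such cosets, translation by $b$ is a product of $n/m$ disjoint $m$-cycles. Applying this with $b=b_{i}$ and $m=n_{i}$ shows that the generator of $C_{n_{i}}$ is the product of $\frac{n}{n_{i}}$ disjoint $n_{i}$-cycles, as claimed.

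The one point requiring care is the regularity step in the first paragraph: it is the only place where commutativity of $B$ is genuinely used, and everything afterwards is a mechanical consequence of the structure theorem together with the coset description of the orbits of a left translation. I therefore expect that step to be the crux, while the remaining verifications are routine.
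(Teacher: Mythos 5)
Your proof is correct, and it takes a cleaner route than the paper's. The paper first writes $B\cong C_{n_1}\times\cdots\times C_{n_s}$ in invariant-factor form, observes that every element of a transitive abelian subgroup is a product of cycles of equal length, and then asserts (with details omitted) that an induction conjugates the generator tuple $(x_1,\dots,x_k)$ to the explicit standard tuple $(\sigma_1,\dots,\sigma_k)$ constructed before the lemma. You instead prove regularity first: all point stabilizers coincide because $B$ is abelian and stabilizers of points in one orbit are conjugate, their common value is the kernel of a faithful action, hence trivial, so $|B|=n$ and $X_n$ is identified with $B$ acting on itself by translation; the cycle type of each generator then drops out of the coset decomposition. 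This is more self-contained (the paper's omitted induction is replaced by a one-line computation), and it actually delivers slightly more than the literal statement: since any two regular actions of the same abstract group on an $n$-set are simultaneously conjugate in $S_n$ via the intertwining bijection, your argument also yields the tuple-conjugacy to $(\sigma_1,\dots,\sigma_s)$ that the paper really uses in the subsequent Lemma on reduction to the simple case --- provided you add the routine check that the standard tuple $(\sigma_1,\dots,\sigma_s)$ is itself the translation action of $C_{n_1}\times\cdots\times C_{n_s}$ under the indexing of $X_n$ by mixed-radix digits. The only cosmetic slip is calling the $\langle b\rangle$-orbits ``left cosets'' when they are orbits of left translation, i.e.\ cosets $\langle b\rangle g$; in an abelian group this is immaterial.
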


\begin{proof}
By the classification of finitely generated abelian groups, $B$ can be written in a unique way as $B\cong
C_{n_1}\times\cdots\times C_{n_{s}}$, where $n_{1},n_{2},\dots,n_{s}$ are positive integers satisfying
that $n_{i+1}|n_{i}$ for each $i$ ($1\leq i\leq s-1$). Choose a generator $x_{i}$ of $C_{n_{i}}$. It
suffices to prove that: for each $k$ ($\leq s$), the tuple $(x_1,\dots,x_{k})$ is conjugate to $(\sigma_1,
\dots,\sigma_{k})$ as above. Note that, as $B$ is a transitive subgroup of $S_{n}$, it follows that any
element of $B$ is a product of cycles of equal size. One can show the above statement by an inductive
argument. We omit the details here.
\end{proof}



Let $\fru_0$ be a compact simple Lie algebra. For $n\geq 1$, write $$\fru=\underbrace{\fru_0\oplus\cdots
\oplus\fru_0}_{n}.$$ Let $A$ be a compact abelian subgroup of $\Aut(\fru)$ such that the action of $A$ on
$\fru$ permutes the $n$ simple factors $\fru_0$ transitively. Write $N=\Aut(\fru_0)^{n}\lhd\Aut(\fru),$
and let $K$($\cong S_{n}$) be the permutation subgroup of $\Aut(\fru)$. Then, $\Aut(\fru)=N\rtimes K.$
Write $$\phi:\Aut(\fru)\rightarrow K$$ for the projection map.

\begin{lemma}\label{L:semisimple-simple}
Substituting $A$ by a conjugate subgroup if necessary, one can make $\ker(\phi|_{A})\subset Z_{\Aut(\fru)}
(K)=\Delta(\Aut(\fru_0))$, where $\Delta:\Aut(\fru_0)\rightarrow\Aut(\fru)$ is the diagonal map. Moreover,
with this normalization, $A$ is a $(\ast)$-subgroup of $\Aut(\fru_0)$ if and only $\ker(\phi|_{A})=
\Delta(A')$ with $A'$ a $(\ast)$-subgroup of $\Aut(\fru_0)$.
\end{lemma}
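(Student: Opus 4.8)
The plan is to first normalize $A$ by conjugation so that $\ker(\phi|_{A})=A\cap N$ becomes diagonal, and then to compare the two sides of the $(\ast)$-condition by recognizing $\fru$ as an induced $A$-module. Since $A$ permutes the $n$ simple factors transitively, $\phi(A)$ is a transitive abelian subgroup of $K\cong S_{n}$, so by Lemma \ref{L:transitive-abelian} it has order exactly $n$ and hence acts regularly on $X=\{1,\dots,n\}$. Identifying $X$ with $\phi(A)$ through the base point $1$, let $\tau_{i}\in\phi(A)$ be the unique element with $\tau_{i}(1)=i$ and pick a lift $a^{(i)}=(\mathbf{a}^{(i)},\tau_{i})\in A$, with $a^{(1)}=\id$. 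Writing $b=(\mathbf{b},1)\in A\cap N$, the relation $a^{(i)}b=ba^{(i)}$ in the abelian group $A$ reads componentwise as $b_{i}=a^{(i)}_{i}\,b_{1}\,(a^{(i)}_{i})^{-1}$; the key point is that the conjugating element $c_{i}:=a^{(i)}_{i}$ depends only on the fixed lift $a^{(i)}$ and not on $b$. Conjugating $A$ by $\mathbf{c}^{-1}=(c_{1}^{-1},\dots,c_{n}^{-1})\in N$ then makes every element of $A\cap N$ diagonal simultaneously, giving $\ker(\phi|_{A})\subset\Delta(\Aut(\fru_{0}))=Z_{\Aut(\fru)}(K)$, which is the first assertion and defines $A'$ by $A\cap N=\Delta(A')$. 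Feeding a diagonal $b$ back into the same commutation relation also forces $a^{(i)}_{j}\in Z_{\Aut(\fru_{0})}(A')$ for all $i,j$, a fact I will reuse.

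For the equivalence, note first that $A/(A\cap N)\cong\phi(A)$ is finite, so $\dim A=\dim(A\cap N)=\dim\Delta(A')=\dim A'$. It remains to show $\dim\fru^{A}=\dim\fru_{0}^{A'}$. Here I would identify the $A$-module $\fru=\fru_{0}^{\oplus n}$ with the induced module $\Ind_{A_{1}}^{A}(\fru_{0}^{(1)})$, where $A_{1}$ is the stabilizer in $A$ of the first factor: because $\phi(A)$ is regular one has $A_{1}=A\cap N=\Delta(A')$, the index $[A:A_{1}]=n$ matches the number of summands, the first factor $\fru_{0}^{(1)}$ is $A_{1}$-stable and restricts to $\fru_{0}|_{A'}$, and the transfer maps $\fru_{0}^{(1)}\to\fru_{0}^{(i)}$ given by the $a^{(i)}_{i}\in Z_{\Aut(\fru_{0})}(A')$ supply the coset-representative isomorphisms of an induced module. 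Frobenius reciprocity then yields $\dim\fru^{A}=\dim(\fru_{0}^{(1)})^{A_{1}}=\dim\fru_{0}^{A'}$ as real dimensions. Combining the two displays, $A$ satisfies $(\ast)$ in $\Aut(\fru)$ iff $\dim\fru^{A}=\dim A$, iff $\dim\fru_{0}^{A'}=\dim A'$, iff $A'$ satisfies $(\ast)$ in $\Aut(\fru_{0})$.

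I expect the normalization step to be the main obstacle: one must exhibit a single element $\mathbf{c}\in N$ whose conjugation diagonalizes all of $\ker(\phi|_{A})$ at once. This is precisely where both the regularity of the transitive abelian group $\phi(A)$ and the commutativity of $A$ are indispensable—regularity lets one transport everything to the base factor along the well-defined elements $\tau_{i}$, and commutativity makes the conjugating data $c_{i}=a^{(i)}_{i}$ independent of the chosen $b$. Once this untwisting is in place, the induced-module description together with Frobenius reciprocity renders the dimension comparison essentially formal.
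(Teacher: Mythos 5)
Your proof is correct, and it reaches the conclusion by a genuinely different route in both halves. For the normalization, the paper invokes Lemma \ref{L:transitive-abelian} to fix explicit generators $\sigma_1,\dots,\sigma_s$ of $\phi(A)$, lifts them to $f_i\in A$, and conjugates the tuple so that the ``ascending'' component maps of each $f_i$ are identities, from which it deduces that the ``descending'' maps are single commuting elements $g_i$ and that $\ker(\phi|_A)$ is diagonal; you instead use only the regularity of the transitive abelian image to transport every coordinate of $A\cap N$ back to the base factor along the elements $c_i=a^{(i)}_i$, and conjugate once by $(c_1^{-1},\dots,c_n^{-1})\in N$ --- a cleaner untwisting that avoids the explicit cycle combinatorics (your observation that $c_i$ is independent of $b$ is exactly the point that makes a single conjugation suffice). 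For the equivalence of the $(\ast)$-conditions, the paper computes the full centralizer $Z_{\Aut(\fru)}(A)=\Delta(Z_{\Aut(\fru_0)}(A'))\cdot A$ and reads off dimensions, whereas you identify $\fru$ with $\Ind_{A_1}^{A}\fru_0^{(1)}$ (using $A_1=A\cap N=\Delta(A')$ by regularity and the fact that the coset representatives' components centralize $A'$) and apply Frobenius reciprocity to get $\dim\fru^{A}=\dim\fru_0^{A'}$ directly. What the paper's version buys is the explicit structural description $A=\Delta(A')\cdot\langle f_1,\dots,f_s\rangle$ with $f_i^{n_i}=\Delta(g_i)$, which is reused immediately afterwards to parametrize $(\ast)$-subgroups of $\Aut(\fru)$ by tuples $(n_1,\dots,n_s;A';g_1,\dots,g_s)$, together with the full centralizer rather than just its dimension; what your version buys is economy and robustness, since the induced-module computation of fixed points is formal once the normalization is in place.
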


\begin{proof}
Write $B=\phi(A)\subset K\cong S_{n}$. By Lemma \ref{L:transitive-abelian}, we may assume that there exists
positive integers $n_{1},n_{2},\dots,n_{s}$ satisfying that $n_{i+1}|n_{i}$ ($1\leq i\leq s-1$) and $n_{1}
n_{2}\cdots n_{s}=n$, and the tuple $(\sigma_1,\dots,\sigma_{s})$ as defined ahead of Lemma
\ref{L:transitive-abelian} is a set of generators of $B$. Choose $f_{i}\in A$ map to $\sigma_{i}$.
Substituting $(f_1,\dots,f_{s})$ by a conjugate tuple if necessary, we may assume that: if $\sigma_{i}(x)=y$
($x,y\in\{1,\dots,n\}$) with $y>x$, then $f_{i}:\fru_{x}=\fru_0\rightarrow\fru_{y}=\fru_0$ is the identity
map. As $f_{i}$ commutes with all other $f_{j}$ ($1\leq j\leq s$), using the identity maps saying above we
see that: there is a fixed $g_{i}\in\Aut(\fru_0)$ such that if $\sigma_{i}(x)=y$ ($x,y\in\{1,\dots,n\}$)
with $y<x$, then $f_{i}:\fru_{x}=\fru_0\rightarrow\fru_{y}=\fru_0$ is given by $g_{i}\in\Aut(\fru_0)$.
Moreover, $f_{i}$ commutes with $f_{j}$ implies that $g_{i}$ commutes with $g_{j}$.

With the above form of $f_1,\dots,f_{s}$, we have $$\Aut(\fru)^{f_1,\dots,f_{s}}=\Delta(\Aut(\fru_0)^{g_1,
\dots,g_{s}})\cdot\langle f_1,\dots,f_{s}\rangle.$$ This shows the first statement of the conclusion. From
the above form of of $\Aut(\fru)^{f_1,\dots,f_{s}}$, we can write $A$ as the form $$A=\Delta(A')\cdot
\langle f_1,\dots,f_{s}\rangle$$ for some $A'\subset\Aut(\fru_0)$ with $\Delta(A')=\ker(\phi_{A})$. Due to
$f_{i}^{n_{i}}=\Delta(g_{i})$ ($1\leq i\leq s$), we have $g_{1},\dots,g_{s}\in A'.$ Then, $$Z_{\Aut(\fru)}
(A)=\Delta(Z_{\Aut(\fru_0)}(A'))\cdot A.$$ By this, $A$ is a $(\ast)$-subgroup of $\Aut(\fru_0)$ if and only
$A'$ a $(\ast)$-subgroup of $\Aut(\fru_0)$.
\end{proof}

By Lemma \ref{L:semisimple-simple} and its proof, a $(\ast)$-subgroup $A$ of $\Aut(\fru)$ is characterized 
by a tuple $(n_1,\dots,n_{s})$ as in Lemma \ref{L:transitive-abelian}, a $(\ast)$-subgroup $A'$ of 
$\Aut(\fru_0)$ and the choice of a tuple $(g_1,\dots,g_{s})$ contained in $A'$. This reduces the 
classification of $(\ast)$-subgroups of $\Aut(\fru)$ to the classification of $(\ast)$-subgroups of 
$\Aut(\fru_0)$.

\smallskip

\noindent{\it Step 3, simple Lie algebra case.} For any compact simple Lie algebra $\fru_0$, $(\ast)$-subgroups
of $\Aut(\fru_0)$ are classified in \cite{Yu-maximalE}.

\subsection{$(\ast)$-groups of automorphisms of a torus}\label{SS:automorphism-abelian}

Let $\frg_0$ be a real vector space of dimension $n$, and $L$ be a lattice in $\frg_0$. Let $A$ be a compact
abelian group of invertible linear transformations of $\frg_0$ preserving the lattice $L$. Then, $A$ could
be regarded as a subgroup of $\Aut(L)\cong\GL(n,\mathbb{Z})$. Thus, $A$ is a finite group.

Write $V=\span_{\mathbb{Q}}L$. Then, $A\subset\Aut(L)\subset\GL(V)$. Integral lattices and finite
subgroups of $\GL(n,\mathbb{Z})$ are subtle in general. We prefer to consider $A$ as a finite subgroup of
$\GL(V)\cong\GL(n,\mathbb{Q})$. Lemmas \ref{L:integral} and \ref{L:isogeny-lattice} indicate that these
two views are nearly equivalent.

\begin{lemma}\label{L:integral}
If $H$ is a finite group acting on $V=\mathbb{Q}^{n}$, then there exists a lattice $L$ of $V$ such that $L$
is $H$-stable.
\end{lemma}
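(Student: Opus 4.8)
The plan is to construct an $H$-stable lattice by starting from any lattice and averaging it over the finite group $H$. Concretely, I would first fix an arbitrary lattice $L_0$ in $V$, for instance $L_0=\bbZ^n$ under the chosen identification $V\cong\mathbb{Q}^n$. This lattice need not be preserved by $H$, but because $H$ is finite I can repair this defect by summing over the group. The key observation is that each $g\in H$ is a $\mathbb{Q}$-linear automorphism of $V$, so $gL_0$ is again a lattice in $V$, and there are only finitely many of them.

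First I would set $L=\sum_{g\in H}gL_0$, the subgroup of $V$ generated by all the translates $gL_0$. This $L$ is manifestly $H$-stable: for any $h\in H$ we have $hL=\sum_{g\in H}hgL_0=\sum_{g'\in H}g'L_0=L$, using that left multiplication by $h$ permutes the elements of $H$. The remaining point is to verify that $L$ is genuinely a lattice, i.e. a finitely generated subgroup of $V$ of full rank $n$. Full rank is immediate since $L\supseteq L_0$ and $L_0$ already has rank $n$. For finite generation, I would note that $L$ is a finitely generated abelian group, being the sum of the finitely many finitely generated subgroups $gL_0$; since $L$ is moreover torsion-free (it sits inside the $\mathbb{Q}$-vector space $V$), it is free of finite rank, hence a lattice. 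This shows $L$ is a lattice preserving full rank and stable under $H$, which is exactly the claim.

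The one subtlety worth pinning down, and the step I expect to require the most care, is confirming that $L$ is discrete (equivalently finitely generated) rather than a dense subgroup of $V$. The potential worry is that summing translates might produce something like $\mathbb{Z}[\tfrac12]$ in a single coordinate. I would rule this out precisely through the finiteness of $H$: the sum $L=\sum_{g\in H}gL_0$ involves only $|H|$ summands, each a free $\mathbb{Z}$-module of rank $n$, so $L$ is the image of the free module $\bigoplus_{g\in H}gL_0$ of finite rank $n|H|$ under the addition map into $V$, and the image of a finitely generated abelian group is finitely generated. Torsion-freeness inside $V$ then upgrades this to freeness of rank exactly $n$. Thus the averaging construction produces the desired $H$-stable lattice.
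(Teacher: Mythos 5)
Your proof is correct. The paper's own proof is the dual construction: it takes $L=\bigcap_{g\in H}gL'$ rather than $L=\sum_{g\in H}gL_0$. Both are the standard symmetrization over a finite group, and the $H$-stability argument is identical (left multiplication by $h$ permutes the translates). The difference lies in which half of the verification is nontrivial. For your sum, full rank is free (since $L\supseteq L_0$) and the work is in finite generation, which you handle correctly: the sum of $|H|$ finitely generated subgroups is finitely generated, and a finitely generated torsion-free abelian group of rank $n$ spanning $V$ is a genuine lattice. For the paper's intersection, finite generation is free (a subgroup of the free module $L'$) and the work is in full rank, which the paper leaves implicit; it follows because each $gL'$ contains $m_g L'$ for a suitable nonzero integer $m_g$, so the intersection contains $\bigl(\prod_{g}m_g\bigr)L'$. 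Your version spells out the one point that actually needs an argument, so it is, if anything, slightly more complete than the paper's one-line proof.
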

\begin{proof}
Choose a lattice $L'$ of $V$. Write $$L=\bigcap_{g\in H}gL'.$$ Then, $L$ is an $H$-stable lattice.
\end{proof}

\begin{definition}\label{D:isogeny}
We say two pairs $(A_{1},L_1),(A_{2},L_2)$ of finite groups acting on lattices are isogenous if there is
another pair $(A,L)$ and in injective isogenous (of lattices) $\phi_{i}: L\rightarrow L_{i}$ ($i=1,2$)
such that $(\phi_{i})_{\ast}(A)=A_{i}$. Here $\phi_{\ast}$ means the isomorphism $\GL(L\otimes_{\mathbb{Z}}
\mathbb{Q})\rightarrow\GL(L_{i}\otimes_{\mathbb{Z}}\mathbb{Q})$ induced by $\phi_{i}$.
\end{definition}

Due to the correspondence between tori and lattices in real real vector space, we have a similar definition
for {\it isogeny} between tori with group actions.

\begin{lemma}\label{L:isogeny-lattice}
Let $A$ be a finite group, and $L_1$,$L_2$ be two lattices acting faithfully by $A$. Then $(A,L_1)$ and
$(A,L_2)$ are isogenous if and only if the corresponding $\mathbb{Q}$-representations of $A$ on $L_{1}
\otimes_{\mathbb{Z}}\mathbb{Q}$ and $L_{2}\otimes_{\mathbb{Z}}\mathbb{Q}$ are isomorphic.
\end{lemma}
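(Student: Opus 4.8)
The plan is to exploit the observation that an injective isogeny of lattices $\phi\colon L\to L'$ is precisely a $\mathbb{Z}$-linear map whose scalar extension $\phi\otimes\mathbb{Q}$ is an isomorphism of $\mathbb{Q}$-vector spaces. Thus both the notion of isogeny and the notion of isomorphism of $\mathbb{Q}$-representations live over $\mathbb{Q}$, and the statement reduces to elementary linear algebra together with the commensurability of full-rank lattices. Throughout I keep track of the $A$-action: the induced maps $(\phi_i)_*$ in Definition \ref{D:isogeny} are conjugations by the $\mathbb{Q}$-linear isomorphisms $\phi_i\otimes\mathbb{Q}$, and the condition $(\phi_i)_*(A)=A_i$ is read as saying that $\phi_i\otimes\mathbb{Q}$ intertwines the $A$-action on $L\otimes_{\mathbb{Z}}\mathbb{Q}$ with the given $A$-action on $L_i\otimes_{\mathbb{Z}}\mathbb{Q}$, where $A_i\subset\GL(L_i\otimes_{\mathbb{Z}}\mathbb{Q})$ denotes the image of the latter.

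For the forward implication, suppose $(A,L_1)$ and $(A,L_2)$ are isogenous, witnessed by a pair $(A,L)$ and injective isogenies $\phi_i\colon L\to L_i$. First I would pass to $\mathbb{Q}$-coefficients: each $\psi_i:=\phi_i\otimes\mathbb{Q}\colon L\otimes_{\mathbb{Z}}\mathbb{Q}\to L_i\otimes_{\mathbb{Z}}\mathbb{Q}$ is an $A$-equivariant isomorphism of $\mathbb{Q}$-vector spaces. Then $\psi:=\psi_2\circ\psi_1^{-1}\colon L_1\otimes_{\mathbb{Z}}\mathbb{Q}\to L_2\otimes_{\mathbb{Z}}\mathbb{Q}$ is again $A$-equivariant, and is the required isomorphism of $\mathbb{Q}$-representations.

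For the converse, suppose $\psi\colon L_1\otimes_{\mathbb{Z}}\mathbb{Q}\to L_2\otimes_{\mathbb{Z}}\mathbb{Q}$ is an $A$-equivariant $\mathbb{Q}$-linear isomorphism. I would then clear denominators: since $\psi(L_1)$ is a full-rank lattice in $L_2\otimes_{\mathbb{Z}}\mathbb{Q}$, there is a positive integer $N$ with $N\psi(L_1)\subseteq L_2$. Now take $L=L_1$, let $\phi_1=\id_{L_1}$, and let $\phi_2=N\psi|_{L_1}\colon L_1\to L_2$. Both $\phi_1$ and $\phi_2$ are injective isogenies, since their scalar extensions are isomorphisms; and because the scalar $N$ is central and $\psi$ is $A$-equivariant, $\phi_2$ is $A$-equivariant as well. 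Hence $(\phi_1)_*(A)=A_1$ and $(\phi_2)_*(A)=A_2$, exhibiting the desired isogeny. (Alternatively one may take the common lattice $L=\psi^{-1}(\psi(L_1)\cap L_2)$, which is $A$-stable because $\psi(L_1)$ and $L_2$ are, and which maps by an honest inclusion into $L_1$ and by $\psi$ into $L_2$.)

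The computations are routine; the points demanding care are conceptual. The main one is the bookkeeping of the $A$-action: the isogeny relation must track the labelling of $A$ and not merely match the images $A_i$ as abstract subgroups of $\GL$, for otherwise one would only recover an isomorphism of representations up to a twist by an automorphism of $A$. The one nontrivial input is the commensurability of any two full-rank lattices in a fixed $\mathbb{Q}$-vector space, which is exactly what permits clearing denominators in the converse and, if one prefers, building a single common $A$-stable lattice mapping isogenously to both $L_1$ and $L_2$.
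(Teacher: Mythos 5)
Your proof is correct and follows essentially the same route as the paper: the forward direction is the same composition of scalar-extended isogenies, and your parenthetical construction $L=\psi^{-1}(\psi(L_1)\cap L_2)$ is exactly the paper's argument of intersecting two $A$-stable lattices inside a common $\mathbb{Q}$-representation. Your primary variant for the converse (taking $L=L_1$ and clearing denominators so that $N\psi$ lands in $L_2$) is a minor, equally valid reformulation resting on the same commensurability fact.
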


\begin{proof}
Necessarity is obvious.

Sufficiency. Suppose the $\mathbb{Q}$-representations of $A$ on $L_{1}\otimes_{\mathbb{Z}}\mathbb{Q}$ and
$L_{2}\otimes_{\mathbb{Z}}\mathbb{Q}$ are isomorphic. Then, we can find a $\mathbb{Q}$-representation $V$
of $A$ and $A$-stable lattices $L'_{1}$ and $L'_{2}$ such that $(A,L'_{i})\cong(A,L_{i})$ ($i=1,2$).
Take $L=L'_1\cap L'_{2}$. Then, $(A,L)$ gives a isogeny between $(A,L_1)$ and $(A,L_2)$.
\end{proof}

By Lemma \ref{L:isogeny-lattice}, to classify finite groups acting faithfully on integral lattices (or finite
subgroups of $\GL(n,\mathbb{Z})$) up to isogeny, it is equivalent to classify faithful
$\mathbb{Q}$-representations of finite groups.

\smallskip

For a positive integer $n$, write $\Phi_{n}(t)$ for the degree $n$ cyclotomic polynomial, write $a$ for a
generator of $C_{n}$. We define the cyclotomic $\mathbb{Q}$-module $V_{n}$ of $C_{n}$ by $$V_{n}=
\mathbb{Q}[t]/(\Phi_{n}(t)),\quad a\cdot [f(t)]=[tf(t)](\forall f(t)\in\mathbb{Q}[t]).$$ Note that, as
$\Phi_{n}(t)$ is a $\mathbb{Q}$-irreducible polynomial, $V_{n}$ is actually an extension field of $\mathbb{Q}$,
i.e., the degree $n$ cyclotomic field. We have $\dim_{\mathbb{Q}}V_{n}=\phi(n)$, where $\phi$ is Euler
$\phi$ function.

\begin{lemma}\label{L:cyclotomic}
For any $n\geq 1$, the degree $n$ cyclotomic module $V_{n}$ of $C_{n}$ is irreducible, and $$Z_{\GL(V_{n})}
(C_{n})=V_{n}^{\times},$$ where $V_{n}^{\times}$ means the multiplicative group of non-zero elements in the
field $V_{n}$.
\end{lemma}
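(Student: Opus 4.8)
The plan is to exploit the fact that $V_{n}=\mathbb{Q}[t]/(\Phi_{n}(t))$ is a field, namely the cyclotomic field $\mathbb{Q}(\zeta_{n})$ of degree $\phi(n)$, on which the chosen generator $a$ of $C_{n}$ acts as multiplication by $\zeta_{n}=[t]$. Both assertions will follow from this single structural observation together with the fact that $\zeta_{n}$ generates $V_{n}$ as a $\mathbb{Q}$-algebra.

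For irreducibility, I would argue as follows. A $C_{n}$-submodule $W\subset V_{n}$ is by definition a $\mathbb{Q}$-subspace stable under multiplication by $\zeta_{n}$. Since $V_{n}=\mathbb{Q}[\zeta_{n}]$ is generated as a $\mathbb{Q}$-algebra by $\zeta_{n}$, stability under multiplication by $\zeta_{n}$ forces stability under multiplication by every element of $V_{n}$; thus $W$ is an ideal of the field $V_{n}$, and hence equals $\{0\}$ or $V_{n}$. Equivalently, one notes that $W$ is a $\mathbb{Q}[t]$-submodule of the $\mathbb{Q}[t]$-module $\mathbb{Q}[t]/(\Phi_{n}(t))$, which is simple precisely because $\Phi_{n}$ is $\mathbb{Q}$-irreducible.

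For the centralizer, I would compute $\End_{\mathbb{Q}[C_{n}]}(V_{n})$ directly. Let $\psi\in\GL(V_{n})$ commute with the $C_{n}$-action, i.e. with multiplication by $\zeta_{n}$. By the same algebra-generation argument, $\psi$ commutes with multiplication by every element of $V_{n}$. Setting $c=\psi(1)$, for arbitrary $x\in V_{n}$ one then gets $\psi(x)=\psi(x\cdot 1)=x\cdot\psi(1)=xc$, so $\psi$ is multiplication by $c$. Invertibility of $\psi$ is equivalent to $c\neq 0$, i.e. $c\in V_{n}^{\times}$; conversely every such multiplication visibly commutes with the $C_{n}$-action. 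Hence $Z_{\GL(V_{n})}(C_{n})=V_{n}^{\times}$, acting by multiplication.

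The argument has essentially no obstacle; the only point requiring care is the upgrade from ``commutes with multiplication by $\zeta_{n}$'' to ``commutes with multiplication by all of $V_{n}$,'' which is exactly where the generation of the cyclotomic field by $\zeta_{n}$ enters. Abstractly this is just Schur's lemma: $V_{n}$ is an irreducible $\mathbb{Q}[C_{n}]$-module whose endomorphism division algebra is the commutative field $V_{n}$ itself, so its group of units is $V_{n}^{\times}$.
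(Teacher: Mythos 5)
Your proof is correct and follows essentially the same route as the paper's: both arguments reduce everything to the observation that $\zeta_{n}=[t]$ generates the field $V_{n}$ as a $\mathbb{Q}$-algebra, so that $C_{n}$-stability (resp.\ commuting with the $C_{n}$-action) upgrades to $V_{n}$-stability (resp.\ commuting with all multiplications), and the centralizer computation via $\psi(x)=x\cdot\psi(1)$ is exactly the paper's chain of equalities with $g_{0}(t)=g\cdot[1]$.
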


\begin{proof}
The irreducibility follows from the $\mathbb{Q}$-algebra generated by the multiplication $[f(t)]\mapsto[tf(t)]$
is $V_{n}$, and $V_{n}$ is a field.

Let $g\in Z_{\GL(V_{n})}(C_{n})$. Write $g\cdot[1]=[g_{0}(t)]$. Then, for any $f(t)\in\mathbb{Q}[t]$,
\begin{eqnarray*}
&&g\cdot[f(t)]=g\cdot f(a)\cdot[1]=f(a)\cdot g\cdot[1]\\&=&f(a)\cdot[g_{0}(t)]=[f(t)g_{0}(t)]=[g_{0}(t)f(t)]
\\&=&[g_{0}(t)][f(t)].
\end{eqnarray*}
As $g\in\GL(V_{n})$, we have $[g_{0}(t)]\neq 0$. Thus, $g=[g_{0}(t)]\in V_{n}^{\times}$.
\end{proof}

\begin{lemma}\label{L:rational-decomposition}
Let $V$ be a $\mathbb{Q}$-representation of an abelian group $A$. Then there is a decomposition $$V=
\bigoplus_{1\leq i\leq s}V_{i}$$ such that each $V_{i}$ is $A$-stable and is irreducible.

Let $V$ be a faithful and irreducible $\mathbb{Q}$-representation of an abelian group $A$. Then there exists
$n\geq 1$ such that $A\cong C_{n}$ and $V$ is its cyclotomic module.
\end{lemma}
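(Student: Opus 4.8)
The plan is to prove the two assertions separately; both rest on the finiteness of $A$, which is the running assumption of this subsection. For the first assertion I would establish complete reducibility by Maschke's averaging argument. Given any $A$-stable subspace $W\subseteq V$, choose an arbitrary $\mathbb{Q}$-linear projection $p\colon V\to W$ and symmetrize it to $\tilde p=\frac{1}{|A|}\sum_{a\in A}\rho(a)\circ p\circ\rho(a)^{-1}$; since $\operatorname{char}\mathbb{Q}=0$ the factor $1/|A|$ makes sense, and $\tilde p$ is an $A$-equivariant projection onto $W$, so $\ker\tilde p$ is an $A$-stable complement of $W$. An induction on $\dim_{\mathbb{Q}}V$, splitting off an irreducible $A$-stable subspace at each step, then yields the decomposition $V=\bigoplus_{i}V_{i}$.

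For the second assertion, write $\rho\colon A\to\GL(V)$ for the representation and let $R\subseteq\End_{\mathbb{Q}}(V)$ be the $\mathbb{Q}$-subalgebra generated by $\rho(A)$. Since $A$ is abelian, $R$ is commutative, and since $V$ is $A$-irreducible it is a simple $R$-module. Picking $0\neq v\in V$, the map $R\to V$, $r\mapsto rv$, is surjective (its image is a nonzero $R$-submodule) with kernel a maximal ideal $I$, and one checks $I=\operatorname{Ann}(v)=\operatorname{Ann}_{R}(V)$ using commutativity. As $R$ is by construction a subalgebra of $\End_{\mathbb{Q}}(V)$ it acts faithfully, so $\operatorname{Ann}_{R}(V)=0$ and hence $I=0$. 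Therefore $R=:F$ is a field, a number field since it is finite-dimensional over $\mathbb{Q}$, and $V\cong R/I=F$ is one-dimensional over $F$.

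It remains to identify $F$ and the module structure. Since $\rho$ is faithful, $\rho(A)$ is a finite subgroup of $F^{\times}$ of order $n=|A|$, and a finite subgroup of the multiplicative group of a field is cyclic; thus $\rho(A)=\langle\zeta\rangle$ with $\zeta$ a primitive $n$-th root of unity and $A\cong C_{n}$. Then $F=\mathbb{Q}[\rho(A)]=\mathbb{Q}(\zeta)$ is the $n$-th cyclotomic field, of degree $\phi(n)=\dim_{\mathbb{Q}}V$, consistent with Lemma \ref{L:cyclotomic}. Sending a chosen generator of $A$ to the generator $a$ of $C_{n}$, the $\mathbb{Q}$-algebra isomorphism $V_{n}=\mathbb{Q}[t]/(\Phi_{n}(t))\to F$, $[f(t)]\mapsto f(\zeta)$, intertwines the actions because $a\cdot[f(t)]=[tf(t)]\mapsto\zeta f(\zeta)=\rho(a)\bigl(f(\zeta)\bigr)$, exhibiting $V$ as the cyclotomic module $V_{n}$.

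The first assertion is the routine Maschke argument. The crux of the second is the structural observation that the commutative algebra $R$ generated by $\rho(A)$ is forced to be a number field, after which the cyclicity of $A$ and the cyclotomic identification follow from the standard fact that every finite subgroup of $F^{\times}$ is cyclic. I therefore anticipate no serious obstacle beyond correctly assembling these structural steps and matching generators so as to realize $V$ literally as $V_{n}$. One caveat I would flag is that the hypothesis that $A$ is finite is genuinely needed: for infinite abelian $A$ the same analysis produces only a subgroup of $F^{\times}$, which need not be cyclic (e.g.\ $\mathbb{Z}$ acting through a matrix with an irreducible non-cyclotomic characteristic polynomial), so the conclusion $A\cong C_{n}$ relies on finiteness.
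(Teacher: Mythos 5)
Your proof is correct, but for the second assertion it takes a genuinely different route from the paper. The paper argues element by element: it first shows that the minimal polynomial of each $a|_{V}$ is exactly $\Phi_{\ord(a)}$ (using that $\{v:\Phi_{d}(a)v=0\}$ is an $A$-stable subspace, hence all of $V$), then deduces cyclicity of $A$ by a counting argument --- any two elements of order $m$ satisfy $a'=a^{k}$ because a character $\alpha$ of $A$ on $V\otimes_{\mathbb{Q}}\mathbb{C}$ sends both to primitive $m$-th roots of unity and $\alpha(a'a^{-k})=1$ forces $a'a^{-k}=1$ --- and finally identifies $V=\mathbb{Q}[a]\cdot v\cong\mathbb{Q}[t]/(\Phi_{n}(t))$. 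You instead pass immediately to the commutative algebra $R=\mathbb{Q}[\rho(A)]\subseteq\End_{\mathbb{Q}}(V)$, show by the Schur-type maximal-ideal argument that $R$ is a number field $F$ with $V\cong F$ one-dimensional over it, and then obtain cyclicity of $A$ for free from the fact that a finite subgroup of $F^{\times}$ is cyclic. Your version is shorter and more structural, and it makes the appearance of the cyclotomic field conceptually transparent; the paper's version is more elementary (no quotient-by-maximal-ideal machinery) and isolates the intermediate fact that every element acts with cyclotomic minimal polynomial. All the individual steps you use check out: the identity $\operatorname{Ann}(v)=\operatorname{Ann}_{R}(V)$ does require commutativity and surjectivity of $r\mapsto rv$, both of which you have, and your closing caveat that finiteness of $A$ is genuinely needed (for Maschke in the first assertion and for cyclicity of $\rho(A)\subseteq F^{\times}$ in the second) is apt, since the lemma's statement omits it and it is only supplied by the running hypotheses of the subsection.
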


\begin{proof}
The first statement is basic representation theory.

For the second statement, we first show that for any $a\in A$, the minimal polynomial $f_{a}(t)$ of $a|_{V}$
is a cyclotomic polynomial. Let $n$ be the order of $a$. As $a^{n}=1$, we have $f_{a}(t)| t^{n}-1$. Thus,
$f_{a}(t)$ is a product of $\Phi_{d}(t)$ ($d|n$). Take a $d$ with $\Phi_{d}(t)|f_{a}(t)$. Then, $$V':=
\{v\in V: \Phi_{d}(a)\cdot v=0\}\neq 0.$$ As $A$ is abelain, $V'$ is $A$-stable. Thus, $V'=V$ by the
irreducibility of $V$. Hence, $f_{a}=\Phi_{d}$. Moreover we have $d=n$ as $a$ has order $n$. This shows the
assertion. Secondly, we show that $A$ is a cyclic group. For this, it suffices to show that for any $m\geq 1$,
$A$ has at most $\phi(m)$ elements of order $m$. Let $a$ and $a'$ be two order $m$ elements. Take a character
$\alpha$ of $A$ in $V\otimes_{\mathbb{Q}}\mathbb{C}$. By the first assertion, $\alpha(a)$ and $\alpha(a')$
are both unit elements of order $m$. Thus, there exists $k\in\mathbb{Z}$ with $(m,k)=1$ such that $\alpha(a')
=\alpha(a)^{k}$. Write $b=a'a^{-k}$. Then, $\alpha(b)=1$. By the first assertion, $b=1$. Thus, $a'=a^{k}$.
This shows the second assertion. Thirdly, take a generator $a$ of $A$. Let $n$ be the order of $a$. By the
first assertion, the minimal polynomial of $a|_{V}$ is $\Phi_{n}(t)$. Take any $0\neq v\in V$. Then,
$$\mathbb{Q}[a]\cdot v\cong\mathbb{Q}[t]/(\Phi_{n}(t))=V_{n}.$$ As $V$ is irreducible, we get $V=
\mathbb{Q}[a]\cdot v$. This showes the second statement.
\end{proof}

\begin{lemma}\label{L:cyclotomic-finite}
For any $n\geq 1$, the group of finite order elements in $V_{n}^{\times}$ is isomorphic to $C_{n}$ (or to $C_{2n}$)
if $n$ is even (or odd).
\end{lemma}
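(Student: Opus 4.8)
The plan is to identify $V_n$ with the $n$-th cyclotomic field $\mathbb{Q}(\zeta_n)$, as already noted before Lemma \ref{L:cyclotomic}, where $\zeta_n=[t]\in V_n$ is a primitive $n$-th root of unity. Under this identification the finite-order elements of $V_n^\times$ are precisely the roots of unity contained in $\mathbb{Q}(\zeta_n)$. This torsion subgroup is a finite subgroup of the multiplicative group of a field, hence cyclic, so it equals $\mu_M$ for a unique $M\geq 1$; since $\mu_n\subset V_n$ by construction we have $n\mid M$, and the task reduces to pinning down $M$.

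First I would record the elementary containment that forces $M$ to be large when $n$ is odd: in that case $-\zeta_n$ lies in $V_n$, and because $\gcd(2,n)=1$ it has multiplicative order $\mathrm{lcm}(2,n)=2n$, so it is a primitive $2n$-th root of unity. Hence $\mu_{2n}\subseteq V_n^\times$, which gives $2n\mid M$ whenever $n$ is odd. For $n$ even no such gain is available, and the content of the lemma is that nothing more appears.

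The crux is the reverse bound on $M$. If $\mu_M\subseteq\mathbb{Q}(\zeta_n)$ with $n\mid M$, then $\mathbb{Q}(\zeta_M)\subseteq\mathbb{Q}(\zeta_n)$, while $\mu_n\subseteq\mu_M$ gives the opposite inclusion; thus $\mathbb{Q}(\zeta_M)=\mathbb{Q}(\zeta_n)$ and, comparing degrees over $\mathbb{Q}$, $\phi(M)=\phi(n)$. It then remains a purely number-theoretic claim: $n\mid M$ together with $\phi(M)=\phi(n)$ forces $M=n$, or $M=2n$ with $n$ odd. I would prove this from the product formula for $\phi$, writing $\phi(M)/\phi(n)=(M/n)\prod_{p\mid M,\,p\nmid n}(p-1)/p$, clearing denominators, and observing that every prime dividing $M/n$ but not $n$ must equal $2$ with exponent one while the complementary factor collapses to $1$; this yields $M/n\in\{1,2\}$, with the value $2$ admissible only when $n$ is odd. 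This inequality bookkeeping is the main obstacle, since it is exactly where the even/odd dichotomy is forced.

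Combining the two bounds finishes the argument. For $n$ even the reverse bound yields $M=n$, so the torsion group is $\mu_n\cong C_n$; for $n$ odd the forward containment gives $2n\mid M$ while the reverse bound gives $M\in\{n,2n\}$, hence $M=2n$ and the torsion group is $\mu_{2n}\cong C_{2n}$. Finally I would check the degenerate cases $n=1,2$, where $V_n=\mathbb{Q}$ and the torsion is $\{\pm 1\}$, to confirm consistency with the stated formula.
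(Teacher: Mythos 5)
Your argument is correct and rests on the same mechanism as the paper's: a root of unity of order $d$ inside $V_{n}$ generates a copy of the cyclotomic field $V_{d}$, so comparing degrees forces $\phi(d)\leq\phi(n)$, and the even/odd dichotomy drops out of the arithmetic of $\phi$. The only cosmetic difference is organizational — the paper first reduces the odd case to the even case via $V_{n}=V_{2n}$ and then derives a contradiction from a single prime $p$ by constructing an element of order $pn$ with $\phi(pn)>\phi(n)$, whereas you solve the global equation $\phi(M)=\phi(n)$ with $n\mid M$ — but the content is the same.
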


\begin{proof}
Note that $V_{n}=V_{2n}$ if $n$ is odd. We may assume that $n$ is even. Take an element $a\in V_{n}$ of order $n$
(for example $a=[t]\in\mathbb{Q}[t]/(\Phi_{n}(t))=V_{n}$). Let $b\in V_{n}$ be a finite order element. We just need
to show $b^{n}=1$. Suppose $b^{n}\neq 1$. Then, the order $m$ of $b$ is not a factor of $n$. Then, there exists a
prime $p$ which has a higher power order in $m$ than in $n$, i.e., there is a integer $k$ such that $p^{k}|n$,
$p^{k+1}\not|n$ and $p^{k+1}|m$. Write $n=p^{k}n'$, $m=p^{k+1}m'$ and set $b'=a^{p^{k}}b^{m'}$. Then, the order
of $b'$ is $p^{k+1}n'=pn$. Thus, $$V_{n}=\mathbb{Q}[a]\supset\mathbb{Q}[b']\cong V_{pn}.$$ Comparing dimension,
we get $\phi(n)\geq\phi(pn)>\phi(n)$. This is a contradiction.
\end{proof}

\noindent{\it $A$-stable lattices and centralizer.} Let $V$ be a faithful $\mathbb{Q}$-representation of a
finite abelian group $A$. From a $A$-stable lattice $L$, one constructs a torus $T$ with a faithful $A$-action.
This gives a compact Lie group $G=T\rtimes A$. Apparently, $A$ is a $(\ast)$-subgroup of $G$ if and only if
there is no nonzero $A$-fixed vectors in $V$. We don't know in general how the centralizer $Z_{\Aut(T)}(A)=
Z_{\GL(L)}(A)$ could be. In another aspect, there are many $A$-stable lattices in $V$, we don't know how to
associate any useful structure on the set of $A$-stable lattices in $V$.

In the case of $A=C_{n}$ and $V=V_{n}$, an $A$-stable lattice in $V_{n}$ is a free $\mathbb{Z}$-submodule $L$
of $V_{n}$, which has rank $\phi(n)$ and has $tL=L$. By Lemma \ref{L:cyclotomic}, $Z_{\GL(V_{n})}(C_{n})=
V_{n}^{\times}$ is abelian. Thus, $Z_{\GL(L)}(A)$ is also an abelian group. If $L$ is further a
$\mathbb{Z}$-subalgebra of $V_{n}$, then $$Z_{\GL(L)}(A)=L^{\times}$$ is the group of invertible elements
in $L$. A particular example is $L=\mathbb{Z}[t]/(\Phi_{n}(t))$.

By Lemma \ref{L:cyclotomic-finite}, the group of finite order elements in $V_{n}^{\times}$ in isomorphic to
$C_{n}$ (or $C_{2n}$ when $n$ is odd). Thus, the subgroup of finite order elements in $Z_{\GL(L)}(A)$ is equal
to $\langle A,-I\rangle$.

\begin{question}\label{Q:cyclotomic}
Is $Z_{\GL(L)}(A)$ itself a finite group?
\end{question}

If the answer to Question \ref{Q:cyclotomic} is affirmative, then $Z_{\GL(L)}(A)=\langle A,-I\rangle$. Thus,
$A$ is a maximal subgroup of $\GL(L)$ if $n$ is even.

\subsection{$(\ast)$-groups with a $2$-generated character group}\label{SS:ast-TwoGenerated}

Let $A$ be a compact abelian group with character group generated by two elements. In the regular case, 
$A$ is isomorphic to the product of two finite cyclic groups, the product of a finite cyclic group and 
$\U(1)$, or a two-dimensional torus. In the degenerate case, $A$ is either a finite cyclic group, or 
isomorphic to $\U(1)$. We want to study actions of $A$ on a compact semisimple Lie algebra which satisfy 
the condition $(\ast)$. We may assume that the action is faithful and consider only the regular case. 

\smallskip

\noindent{\it The finite group case.} We first consider the case of a compact simple Lie algebra.
\begin{lemma}\label{L:twogenerated1}
Let $\fru_0$ be a compact simple Lie algebra and $A$ be a finite abelian subgroup of $\Aut(\fru_0)$ satisfying
condition $(\ast)$. If $A$ is generated by two elements, then $\fru_0\cong\mathfrak{su}(m)$ for some $m\geq 2$
and $A$ is an $m$-Hessenberg group.
\end{lemma}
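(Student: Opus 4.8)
The plan is to turn condition $(\ast)$ into a fixed-point statement and then force $A$ into the normalizer of a maximal torus. Since $A$ is finite, $\dim A=0$, so $(\ast)$ reads $\fru_0^{A}=0$; equivalently $Z_{G}(A)$ is finite, where $G=\Aut(\fru_0)$. First I would record two preliminary facts. A finite-order automorphism of a nonzero semisimple Lie algebra always has a nonzero (indeed reductive, positive-dimensional) fixed subalgebra; consequently a \emph{cyclic} $A$ cannot satisfy $(\ast)$, so $A$ genuinely needs two generators. Moreover the image $\bar A$ of $A$ in $\Out(\fru_0)$ is cyclic, since every abelian subgroup of $\Out(\fru_0)$ (which is one of $1,C_2,C_3,S_3$) is cyclic; hence I may choose a generating pair $A=\langle x,y\rangle$ with $x$ inner. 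The key local step is that each generator has abelian fixed subalgebra: writing $\fru_0^{x}=z\oplus\frs$ with $\frs$ semisimple, the other generator satisfies $\frs^{y}\subseteq\fru_0^{A}=0$, so by the first fact $\frs=0$. As $x$ is inner and semisimple its connected centralizer is reductive of full rank, and being abelian it is a maximal torus $T$; thus $x$ is regular and $Z_{G}(x)^{0}=T$.

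Since $A$ is abelian, $y\in Z_{G}(x)\subseteq N_{G}(T)$, so $A\subseteq N_{G}(T)$ and $\bar A=\langle\bar y\rangle$ is cyclic in $W\rtimes\Out(\fru_0)$. Two constraints now pin the configuration down. From $\fru_0^{A}=0$ together with regularity of $x$ (which gives $\frg^{x}=\frt$) we get $\frt^{\bar y}=0$, i.e. $\bar y$ is elliptic; and from $A$ abelian we get $\bar y(x)=x$, so $x\in T^{\bar y}$ is a \emph{regular} element of the finite group $T^{\bar y}$. For the types with no outer automorphisms, $y$ is inner, and passing to the simply connected cover $\wt G$ the commutator $c=\wt x\,\wt y\,\wt x^{-1}\wt y^{-1}$ lies in the center $Z(\wt G)$; if $c=1$ then $\wt y$ centralizes the regular element $\wt x$, forcing $\wt y\in\wt T$ and $\bar y=1$, contradicting ellipticity. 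Hence $c\neq1$, which already excludes the trivial-center types $\G_2$, $\F_4$, $\E_8$.

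It then remains to classify elliptic $\bar y$ admitting a regular $x\in T^{\bar y}$. In the adjoint case $X^{\ast}(T)=Q$ is the root lattice, and $x$ is a character of $Q$ that is $\bar y$-invariant (so factors through $Q/(1-\bar y)Q$) and is nonzero on every root. A necessary condition is that no root lie in $(1-\bar y)Q$. I would show this fails for every elliptic $\bar y$ in types $\B$, $\C$, $\D$, $\E_6$, $\E_7$ — for instance in $\C_2$ the long root is forced into $(1-\bar y)Q$, so its value under any $\bar y$-invariant character is trivial and no regular $x$ exists — leaving only type $\A_{m-1}$. In type $\A_{m-1}$, elliptic is equivalent to $\bar y$ being an $m$-cycle (Coxeter element), $T^{\bar y}\cong C_m$, and the regular elements of $T^{\bar y}$ are precisely its generators; hence $\langle x\rangle=T^{\bar y}$ and $A=\langle x,y\rangle\cong C_m\times C_m$, realized by a clock matrix and a shift matrix in $\PU(m)$, which is the $m$-Hessenberg group. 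This proves $\fru_0\cong\mathfrak{su}(m)$ and identifies $A$.

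Everything up to the type classification (the reduction to $\fru_0^{A}=0$, the toral-fixed-subalgebra argument, the passage to $N_{G}(T)$, and the central commutator) is formal. The main obstacle is the last step: ruling out the remaining classical and exceptional types and treating the twisted case (outer $\bar y$ in types $\A$, $\D$, $\E_6$) uniformly. This is exactly where Springer's theory of regular elements of $W\rtimes\Out(\fru_0)$, or equivalently the explicit classification of finite $(\ast)$-subgroups in \cite{Yu-maximalE}, does the substantive work, and I would lean on one of these to finish the case analysis rather than grind it out by hand.
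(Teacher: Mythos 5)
Your overall skeleton is sound and is essentially the standard argument: for finite $A$ condition $(\ast)$ reads $\fru_0^{A}=0$; commutativity forces the fixed subalgebra of each generator to be abelian, so the inner generator $x$ is regular with $\fru_0^{x}=\frt_0$; the second generator lands in $N_{G}(T)$ and projects to an elliptic element $\bar y$ of $W\rtimes\Out(\fru_0)$ fixing the regular element $x\in T^{\bar y}$; and the central-commutator argument correctly eliminates $\G_2$, $\F_4$, $\E_8$. But the paper's own proof is a one-line citation of \cite[Proposition 4.1.1]{Borel-Friedman-Morgan}, which \emph{is} the statement that a commuting pair in a compact simple adjoint group with finite common centralizer occurs only in $\PU(m)$, where it is conjugate to the clock-and-shift pair. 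So what you have written is a sketch of a proof of the cited result, and since you end by proposing to invoke either that circle of ideas or the classification in \cite{Yu-maximalE} to finish, your argument does not actually discharge the case analysis that constitutes the substance of the lemma.

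Moreover, the one concrete mechanism you offer for that case analysis is wrong. The condition that no root lie in $(1-\bar y)Q$ is necessary but far from sufficient, and it does \emph{not} fail for every elliptic $\bar y$ outside type $\A$: for $\bar y=-1$ (elliptic in all of $\B_n$, $\C_n$, $\D_{2n}$, $\E_7$, $\E_8$, $\F_4$, $\G_2$) one has $(1-\bar y)Q=2Q$, and no root of $\B_n$, $\C_n$ or $\D_n$ lies in $2Q$ (every root has an odd coordinate relative to the standard description of $Q$). Your $\C_2$ computation is correct only for the Coxeter element, not for $-1$. These cases are instead excluded by the finer obstruction that no single character of the finite group $Q/(1-\bar y)Q$ is nonvanishing on \emph{all} root classes simultaneously: e.g.\ for $\B_n$ with $\bar y=-1$, requiring $\chi(e_i)=-1$ for all $i$ forces $\chi(e_i+e_j)=1$. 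So the criterion to be verified type by type is the existence of a regular element of the finite abelian group $T^{\bar y}$, which is a strictly stronger condition than the one you test. Relatedly, Springer's theory of regular elements concerns regular eigenvectors in the reflection representation $\frt$, not regular elements of the finite group $T^{\bar y}$, so it is not the off-the-shelf tool you want; the relevant analysis is that of \cite{Borel-Friedman-Morgan} (see also \cite{Reeder}) --- i.e.\ exactly the citation the paper makes. The outer cases ($\bar y\notin W$ in types $\A$, $\D$, $\E_6$) are also left entirely untreated.
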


\begin{proof}
This follows from \cite[Proposition 4.1.1]{Borel-Friedman-Morgan}.
\end{proof}

For a compact semisimple Lie algebra $\fru$, by the reduction in Subsection \ref{SS:ast-semisimple}, we
first reduce to the case that $A$ permutes the simple factors of $\fru$ transitively. By Lemmas
\ref{L:transitive-abelian} and Lemma \ref{L:semisimple-simple}, the image of $A$ in $S_{n}$ is either
isomorphic to $C_{n}$, or to $C_{n_1}\times C_{n_2}$ with $1\neq n_{2}|n_{1}$ and $n=n_1n_{2}$. Moreover
both reduce to the compact simple Lie algebra case, and then are solved by Lemma \ref{L:twogenerated1}

\smallskip

\noindent{\it The one-dimensional group case.} By the reduction in Subsection \ref{SS:ast-semisimple}, we
first reduce to the case that $A$ permutes the simple factors of $\fru$ transitively. In the transitive case,
by Lemmas \ref{L:transitive-abelian} and \ref{L:semisimple-simple}, the image of $A$ in $S_{n}$ must be isomorphic
to $C_{n}$. Then, it reduces to the compact simple Lie algebra case. In the compact simple Lie algebra case it
is solved by the following lemma .

\begin{lemma}\label{L:TwoGenerated2}
Let $\fru_0$ be a compact simple Lie algebra and $A$ be a compact abelian subgroup of $\Aut(\fru_0)$ satisfying
condition $(\ast)$. If $\dim A=1$ and $A/A^{0}$ is cyclic, then $\fru_0\cong\mathfrak{su}(2)$ or $\mathfrak{su}(3)$.
When $\fru_0\cong\mathfrak{su}(2)$, $A$ is maximal torus of $\Aut(\fru_0)\cong\PSU(2)$; when $\fru_0\cong
\mathfrak{su}(3)$, $A=\langle\sigma,A^{0}\rangle$, where $\sigma$ is an outer involution and $A^{0}$ is a
maximal torus of $\Int(\fru_0)^{\sigma}$.
\end{lemma}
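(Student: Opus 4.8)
The plan is to treat the two pieces of $A$ separately: the identity circle $T=A^{0}$ and a generator $\sigma$ of the cyclic quotient $A/A^{0}$. Since $T$ is one-dimensional and connected it lies in $\Int(\fru_{0})$, so $T=\exp(\bbR X)$ for some $X\in\fru_0$, and its fixed subalgebra $\fru_0^{T}=\{Y\in\fru_0:[X,Y]=0\}$ is the centralizer of $X$, a reductive subalgebra of full rank $r=\rank\fru_0$. Because $A$ is abelian, $\sigma$ commutes with $T$; hence $\sigma$ preserves $\fru_0^{T}$ and fixes $X$. Thus $\fru_0^{A}=(\fru_0^{T})^{\sigma}$, and condition $(\ast)$ becomes $\dim(\fru_0^{T})^{\sigma}=1$. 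Note $X$ lies in the centre $z(\fru_0^{T})$, so it already contributes one dimension to this fixed space.

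Next I would cut down $\fru_0^T$. Write $\fru_0^{T}=z(\fru_0^{T})\oplus(\fru_0^{T})_{der}$. As $\sigma$ preserves this decomposition and $X\in z(\fru_0^T)^{\sigma}$, the equality $\dim(\fru_0^{T})^{\sigma}=1$ forces $z(\fru_0^{T})^{\sigma}=\bbR X$ and $((\fru_0^{T})_{der})^{\sigma}=0$. Now $T$ acts trivially on $\fru_0^{T}$ by definition, so from $\sigma^{d}\in T$ (with $d$ the order of $A/A^{0}$) we get that $\sigma$ acts with finite order on $(\fru_0^{T})_{der}$. The key input here is the classical fact that a finite-order automorphism of a nonzero compact semisimple Lie algebra always has nonzero fixed subalgebra (visible, for instance, from Kac's description of the fixed subalgebra as a reductive algebra of positive dimension). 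Applying it, $((\fru_0^{T})_{der})^{\sigma}=0$ forces $(\fru_0^{T})_{der}=0$. Hence $\fru_0^{T}$ is abelian of full rank, i.e. a Cartan subalgebra $\frt$, the element $X$ is regular, and the problem is reduced to the single equation $\dim\frt^{\sigma}=1$.

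Finally I would split on whether $\sigma$ is inner or outer. If $\sigma=\Ad(g)$ is inner, then $g$ centralizes $T$, so $g$ lies in the maximal torus with Lie algebra $\frt$ and $\sigma$ acts trivially on $\frt$; then $\dim\frt^{\sigma}=r$, forcing $r=1$ and $\fru_0\cong\mathfrak{su}(2)$, in which case $g\in T$ and $A=T$ is a maximal torus of $\Aut(\fru_0)\cong\PSU(2)$. If $\sigma$ is outer, the decisive observation is that $\sigma$ fixes the regular element $X$, hence preserves the Weyl chamber containing $X$; therefore $\sigma$ acts on $\frt$ as a diagram automorphism for the simple system cut out by that chamber. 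For such an automorphism $\dim\frt^{\sigma}$ equals the number of $\sigma$-orbits on the simple roots, so $\dim\frt^{\sigma}=1$ means the diagram automorphism is transitive on the nodes. Scanning the simple Dynkin diagrams carrying nontrivial symmetries ($A_n$, $D_n$, $E_6$, and triality on $D_4$) shows this happens only for $A_2$, i.e. $\fru_0\cong\mathfrak{su}(3)$, where the node-swap has order two. One then adjusts $\sigma$ within its $A^{0}$-coset to an honest outer involution and identifies $\frt^{\sigma}=\bbR X$ with a maximal torus of $\Int(\fru_0)^{\sigma}$, yielding exactly the asserted description.

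I expect the main obstacle to be the rigorous justification of the two structural inputs in the middle step: that $((\fru_0^{T})_{der})^{\sigma}=0$ really forces the semisimple part to vanish (the nonvanishing-of-fixed-points lemma), and that a finite-order automorphism fixing a regular element must act as a genuine diagram automorphism rather than a Weyl-twisted one. The first is what eliminates all types whose one-dimensional circle has a positive-dimensional semisimple centralizer; the second converts the a priori delicate question ``which outer automorphisms have a one-dimensional fixed space spanned by a regular vector'' into the elementary combinatorial count of diagram-automorphism orbits, and is the crux that singles out $A_2$.
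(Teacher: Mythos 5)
Your argument is correct, and it reaches the classification by a genuinely different route from the paper's. The paper pivots on a finite-order generator $\theta$ of $A/A^{0}$: condition $(\ast)$ is read as saying that $A^{0}$ is a maximal torus of $(\Aut(\fru_0)^{\theta})^{0}$, so $\rank\fru_{0}^{\theta}=1$, and the decisive input is the cited fact from Br\"ocker--tom Dieck that $\rank\fru_{0}^{\theta}$ depends only on the coset $\theta\Int(\fru_0)$, which reduces the problem to evaluating that rank on diagram automorphisms. You pivot instead on the circle $A^{0}$: you first prove that $Z_{\fru_0}(X)$ is a Cartan subalgebra by killing its derived part with the nonvanishing of fixed points of a torsion automorphism of a compact semisimple Lie algebra (the same ``theorem of Borel'' the paper invokes in the proof of Lemma \ref{L:abelian1}), and then read $\sigma$ as a chamber-preserving, hence diagram, automorphism of that Cartan. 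Both roads end at the same orbit count on Dynkin nodes; your route avoids the external rank-invariance citation and yields the extra information that $A^{0}$ is a regular torus, while the paper's is shorter once its citation is granted. Both proofs are equally terse on the final normalization in the $\mathfrak{su}(3)$ case: to get an outer \emph{involution} one should check that $\sigma^{2}\in A^{0}$ (equivalently $A/A^{0}\cong C_{2}$), which holds because $\sigma^{2}$ acts trivially on $\frt$, and writing $\sigma=s\sigma_{0}$ with $\sigma_{0}$ pinned and $s\in\exp(\frt)$ gives $\sigma^{2}=s\,\sigma_{0}(s)\in\exp\bigl((1+\sigma_{0})\frt\bigr)=\exp(\frt^{\sigma})=A^{0}$; divisibility of the circle $A^{0}$ then lets you correct $\sigma$ to an involution as you indicate.
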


\begin{proof}
Write $A=A^{0}\times\langle\theta\rangle\subset\Aut(\fru_0)$ with $\theta$ a finite order automorphism $\theta$
of $\fru_0$. Then, $A^{0}$ must be a maximal torus of $(\Aut(\fru_0)^{\theta})^{0}$. As $\dim A=1$, we have
$\rank\fru_{0}^{\theta}=1$. By \cite[Chapter IV, Proposition 4.2]{BtD}, $\rank\fru_0^{\theta}$ depends only
on the coset $\theta\Int(\fru_0)$ in $\Aut(\fru_0)$. Using diagram automorphism, one can calculate $\rank
\fru_0^{\theta}$ in all cases. Only two cases may happen if $\rank\fru_{0}^{\theta}=1$: (1), $\fru_0\cong
\mathfrak{su}(2)$; (2), $\fru_0\cong\mathfrak{su}(3)$ and $\theta$ is an outer automorphism. The conclusion
follows then.
\end{proof}

\smallskip

\noindent{\it The two-dimensional group case.} If $A$ is a two-dimensional torus, then $\fru$ is a rank two compact
semisimple Lie algebra and $A$ is a maximal torus of $\Int(\fru)$.

\section{Twisted root system associated to a $(\ast)$-subgroup}\label{S:ast-TRS}

Let $G$ be a compact Lie group and $A$ be a $(\ast)$-subgroup of $G$. The conjugation action of $A$ on the
complexified Lie algebra $\frg$ of $G$ gives a decomposition $$\frg=\sum_{\lambda\in X^{\ast}(A)}\frg_{\lambda},$$
where $$\frg_{\lambda}=\{Y\in\frg: \Ad(a)Y=\lambda(a)Y, \forall a\in A\}.$$ Since $A$ satisfies the condition
$(\ast)$, the zero-weight space $\frg_{0}=\fra$. Set $$R(G,A)=\{\alpha\in X^{\ast}(A)-\{0\}: \frg_{\alpha}\neq
0\}.$$ We call an element $\alpha\in R(G,A)$ a {\it root}, and call $\frg_{\alpha}$ the {\it root space} of
$\alpha$. A root $\alpha$ is called an {\it infinite root} if $\alpha|_{A^{0}}\neq 0$; it is called a
{\it finite root} if $\alpha|_{A^{0}}=0$. Write $R_{0}(G,A)$ for the set of finite roots.

Set $$W_{large}(G,A)=N_{G^{0}}(A)/Z_{G^{0}}(A),$$ and call it the {\it Weyl group} of $A$. By conjugation,
$W_{large}(G,A)$ acts on $A$ faithfully. It is clear that the induced action of $W_{large}(G,A)$ on $X^{\ast}(A)$
preserves $R(G,A)$. It is easy to show that $W_{large}(G,A)$ is a finite group.

Endow $G$ with a biinvariant Riemannian metric. Restricted to $A$, it gives a positive-definite inner product
on the Lie algebra $\fra_0$ of $A$, which is $W_{large}(G,A)$ invariant. It induces a positive-definite inner
inner on the dual space $\fra_{0}^{\ast}=\Hom_{\mathbb{R}}(\fra_0,\mathbb{R})$. Since $X^{\ast}(A^{0})\subset
i\fra_{0}^{\ast}$, multiplying by $-1$ and by restriction it gives a positive-definite inner product on
$X^{\ast}(A^{0})$. Without ambiguity, we write $(\cdot,\cdot)$ for each of these inner products. For
$\alpha,\beta\in X^{\ast}(A)$, set $$(\alpha,\beta)=(\alpha|_{A^{0}},\beta|_{A^{0}}).$$

\subsection{Infinite roots}\label{SS:infiniteRoot}

For an infinite root $\alpha$, write $$G^{[\alpha]}=Z_{G}(\ker\alpha),\quad \frg^{[\alpha]}=
Z_{\frg}(\ker\alpha).$$ It is clear that $\frg^{[\alpha]}$ is the complexified Lie algebra of $G^{[\alpha]}$.

\begin{lemma}\label{L:infinity1}
The derived subalgebra of $\frg^{[\alpha]}$ is isomorphic to $\mathfrak{sl}_{2}(\mathbb{C})$.
\end{lemma}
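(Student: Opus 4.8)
The plan is to realize $\frg^{[\alpha]}$ as a complex reductive Lie algebra in which $\fra$ is a Cartan subalgebra and all of whose roots lie on a single line; the structure theory of rank-one semisimple Lie algebras will then force the derived subalgebra to be $\mathfrak{sl}_2(\mathbb{C})$. First I would describe $\frg^{[\alpha]}$ explicitly via the weight decomposition. Since $\frg^{[\alpha]}=Z_\frg(\ker\alpha)$ consists of the vectors fixed by $\Ad(a)$ for all $a\in\ker\alpha$, it is the sum of the weight spaces $\frg_\lambda$ with $\lambda|_{\ker\alpha}=0$. Because $\alpha$ is an infinite root, $\alpha|_{A^0}\neq 0$, so $\alpha$ maps the torus $A^0$ onto $\U(1)$; hence $\alpha(A)=\U(1)$ and $A/\ker\alpha\cong\U(1)$. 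Consequently the characters of $A$ trivial on $\ker\alpha$ are exactly the integer multiples of $\alpha$, and
\[
\frg^{[\alpha]}=\bigoplus_{k\in\mathbb{Z}}\frg_{k\alpha},
\]
with zero-weight part $\frg_0=\fra$.

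Next I would record two structural facts. Since $G^{[\alpha]}=Z_{G}(\ker\alpha)$ is a closed, hence compact, subgroup of $G$, its complexified Lie algebra $\frg^{[\alpha]}$ is reductive. I then claim that $\fra$ is a Cartan subalgebra of it. Indeed $\fra$ is abelian and consists of semisimple (toral) elements, so it suffices to show that it is self-centralizing. If $X=\sum_k X_k$ with $X_k\in\frg_{k\alpha}$ centralizes $\Lie(A^0)$, then $k\,d\alpha(h)X_k=0$ for all $h\in\Lie(A^0)$; as $d\alpha\neq 0$ this forces $X_k=0$ for $k\neq 0$, so the centralizer of $\Lie(A^0)$ in $\frg^{[\alpha]}$ is $\frg_0$. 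Here the condition $(\ast)$ enters decisively, giving $\frg_0=\fra$, whence $Z_{\frg^{[\alpha]}}(\fra)\subseteq Z_{\frg^{[\alpha]}}(\Lie(A^0))=\fra$, so $\fra$ equals its own centralizer and is a Cartan subalgebra. In particular the rank of $\frg^{[\alpha]}$ equals $\dim\fra$.

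Finally I would pin down the root system. With respect to the Cartan subalgebra $\fra$, every root space lies in some $\frg_{k\alpha}$, so every root of $\frg^{[\alpha]}$ is an integer multiple of $d\alpha$ and all roots lie on the line $\mathbb{R}\,d\alpha$; moreover $\frg_\alpha\neq 0$ shows that the derived subalgebra $\frg^{[\alpha]}_{der}$ is nonzero. The root system of a complex reductive Lie algebra is reduced and spans a space of dimension equal to the semisimple rank; being nonempty and contained in a line, it must be of type $A_1$, so $\frg^{[\alpha]}_{der}$ is rank-one semisimple, i.e. $\frg^{[\alpha]}_{der}\cong\mathfrak{sl}_2(\mathbb{C})$. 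I expect the main obstacle to be the verification that $\fra$ is a genuine Cartan subalgebra rather than merely a toral subalgebra; this is exactly the point where the defining property $(\ast)$ is needed, and once reducedness of the root system is invoked it also shows, as a by-product, that only the multiples $\pm\alpha$ occur and that $\frg_{\pm\alpha}$ are one-dimensional.
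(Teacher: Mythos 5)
Your proposal is correct and follows essentially the same route as the paper: both establish the decomposition $\frg^{[\alpha]}=\fra\oplus\bigoplus_{k\in\mathbb{Z}-\{0\}}\frg_{k\alpha}$ from the fact that the characters trivial on $\ker\alpha$ are exactly $\mathbb{Z}\alpha$, use condition $(\ast)$ to see that $\fra$ is self-centralizing (the paper phrases this as $A^{0}$ being a maximal torus of $(G^{[\alpha]})^{0}$), and conclude that the semisimple part has rank one, hence is $\mathfrak{sl}_{2}(\mathbb{C})$. The only cosmetic difference is that you identify the rank of the derived subalgebra via the reduced root system on a line, while the paper identifies the center of $\frg^{[\alpha]}$ with the complexified Lie algebra of $\ker\alpha$ and subtracts dimensions.
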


\begin{proof}
As $\alpha|_{A^{0}}\neq 1$, we have $A=A^{0}\cdot\ker\alpha$. Apparently, $A$ is also a $(\ast)$-subgroup of
$G^{[\alpha]}$. By definition, $\ker\alpha\subset Z(G^{[\alpha]})$. Thus, $$Z_{\frg^{[\alpha]}}(A^{0})=\fra.$$
Hence, $A^{0}$ is a maximal torus of $(G^{[\alpha]})^{0}$. For a root $\beta\in R(G,A)$, $\beta|_{\ker\alpha}
=0$ if and only if $\beta\in\mathbb{Z}\alpha$. Thus, $$\frg^{[\alpha]}=\fra\oplus(\bigoplus_{k\in\mathbb{Z}-
\{0\}}\frg_{k\alpha}).$$ Write $$\frg^{[\alpha]}=(\frg^{[\alpha]})_{der}\oplus z(\frg^{[\alpha]})$$ for the
direct sum decomposition of $\frg^{[\alpha]}$ into its semisimple part and central part. Then,
$z(\frg^{[\alpha]})$ is equal to the complexified Lie algebra of $\ker\alpha$, and $(\frg^{[\alpha]})_{der}$
is a semisimple Lie algebra of rank $1$. Hence, $(\frg^{[\alpha]})_{der}\cong\mathfrak{sl}_{2}(\mathbb{C}).$
\end{proof}

Write $G_{[\alpha]}$ for the connected Lie subgroup of $G$ with Lie algebra $\frg_0\cap(\frg^{[\alpha]})_{der}$.
Put $$T_{[\alpha]}=G_{[\alpha]}\cap A.$$ By the proof of Lemma \ref{L:infinity1}, $T_{[\alpha]}$ is a maximal
torus of $G_{[\alpha]}$ and \begin{equation}\label{Eq:split-infinite}A=T_{[\alpha]}\cdot\ker\alpha.
\end{equation} Due to $(\frg^{[\alpha]})_{der}\cong\mathfrak{sl}_{2}(\mathbb{C})$, there is a unique
$\check{\alpha}\in X_{\ast}(A)$ whose image lies in $T_{[\alpha]}$ and satisfies $\alpha(\check{\alpha})=2$.

Define the {\it reflection} $s_{\alpha}: A\rightarrow A$ by \begin{equation}\label{Eq:reflection}s_{\alpha}(a)
=a\check{\alpha}(\alpha(a))^{-1},\ \forall a\in A.\end{equation} It is clear that $s_{\alpha}|_{\ker\alpha}=1$
and $s_{\alpha}|_{\Im\check{\alpha}}=-1$. Since $A=(\ker\alpha)\cdot\Im\check{\alpha}$, we have $s_{\alpha}^2
=\id$. The induced action of $s_{\alpha}$ on $X^{\ast}(A)$ is given by \[s_{\alpha}(\lambda)=\lambda-
\lambda(\check{\alpha})\alpha,\ \forall\lambda\in X^{\ast}(A).\]

\begin{lemma}\label{L:reflection}
There exists an element $n_{\alpha}\in N_{G_{[\alpha]}}(A)$ such that $$\Ad(n_{\alpha})|_{A}=s_{\alpha}.$$
\end{lemma}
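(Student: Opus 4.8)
The plan is to produce $n_{\alpha}$ entirely inside the rank-one group $G_{[\alpha]}$, exploiting that the nontrivial Weyl element of a compact connected rank-one group inverts a maximal torus. By Lemma~\ref{L:infinity1}, $(\frg^{[\alpha]})_{der}\cong\mathfrak{sl}_{2}(\mathbb{C})$, so $G_{[\alpha]}$ is a compact connected Lie group of rank one, locally isomorphic to $\SU(2)$, with maximal torus $T_{[\alpha]}$. Since $\check{\alpha}\in X_{\ast}(A)$ is nontrivial (as $\alpha(\check{\alpha})=2$) and has image in the circle $T_{[\alpha]}$, a nontrivial connected closed subgroup of a one-dimensional torus being the whole torus, we get $\Im\check{\alpha}=T_{[\alpha]}$. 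First I would record the two basic facts to be glued together: by \eqref{Eq:split-infinite}, $A=T_{[\alpha]}\cdot\ker\alpha$; and because $\ker\alpha\subset Z(G^{[\alpha]})$ while $G_{[\alpha]}\subset(G^{[\alpha]})^{0}$, the subgroup $\ker\alpha$ centralizes $G_{[\alpha]}$ pointwise.

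Next I would invoke the structure of compact connected rank-one groups: the Weyl group $N_{G_{[\alpha]}}(T_{[\alpha]})/T_{[\alpha]}$ has order two, and its nontrivial class is realized by some $n_{\alpha}\in N_{G_{[\alpha]}}(T_{[\alpha]})$ acting on $T_{[\alpha]}$ by inversion, $n_{\alpha}tn_{\alpha}^{-1}=t^{-1}$ for all $t\in T_{[\alpha]}$ (for $\SU(2)$ this is the reflection swapping the diagonal entries, and the adjoint and intermediate quotients inherit it). Crucially, this $n_{\alpha}$ lies in $G_{[\alpha]}$, which is centralized by $\ker\alpha$, so $\Ad(n_{\alpha})$ fixes $\ker\alpha$ pointwise. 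Consequently $\Ad(n_{\alpha})$ preserves $A=T_{[\alpha]}\cdot\ker\alpha$, giving $n_{\alpha}\in N_{G_{[\alpha]}}(A)$, and the assertion $\Ad(n_{\alpha})|_{A}=s_{\alpha}$ reduces to checking that the two maps agree on the generating subgroups $T_{[\alpha]}$ and $\ker\alpha$.

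The verification is then a direct comparison against the remarks following \eqref{Eq:reflection}, where it is already noted that $s_{\alpha}|_{\ker\alpha}=1$ and $s_{\alpha}|_{\Im\check{\alpha}}=-1$. On $\ker\alpha$ both $\Ad(n_{\alpha})$ and $s_{\alpha}$ are the identity. On $T_{[\alpha]}=\Im\check{\alpha}$, writing $a=\check{\alpha}(z)$ gives $\alpha(a)=z^{\alpha(\check{\alpha})}=z^{2}$, so $s_{\alpha}(a)=\check{\alpha}(z)\check{\alpha}(z^{2})^{-1}=\check{\alpha}(z^{-1})=a^{-1}$, which matches inversion by $\Ad(n_{\alpha})$. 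Since both sides are automorphisms of $A$ agreeing on $T_{[\alpha]}$ and on $\ker\alpha$, and every element of $A$ is a product of one element from each, they coincide on all of $A$. The only genuinely load-bearing input is the existence of a Weyl element of $G_{[\alpha]}$ inverting $T_{[\alpha]}$, and I expect this to be the step requiring the most care: one must confirm that the inverting element can be chosen inside the \emph{connected} group $G_{[\alpha]}$ (rather than merely in $N_{G}(A)$), so that it automatically commutes with $\ker\alpha$. This is exactly what the rank-one classification supplies, and it is where the content of the argument resides.
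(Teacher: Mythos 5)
Your proof is correct and follows essentially the same route as the paper: realize the inverting Weyl element inside the rank-one connected group $G_{[\alpha]}$, use that $\ker\alpha$ centralizes $G_{[\alpha]}$ and that $A=T_{[\alpha]}\cdot\ker\alpha$, and check agreement on the two factors. Your write-up simply spells out the verification that the paper leaves as ``direct calculation.''
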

\begin{proof}
As $G_{[\alpha]}$ is a connected compact Lie group of rank one, we have $G_{[\alpha]}\cong\SU(2)$ or $\SO(3)$.
By direct calculation, one finds $n_{\alpha}\in N_{G_{[\alpha]}}(T_{[\alpha]})$ such that
$\Ad(n_{\alpha})|_{T_{[\alpha]}}=-1.$  As $G_{[\alpha]}$ commutes with $\ker\alpha$ and $A=T_{[\alpha]}\cdot
\ker\alpha$, we get $n_{\alpha}\in N_{G_{[\alpha]}}(A)$ and $\Ad(n_{\alpha})|_{A}=s_{\alpha}.$
\end{proof}

\begin{lemma}\label{L:infinity2}
For any $\lambda\in X^{\ast}(A)$, we have $$\lambda(\check{\alpha})=\frac{2(\lambda,\alpha)}{(\alpha,\alpha)}.$$
\end{lemma}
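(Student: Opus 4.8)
The plan is to reproduce, in the present twisted setting, the classical argument identifying a coroot pairing with a reflection coefficient. First I would observe that both sides of the claimed identity depend only on the restriction $\lambda|_{A^{0}}$: the inner product $(\lambda,\alpha)$ is by definition $(\lambda|_{A^{0}},\alpha|_{A^{0}})$, and the pairing $\lambda(\check{\alpha})$ depends only on $\lambda|_{A^{0}}$ because $\check{\alpha}$ has image in $T_{[\alpha]}\subset A^{0}$. Hence it suffices to argue inside the real inner product space $E=X^{\ast}(A^{0})\otimes_{\mathbb{Z}}\mathbb{R}$, in which $\alpha$ is a nonzero vector since $\alpha$ is an infinite root.

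The crucial point is that $s_{\alpha}$ acts on $E$ as an orthogonal transformation. By Lemma \ref{L:reflection} there is $n_{\alpha}\in N_{G_{[\alpha]}}(A)\subset N_{G^{0}}(A)$ with $\Ad(n_{\alpha})|_{A}=s_{\alpha}$, so $s_{\alpha}$ lies in the image of $W_{large}(G,A)$ acting on $X^{\ast}(A)$; since the inner product on $X^{\ast}(A^{0})$ was constructed to be $W_{large}(G,A)$-invariant, $s_{\alpha}$ preserves it. Next I would pin down $s_{\alpha}$ from its eigenspaces. From the displayed formula $s_{\alpha}(\lambda)=\lambda-\lambda(\check{\alpha})\alpha$ one reads off that the fixed space is the hyperplane $H=\{\mu\in E:\mu(\check{\alpha})=0\}$, while $s_{\alpha}(\alpha)=-\alpha$ because $\alpha(\check{\alpha})=2$; thus $s_{\alpha}$ is an involution with $E=H\oplus\mathbb{R}\alpha$ and $-1$-eigenspace $\mathbb{R}\alpha$. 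For an orthogonal involution the $+1$- and $-1$-eigenspaces are mutually orthogonal, so $H=\alpha^{\perp}$ and $s_{\alpha}$ is exactly the orthogonal reflection in the hyperplane $\alpha^{\perp}$.

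Finally, the orthogonal reflection negating $\alpha$ and fixing $\alpha^{\perp}$ pointwise is given by the standard formula $\mu\mapsto\mu-\frac{2(\mu,\alpha)}{(\alpha,\alpha)}\alpha$. Comparing this with $s_{\alpha}(\lambda)=\lambda-\lambda(\check{\alpha})\alpha$ and cancelling the nonzero vector $\alpha$ yields $\lambda(\check{\alpha})=\frac{2(\lambda,\alpha)}{(\alpha,\alpha)}$ for every $\lambda$, which is the assertion.

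I expect the main obstacle to be establishing the orthogonality of $s_{\alpha}$ cleanly: one must invoke precisely the $W_{large}(G,A)$-invariance of the chosen biinvariant metric together with Lemma \ref{L:reflection}, and then use the elementary but essential fact that the $\pm1$-eigenspaces of an orthogonal involution are orthogonal complements. Everything downstream is a formal comparison of two expressions for one and the same reflection.
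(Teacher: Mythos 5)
Your proof is correct and rests on exactly the same two ingredients as the paper's: Lemma \ref{L:reflection} to place $s_{\alpha}$ in $W_{large}(G,A)$, and the $W_{large}(G,A)$-invariance of the inner product induced from the biinvariant metric. The paper simply applies that invariance directly to the pair $(\lambda,\alpha)$ and expands $(s_{\alpha}(\lambda),s_{\alpha}(\alpha))=(\lambda-\lambda(\check{\alpha})\alpha,-\alpha)$, which is the one-line algebraic form of your recognize-it-as-the-orthogonal-reflection argument.
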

\begin{proof}
As the inner product $(\cdot,\cdot)$ $X^{\ast}(A)$ is induced from a biinvariant Riemannian metric on $G$,
it is $W_{large}(G,A)$ invariant. By Lemma \ref{L:reflection}, $s_{\alpha}\in W_{large}(G,A)$. Thus, it
is $s_{\alpha}$ invariant. Hence, for any $\lambda\in X^{\ast}(A)$, \begin{eqnarray*}&&(\lambda,\alpha)
\\&=&(s_{\alpha}(\lambda),s_{\alpha}(\alpha))\\&=&(\lambda-\lambda(\check{\alpha})\alpha,-\alpha)\\&=&
-(\lambda,\alpha)+\lambda(\check{\alpha})(\alpha,\alpha).\end{eqnarray*} Therefore, $\lambda(\check{\alpha})
=\frac{2(\lambda,\alpha)}{(\alpha,\alpha)}.$
\end{proof}

\begin{lemma}\label{L:infinity3}
Let $A$ be a $(\ast)$-subgroup of a compact Lie group $G$, and $\alpha\in R(G,A)-R_{0}(G,A)$ be an infinite
root. Then $\dim\frg_{\alpha}=1$, $-\alpha$ is a root, and $k\alpha$ is not a root for any $k\in\mathbb{Z}-
\{0,\pm{1}\}$. Moreover, the action of $s_{\alpha}$ on $A$ preserves infinite roots and finite roots.
\end{lemma}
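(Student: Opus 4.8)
The plan is to read off the first three assertions directly from the $\mathfrak{sl}_{2}(\mathbb{C})$-structure established in Lemma \ref{L:infinity1}, and to deduce the preservation statement from the explicit formula for $s_{\alpha}$ together with Lemma \ref{L:infinity2}.

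First I would recall from the proof of Lemma \ref{L:infinity1} that $\frg^{[\alpha]}=\fra\oplus\bigoplus_{k\in\mathbb{Z}-\{0\}}\frg_{k\alpha}$, that $(\frg^{[\alpha]})_{der}\cong\mathfrak{sl}_{2}(\mathbb{C})$, and that $\ker\alpha\subset Z(G^{[\alpha]})$. In particular $\ker\alpha$ acts trivially on $(\frg^{[\alpha]})_{der}$ via $\Ad$, so using the splitting $A=T_{[\alpha]}\cdot\ker\alpha$ from \eqref{Eq:split-infinite}, the $A$-action on $(\frg^{[\alpha]})_{der}$ factors through the maximal torus $T_{[\alpha]}$ of $G_{[\alpha]}$. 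Since both the zero-weight space $\fra$ and the center $z(\frg^{[\alpha]})=\Lie(\ker\alpha)\otimes\mathbb{C}$ carry the trivial $A$-weight, every root space $\frg_{k\alpha}$ with $k\neq 0$ lies inside $(\frg^{[\alpha]})_{der}$.

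Next I would match up the weights. The nonzero $T_{[\alpha]}$-weights on $(\frg^{[\alpha]})_{der}\cong\mathfrak{sl}_{2}(\mathbb{C})$ are exactly $\pm\beta$, where $\beta$ is the root of $\mathfrak{sl}_{2}(\mathbb{C})$, each of multiplicity one. As $\frg_{\alpha}\subset(\frg^{[\alpha]})_{der}$ is nonzero with nonzero $A$-weight, its restriction $\alpha|_{T_{[\alpha]}}$ is one of $\pm\beta$; after renaming $\beta$ we may assume $\alpha|_{T_{[\alpha]}}=\beta$. Because $A=T_{[\alpha]}\cdot\ker\alpha$ and $k\alpha|_{\ker\alpha}=0$, a character in $\mathbb{Z}\alpha$ is determined by its restriction to $T_{[\alpha]}$; hence $\frg_{k\alpha}$ is precisely the $(k\beta)$-weight space of $\mathfrak{sl}_{2}(\mathbb{C})$. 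Since the adjoint representation of $\mathfrak{sl}_{2}(\mathbb{C})$ has weights $0,\pm\beta$ only, $\frg_{k\alpha}\neq 0$ forces $k\in\{0,\pm 1\}$. This simultaneously gives $\dim\frg_{\alpha}=1$ (the $\beta$-weight space), shows $\frg_{-\alpha}$ contains the $(-\beta)$-weight space and so $-\alpha\in R(G,A)$, and shows $k\alpha\notin R(G,A)$ for $k\in\mathbb{Z}-\{0,\pm 1\}$.

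Finally, for the preservation statement, by Lemma \ref{L:reflection} we have $s_{\alpha}\in W_{large}(G,A)$, so $s_{\alpha}$ permutes $R(G,A)$; it remains to check it respects the finite/infinite dichotomy. Restricting $s_{\alpha}(\gamma)=\gamma-\gamma(\check{\alpha})\alpha$ to $A^{0}$ gives $s_{\alpha}(\gamma)|_{A^{0}}=\gamma|_{A^{0}}-\gamma(\check{\alpha})\,\alpha|_{A^{0}}$. If $\gamma$ is a finite root then $\gamma|_{A^{0}}=0$, whence $(\gamma,\alpha)=(\gamma|_{A^{0}},\alpha|_{A^{0}})=0$, so by Lemma \ref{L:infinity2} $\gamma(\check{\alpha})=0$ and therefore $s_{\alpha}(\gamma)|_{A^{0}}=0$; thus $s_{\alpha}$ maps $R_{0}(G,A)$ into itself. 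Since $s_{\alpha}^{2}=\id$ and $R_{0}(G,A)$ is finite, it maps $R_{0}(G,A)$ bijectively onto itself, and hence also preserves the complementary set of infinite roots. The one point demanding care—the main obstacle—is the bookkeeping in the middle step: one must verify that passing from the $A$-grading of $\frg^{[\alpha]}$ to the $T_{[\alpha]}$-grading of $\mathfrak{sl}_{2}(\mathbb{C})$ is an exact matching of weight spaces, which rests precisely on $\ker\alpha$ acting trivially on $(\frg^{[\alpha]})_{der}$ together with the decomposition $A=T_{[\alpha]}\cdot\ker\alpha$.
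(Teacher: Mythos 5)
Your proposal is correct and follows essentially the same route as the paper: the first three assertions are read off from the containment $\frg_{k\alpha}\subset(\frg^{[\alpha]})_{der}\cong\mathfrak{sl}_{2}(\mathbb{C})$ established in Lemma \ref{L:infinity1}, and the preservation statement comes from $s_{\alpha}\in W_{large}(G,A)$ via Lemma \ref{L:reflection}. You merely make explicit two points the paper leaves implicit --- the matching of $A$-weight spaces with $T_{[\alpha]}$-weight spaces, and the check that $s_{\alpha}$ respects the finite/infinite dichotomy (the paper instead invokes that any element of $W_{large}(G,A)$, being an automorphism of $A$, preserves $A^{0}$ and hence $X^{\ast}_{0}(A)$) --- and both elaborations are sound.
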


\begin{proof}
From the proof of Lemma \ref{L:infinity1}, we see that the weight space $\frg_{k\alpha}$ ($k\in\mathbb{Z}-
\{0\}$) is contained in $(\frg^{[\alpha]})_{der}$. As $(\frg^{[\alpha]})_{der}\cong\mathfrak{sl}_{2}(\mathbb{C})$,
we get that $\dim\frg_{\alpha}=1$, $-\alpha$ is a root, and $k\alpha$ is not a root for any $k\in\mathbb{Z}-
\{0,\pm{1}\}$. As the action of $W_{large}(G,A)$ on $A$ preserves infinite roots and finite roots, so does
$s_{\alpha}\in W_{large}(G,A)$.
\end{proof}

\smallskip

Write $$R'(G,A)=\{\alpha|_{A^{0}}:\alpha\in R(G,A)-R_{0}(G,A)\}.$$ For $\alpha'\in R'(G,A)$, choose $\alpha
\in R(G,A)-R_{0}(G,A)$ such that $\alpha|_{A^{0}}=\alpha'$. Set $$\check{\alpha'}=\check{\alpha}.$$ By Lemma
\ref{L:infinity2}, this does not depend on the choice of $\alpha$. We call the set $R'(G,A)$ together with
the map $\alpha'\mapsto\check{\alpha'}$ ($\alpha'\in R'(G,A)$) the {\it restricted root system} of $A^{0}$
in $G$. For any $\alpha'\in R'(G,A)$, we have a reflection $s_{\alpha'}$ on $A^{0}$ defined by
\[s_{\alpha'}(a)=a\check{\alpha'}(\alpha'(a))^{-1},\ \forall a\in A^{0}.\] It induces a reflection on
$X^{\ast}(A^{0})$ given by \[s_{\alpha'}(\lambda)=\lambda-\lambda(\check{\alpha'})\alpha',\
\forall\lambda\in X^{\ast}(A^{0}).\]

Recall that we defined root system in a lattice in \cite[Definition 2.2]{Yu-dimension}. Besides the structure
of usual root system, the extra condition is ``strong integrality'': for a root system $\Phi$ in a lattice
$L$, it is required that $\frac{2(\lambda,\alpha)}{(\alpha,\alpha)}\in\mathbb{Z}$ for any $\alpha\in\Phi$ and
any $\lambda\in L$.

\begin{theorem}\label{T:restrictedRS1}
$R'(G,A)$ is a root system in the lattice $X^{\ast}(A^{0})$.
\end{theorem}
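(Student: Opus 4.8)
The plan is to verify directly the defining axioms of a root system in a lattice in the sense of \cite[Definition 2.2]{Yu-dimension}: that $R'(G,A)$ is a finite subset of $X^{\ast}(A^0)\setminus\{0\}$, that it is stable under each reflection $s_{\alpha'}$, that $\frac{2(\beta',\alpha')}{(\alpha',\alpha')}\in\mathbb{Z}$ for all $\alpha',\beta'\in R'(G,A)$, and finally the strong integrality $\frac{2(\lambda,\alpha')}{(\alpha',\alpha')}\in\mathbb{Z}$ for every $\lambda\in X^{\ast}(A^0)$ and $\alpha'\in R'(G,A)$. Finiteness is immediate, since $R(G,A)$ indexes the finitely many nonzero weights of the finite-dimensional representation $\frg$ of $A$; and $0\notin R'(G,A)$ because its elements are by definition restrictions $\alpha|_{A^0}$ of \emph{infinite} roots, for which $\alpha|_{A^0}\neq 0$. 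The whole argument consists of transporting the rank-one information already recorded in Lemmas \ref{L:infinity2} and \ref{L:infinity3} across the restriction map $r\colon X^{\ast}(A)\to X^{\ast}(A^0)$.

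The first thing I would establish is that $r$ intertwines the $A$-level reflection with the $A^0$-level one, i.e. $r\circ s_\alpha = s_{\alpha'}\circ r$ for every infinite root $\alpha$ with $\alpha'=\alpha|_{A^0}$. This follows from the formula $s_\alpha(\lambda)=\lambda-\lambda(\check\alpha)\alpha$ together with the identifications $X_\ast(A)=X_\ast(A^0)$ and $\check\alpha=\check{\alpha'}$: since $\check\alpha$ factors through $A^0$, the integer $\lambda(\check\alpha)$ depends only on $r(\lambda)$, so restricting the displayed formula to $A^0$ produces exactly $s_{\alpha'}(r(\lambda))$. Granting this, stability of $R'(G,A)$ under $s_{\alpha'}$ is formal: given $\beta'\in R'(G,A)$, lift it to an infinite root $\beta$; by the last sentence of Lemma \ref{L:infinity3} the reflection $s_\alpha$ preserves the set of infinite roots, so $s_\alpha(\beta)$ is again an infinite root, whence $s_{\alpha'}(\beta')=r(s_\alpha(\beta))\in R'(G,A)$. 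Since $s_{\alpha'}^2=\id$ this yields $s_{\alpha'}(R'(G,A))=R'(G,A)$.

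For the integrality statements I would invoke Lemma \ref{L:infinity2}. Because the inner product on $X^{\ast}(A)$ is by construction defined as $(\alpha,\beta)=(\alpha|_{A^0},\beta|_{A^0})$, it is literally pulled back along $r$, so $(\alpha,\beta)=(\alpha',\beta')$ for any lifts. Hence for $\alpha',\beta'\in R'(G,A)$, choosing infinite roots $\alpha,\beta$ above them,
\[
\frac{2(\beta',\alpha')}{(\alpha',\alpha')}=\frac{2(\beta,\alpha)}{(\alpha,\alpha)}=\beta(\check\alpha)=\beta'(\check{\alpha'})\in\mathbb{Z},
\]
the final pairing being integral because $\check{\alpha'}\in X_\ast(A^0)$ pairs integrally with $X^{\ast}(A^0)$. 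Strong integrality is the same computation with $\beta'$ replaced by an arbitrary $\lambda\in X^{\ast}(A^0)$: lift $\lambda$ to some $\tilde\lambda\in X^{\ast}(A)$ (possible as $r$ is surjective), apply Lemma \ref{L:infinity2} to $\tilde\lambda$, and note $\tilde\lambda(\check\alpha)=\lambda(\check{\alpha'})\in\mathbb{Z}$ together with $(\tilde\lambda,\alpha)=(\lambda,\alpha')$. This also exhibits $s_{\alpha'}$ as the Euclidean reflection orthogonal to $\alpha'$, closing the loop with the reflection axiom.

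Since all the hard rank-one analysis was already done in the preceding lemmas, there is no deep obstacle remaining; the point demanding genuine care is the descent along $r$, and in particular the well-definedness of the coroot assignment $\alpha'\mapsto\check{\alpha'}$ and the identity $\check\alpha=\check{\alpha'}$ — that the $\mathfrak{sl}_2(\mathbb{C})$-coroot attached to an infinite root depends only on its restriction to $A^0$. This is exactly what Lemma \ref{L:infinity2} secures, since two infinite roots with the same restriction yield the same values of $\frac{2(\lambda,\cdot)}{(\cdot,\cdot)}$ and hence the same coroot; I would therefore cite it at precisely this juncture. I would also emphasize that reducedness is \emph{not} assumed in the definition of root system in a lattice, which is essential: restriction to $A^0$ may identify roots of different lengths and so can genuinely produce non-reduced systems, and demanding reducedness here would make the statement false.
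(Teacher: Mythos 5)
Your proof is correct and follows essentially the same route as the paper's first proof: invariance of $R'(G,A)$ under the reflections $s_{\alpha'}$ via their realization in $N_{G}(A)$ (Lemmas \ref{L:reflection} and \ref{L:infinity3}), strong integrality via Lemma \ref{L:infinity2}, and well-definedness of $\alpha'\mapsto\check{\alpha'}$ exactly where the paper places it. (The paper also records a second proof via \cite[Corollary 3.4]{Yu-dimension}, identifying $R'(G,A)$ with the union $\Psi'_{A^{0}}$ of root systems of connected closed subgroups having $A^{0}$ as a maximal torus, but your more explicit verification of the axioms is a faithful expansion of the first proof.)
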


\begin{proof}
We give two proofs for this. First proof: by Lemma \ref{L:reflection}, the set $R'(G,A)$ is invariant under
$s_{\alpha'}$ for any $\alpha'\in R'(G,A)$. By Lemma \ref{L:infinity2}, any $\alpha'\in R'(G,A)$ satisfies
strong integrality. Thus, $R'(G,A)$ is a root system in the lattice $X^{\ast}(A^{0})$.

Second proof: by \cite[Corollary 3.4]{Yu-dimension}, the set $\Psi'_{A^{0}}$ of the union of root systems
of connected closed subgroups $H$ of $G$ with $A^{0}$ a maximal torus of $H$ is a root system in
$X^{\ast}(A^{0})$. By Lemma \ref{L:infinity1}, we have $\Psi'_{A^{0}}=R'(G,A).$ Thus, $R'(G,A)$ is a
root system in the lattice $X^{\ast}(A^{0})$.
\end{proof}

\subsection{Finite roots and coroot groups}

For a finite root $\alpha$ of order $n$, write $$G^{[\alpha]}=Z_{G}(\ker\alpha).$$ Its complexified Lie
algebra is $$\frg^{[\alpha]}=\fra\oplus\bigoplus_{m\in\mathbb{Z}-\{0\}}\frg_{m\alpha}.$$ It is clear that
$A/\ker\alpha\cong\mathbb{Z}/n\mathbb{Z}$. Choose an element $y\in A$ generate $A/\ker\alpha$.

\begin{lemma}\label{L:abelian1}
The subalgebra $\frg^{[\alpha]}$ is abelian.
\end{lemma}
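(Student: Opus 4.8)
The plan is to use that $\frg^{[\alpha]}$, being the complexified Lie algebra of the compact group $G^{[\alpha]}=Z_{G}(\ker\alpha)$, is reductive, and to show that its derived subalgebra vanishes. First I would record that $\ker\alpha$ is central in $G^{[\alpha]}$: since $\ker\alpha$ is abelian it lies in $Z_{G}(\ker\alpha)=G^{[\alpha]}$, and by the very definition of the centralizer every element of $G^{[\alpha]}$ commutes with $\ker\alpha$, so $\ker\alpha\subseteq Z(G^{[\alpha]})$. In particular $A^{0}=(\ker\alpha)^{0}$ is central, so its complexified Lie algebra $\fra$ lies in the center $z(\frg^{[\alpha]})$. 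Writing the reductive splitting $\frg^{[\alpha]}=z(\frg^{[\alpha]})\oplus\frs$ with $\frs:=(\frg^{[\alpha]})_{der}$ semisimple, this gives $\frs\cap\fra\subseteq\frs\cap z(\frg^{[\alpha]})=0$.

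Next I would examine the weight-zero space of the $A$-action. The derived subalgebra is characteristic, hence $\Ad(A)$-stable, so the weight decomposition restricts to $\frs$; its weight-zero part is $\frs_{0}=\frs\cap(\frg^{[\alpha]})_{0}=\frs\cap\fra$. The displayed decomposition $\frg^{[\alpha]}=\fra\oplus\bigoplus_{m\neq0}\frg_{m\alpha}$ shows $(\frg^{[\alpha]})_{0}=\fra$, and by the previous step $\frs\cap\fra=0$; therefore $\frs_{0}=0$. Moreover $\ker\alpha$ acts trivially on $\frg^{[\alpha]}$, since every weight there is a multiple of $\alpha$ and so vanishes on $\ker\alpha$; thus the $A$-action on $\frs$ factors through $A/\ker\alpha=\langle y\rangle\cong\mathbb{Z}/n$, giving $\frs_{0}=\frs^{\langle y\rangle}=\frs^{\Ad(y)}$. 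Because $y^{n}\in\ker\alpha$, the automorphism $\Ad(y)|_{\frs}$ has finite order, and we have just shown it has no nonzero fixed vector.

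Finally I would invoke the classical fact that a finite-order automorphism of a nonzero semisimple Lie algebra always has a nonzero fixed-point subalgebra; applied to $\Ad(y)|_{\frs}$ this forces $\frs=0$, i.e.\ $\frg^{[\alpha]}=z(\frg^{[\alpha]})$ is abelian. I expect this last input to be the step demanding the most care. In the inner case it is immediate: a finite-order element of a compact connected group lies in a maximal torus $T'$, and its adjoint fixes $\Lie(T')$ pointwise, which is nonzero. However, although $\Ad(y)$ is inner on $\frg$, its restriction to the subalgebra $\frs$ may be an outer automorphism of $\frs$, so one cannot argue purely through a torus and must quote the general statement (which follows from the classification of finite-order automorphisms, the fixed algebra being reductive of positive rank in every case). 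A minor bookkeeping point to verify en route is that the reductive splitting $\frg^{[\alpha]}=z(\frg^{[\alpha]})\oplus\frs$ is genuinely $\Ad(A)$-stable, which holds since both summands are characteristic.
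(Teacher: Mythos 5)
Your proof is correct and follows essentially the same route as the paper: reduce to the semisimple part $\frs=(\frg^{[\alpha]})_{der}$, show $\frs^{\Ad(y)}=\frs\cap\fra=0$ using that the zero-weight space is $\fra$ and that the $A$-action factors through $A/\ker\alpha=\langle y\rangle$, and then invoke the Borel--Mostow type theorem that a finite-order automorphism of a nonzero semisimple Lie algebra has nonzero fixed-point subalgebra (the paper's ``theorem of Borel''). Your added caution that $\Ad(y)|_{\frs}$ may be outer, so the general fixed-point theorem rather than a maximal-torus argument is needed, is a correct and worthwhile observation that the paper leaves implicit.
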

\begin{proof}
Write $\frg^{[\alpha]}=z(\frg^{[\alpha]})\oplus(\frg^{[\alpha]})_{der}$ for the direct sum decomposition
of $\frg^{[\alpha]}$ into its central part and semisimple part. Since $\alpha$ is a finite root, we have
$A^0\subset\ker\alpha$. Thus, $\fra\subset z(\frg^{[\alpha]})$. As $y\in A$ generates $A/\ker\alpha$, we
have $\fra=Z_{\frg}(A)=(\frg^{[\alpha]})^{y}$. Hence, $(\frg^{[\alpha]})_{der}^{y}=0$. By a theorem of Borel,
this implies  $(\frg^{[\alpha]})_{der}=0$. Thus, $\frg^{[\alpha]}$ is abelian.
\end{proof}



Write $$\frg_{[\alpha]}=\sum_{m\in\mathbb{Z}-\{0\}}\frg_{m\alpha}.$$ Let $G_{[\alpha]}$ be the connected
Lie subgroup of $G$ with Lie algebra $\frg_0\cap\frg_{[\alpha]}$.

\begin{lemma}\label{L:closed1}
$G_{[\alpha]}$ is a torus and $G_{[\alpha]}\cap A$ is a finite group.
\end{lemma}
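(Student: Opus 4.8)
The plan is to first identify the identity component of $G^{[\alpha]}$ as a torus, then to exhibit $G_{[\alpha]}$ inside it as the image of a commutator homomorphism, which forces it to be closed, and finally to read off finiteness of $G_{[\alpha]}\cap A$ from a Lie-algebra computation. First I would record that $G^{[\alpha]}=Z_{G}(\ker\alpha)$ is a closed, hence compact, subgroup of $G$, and that by Lemma \ref{L:abelian1} its complexified Lie algebra $\frg^{[\alpha]}$ is abelian. Consequently $T:=(G^{[\alpha]})^{0}$ is a connected compact abelian Lie group, i.e.\ a torus, with complexified Lie algebra $\frg^{[\alpha]}=\fra\oplus\frg_{[\alpha]}$. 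Since $A^{0}$ is connected and lies in $G^{[\alpha]}$, we have $A^{0}\subset T$; and since $y\in A\subset G^{[\alpha]}$, conjugation by $y$ restricts to an automorphism $c_{y}$ of the normal subgroup $T$, which fixes $A^{0}$ pointwise because $A$ is abelian, so $\Ad(y)$ fixes $\fra_{0}$.

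The key step for the first assertion is to avoid arguing closedness of $G_{[\alpha]}$ directly, and instead to produce it as the image of a homomorphism of tori. I would define $\psi:T\to T$ by $\psi(t)=yty^{-1}t^{-1}$. Since $T$ is abelian and $c_{y}$ is an automorphism, $\psi$ is a homomorphism, so its image $S:=\psi(T)$ is a closed connected subgroup of $T$, i.e.\ a subtorus. Its differential is $\Ad(y)-\id$ on $\Lie(T)$. Writing $\zeta=\alpha(y)$, which is a primitive $n$-th root of unity because $y$ generates $A/\ker\alpha\cong\bbZ/n\bbZ$ and $\alpha$ has order $n$, the operator $\Ad(y)$ acts on the weight space $\frg_{j\alpha}$ by $\zeta^{j}$. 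Hence $\Ad(y)-\id$ vanishes on $\fra=\frg_{0}$ and is invertible on each $\frg_{j\alpha}$ with $1\leq j\leq n-1$, so its image on $\frg^{[\alpha]}$ is exactly $\frg_{[\alpha]}$; intersecting with the real form gives $\Lie(S)=\frg_{0}\cap\frg_{[\alpha]}$. As $S$ is connected with this Lie algebra, $S=G_{[\alpha]}$, and therefore $G_{[\alpha]}$ is a torus.

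For the second assertion I would note that $G_{[\alpha]}$ and $A$ are both closed, so $G_{[\alpha]}\cap A$ is a closed subgroup of the compact group $G$, and its Lie algebra is contained in $\Lie(G_{[\alpha]})\cap\fra_{0}=(\frg_{0}\cap\frg_{[\alpha]})\cap\fra_{0}=0$, the last equality holding because $\frg^{[\alpha]}=\fra\oplus\frg_{[\alpha]}$ is a direct sum. A closed subgroup with trivial Lie algebra is discrete, and a discrete subgroup of a compact group is finite, so $G_{[\alpha]}\cap A$ is finite.

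The only genuinely delicate point is the first assertion: a connected immersed subgroup of a torus with a prescribed Lie algebra need not be closed (an irrational winding is the standard counterexample), so one cannot simply declare $G_{[\alpha]}$ to be a subtorus from its Lie algebra. Presenting it as $\psi(T)$, the image of a torus under a continuous homomorphism, is exactly what guarantees closedness; everything else reduces to the elementary spectral analysis of the finite-order operator $\Ad(y)$ on $\frg^{[\alpha]}$.
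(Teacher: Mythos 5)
Your proof is correct and follows essentially the same route as the paper: both rest on Lemma \ref{L:abelian1} to identify $(G^{[\alpha]})^{0}$ as a torus $T$ and then realize $G_{[\alpha]}$ as a closed subgroup of $T$ cut out by an endomorphism built from $y_{\ast}=\Ad(y)|_{T}$, with finiteness of $G_{[\alpha]}\cap A$ read off from $\frg_{[\alpha]}\cap\fra=0$. The only difference is that you exhibit $G_{[\alpha]}$ as the image of $t\mapsto y_{\ast}(t)t^{-1}$ (automatically connected and closed), whereas the paper takes the neutral component of the kernel of the norm map $z_{\ast}=I+y_{\ast}+\cdots+y_{\ast}^{n-1}$; these two endomorphisms have complementary spectral supports on $\frg^{[\alpha]}$, so the arguments are interchangeable.
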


\begin{proof}
By Lemma \ref{L:abelian1}, $(G^{[\alpha]})^0$ is a torus. The conjugation action of $y$ on
$(G^{[\alpha]})^0$ is given by some $y_{\ast}\in\Aut((G^{[\alpha]})^0)$. Thus, $(\Fix y_{\ast})^0=A^0$
due to $A$ satisfies the condition $(\ast)$. As the order of $\alpha$ is $n$, the order of $y_{\ast}$
is also $n$. Write $$z_{\ast}=I+y_{\ast}+\cdots+y_{\ast}^{n-1}.$$ Then, $z_{\ast}$ is an endomorphism of
$(G^{[\alpha]})^0$ and $G_{[\alpha]}$ is the neutral subgroup of $\{x\in (G^{[\alpha]})^0: z_{\ast}(x)
=1\}.$ Thus, $G_{[\alpha]}$ is a closed subgroup of $G$. As $\frg^{[\alpha]}$ is abelian, so is
$\frg_{[\alpha]}$. Thus, $G_{[\alpha]}$ is abelian. By definition, $G_{[\alpha]}$ is connected.
Hence, it is a torus.

Since $\frg_{[\alpha]}\cap\fra=0$, it follows that $G_{[\alpha]}\cap A$ is a finite group.
\end{proof}


Write $$R^{\vee}(\alpha)=\Hom(A/\ker\alpha,A\cap G_{[\alpha]}).$$ For any $\xi\in R^{\vee}(\alpha)$,
define $s_{\alpha,\xi}:A\rightarrow A$ by $$s_{\alpha,\xi}(a)=a\xi(a),\ \forall a\in A.$$


\begin{lemma}\label{L:transvection1}
For any $\xi\in R^{\vee}(\alpha)$, there exists $n\in N_{G_{[\alpha]}}(A)$ such that $\Ad(n)|_{A}=s_{\alpha,
\xi}.$
\end{lemma}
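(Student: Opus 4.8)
The plan is to produce the required element inside the torus $T:=G_{[\alpha]}$ (a torus by Lemma \ref{L:closed1}) by solving an isogeny equation. Since $A$ is abelian with $\ker\alpha\subset Z(G^{[\alpha]})$ while $T\subset(G^{[\alpha]})^{0}$, the group $A$ normalizes $T$ and the resulting conjugation action of $A$ on $T$ factors through $A/\ker\alpha\cong C_{n}$, generated by the image of $y$; write $y_{\ast}\in\Aut(T)$ for the automorphism induced by conjugation by $y$. The relation $\Ad(g)|_{A}=s_{\alpha,\xi}$ that we want, namely $gag^{-1}=a\xi(a)$ for all $a\in A$, is equivalent (using abelianness, so that $a\xi(a)a^{-1}=\xi(a)$) to the commutator identities $[g,a]=\xi(a)$. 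First I would note that for $g\in T$ one has $gag^{-1}a^{-1}=g\cdot a_{\ast}(g)^{-1}=(\id-a_{\ast})(g)$ (written additively on $T$), where $a_{\ast}$ is conjugation by $a$; since this action factors through $\langle y_{\ast}\rangle$, it suffices to arrange the single identity $(\id-y_{\ast})(g)=\xi(y)$.

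The hard part is to solve this equation, i.e.\ to show $\id-y_{\ast}$ is surjective on $T$. For this I would pass to the Lie algebra: on $\frg_{[\alpha]}=\bigoplus_{m\in\mathbb{Z}-\{0\}}\frg_{m\alpha}$ the element $y$ acts on $\frg_{m\alpha}$ by the scalar $\alpha(y)^{m}$, and $\zeta:=\alpha(y)$ is a primitive $n$-th root of unity because $y$ generates $A/\ker\alpha$. Hence every eigenvalue of $y_{\ast}$ on $\frg_{[\alpha]}$ is some $\zeta^{m}$ with $m\not\equiv 0\pmod n$, so $\id-y_{\ast}$ has eigenvalues $1-\zeta^{m}\neq 0$ and is invertible on $\Lie(T)$. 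Therefore $\id-y_{\ast}\colon T\to T$ is an isogeny, in particular surjective, and since $\xi(y)\in A\cap T\subset T$ we may choose $g\in T$ with $(\id-y_{\ast})(g)=\xi(y)$; this $g$ is the element $n$ in the statement.

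It remains to check that this single identity forces $[g,a]=\xi(a)$ for every $a\in A$, which is where I would use abelianness once more. Writing $a=y^{k}c$ with $c\in\ker\alpha$, the relation $c_{\ast}=\id$ on $T$ gives $[g,a]=(\id-y_{\ast}^{k})(g)$, while $\xi(a)=\xi(y)^{k}=k\,\xi(y)$ additively. Factoring $\id-y_{\ast}^{k}=(\id+y_{\ast}+\cdots+y_{\ast}^{k-1})(\id-y_{\ast})$ and using $(\id-y_{\ast})(g)=\xi(y)$ reduces this to showing $y_{\ast}$ fixes $\xi(y)$; but $\xi(y)\in A$ and $A$ is abelian, so $y$ commutes with $\xi(y)$ and indeed $y_{\ast}(\xi(y))=\xi(y)$. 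Thus $[g,a]=k\,\xi(y)=\xi(a)$, whence $gag^{-1}=a\xi(a)\in A$ for all $a$; in particular $g\in N_{G_{[\alpha]}}(A)$ and $\Ad(g)|_{A}=s_{\alpha,\xi}$, as required. I expect the surjectivity of $\id-y_{\ast}$ to be the main obstacle, the final verification being a short commutator computation.
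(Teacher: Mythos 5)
Your proof is correct and follows essentially the same route as the paper's: the paper also solves the commutator equation on the torus $G_{[\alpha]}$ by observing that $\Ad(y^{-1})-I$ has no zero eigenvalue on $\frg_{[\alpha]}$ (because $\frg_{[\alpha]}\cap\fra=0$), hence the induced endomorphism of $G_{[\alpha]}$ is surjective, and then extends from $y$ to all of $A$ using that $G_{[\alpha]}$ commutes with $\ker\alpha$ and $\xi|_{\ker\alpha}=1$. Your version merely makes the eigenvalue computation and the telescoping verification for $a=y^{k}c$ more explicit.
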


\begin{proof}
Due to $\frg_{[\alpha]}\cap\fra=0$, $\Ad(y^{-1})-I$ is an endomorphism of $\frg_{[\alpha]}$ without eigenvalue
$0$. Thus, the endomorphism $$\Ad(y^{-1})-I: G_{[\alpha]}\rightarrow G_{[\alpha]}$$ is surjective. Hence, there
exists $n\in G_{[\alpha]}$ such that $y^{-1}nyn^{-1}=\xi(y)$. From this, $nyn^{-1}=y\xi(y)$. As $G_{[\alpha]}$
commutes with $\ker\alpha$ and $\xi|_{\ker\alpha}=1$, we get $nxn^{-1}=x\xi(x)$ for any $x\in A$. Therefore,
$n\in N_{G_{[\alpha]}}(A)$ and $\Ad(n)|_{A}=s_{\alpha,\xi}.$
\end{proof}

By Lemma \ref{L:transvection1}, $s_{\alpha,\xi}$ ($\xi\in R^{\vee}(\alpha)$) is an automorphism of $A$. From
this, the map $R^{\vee}(\alpha)\rightarrow W_{large}(G,A)$ given by $$\xi\mapsto s_{\alpha,\xi}$$ gives an
injection from $R^{\vee}(\alpha)$ to $W_{large}(G,A)$. We call $R^{\vee}(\alpha)$ the {\it coroot group} of
$\alpha$.

\begin{lemma}\label{L:rootSubgroup1}
We have $A\cap G_{[\alpha]}\subset\ker\alpha$.
\end{lemma}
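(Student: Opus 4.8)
The plan is to exploit the fact, established in Lemma \ref{L:closed1}, that $G_{[\alpha]}$ is a torus, hence an abelian connected Lie group. The adjoint action of any abelian Lie group on its own Lie algebra is trivial, and I would use exactly this to force $\alpha$ to vanish on every element of $A$ that happens to lie in $G_{[\alpha]}$. The whole argument amounts to computing the action of such an element on the root space $\frg_{\alpha}$ in two different ways and comparing.

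First I would identify $\frg_{[\alpha]}$ with the complexified Lie algebra of $G_{[\alpha]}$. Since the conjugation action of $A$ preserves the compact real form $\frg_0$, complex conjugation on $\frg$ carries $\frg_{m\alpha}$ to $\frg_{-m\alpha}$; hence $\frg_{[\alpha]}=\sum_{m\in\mathbb{Z}-\{0\}}\frg_{m\alpha}$ is stable under conjugation and is the complexification of the real subalgebra $\frg_0\cap\frg_{[\alpha]}=\Lie(G_{[\alpha]})$. In particular the root space $\frg_{\alpha}$, which is nonzero because $\alpha\in R(G,A)$, is contained in $\frg_{[\alpha]}=\Lie(G_{[\alpha]})\otimes_{\mathbb{R}}\mathbb{C}$.

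Next I would take an arbitrary $a\in A\cap G_{[\alpha]}$ and examine $\Ad(a)$ on $\frg_{\alpha}$. On one hand, since $a\in G_{[\alpha]}$ and $G_{[\alpha]}$ is abelian, $\Ad(a)$ acts as the identity on $\Lie(G_{[\alpha]})$ and therefore on its complexification $\frg_{[\alpha]}\supset\frg_{\alpha}$. On the other hand, since $a\in A$ and $\frg_{\alpha}$ is by definition the weight space on which $A$ acts through the character $\alpha$, the map $\Ad(a)$ acts on $\frg_{\alpha}$ as the scalar $\alpha(a)$. Choosing any $0\neq Y\in\frg_{\alpha}$ gives $\alpha(a)Y=\Ad(a)Y=Y$, so $\alpha(a)=1$, i.e. $a\in\ker\alpha$.

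I do not expect a serious obstacle. The only point requiring genuine care is the identification of $\frg_{[\alpha]}$ with the complexification of $\Lie(G_{[\alpha]})$, which rests on the conjugation-stability noted above, together with the standard fact that the torus $G_{[\alpha]}$ acts trivially on its own Lie algebra via $\Ad$. Once these are in place the conclusion is immediate from comparing the two descriptions of $\Ad(a)$ on the nonzero space $\frg_{\alpha}$.
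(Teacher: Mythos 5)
Your argument is correct and is essentially the paper's own proof: both use that the torus $G_{[\alpha]}$ acts trivially via $\Ad$ on its Lie algebra, hence on $\frg_{\alpha}\neq 0$, while an element of $A$ acts there by the scalar $\alpha(a)$, forcing $\alpha(a)=1$. The extra care you take in identifying $\frg_{[\alpha]}$ with the complexified Lie algebra of $G_{[\alpha]}$ is a detail the paper leaves implicit but changes nothing in substance.
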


\begin{proof}
Let $x\in A\cap G_{[\alpha]}$. Since $G_{[\alpha]}$ is a torus, it follows that $\Ad(x)$ acts trivially
on $\frg_{k\alpha}$ for any $k\in\mathbb{Z}-\{0\}$. Thus, $(k\alpha)(x)=1$ if $\frg_{k\alpha}\neq 0$.
Particularly we have $\alpha(x)=1$. Hence, $x\in\ker\alpha$. Therefore, $A\cap G_{[\alpha]}\subset
\ker\alpha$.
\end{proof}

\begin{lemma}\label{L:cardinality1}
Let $H$ be a torus and $y$ be an automorphism of $H$ of finite order. For an integer $d$, write $m_{y}(d)$
for the multiplicity of an $d$-th primitive root eigenvalue of the adjoint action of $y$ on $\frh$. Then,
$$|H^{y}|=\prod_{p^{k}|n, p\textrm{ prime}, k\geq 1}p^{m_{y}(p^{k})}.$$
\end{lemma}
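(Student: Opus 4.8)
The plan is to transport everything to the cocharacter lattice and then reduce the computation of $|H^{y}|$ to a determinant that factors through cyclotomic polynomials. Write $Y=X_{\ast}(H)$, a free $\mathbb{Z}$-module of rank $\dim H$, so that $H\cong(Y\otimes_{\mathbb{Z}}\mathbb{R})/Y$ and the automorphism $y$ corresponds to an element $M\in\GL(Y)$ with $M^{n}=\Id$, $n=\ord(y)$; here the adjoint action of $y$ on $\frh=Y\otimes_{\mathbb{Z}}\mathbb{C}$ is the complexification of $M$, so $m_{y}(d)$ is exactly the multiplicity of $\Phi_{d}$ in the characteristic polynomial of $M$. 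Under this identification a class $\bar v\in H$ is $y$-fixed exactly when $(M-\Id)v\in Y$, so $H^{y}=\{v\in Y\otimes_{\mathbb{Z}}\mathbb{R}:(M-\Id)v\in Y\}/Y$. I first note that $H^{y}$ is finite if and only if $\det(M-\Id)\neq0$, i.e.\ $1$ is not an eigenvalue of $M$, i.e.\ $m_{y}(1)=0$; this is the hypothesis needed for the right-hand side (a finite product) to equal $|H^{y}|$, and it holds in every application since there $y$ acts on $\frh$ without nonzero fixed vectors.

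Assuming $m_{y}(1)=0$, set $N=M-\Id\in\End(Y)$, which is injective because $\det N\neq0$. Since $N(Y)\subset Y$ we have $Y\subset N^{-1}(Y)$, and $N$ restricts to a bijection $N^{-1}(Y)\to Y$ carrying the sublattice $Y$ onto $N(Y)$; hence it induces an isomorphism $H^{y}=N^{-1}(Y)/Y\cong Y/N(Y)$. Therefore
\begin{equation*}
|H^{y}|=[Y:N(Y)]=|\det(M-\Id)|,
\end{equation*}
the last equality being the standard index-equals-determinant identity for an injective endomorphism of a free $\mathbb{Z}$-module (via Smith normal form). This identification is the concrete heart of the argument.

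Finally I would evaluate $|\det(M-\Id)|$ number-theoretically. As $M^{n}=\Id$, its characteristic polynomial splits over $\mathbb{Q}$ into cyclotomic factors $\det(x\Id-M)=\prod_{d\mid n}\Phi_{d}(x)^{m_{y}(d)}$, the exponent of $\Phi_{d}$ being the common multiplicity $m_{y}(d)$ of each primitive $d$-th root (equal for all such roots since $M$ is integral, so its characteristic polynomial has rational coefficients). Evaluating at $x=1$ and taking absolute values gives $|\det(M-\Id)|=\prod_{d\mid n}|\Phi_{d}(1)|^{m_{y}(d)}$, where the factor $d=1$ is absent precisely because $m_{y}(1)=0$. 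The classical values $\Phi_{d}(1)=p$ when $d=p^{k}$ is a prime power with $k\geq1$, and $\Phi_{d}(1)=1$ when $d>1$ has at least two distinct prime divisors, then collapse the product to $\prod_{p^{k}\mid n}p^{m_{y}(p^{k})}$, as claimed. The only genuine obstacle is this cyclotomic evaluation; I would either cite it or re-derive $\Phi_{p^{k}}(1)=\Phi_{p}(1)=p$ from $\Phi_{p^{k}}(x)=(x^{p^{k}}-1)/(x^{p^{k-1}}-1)$ and deduce $\Phi_{d}(1)=1$ for $d$ with several prime factors by Möbius inversion of the identity $\prod_{d\mid m,\,d>1}\Phi_{d}(1)=m$.
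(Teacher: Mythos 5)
Your proof is correct and follows essentially the same route as the paper: identify $H^{y}$ with $(I-y_{\ast})^{-1}(L)/L$, compute its order as $|\det(I-y_{\ast})|$ (the paper does this by comparing covolumes, you by Smith normal form — the same fact), and then evaluate the cyclotomic factorization of the characteristic polynomial at $1$. Your explicit remark that the formula requires $m_{y}(1)=0$ (equivalently $H^{y}$ finite) is a worthwhile precision that the paper's statement leaves implicit, and it does hold in all the applications.
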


\begin{proof}
Write $L=X_{\ast}(H)$ and $U=L\otimes_{\mathbb{Z}}\mathbb{R}.$ Then, $U$ can be identified with the Lie
algebra of $H$. Write $y_{\ast}\in\GL(L)\subset\GL(U)$ for the adjoint action of $y$ on $L$ (and on $U$).
Then, $H^{y}$ can be identified with $(I-y_{\ast})^{-1}(L)/L$. Comparing volumes of $U/L$ and
$U/(I-y_{\ast})^{-1}(L)$, we see that $$|H^{y}|=|(I-y_{\ast})^{-1}(L)/L|=|\det(I-y_{\ast})|.$$ Write
$\Phi_{d}\in\mathbb{Z}[T]$ for the cyclotomic polynomial of degree $d$. Then, the characteristic
polynomial of $y_{\ast}$ is $$f=\prod_{d|n}\Phi_{d}^{m_{y}(d)}.$$ Therefore, $$|\det(I-y_{\ast})|=|f(1)|
=\prod_{d|n}\Phi_{d}(1)^{m_{y}(d)}.$$ For a positive integer $d$, $\Phi_{d}(1)\neq 1$ only when $d=p^{k}$
for some prime $p$ and integer $k\geq 1$. Moreover $\Phi_{p^{k}}(1)=p$ for any $k\geq 1$. Thus, $$|H^{y}|
=|\det(I-y_{\ast})|=\prod_{p^{k}|n, p\textrm{ prime}, k\geq 1}p^{m_{y}(p^{k})}.$$
\end{proof}

\begin{theorem}[Vogan's counting formula]\label{T:Vogan1}
Let $A$ be maximal abelian subgroup of a compact $G$, and $\alpha$ be a finite root of $A$ with order $n$.
Then \begin{equation}\label{Eq3}|R^{\vee}(\alpha)|=\prod_{p^{k}|n, p\textrm{ prime}, k\geq 1}
p^{\dim\frg_{\frac{n}{p^{k}}\alpha}}.\end{equation}
\end{theorem}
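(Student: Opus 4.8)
The plan is to reduce $|R^{\vee}(\alpha)|$ to a count of fixed points of $y$ on the torus $G_{[\alpha]}$ (Lemma \ref{L:closed1}) and then to invoke Lemma \ref{L:cardinality1}. Recall that $R^{\vee}(\alpha)=\Hom(A/\ker\alpha,A\cap G_{[\alpha]})$ with $A/\ker\alpha\cong\mathbb{Z}/n\mathbb{Z}$ generated by the image of $y$. The first step is to identify
$$A\cap G_{[\alpha]}=(G_{[\alpha]})^{y},$$
where $y$ acts by conjugation. This is exactly where the hypothesis that $A$ is maximal abelian enters (for a general $(\ast)$-subgroup one would only get an inclusion, and the formula could fail): maximality gives $Z_{G}(A)=A$. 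The inclusion ``$\subseteq$'' holds because $A$ is abelian. Conversely, any $x\in(G_{[\alpha]})^{y}$ commutes with $y$, and since $G_{[\alpha]}\subseteq G^{[\alpha]}=Z_{G}(\ker\alpha)$ centralizes $\ker\alpha$ while $A=\langle y\rangle\cdot\ker\alpha$, such $x$ centralizes all of $A$; hence $x\in Z_{G}(A)=A$.

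Next I would show $|R^{\vee}(\alpha)|=|(G_{[\alpha]})^{y}|$. For any abelian group $M$ one has $\Hom(\mathbb{Z}/n\mathbb{Z},M)\cong M[n]$, the $n$-torsion subgroup, so it suffices to prove that $M=(G_{[\alpha]})^{y}$ is killed by $n$. Writing $L=X_{\ast}(G_{[\alpha]})$ and $y_{\ast}\in\GL(L)$ for the induced automorphism, the argument in the proof of Lemma \ref{L:cardinality1} identifies $(G_{[\alpha]})^{y}\cong\coker(y_{\ast}-I)=L/(y_{\ast}-I)L$. Now $y_{\ast}$ has no eigenvalue $1$, because it acts on $\frg_{m\alpha}$ by $\zeta^{m}$ with $\zeta=\alpha(y)$ a primitive $n$-th root of unity, and every weight $m\alpha$ occurring in $\frg_{[\alpha]}$ has $m\not\equiv 0\pmod n$. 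Hence $N=\sum_{j=0}^{n-1}y_{\ast}^{j}$ satisfies $(y_{\ast}-I)N=y_{\ast}^{n}-I=0$ and so $N=0$ on $L$; consequently $-nI=\sum_{j=0}^{n-1}(y_{\ast}^{j}-I)$ lies in $(y_{\ast}-I)\End(L)$, as each summand does, giving $nL\subseteq(y_{\ast}-I)L$. Thus $M$ is killed by $n$ and $|R^{\vee}(\alpha)|=|M|=|(G_{[\alpha]})^{y}|$.

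Finally I would apply Lemma \ref{L:cardinality1} with $H=G_{[\alpha]}$ and $y$ of order $n$, obtaining $|(G_{[\alpha]})^{y}|=\prod_{p^{k}\mid n}p^{m_{y}(p^{k})}$, and identify $m_{y}(p^{k})=\dim\frg_{\frac{n}{p^{k}}\alpha}$. Indeed $y$ acts on $\frg_{m\alpha}$ by the scalar $\zeta^{m}$, so $\frg_{m\alpha}$ is precisely the $\zeta^{m}$-eigenspace; taking the distinguished primitive $p^{k}$-th root $\zeta^{n/p^{k}}$, its eigenspace is exactly $\frg_{\frac{n}{p^{k}}\alpha}$. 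Because the characteristic polynomial of $y_{\ast}$ on $L$ is an integral product of cyclotomic polynomials, every primitive $p^{k}$-th root of unity occurs with the same multiplicity, namely $m_{y}(p^{k})$; this common value therefore equals $\dim\frg_{\frac{n}{p^{k}}\alpha}$, and substituting yields the stated formula.

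The step I expect to be most delicate is this last multiplicity identification: a priori $m_{y}(p^{k})$ aggregates the dimensions $\dim\frg_{\frac{n}{p^{k}}m'\alpha}$ over all $m'$ prime to $p$, and it is the Galois-stability forced by integrality of the characteristic polynomial that collapses this sum to the single eigenspace $\frg_{\frac{n}{p^{k}}\alpha}$. The identification $A\cap G_{[\alpha]}=(G_{[\alpha]})^{y}$, which crucially uses maximality of $A$, is the other essential and non-formal ingredient.
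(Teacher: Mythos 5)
Your proof is correct and follows essentially the same route as the paper: maximality gives $A\cap G_{[\alpha]}=(G_{[\alpha]})^{y}$ via $Z_{G}(A)=A$, and the count then comes from Lemma \ref{L:cardinality1}. You additionally make explicit two points the paper's proof leaves implicit --- that $(G_{[\alpha]})^{y}$ is killed by $n$ (so passing to $\Hom(\mathbb{Z}/n\mathbb{Z},\cdot)$ loses nothing) and that Galois-stability of the integral characteristic polynomial identifies $m_{y}(p^{k})$ with $\dim\frg_{\frac{n}{p^{k}}\alpha}$ --- and both are handled correctly.
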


\begin{proof}
Since $A$ is a maximal abelian subgroup of $G$, we have $Z_{G}(A)=A$. Thus, $$(G_{[\alpha]})^{y}\subset
(G^{[\alpha]})^{y}=Z_{G}(A)=A.$$ Hence, $(G_{[\alpha]})^{y}\subset A\cap G_{[\alpha]}$. As $A$ is abelian,
we have $A\cap G_{[\alpha]}\subset(G_{[\alpha]})^{y}.$ Therefore, $A\cap G_{[\alpha]}=(G_{[\alpha]})^{y}$.
The conclusion follows from Lemma \ref{L:cardinality1}.
\end{proof}

When $A$ is a finite maximal abelian subgroup, Formula (\ref{Eq3}) is shown in \cite[Eq. (3.7g)]{Han-Vogan}.

\smallskip

We call the set $R(G,A)$ together with the map $\alpha\mapsto\check{\alpha}$ for infinite roots and coroot
groups for finite roots the {\it twisted root system} of $A$ in $G$. Define $W_{small}(G,A)$ as the subgroup
of $W_{large}(G,A)$ generated by $s_{\alpha}$ ($\alpha\in R(G,A)-R_{0}(G,A)$) and $s_{\alpha,\xi}$ ($\alpha
\in R_{0}(G,A)$, $\xi\in R^{\vee}(\alpha)\}$). We call $W_{small}(G,A)$ the {\it small Weyl group} of $A$.
Apparently, the action of $W_{large}(G,A)$ on $A$ preserves infinite roots, the map $\alpha\mapsto
\check{\alpha}$ ($\alpha\in R(G,A)-R_{0}(G,A)$), finite roots, and coroot groups. So does $W_{small}(G,A)$.


\subsection{Weyl group}

Let $A$ be a $(\ast)$-subgroup of a compact Lie group $G$. In the above, we have associated with $A$ a 
twisted root system consisting of a set of roots $R(G,A)$, together with the map of coroots $\alpha
\mapsto\check{\alpha}$ for infinite roots and coroot groups $R^{\vee}(\alpha)$ for finite roots. The 
small Weyl group $W_{small}(G,A)$ is a finite group of automorphisms of $A$ generated by reflections 
$s_{\alpha}$ ($\alpha\in R(G,A)-R_{0}(G,A)$) and transvections $s_{\alpha,\xi}$ ($\alpha\in R_{0}(G,A)$, 
$\xi\in R^{\vee}(\alpha)$).

Set \begin{equation}\label{Eq:W0}W_{0}=\{\gamma\in W_{small}(G,A):\gamma|_{A^0}=\id\},\end{equation}
\begin{equation}\label{Eq:W1}W_{1}=\{\gamma\in W_{small}(G,A):\gamma|_{A/A^0}=\id\},\end{equation}
\begin{equation}\label{Eq:W'}W'=\{\gamma\in W: \gamma|_{A/A^0}=\id,\gamma|_{A^0}=\id\}.\end{equation}
\begin{equation}\label{Eq:Wf}W_{f}=\langle s_{\alpha,\xi}:\alpha\in R_{0}(G,A),\xi\in R^{\vee}(\alpha)
\rangle,\end{equation}
\begin{equation}\label{Eq:Wt}W_{tiny}=\langle s_{\alpha}:\alpha\in R(G,A)-R_{0}(G,A)\rangle,\end{equation}
Apparently, $W_0$, $W_1$, $W'$, $W_{f}$, $W_{tiny}$ are all normal subgroups of $W_{small}(G,A)$.


\begin{proposition}\label{P:W-split}
We have $W=W_{f}W_{tiny},$ $$W_{0}=W_{f}(W_{tiny}\cap W_{0})=W_{f}W',$$
$$W_{1}=W_{tiny}(W_{f}\cap W_{1})=W_{tiny}W',$$
$$W'=W_{0}\cap W_{1}=(W'\cap W_{f})(W'\cap W_{tiny}).$$
\end{proposition}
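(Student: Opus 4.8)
The plan is to reduce everything to two structural containments, $W_{tiny}\subseteq W_1$ and $W_f\subseteq W_0$, after which all four displayed identities fall out by elementary manipulation of products of normal subgroups.

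First I would record these two containments. A generator $s_{\alpha}$ of $W_{tiny}$ comes from an infinite root, and its formula $s_{\alpha}(a)=a\,\check{\alpha}(\alpha(a))^{-1}$ shows that $a$ and $s_{\alpha}(a)$ differ by an element of $\Im\check{\alpha}$; since $\Im\check{\alpha}\subseteq T_{[\alpha]}\subseteq G_{[\alpha]}$ and $G_{[\alpha]}$ is connected, that element lies in $A^{0}$, so $s_{\alpha}|_{A/A^{0}}=\id$ and hence $W_{tiny}\subseteq W_{1}$. Dually, a generator $s_{\alpha,\xi}$ of $W_{f}$ comes from a finite root, so $A^{0}\subseteq\ker\alpha$; as $\xi\in\Hom(A/\ker\alpha,A\cap G_{[\alpha]})$ kills $\ker\alpha$ it kills $A^{0}$, whence $s_{\alpha,\xi}|_{A^{0}}=\id$ and $W_{f}\subseteq W_{0}$.

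Next, for the first identity I note that $W_{small}(G,A)$ is by definition generated by the $s_{\alpha}$ and the $s_{\alpha,\xi}$, i.e.\ by $W_{tiny}$ and $W_{f}$; as both are normal, their product is a subgroup containing all generators, so $W=W_{f}W_{tiny}$. For the second identity I take $\gamma\in W_{0}$, write $\gamma=ft$ with $f\in W_{f}$ and $t\in W_{tiny}$, and use $f|_{A^{0}}=\id$ (from $W_{f}\subseteq W_{0}$) to get $t|_{A^{0}}=\id$, i.e.\ $t\in W_{tiny}\cap W_{0}$; this yields $W_{0}=W_{f}(W_{tiny}\cap W_{0})$. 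Since $W_{tiny}\subseteq W_{1}$ gives $W_{tiny}\cap W_{0}\subseteq W_{1}\cap W_{0}=W'$, while $W_{f}W'\subseteq W_{0}$, the chain $W_{0}=W_{f}(W_{tiny}\cap W_{0})\subseteq W_{f}W'\subseteq W_{0}$ forces $W_{0}=W_{f}(W_{tiny}\cap W_{0})=W_{f}W'$. The third identity is proved identically with the roles of $(W_{f},W_{0})$ and $(W_{tiny},W_{1})$ interchanged, using $W_{tiny}\subseteq W_{1}$ and $W_{f}\cap W_{1}\subseteq W'$.

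Finally, for the fourth identity I start from $\gamma\in W'=W_{0}\cap W_{1}$ and invoke the decomposition $\gamma=ft$ of the second identity, with $f\in W_{f}$ and $t\in W_{tiny}\cap W_{0}$. Then $t\in W_{tiny}\cap W_{0}\subseteq W'$, so $t\in W'\cap W_{tiny}$; and since $\gamma,t\in W_{1}$ I get $f=\gamma t^{-1}\in W_{1}$, while $f\in W_{f}\subseteq W_{0}$, so $f\in W_{0}\cap W_{1}=W'$ and thus $f\in W'\cap W_{f}$. The reverse inclusion is immediate because $W'$ is a group, giving $W'=(W'\cap W_{f})(W'\cap W_{tiny})$. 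The only real obstacle is the bookkeeping: once $W_{tiny}\subseteq W_{1}$ and $W_{f}\subseteq W_{0}$ are established, every equality is obtained by writing a group product, restricting to $A^{0}$ or to $A/A^{0}$, and sandwiching between two inclusions.
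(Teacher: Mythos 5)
Your proof is correct and follows essentially the same route as the paper: establish the two containments $W_{tiny}\subseteq W_{1}$ and $W_{f}\subseteq W_{0}$ directly from the formulas for $s_{\alpha}$ and $s_{\alpha,\xi}$, then derive each identity by factoring an element through $W=W_{f}W_{tiny}$ and sandwiching between inclusions. If anything, your handling of the final identity $W'=(W'\cap W_{f})(W'\cap W_{tiny})$ is stated more cleanly than the paper's version, which contains a couple of garbled inclusions at that step.
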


\begin{proof}
From the definitions it follows that $W=W_{f}W_{tiny}$ and $W'=W_{0}\cap W_{1}$.

For an infinite root $\alpha$, from the definition $$s_{\alpha}(a)=a\check{\alpha}(\alpha(a)^{-1}),
\ \forall a\in A,$$ we see that $s_{\alpha}\in W_1$. Thus, $W_{tiny}\subset W_1$. For a finite root
$\alpha$ and $\xi\in R^{\vee}(\alpha)$, from the definition $$s_{\alpha,\xi}(a)=a\xi(\alpha(a)),
\ \forall a\in A,$$ we see that $s_{\alpha,\xi}\in W_{0}$. Thus, $W_{f}\subset W_{0}$.

It is clear that $W_1\supset W'W_{tiny}$. On the other hand, \begin{eqnarray*}W_1&=&W_1\cap W\\&=&
W_1\cap W_{f}W_{tiny}\\&=&W_{tiny}(W_1\cap W_{f})\\&\subset&W_{tiny}(W_1\cap W_0)\\&=&W_{tiny}W'.
\end{eqnarray*} Thus, $W_1=W'W_{tiny}$. Similarly one can show $W_{0}=W_{f}(W_{tiny}\cap W_{0})=
W_{f}W'$.

Given an element $\gamma\in W'$, write $\gamma=\gamma_1\gamma_2$ for some $\gamma_1\in W_{tiny}$ and
$\gamma_2\in W_{f}$. Then, $$\gamma_1=\gamma\gamma_2^{-1}\in W_{tiny}\cap W'W_{f}\subset W_{tiny}\cap
W_{0}=W_{tiny}\cap W'.$$ In the last equality we use $W_{tiny}\cap W_{0}\subset W_{1}\subset W_{0}=W'$.
Similarly one can show $\gamma_2\in W_{f}\subset W'$. Thus, $W'=(W'\cap W_{f})(W'\cap W_{tiny})$.
\end{proof}



For an infinite root $\alpha$, write $$R_{1,\alpha}=\{\beta\in R(G,A):\beta|_{A^0}=\alpha|_{A^0}\},$$
$$R_{2,\alpha}=\{\beta\in R(G,A):\beta|_{A^0}=2\alpha|_{A^0}\},$$ $$R_{\alpha}=R_{1,\alpha}\cup
R_{2,\alpha}.$$ We call $R_{1,\alpha}$ the {\it $\alpha$-strip}. Write\footnotemark $$R_{0,\alpha}=
\{\beta-\alpha\in X^{\ast}(A):\beta\in R_{1,\alpha}\},$$ $$Z_{0,\alpha}=\{\beta-2\alpha\in X^{\ast}(A):
\beta\in R_{2,\alpha}\}.$$  \footnotetext{By Lemma \ref{L:strip1} below, if $\beta|_{A^0}=
2\alpha|_{A^0}$, then $\beta-\alpha\in R_{1,\alpha}$. Thus, $Z_{0,\alpha}\subset R_{0,\alpha}$.}

Take a simple system $\{\alpha'_{i}: 1\leq i\leq l\}$ of $R'(G,A)$. For each $i$, choose $\alpha_{i}\in
R(G,A)$ such that $\alpha_{i}|_{A^{0}}=\alpha'_{i}$. Write $$B=\bigcap_{1\leq i\leq s}\ker\alpha_{i}.$$
Then, $A=A^{0}\cdot B$. Write $R'$ for the sub-root system generated by $\alpha_1,\dots,\alpha_{l}$.
Then, $R'=\{\alpha\in R(G,A):\alpha|_{B}=1\}$, and the restriction map $$R'\rightarrow R'(G,A),\quad
\beta\mapsto\beta|_{A^{0}}$$ is an isomorphism.

\begin{proposition}\label{P:W0}
We have $W_{small}=W_{0}\rtimes W_{R'}$ and $W_{0}$ is generated by $W_{f}$ and $s_{\beta_1}s_{\beta_2}$
($\beta_1,\beta_{2}\in R_{\beta}$, $\beta\in R'$).
\end{proposition}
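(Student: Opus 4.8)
The plan is to prove the two assertions in turn, using the decompositions of Proposition \ref{P:W-split} together with the splitting provided by the simple subsystem $R'$.

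First I would set up the semidirect decomposition. Restriction to $A^{0}$ gives a homomorphism $\rho\colon W_{small}\to\Aut(A^{0})$. From $s_{\alpha}|_{A^{0}}=s_{\alpha|_{A^{0}}}$ for an infinite root $\alpha$, and $s_{\alpha,\xi}|_{A^{0}}=\id$ for a finite root $\alpha$ (since $A^{0}\subset\ker\alpha$ forces $\xi|_{A^{0}}=1$), the image of $\rho$ is exactly $W(R'(G,A))=\langle s_{\alpha'}:\alpha'\in R'(G,A)\rangle$ and $\ker\rho=W_{0}$. Next I would check that $\rho$ restricts to an isomorphism $W_{R'}\xrightarrow{\sim}W(R'(G,A))$. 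Surjectivity is clear since $\{\beta|_{A^{0}}:\beta\in R'\}=R'(G,A)$. For injectivity, note that each $\beta\in R'$ satisfies $\beta|_{B}=1$, so $s_{\beta}(b)=b\,\check{\beta}(\beta(b))^{-1}=b$ for all $b\in B$; hence $W_{R'}$ fixes $B$ pointwise. If $w\in W_{R'}\cap W_{0}$, then $w$ is the identity on both $A^{0}$ and $B$, and since $A=A^{0}\cdot B$ and $W_{small}$ acts faithfully on $A$, we get $w=\id$. Thus $W_{R'}$ is a complement to the normal subgroup $W_{0}$, giving $W_{small}=W_{0}\rtimes W_{R'}$.

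For the second assertion I would first record that each proposed generator lies in $W_{0}$: for $\beta_{1},\beta_{2}\in R_{\beta}$ the restrictions $\beta_{1}|_{A^{0}},\beta_{2}|_{A^{0}}$ are proportional to $\beta|_{A^{0}}$, so $s_{\beta_{1}}|_{A^{0}}=s_{\beta_{2}}|_{A^{0}}=s_{\beta|_{A^{0}}}$ and $(s_{\beta_{1}}s_{\beta_{2}})|_{A^{0}}=\id$. Since $W_{f}\subset W_{0}$ and $W_{0}=W_{f}(W_{tiny}\cap W_{0})$ by Proposition \ref{P:W-split}, it suffices to show that $K:=W_{tiny}\cap W_{0}$ is generated by products $s_{\beta_{1}}s_{\beta_{2}}$ with $\beta_{1},\beta_{2}\in R_{\beta}$, $\beta\in R'$. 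Here I would use that $W_{tiny}=K\rtimes W_{R'}$ (the argument of the first paragraph with $W_{tiny}$ in place of $W_{small}$) and apply Schreier rewriting relative to the complement $Q:=W_{R'}$. For each infinite root $\beta$ let $\gamma(\beta)\in R'$ be the unique root with $\gamma(\beta)|_{A^{0}}=\beta|_{A^{0}}$; then $x_{\beta}:=s_{\beta}s_{\gamma(\beta)}\in K$ and $s_{\beta}=x_{\beta}\,s_{\gamma(\beta)}$ with $s_{\gamma(\beta)}\in Q$. The Schreier generators of $K$ are the conjugates $q\,x_{\beta}\,q^{-1}$ ($q\in Q$), and using that $W_{small}$ preserves infinite roots and the coroot map, so that $q\,s_{\delta}\,q^{-1}=s_{q\delta}$, one computes
\[q\,x_{\beta}\,q^{-1}=s_{q\beta}\,s_{q\gamma(\beta)}.\]
Since $q\beta$ and $q\gamma(\beta)$ both restrict to $q(\beta|_{A^{0}})\in R'(G,A)$, they lie in the same strip $R_{1,q\beta}=R_{1,\delta}\subset R_{\delta}$ for the unique $\delta\in R'$ with $\delta|_{A^{0}}=q(\beta|_{A^{0}})$. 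Hence every Schreier generator has the required form, $K$ is generated by such products, and combining with $W_{f}$ yields $W_{0}$.

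The main obstacle is the second assertion: pinning down the kernel $K=W_{tiny}\cap W_{0}$ explicitly. The semidirect decomposition and the Schreier rewriting reduce it to bookkeeping, but two points need genuine care: verifying that $W_{R'}$ is an honest complement (faithfulness of the $W_{small}$-action on $A=A^{0}\cdot B$, together with $s_{\beta}|_{B}=\id$ for $\beta\in R'$, is what makes this work), and checking that each Schreier generator, after conjugation by $Q$, remains a product of two reflections inside a single strip $R_{\delta}$ with $\delta\in R'$.
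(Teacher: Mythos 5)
Your proof is correct and follows essentially the same route as the paper's: both rest on $W_{small}=W_{f}W_{tiny}$ from Proposition \ref{P:W-split}, the isomorphism $R'\to R'(G,A)$ to get $W_{0}\cap W_{R'}=\{1\}$, and the factorization $s_{\beta}=(s_{\beta}s_{\gamma(\beta)})s_{\gamma(\beta)}$ to see that $W_{tiny}$ is generated by $W_{R'}$ together with the products $s_{\beta_1}s_{\beta_2}$. Your restriction homomorphism $\rho$ and the Schreier rewriting merely make explicit what the paper dismisses as ``clear,'' so the argument matches in substance.
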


\begin{proof}
It is clear that $W_{tiny}$ is generated by $W_{R'}$ and $s_{\beta_1}s_{\beta_2}$ ($\beta_1,\beta_{2}
\in R_{\beta}$, $\beta\in R'$). It is easy to show $s_{\beta_1}s_{\beta_2}\in W'\subset W_{0}$ for any
$\beta_1,\beta_{2}\in R_{\beta}$. Due to $R'\rightarrow R'(G,A)$ is an isomorphism, we have $W_{0}\cap
W_{R'}=\{1\}$. Since $W_{f}\subset W_{0}$ and $W_{0}\cap W_{R'}=\{1\}$, we get $$W_{small}=W_{f}W_{tiny}
=W_{0}\rtimes W_{R'},$$ and $W_{0}$ is generated by $W_{f}$ by $s_{\beta_1}s_{\beta_2}$ ($\beta_1,
\beta_{2}\in R_{\beta}$, $\beta\in R'$).
\end{proof}

\begin{lemma}\label{L:generator}
Let $A$ be a compact abelian group, and $\nu_1,\dots,\nu_{s}\in X^{\ast}(A)$ be linear characters of $A$.
If $\bigcap_{1\leq i\leq s}\ker\nu_{i}=1$, then $\nu_1,\dots,\nu_{s}$ generate $X^{\ast}(A)$.
\end{lemma}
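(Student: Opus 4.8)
The plan is to prove the statement by passing to the Pontryagin dual (character-group) picture, where the condition $\bigcap_i \ker\nu_i = 1$ becomes the assertion that a certain homomorphism is injective, and then dualize to conclude surjectivity of the map generating $X^{\ast}(A)$. Concretely, the characters $\nu_1,\dots,\nu_s$ assemble into a single homomorphism $\nu = (\nu_1,\dots,\nu_s)\colon A \to \U(1)^s$, and the hypothesis $\bigcap_{i} \ker\nu_i = 1$ says exactly that $\ker\nu = 1$, i.e. $\nu$ is injective.

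The key duality input is that for compact abelian groups the Pontryagin dual functor $A \mapsto X^{\ast}(A)$ is exact and contravariant, so an injection $\nu\colon A \hookrightarrow \U(1)^s$ dualizes to a surjection $\nu^{\vee}\colon X^{\ast}(\U(1)^s) \twoheadrightarrow X^{\ast}(A)$. First I would identify $X^{\ast}(\U(1)^s) \cong \mathbb{Z}^s$, generated by the coordinate projections $e_1,\dots,e_s$ where $e_i$ is the $i$-th projection $\U(1)^s \to \U(1)$. Then the map $\nu^{\vee}$ sends $e_i$ to $e_i \circ \nu = \nu_i \in X^{\ast}(A)$. Since $\nu^{\vee}$ is surjective and $\mathbb{Z}^s$ is generated by $e_1,\dots,e_s$, the images $\nu_1,\dots,\nu_s$ generate $X^{\ast}(A)$, which is exactly the conclusion.

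The only subtlety I anticipate is justifying that the dual of an injection of compact abelian groups is a surjection of the (discrete) character groups; this is precisely the exactness of Pontryagin duality for locally compact abelian groups, and the injectivity of $\U(1)$ as a divisible group is what makes the relevant $\mathrm{Hom}(-,\U(1))$ functor exact on the category of (compact) abelian groups. This is the step I expect to be the main point, though it is standard. Alternatively, I could avoid citing exactness directly and argue elementarily: any character $\chi \in X^{\ast}(A)$ vanishes on $\ker\nu = \bigcap_i \ker\nu_i = 1$, so $\chi$ factors through the injection $\nu$, i.e. $\chi = \psi \circ \nu$ for some continuous character $\psi$ of the closed subgroup $\nu(A) \subset \U(1)^s$; extending $\psi$ to all of $\U(1)^s$ (again using divisibility/injectivity of $\U(1)$) and writing the extension as an integer combination of the coordinate characters $e_i$ exhibits $\chi$ as an integer combination of the $\nu_i$. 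I would present whichever of these two routes is cleaner in context, most likely the compact-dual version since it is a one-line application of exactness.
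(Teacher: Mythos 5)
Your proof is correct, and it takes a genuinely different route from the paper's. You package the characters into a single map $\nu=(\nu_1,\dots,\nu_s)\colon A\to \U(1)^{s}$, observe that the hypothesis is exactly injectivity of $\nu$ (and that $\nu(A)$ is closed since $A$ is compact), and then invoke exactness of Pontryagin duality --- equivalently, the extendability of characters from a closed subgroup of $\U(1)^{s}$, coming from the injectivity/divisibility of $\U(1)$ --- to get a surjection $\mathbb{Z}^{s}\cong X^{\ast}(\U(1)^{s})\twoheadrightarrow X^{\ast}(A)$ sending $e_{i}\mapsto\nu_{i}$. The paper instead argues by hand: it restricts the subgroup $X'=\langle\nu_1,\dots,\nu_s\rangle$ to $X^{\ast}(A^{0})$, uses the Smith normal form for subgroups of a free $\mathbb{Z}$-module to show the restriction is all of $X^{\ast}(A^{0})$ (otherwise the common kernel would meet $A^{0}$ nontrivially), then splits $A=A^{0}\times B$ with $B$ finite and finishes by a counting argument $|X'/X''|=|B|$ using duality for the finite group $B$. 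Your approach is shorter and more conceptual, at the price of citing the duality/extension theorem as a black box; the paper's approach is more elementary and self-contained, using only finite-group duality and linear algebra over $\mathbb{Z}$, which fits its generally hands-on style. Both are complete; the one point you should make explicit if you write yours up is that a continuous injection of a compact group into a Hausdorff group is a closed embedding, so the extension theorem for characters of closed subgroups genuinely applies.
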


\begin{proof}
Write $\phi: X^{\ast}(A)\rightarrow X^{\ast}(A^{0})$ for the map $$\alpha\mapsto\alpha|_{A^{0}},\ \forall
\alpha\in X^{\ast}(A).$$ Write $X'$ for the subgroup of $X^{\ast}(A)$ generated by $\nu_1,\dots,\nu_{s}$.
By the normalization of abelian subgroups of finitely generated free modules over a PID, we have: either
$\rank X'<\rank X$, or one can find a basis $\{\mu_1,\dots,\mu_{r}\}$ of $X^{\ast}(A^{0})$ and positive
integers $d_{1},\dots,d_{r}$ with $d_{i+1}|d_{i}$ ($1\leq i\leq r-1$) such that $\phi(X')=\span_{\mathbb{Z}}
\{d_{i}\mu_{i}\}$. In the first case, $$\dim(\bigcap_{1\leq i\leq s}\ker\alpha_{i})>0.$$ In the second case,
if $d_1>1$, then $A^{0}\cap\bigcap_{1\leq i\leq s}\ker\alpha_{i}\neq\{1\}$. Thus, $d_{1}=1$. Then, $\phi(X')
=X^{\ast}(A^{0})$. Choose $\lambda_{1},\dots,\lambda_{r}\in X'$ such that $\{\phi(\lambda_1),\dots,
\phi(\lambda_{r})\}$ is a basis of $X^{\ast}(A^{0})$. Write $$B=\bigcap\ker\lambda_{i}.$$ Then, $B$ is a
finite group and $A=A^{0}\times B$. Write $X''=\{\lambda\in X^{\ast}(A): \lambda|_{B}=1\}$. Then, $$X''=
\span_{\mathbb{Z}}\{\lambda_{i}:1\leq i\leq r\}.$$ There is an injection $X'/X''\hookrightarrow X^{\ast}(B)$.
By duality between $B$ and $X^{\ast}(B)$ and the assumption of $\bigcap_{1\leq i\leq s}\ker\nu_{i}=1$, we
get $|X'/X''|=|B|$. Thus, $X'=X^{\ast}(A)$.
\end{proof}

\begin{lemma}\label{L:W0}
Let $\alpha_1$ and $\alpha_2$ be two infinite root. If $\alpha_1|_{A^0}=\alpha_2|_{A^0}$, then $s_{\alpha_1}
s_{\alpha_2}\in W_{f}$; if $\alpha_1|_{A^0}=2\alpha_2|_{A^0}$, then $s_{\alpha_1}s_{\alpha_2}\in W_{f}$ as
well.
\end{lemma}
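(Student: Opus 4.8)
The plan is to compute the automorphism $s_{\alpha_1}s_{\alpha_2}$ of $A$ explicitly and then recognize it as a root transvection. By Lemma \ref{L:infinity2} the coroot of an infinite root depends only on its restriction to $A^0$, so in the first case $\alpha_1|_{A^0}=\alpha_2|_{A^0}$ gives $\check\alpha_1=\check\alpha_2$, while in the second case $\alpha_1|_{A^0}=2\alpha_2|_{A^0}$ forces the $\BC$-type relation $\check\alpha_2=2\check\alpha_1$. Feeding the defining formula $s_{\alpha_i}(a)=a\check\alpha_i(\alpha_i(a))^{-1}$ into the composite and using $\alpha_i(\check\alpha_i)=2$, I would obtain in both cases
\[s_{\alpha_1}s_{\alpha_2}(a)=a\,\check\alpha_1(\eta(a)),\qquad\forall a\in A,\]
where $\eta=\alpha_2-\alpha_1$ in the first case and $\eta=2\alpha_2-\alpha_1$ in the second. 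The role of the two coroot relations is precisely that the exponent $2$ produced by $\alpha_i(\check\alpha_i)$ cancels the $A^0$-part, so that $\eta|_{A^0}=0$, i.e. $\eta$ is a finite character of $A$ (and $\eta=0$, giving the identity, exactly when $\alpha_1=\alpha_2$).

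The resulting map $a\mapsto a\check\alpha_1(\eta(a))$ has exactly the shape of a transvection $s_{\eta,\xi}$ with $\xi(a)=\check\alpha_1(\eta(a))$. To conclude I must verify two things: that $\eta$ is a genuine finite root, so that $G_{[\eta]}$ and $R^{\vee}(\eta)$ are defined, and that $\xi$ lies in $R^{\vee}(\eta)=\Hom(A/\ker\eta,A\cap G_{[\eta]})$. The first point is the easier one: $\xi$ depends on $a$ only through $\eta(a)$, hence factors through $A/\ker\eta$, and a short root-string argument inside the rank-one subalgebra generated by $\frg_{\alpha_1}$ and $\frg_{-\alpha_2}$ (in the second case using also that $\alpha_1-\alpha_2\in R_{1,\alpha_2}$ via Lemma \ref{L:strip1}) produces a nonzero vector in $\frg_{\pm\eta}$, so $\eta\in R_{0}(G,A)$.

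The main obstacle is the membership $\xi(A)=\check\alpha_1(\mu_m)\subseteq A\cap G_{[\eta]}$, where $m=\ord\eta$; this is where condition $(\ast)$ genuinely enters. Writing $S=(G^{[\eta]})^0$, which is a torus by Lemma \ref{L:abelian1}, and letting $z_{\ast}=I+y_{\ast}+\cdots+y_{\ast}^{m-1}$ as in the proof of Lemma \ref{L:closed1}, I would first dispose of the cheap half: each $\check\alpha_1(\zeta)$ with $\zeta\in\mu_m$ lies in $A^0\subseteq S$, is central and of order dividing $m$, hence $z_{\ast}(\check\alpha_1(\zeta))=\check\alpha_1(\zeta)^m=1$, so $\check\alpha_1(\mu_m)\subseteq\ker z_{\ast}$. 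The remaining and genuinely arithmetic step is to show these torsion elements lie in the \emph{neutral component} $G_{[\eta]}=(\ker z_{\ast})^0$, equivalently that $\check\alpha_1\in z_{\ast}(X_{\ast}(S))$. Here I would use that $(\ast)$ identifies the $y_{\ast}$-invariant sublattice of $X_{\ast}(S)$ with $X_{\ast}(A^0)$, since $(\Fix y_{\ast})^0=A^0$ by the proof of Lemma \ref{L:closed1}, so that $z_{\ast}$ becomes the norm map onto $X_{\ast}(A^0)$; the class of $\check\alpha_1$ in $X_{\ast}(A^0)/z_{\ast}(X_{\ast}(S))\cong\pi_0(\ker z_{\ast})$ then vanishes because $\check\alpha_1$ is the $y_{\ast}$-average (norm) of an absolute coroot $\lambda\in X_{\ast}(S)$, which I would exhibit using the common maximal torus $T_{[\alpha_1]}=T_{[\alpha_2]}$ of the rank-one groups $G_{[\alpha_1]}$ and $G_{[\alpha_2]}$ (forced by $\check\alpha_1=\check\alpha_2$ in the first case, and by the analogous relation in the second). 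Once this membership is established, $\xi\in R^{\vee}(\eta)$ and $s_{\alpha_1}s_{\alpha_2}=s_{\eta,\xi}\in W_{f}$.
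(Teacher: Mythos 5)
Your opening computation is correct and agrees with the paper: in both cases $s_{\alpha_1}s_{\alpha_2}(a)=a\,\check{\alpha}_1(\eta(a))$ with $\eta$ the finite character $\alpha_2-\alpha_1$ (resp.\ $2\alpha_2-\alpha_1$), and you correctly isolate the two things to prove, namely that $\eta\in R_{0}(G,A)$ and that $\xi(a)=\check{\alpha}_1(\eta(a))$ lands in $R^{\vee}(\eta)=\Hom(A/\ker\eta,A\cap G_{[\eta]})$. Your reduction of the second point to the norm-map statement $\check{\alpha}_1\in z_{\ast}(X_{\ast}(S))$, $S=(G^{[\eta]})^{0}$, is also correct (and in fact equivalent to the desired membership $\check{\alpha}_1(e^{2\pi i/m})\in G_{[\eta]}=(\ker z_{\ast})^{0}$). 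The problem is that this last statement is the entire content of the lemma, and your argument for it is not a proof. The quotient $X_{\ast}(S)^{y_{\ast}}/z_{\ast}(X_{\ast}(S))$ is a Tate cohomology group that is nonzero in general, so $\check{\alpha}_1$ being $y_{\ast}$-fixed gives nothing; you must exhibit a specific preimage. Your proposed source of an ``absolute coroot'' --- the rank-one groups $G_{[\alpha_1]},G_{[\alpha_2]}$ and their common maximal torus $T_{[\alpha_1]}=T_{[\alpha_2]}$ --- cannot supply one: the root spaces $\frg_{\pm\alpha_i}$ are not centralized by $\ker\eta$ (an infinite root is never a multiple of a finite one), so $G_{[\alpha_i]}\not\subset G^{[\eta]}$ and these groups contribute no cocharacters of $S$ beyond $\check{\alpha}_1$ itself; the equality $T_{[\alpha_1]}=T_{[\alpha_2]}$ is merely a restatement of $\check{\alpha}_1=\check{\alpha}_2$, which you have already used. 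Any torus of $G'=Z_{G}(\ker\alpha_1\cap\ker\alpha_2)$ large enough to ``unfold'' $\alpha_1$ into absolute roots fails to preserve the individual line $\frg_{\alpha_1}$ (it only preserves the whole strip), so the absolute coroot does not come for free.

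The paper closes exactly this gap by classification: using Lemma \ref{L:generator} it identifies $R(G',A)$ with $\{a\alpha_2+b\gamma\}$, passes to $\frh_0=(\frg'_0)_{der}$ with the $2$-generated $(\ast)$-group $B=\phi(A)$, and invokes the reduction of Subsection \ref{SS:ast-semisimple} together with Lemma \ref{L:TwoGenerated2} to conclude that $(\frh_0,B)$ is one of two explicit models, $\mathfrak{su}(2)^{n}$ or $\mathfrak{su}(3)^{n/2}$ with a cyclic twist; the membership $\check{\alpha}_1(e^{2\pi i/n})\in T^{sc}\cap H^{sc}_{[\gamma]}$ is then verified by direct matrix computation in each model (and this computation is also how the paper sees that $\gamma$ is a finite root, though your root-string argument via Lemma \ref{L:strip1} handles that point independently and correctly). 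So either import this classification step, or supply a genuinely structure-free proof that $\check{\alpha}_1$ is a norm from $X_{\ast}(S)$; as written, the key arithmetic fact is asserted rather than proved.
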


\begin{proof}
First suppose $\alpha_1|_{A^0}=\alpha_2|_{A^0}$. Write $\gamma=\alpha_{2}-\alpha_{1}$. Put $$G'=Z_{G}
(\ker\alpha_{1}\cap\ker\alpha_{2}).$$ Then, $A$ is a $(\ast)$-subgroup of $G'$. By Lemma \ref{L:generator},
we have $$R(G',A)=\{a\alpha_{2}+b\gamma\in R(G,A):a,b\in\mathbb{Z}\}.$$ Write $\frh_0=(\frg'_{0})_{der}$
for the derived subalgebra of the Lie algebra of $G'$. Set $H=\exp(\frh_0)\subset G'$, and $H'=
\Aut(\frh_0)$. Then, the root spaces $\frg_{\pm{\alpha_{i}}}$ ($i=1,2$) are contained in $\frh$.

There is an adjoint homomorphism $$\phi: G'\rightarrow H'.$$ Write $B=\phi(A)$. Then, $B$ is a
$(\ast)$-subgroup of $H'$. We also view $\alpha_1,\alpha_2$ as roots of $B$ acting on $\frh$. Then,
$\alpha_1,\alpha_2$ generate $X^{\ast}(B)$. Let $n$ be the order of $\gamma$. By the reduction to simple
Lie algebra case (cf. Subsection \ref{SS:ast-semisimple}) and Lemma \ref{L:TwoGenerated2}, we have an
explicit list of $(\frh_0,B)$: \begin{enumerate}
\item $\frh_0=\mathfrak{su}(2)^{n}$, $B=\Delta(T)\times\theta$, where $$\theta(X_1,\dots,X_{n})=(X_2,
\dots,X_{n},X_{1}),\ \forall X_{1},\dots,X_{n}\in\mathfrak{su}(2),$$ $\Delta:\Aut(\mathfrak{su}(2))
\rightarrow\Aut(\mathfrak{su}(2)^{n})$ is the diagonal map, and $T$ is a maximal torus of
$\Int(\mathfrak{su}(2))$.
\item $\frh_0=\mathfrak{su}(3)^{n/2}$, $B=\Delta(T)\times\theta$, where\footnotemark $$\theta(X_1,
\dots,X_{n/2})=(X_2,\dots,X_{n/2},\sigma(X_{1})),\ \forall X_{1},\dots,X_{n/2}\in\mathfrak{su}(3)$$
with $$\sigma(X)=J\overline{X}J^{-1},\quad J=\left(\begin{array}{ccc}0&0&1\\0&1&0\\1&0&0\\\end{array}
\right),$$ \footnotetext{We alternate the choice of $\sigma$ in Lemma \ref{L:TwoGenerated2} so that it
is easier to write the torus $T'$ in the proof of below.}
$\Delta:\Aut(\mathfrak{su}(3))\rightarrow\Aut(\mathfrak{su}(3)^{\frac{n}{2}})$ is the diagonal map,
and $T$ is a maximal torus of $\Int(\mathfrak{su}(3))^{\sigma}$.
\end{enumerate}

Choose an element $y\in A$ such that $\alpha_{2}(y)=1$ and $\gamma(y)=e^{\frac{2\pi i}{n}}$. Then, $y$
generates $A/A^{0}\cdot(\ker\alpha_1\cap\ker\alpha_2)$. By calculation, we have $$s_{\alpha_1}
s_{\alpha_2}|_{A^{0}\cdot(\ker\alpha_1\cap\ker\alpha_2)}=\id$$ and $$s_{\alpha_1}s_{\alpha_2}(y)=
y\check{\alpha_{1}}(e^{\frac{2\pi i}{n}}).$$ From the list of $(\frh_0,B)$ as above, one can verify that
$$\gamma\in R_{0}(G,A).$$ Apparently, $y$ generates $A/\ker\gamma$. It suffices to show
\begin{equation}\label{Eq4}\check{\alpha_{1}}(e^{\frac{2\pi i}{n}})\in A\cap G_{[\gamma]}.\end{equation}
Let $H^{sc}$ be a simply-connected covering of $H$, and $\pi: H^{sc}\rightarrow H$ be the projection.
Write $T^{sc}=((\phi|_{H}\circ\pi)^{-1}T)^{0}$. Then, $\check{\alpha_1}$ lifts to a cocharacter of
$T^{sc}$. Due to $\pi(T^{sc}\cap H^{sc}_{[\gamma]})\subset A^{0}\cap H_{[\gamma]}\subset A\cap
G_{[\gamma]}$, it suffices to show $$\check{\alpha_{1}}(e^{\frac{2\pi i}{n}})\in T^{sc}\cap
H^{sc}_{[\gamma]}.$$

For $(\frh_0,B)$ in the first case, we have $H^{sc}=\SU(2)^{n}$, $T^{sc}=\Delta(T')$, where $T'$ is
the set of diagonal matrices in $\SU(2)$. Moreover, $$\check{\alpha_{1}}(t)=\Delta(\diag\{t,t^{-1}\}),
\ \forall t\in\U(1),$$ and $$H^{sc}_{[\gamma]}=\{(g_1,\dots,g_{n})\in T'^{n}:g_1g_2\cdots g_{n}
=1\}.$$ Thus, $$T^{sc}\cap H^{sc}_{[\gamma]}=\big\{\Delta(g):g\in T',g^{n}=1\big\}.$$ Hence,
$\check{\alpha_{1}}(e^{\frac{2\pi i}{n}})=\Delta(\diag\{e^{\frac{2\pi i}{n}},e^{-\frac{2\pi i}{n}}\})
\in T^{sc}\cap H^{sc}_{[\gamma]}$.

For $(\frh_0,B)$ in the second case, we have $H^{sc}=\SU(3)^{n/2}$, $T^{sc}=\Delta(T')$, where $$T'=
\{\diag\{t,1,t^{-1}\}:t\in\U(1)\}.$$ Moreover, $$\check{\alpha_{1}}(t)=\Delta(\diag\{t^{2},1,
t^{-2}\}),$$ and $$H^{sc}_{[\gamma]}\!=\!\big\{\!(\diag\{u_1,s_1,v_1\},\dots,\diag\{u_{\frac{n}{2}},
s_{\frac{n}{2}},v_{\frac{n}{2}}\}\!)\!:\!s_{j}u_{j}v_{j}\!=\!1,\!\prod_{1\leq j\leq\frac{n}{2}}
\frac{u_{j}}{v_{j}}\!=\!1\big\}.$$ Thus, $$T^{sc}\cap H^{sc}_{[\gamma]}=\big\{\Delta(g):g\in T',g^{n}=1
\big\}.$$ Hence, $\check{\alpha_{1}}(e^{\frac{2\pi i}{n}})=\Delta(\diag\{e^{\frac{4\pi i}{n}},1,
e^{\frac{4\pi i}{n}}\})\in T^{sc}\cap H^{sc}_{[\gamma]}$.

Now suppose $\alpha_1|_{A^0}=2\alpha_2|_{A^0}$. Write $\gamma=\alpha_{1}-2\alpha_{2}$. Do a similar
argument as above. Only Case (2) may happen, and the groups $H^{sc}$, $T^{sc}$, $H^{sc}_{[\gamma]}$
are the same as above. The only difference is $$\check{\alpha_{1}}(t)=\Delta(\diag\{t,1,t^{-1}\})$$
now. Then, $$\check{\alpha_{1}}(e^{\frac{2\pi i}{n}})=\Delta(\diag\{e^{\frac{2\pi i}{n}},1,
e^{\frac{2\pi i}{n}}\})\in T^{sc}\cap H^{sc}_{[\gamma]}.$$
\end{proof}

The following result is the first nontrivial property of twisted root system.
\begin{theorem}\label{T:W0}
We have $W_{0}=W_{f}$ and $W_{small}=W_{f}\rtimes W_{R'}$.
\end{theorem}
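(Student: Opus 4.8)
The plan is to combine the two structural propositions already established with Lemma~\ref{L:W0}, so that the theorem reduces to a short case analysis. From the proof of Proposition~\ref{P:W-split} we already know the inclusion $W_{f}\subset W_{0}$, so the only thing to prove for the first assertion is the reverse inclusion $W_{0}\subset W_{f}$. For this I would invoke Proposition~\ref{P:W0}, which tells us that $W_{0}$ is generated by $W_{f}$ together with the products $s_{\beta_1}s_{\beta_2}$ for $\beta_1,\beta_2\in R_{\beta}$ and $\beta\in R'$. Hence it suffices to show that every such product $s_{\beta_1}s_{\beta_2}$ already lies in $W_{f}$.

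To do this, recall $R_{\beta}=R_{1,\beta}\cup R_{2,\beta}$, where members of $R_{1,\beta}$ restrict to $\beta|_{A^0}$ on $A^0$ and members of $R_{2,\beta}$ restrict to $2\beta|_{A^0}$. I would split into three cases according to where $\beta_1$ and $\beta_2$ land. If both lie in $R_{1,\beta}$ (or both in $R_{2,\beta}$), then $\beta_1|_{A^0}=\beta_2|_{A^0}$, and the first case of Lemma~\ref{L:W0} gives $s_{\beta_1}s_{\beta_2}\in W_{f}$. If $\beta_1\in R_{1,\beta}$ and $\beta_2\in R_{2,\beta}$, then $\beta_2|_{A^0}=2\beta_1|_{A^0}$, and the second case of Lemma~\ref{L:W0} gives $s_{\beta_2}s_{\beta_1}\in W_{f}$; since $W_{f}$ is a group, $s_{\beta_1}s_{\beta_2}=(s_{\beta_2}s_{\beta_1})^{-1}\in W_{f}$ as well. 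This exhausts all cases, so all the stated generators of $W_{0}$ lie in $W_{f}$, and therefore $W_{0}=W_{f}$.

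Finally, the semidirect product decomposition is immediate: Proposition~\ref{P:W0} already states $W_{small}=W_{0}\rtimes W_{R'}$, and substituting the equality $W_{0}=W_{f}$ just obtained yields $W_{small}=W_{f}\rtimes W_{R'}$.

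The one place where any genuine content enters is Lemma~\ref{L:W0}, whose proof is the real engine here; granted that lemma, the present argument is essentially bookkeeping. Consequently the only point I would take care with is the case distinction for $R_{\beta}$ and the use of closure of $W_{f}$ under inverses in the mixed case, rather than any deeper structural difficulty.
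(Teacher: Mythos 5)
Your proposal is correct and follows essentially the same route as the paper: Proposition~\ref{P:W0} reduces the claim to showing $s_{\beta_1}s_{\beta_2}\in W_{f}$ for $\beta_1,\beta_2\in R_{\beta}$, which is exactly what Lemma~\ref{L:W0} provides, and the semidirect product decomposition then follows by substitution. Your explicit three-way case split on $R_{1,\beta}$ versus $R_{2,\beta}$ (and the use of $s_{\beta_i}^2=\id$ in the mixed case) merely spells out what the paper leaves implicit when it cites Lemma~\ref{L:W0}.
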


\begin{proof}
By Proposition \ref{P:W0}, $W_{0}$ is generated by $W_{f}$ and $s_{\beta_1}s_{\beta_2}$ ($\beta_1,\beta_{2}
\in R_{\beta}$, $\beta\in R'$). By Lemma \ref{L:W0}, $\beta_1,\beta_{2}\in R_{\beta}$ indicates that
$s_{\beta_1}s_{\beta_2}\in W_{f}$. Thus, $W_{0}\subset W_{f}$. Then, $W_{0}=W_{f}$.

By Proposition \ref{P:W0} again, $W_{small}=W_{0}\rtimes W_{R'}=W_{f}\rtimes W_{R'}$.
\end{proof}


\subsection{Bracket relation}\label{SS:TRD-bracket}

For two roots $\alpha_1$ and $\alpha_2$ in $R(G,A)$, let $$G'=G^{[\alpha_1,\alpha_2]}=Z_{G}(\ker\alpha_1
\cap\ker\alpha_2).$$ Then $A$ is a $(\ast)$-subgroup of $G'$. By Lemma \ref{L:generator}, we have
$$R(G',A)=\{a\alpha_{1}+b\alpha_{2}\in R(G,A): a,b\in\mathbb{Z}\}.$$ We can use the action of $A$ on
the complexified Lie algebra of $G'$ to study the bracket relation between $\frg_{\alpha_1}$ and
$\frg_{\alpha_2}$. Write $\frh_0=(\frg'_{0})_{der}$, $H=\exp(\frh_0)$, $H'=\Aut(\frh_0)$, and $B=\phi(A)$,
where $\phi: G'\rightarrow H'$ is adjoint homomorphism. Then, $$X^{\ast}(B)=\span_{\mathbb{Z}}\{\alpha_1,
\alpha_2\}.$$ For any $\gamma\in R(G',A)$, we have $\frg_{[\gamma]}=(\frg_{[\gamma]}\cap\frh)\oplus
(\frg_{[\gamma]}\cap z(\frg'))$. Assume that $[\frg_{[\alpha_1]},\frg_{[\alpha_2]}]\neq 0$. Then,
$$[\frg_{[\alpha_1]},\frg_{[\alpha_2]}]=[\frg_{[\alpha_1]}\cap\frh,\frg_{[\alpha_2]}\cap\frh]\neq 0.$$

By the reduction to simple Lie algebra case (cf. Subsection \ref{SS:ast-semisimple}) and Lemma
\ref{L:TwoGenerated2}, $(\frh_0,B)$ falls into the following list,
\begin{enumerate}
\item The Lie algebra $\frh_0$ is a rank two compact semisimple Lie algebra, and $B$ is a maximal
torus of $\Int(\frh_0)$.
\item $\frh_0=\bigoplus_{1\leq j\leq m}\mathfrak{su}(2)$, and $B=\Delta(B')\rtimes\langle\tau\rangle$,
where $\tau$ acts as a permutation on simple factors of $\frh_0$ and $B'$ is a maximal torus of
$\Int(\mathfrak{su}(2))$.
\item $\frh_0=\bigoplus_{1\leq j\leq m}\mathfrak{su}(3)$, and $B=\Delta(B')\rtimes\langle\tau\rangle$,
where $\tau$ acts as a permutation on simple factors of $\frh_0$ twisted by an outer involutive
automorphism $\sigma$ of $\mathfrak{su}(3)$ and $B'$ is a maximal torus of $\Int(\mathfrak{su}(3))^{\sigma}$.
\item $\frh_0=\mathfrak{su}(n)$ ($n\geq 2$), and $B$ is an $n$-Hessenberg group; or $\frh_0=
\mathfrak{su}(n)^{m}$ ($n\geq 2$, $m\geq 1$), and $B$ is certain twisting of an $n$-Hessenberg group.
\end{enumerate}
Case (1) is well-understood. Cases (2)-(4) are elementary. In Case (4), when $m=1$, for any $\beta_1,
\beta_2\in R(G',A)$, easy calculation shows that $[\frg_{[\beta_1}],\frg_{[\beta_2]}]\neq 0$ if and
only if $\langle\beta_1,\beta_2\rangle$ is not a cyclic group.




\section{Finite root datum}\label{S:FRD}

Let $A$ be a $(\ast)$-subgroup of a compact Lie group $G$. When $A$ is a finite maximal abelian subgroup,
in \cite{Han-Vogan} a finite root datum is constructed from the action of $A$ on the complexified Lie
algebra $\frg$ of $G$. In this section we extend most results of \cite{Han-Vogan} to the more general
setting of $(\ast)$-subgroups, and study the structure of finite root datum.

\subsection{Middle Weyl group}

For any finite character $\lambda$ of $A$, set $$\frg_{[\lambda]}=\bigoplus_{k\in\mathbb{Z}-\{0\}}
\frg_{k\lambda}.$$ We call $\lambda$ a {\it generalized finite root} if $\frg_{[\lambda]}\neq 0$ and
$$\langle\{k\lambda\in R_{0}(G,A):k\in\mathbb{Z}-\{0\}\}\rangle=\mathbb{Z}\lambda.$$


\begin{lemma}\label{L:abelian2}
For any finite character $\lambda$ of $A$, $\frg_{[\lambda]}$ is abelian.
\end{lemma}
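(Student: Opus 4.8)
The plan is to mirror the proof of Lemma~\ref{L:abelian1} verbatim, with $\lambda$ in place of the finite root $\alpha$: that argument nowhere used that $\alpha$ is an actual root, only that it is a finite character with cyclic quotient $A/\ker\alpha$, so it applies unchanged here. If $\frg_{[\lambda]}=0$ the claim is trivial, so I assume $\frg_{[\lambda]}\neq 0$. The first step is to pass to the larger subalgebra $\frg^{[\lambda]}:=Z_{\frg}(\ker\lambda)$ and show it is abelian; since $\frg_{[\lambda]}$ is a subalgebra of $\frg^{[\lambda]}$, this suffices. Because $A/\ker\lambda$ is cyclic (it is the image $\mu_{n}$ of $\lambda$, where $n=\ord\lambda$), the characters of $A$ trivial on $\ker\lambda$ are exactly the multiples $\mathbb{Z}\lambda$; hence $\frg^{[\lambda]}=\bigoplus_{k\in\mathbb{Z}}\frg_{k\lambda}=\fra\oplus\frg_{[\lambda]}$, using $\frg_{0}=\fra$ from condition $(\ast)$.

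Next I would set up the reductive decomposition $\frg^{[\lambda]}=z(\frg^{[\lambda]})\oplus(\frg^{[\lambda]})_{der}$ and pin down the $A$-fixed vectors. Since $\lambda$ is finite we have $A^{0}\subset\ker\lambda$, so $\fra$ lies in the complexified Lie algebra of $\ker\lambda$; as $\frg^{[\lambda]}$ centralizes $\ker\lambda$ it centralizes that Lie algebra, whence $\fra\subset z(\frg^{[\lambda]})$. Choosing $y\in A$ whose image generates $A/\ker\lambda$ and noting that $\ker\lambda$ acts trivially on $\frg^{[\lambda]}=Z_{\frg}(\ker\lambda)$, the $y$-fixed space coincides with the $A$-fixed space, so condition $(\ast)$ gives $(\frg^{[\lambda]})^{y}=Z_{\frg}(A)=\fra$.

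Now I combine the two facts. As $\Ad(y)$ is an automorphism of $\frg^{[\lambda]}$, it preserves both $z(\frg^{[\lambda]})$ and $(\frg^{[\lambda]})_{der}$, so the fixed space splits accordingly; since $(\frg^{[\lambda]})^{y}=\fra\subset z(\frg^{[\lambda]})$, the component in the derived part vanishes, i.e. $((\frg^{[\lambda]})_{der})^{y}\subset\fra\cap(\frg^{[\lambda]})_{der}=0$. Thus $\Ad(y)$ restricts to a finite-order automorphism of the nonzero compact semisimple Lie algebra $(\frg^{[\lambda]})_{der}$ with no nonzero fixed vectors. By the same theorem of Borel invoked in Lemma~\ref{L:abelian1} this is impossible unless $(\frg^{[\lambda]})_{der}=0$; hence $\frg^{[\lambda]}$, and a fortiori $\frg_{[\lambda]}$, is abelian.

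I do not expect a genuine obstacle here: the whole point is that the proof of Lemma~\ref{L:abelian1} never exploited $\frg_{\alpha}\neq 0$, so it generalizes from finite roots to arbitrary finite characters at no cost. The only step carrying real weight is the appeal to Borel's theorem forcing a compact semisimple Lie algebra with a fixed-point-free finite-order automorphism to be trivial, and that is exactly the black box already used for Lemma~\ref{L:abelian1}.
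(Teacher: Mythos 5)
Your proposal is correct and matches the paper exactly: the paper's proof of this lemma is literally ``the same as the proof for Lemma~\ref{L:abelian1}'', which is precisely the argument you carry out (passing to $\frg^{[\lambda]}=Z_{\frg}(\ker\lambda)$, showing $\fra\subset z(\frg^{[\lambda]})$ and $(\frg^{[\lambda]})^{y}=\fra$, and invoking Borel's theorem to kill $(\frg^{[\lambda]})_{der}$). Your observation that the original argument never used $\frg_{\alpha}\neq 0$ is exactly the point.
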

\begin{proof}
The proof is the same as the proof for Lemma \ref{L:abelian1}.
\end{proof}

\begin{lemma}\label{L:abelian3}
For two finite roots $\alpha,\beta\in R_{0}(G,A)$, if the orders of $\alpha$ and $\beta$ are coprime to each other, then
$$[\frg_{\alpha},\frg_{\beta}]=0.$$
\end{lemma}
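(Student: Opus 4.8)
The plan is to localize at the two finite roots. Set $G'=G^{[\alpha,\beta]}=Z_{G}(\ker\alpha\cap\ker\beta)$; then $A$ is again a $(\ast)$-subgroup of $G'$, and by Lemma \ref{L:generator} one has $R(G',A)=\{a\alpha+b\beta\in R(G,A):a,b\in\bbZ\}$. Since $\ker\alpha\cap\ker\beta$ fixes every vector of $\frg_{\alpha}$ and of $\frg_{\beta}$, both root spaces lie in $\frg'$, so $[\frg_{\alpha},\frg_{\beta}]\subseteq[\frg',\frg']=(\frg')_{der}=\frh$, the complexification of $\frh_{0}=(\frg'_{0})_{der}$. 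Hence it suffices to show $\frh=0$. Following the set-up of Subsection \ref{SS:TRD-bracket}, I pass to the adjoint homomorphism $\phi\colon G'\to H'=\Aut(\frh_{0})$ and put $B=\phi(A)$, which is a $(\ast)$-subgroup of $H'$ with $X^{\ast}(B)=\span_{\bbZ}\{\alpha,\beta\}$.

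The first key step is to turn coprimality into cyclicity of $B$. As $\alpha,\beta$ are finite roots they are torsion elements of $X^{\ast}(A)$, of orders $m$ and $n$; since $\gcd(m,n)=1$, the cyclic subgroups $\langle\alpha\rangle$ and $\langle\beta\rangle$ meet trivially, so $\langle\alpha,\beta\rangle\cong\bbZ/m\times\bbZ/n\cong\bbZ/mn$ is cyclic (generated by $\alpha+\beta$). The group $X^{\ast}(B)=\span_{\bbZ}\{\alpha,\beta\}$ is a quotient of $\langle\alpha,\beta\rangle$, hence cyclic; being generated by torsion characters it is finite. Thus $B$ is a finite cyclic group, say $B=\langle b\rangle$.

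The final step, which I expect to be the real heart of the matter, is to deduce $\frh_{0}=0$ from the fact that the single finite-order automorphism $b$ is a $(\ast)$-element of $\Aut(\frh_{0})$. Being a $0$-dimensional $(\ast)$-subgroup, $B$ satisfies $\frh_{0}^{b}=\frh_{0}^{B}=0$. On the other hand, a finite-order automorphism of a nonzero compact semisimple Lie algebra always has nonzero fixed subalgebra: after conjugation $b$ preserves a maximal torus $T$ of $\Int(\frh_{0})$ together with a positive system, so $b=\Int(t)\tau$ with $t\in T$ and $\tau$ a diagram automorphism; then $b$ acts on $\frt_{0}=\Lie(T)$ through $\tau$, and the $\tau$-invariant vector given by the sum of the simple roots is nonzero, so $\frh_{0}^{b}\supseteq\frt_{0}^{b}\neq 0$. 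This forces $\frh_{0}=0$, whence $\frh=0$ and $[\frg_{\alpha},\frg_{\beta}]=0$.

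The main obstacle is precisely this last implication, translating the combinatorial input (coprime orders, hence a one-generator group) into vanishing of the derived algebra. One could instead finish by invoking the explicit list of Subsection \ref{SS:TRD-bracket}: finiteness of $B$ rules out Cases (1)--(3), leaving only Case (4), where in the single-factor situation the bracket is nonzero exactly when $\langle\alpha,\beta\rangle$ is non-cyclic, so cyclicity gives the result directly. I prefer the fixed-point argument above, since it is self-contained and avoids appealing to the classification.
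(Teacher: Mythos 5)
Your proof is correct, but it is a longer, self-contained unwinding of what the paper does in three lines. The paper sets $\lambda=\alpha+\beta$ and observes that coprimality of the orders forces $\alpha,\beta\in\bbZ\lambda$ (Chinese remainder theorem), so that $\frg_{\alpha},\frg_{\beta}\subset\frg_{[\lambda]}$, which is abelian by Lemma \ref{L:abelian2}; that lemma in turn is proved exactly by the Borel fixed-point argument you spell out (a finite-order automorphism with zero fixed subalgebra on the derived part forces that part to vanish). Note that your localization coincides with the paper's: since $\alpha,\beta\in\bbZ\lambda$ one has $\ker\alpha\cap\ker\beta=\ker\lambda$, so your $G'=Z_{G}(\ker\alpha\cap\ker\beta)$ is precisely $G^{[\lambda]}$, and your cyclic generator $b$ plays the role of the generator $y$ of $A/\ker\lambda$ in the proof of Lemmas \ref{L:abelian1}--\ref{L:abelian2}. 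What your route costs is the detour through $H'=\Aut(\frh_0)$ and the assertion $X^{\ast}(B)=\span_{\bbZ}\{\alpha,\beta\}$ from Subsection \ref{SS:TRD-bracket} (which the paper states but does not prove, and which the paper's own proof of this lemma avoids entirely); what it buys is that the coprimality-to-cyclicity step and the fixed-point theorem are made explicit rather than hidden inside a citation. One cosmetic point: the $\tau$-invariant vector you want lives in $\frt_0$, so it should be the sum of the simple coroots (or the image of the sum of simple roots under the Killing-form identification), not the sum of the simple roots themselves; this does not affect the argument.
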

\begin{proof}
Write $\lambda=\alpha+\beta$. As we assume the orders of $\alpha$ and $\beta$ are coprime to each other, we have
$\alpha,\beta\in\mathbb{Z}\cdot\lambda$. Thus, $\frg_{\alpha},\frg_{\beta}\subset\frg_{[\lambda]}$. By Lemma
\ref{L:abelian2}, $\frg_{[\lambda]}$ is abelian. Hence, $[\frg_{\alpha},\frg_{\beta}]=0.$
\end{proof}

Let $G_{[\lambda]}$ be the connected Lie subgroup of $G$ with Lie algebra $\frg_{0}\cap\frg_{[\lambda]}$.

\begin{lemma}\label{L:closed2}
$G_{[\lambda]}$ is a torus and $G_{[\lambda]}\cap A$ is a finite group.
\end{lemma}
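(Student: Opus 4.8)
The plan is to follow the argument of Lemma~\ref{L:closed1} almost verbatim, with the finite character $\lambda$ playing the role of the finite root $\alpha$. Let $n$ be the order of $\lambda$, put $G^{[\lambda]}=Z_{G}(\ker\lambda)$, and note that its complexified Lie algebra is $\frg^{[\lambda]}=\fra\oplus\frg_{[\lambda]}$. The first step is to record that $\frg^{[\lambda]}$ is abelian: since $\lambda$ is a finite character we have $A^{0}\subset\ker\lambda$, so $\fra$ acts by the zero weight on each $\frg_{m\lambda}$ and is therefore central in $\frg^{[\lambda]}$; combined with Lemma~\ref{L:abelian2} this makes $\frg^{[\lambda]}$ abelian, so that $(G^{[\lambda]})^{0}$ is a torus.

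Next I would choose $y\in A$ generating the cyclic group $A/\ker\lambda\cong C_{n}$ and let $y_{\ast}\in\Aut((G^{[\lambda]})^{0})$ be conjugation by $y$. Because $\frg^{[\lambda]}=Z_{\frg}(\ker\lambda)$ is already centralized by $\ker\lambda$, the invariant subalgebra $(\frg^{[\lambda]})^{y}$ equals $Z_{\frg}(A)=\fra$ by condition~$(\ast)$; hence the fixed subalgebra of $y_{\ast}$ is $\fra_{0}$ and $(\Fix y_{\ast})^{0}=A^{0}$. The eigenvalue of $y_{\ast}$ on $\frg_{m\lambda}$ is $\lambda(y)^{m}$, with $\lambda(y)$ a primitive $n$-th root of unity.

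The decisive step is the averaging endomorphism. I would set $z_{\ast}=I+y_{\ast}+\cdots+y_{\ast}^{n-1}\in\End((G^{[\lambda]})^{0})$ and identify $G_{[\lambda]}$ with the neutral subgroup of $\{x\in(G^{[\lambda]})^{0}:z_{\ast}(x)=1\}$. On Lie algebras $z_{\ast}$ acts as multiplication by $n$ on $\fra$, where $y_{\ast}$ is trivial, and as $\sum_{j=0}^{n-1}\lambda(y)^{mj}=0$ on every $\frg_{m\lambda}$ with $m\not\equiv 0\pmod{n}$; therefore $\ker(dz_{\ast})=\frg_{0}\cap\frg_{[\lambda]}$ is exactly the Lie algebra of $G_{[\lambda]}$. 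This exhibits $G_{[\lambda]}$ as the identity component of a closed subgroup of the torus $(G^{[\lambda]})^{0}$, so it is closed; being connected, compact, and abelian (as $\frg_{[\lambda]}$ is abelian), it is a torus.

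Finally, for the intersection I would observe that $\frg_{[\lambda]}$ is a sum of nonzero weight spaces, so $\frg_{[\lambda]}\cap\fra=0$; hence $\Lie(G_{[\lambda]}\cap A)\subset(\frg_{0}\cap\frg_{[\lambda]})\cap\fra_{0}=0$, and a closed subgroup of a compact Lie group with trivial Lie algebra is finite. The only point requiring genuine care is the verification that $z_{\ast}$ annihilates precisely the nonzero weight spaces while remaining invertible on $\fra$; this computation is what forces $G_{[\lambda]}$ to be a closed, hence compact, subgroup, and it is the exact analogue of the corresponding step in Lemma~\ref{L:closed1}. Everything else is routine once the averaging trick is in place.
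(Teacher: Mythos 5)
Your proof is correct and takes essentially the same route as the paper, which simply declares the proof of Lemma~\ref{L:closed2} to be identical to that of Lemma~\ref{L:closed1}; your write-up is a faithful (and somewhat more detailed) adaptation of that averaging argument, with Lemma~\ref{L:abelian2} replacing Lemma~\ref{L:abelian1} and the explicit eigenvalue computation for $z_{\ast}$ spelled out.
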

\begin{proof}
The proof is the same as the proof for Lemma \ref{L:closed1}.
\end{proof}

Write $$R^{\vee}(\lambda)=\Hom(A/\ker\lambda,G_{[\lambda]}\cap A).$$ For any $\xi\in R^{\vee}(\lambda)$, define
$s_{\lambda,\xi}: A\rightarrow A$ by $$s_{\lambda,\xi}(a)=a\xi(a),\ \forall a\in A.$$

\begin{lemma}\label{L:transvection2}
For any $\xi\in R^{\vee}(\lambda)$, there exists $n\in N_{G_{[\lambda]}}(A)$ such that $$\Ad(n)|_{A}=s_{\lambda,
\xi}.$$
\end{lemma}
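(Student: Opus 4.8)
The plan is to mimic the proof of Lemma \ref{L:transvection1}, which handles the special case where $\lambda = \alpha$ is itself a finite root. The key structural facts I need are already in place: by Lemma \ref{L:closed2}, $G_{[\lambda]}$ is a torus and $G_{[\lambda]} \cap A$ is finite, and by definition $\frg_{[\lambda]} \cap \fra = 0$ since $\frg_{[\lambda]}$ is a sum of nonzero weight spaces. Choose an element $y \in A$ generating the cyclic group $A/\ker\lambda$; since $\lambda$ is a finite character, this makes sense, and $y$ acts on $\frg_{[\lambda]}$ through a finite-order automorphism.

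First I would show that the endomorphism $\Ad(y^{-1}) - I$ acts invertibly on $\frg_{[\lambda]}$. Indeed, $\frg_{[\lambda]} = \bigoplus_{k \in \mathbb{Z} - \{0\}} \frg_{k\lambda}$, and on the weight space $\frg_{k\lambda}$ the operator $\Ad(y^{-1})$ acts by the scalar $(k\lambda)(y^{-1})$. Since $y$ generates $A/\ker\lambda \cong \mathbb{Z}/n\mathbb{Z}$ where $n$ is the order of $\lambda$, and $k\lambda \neq 0$ as a character whenever $\frg_{k\lambda} \neq 0$, this scalar is never equal to $1$; hence $\Ad(y^{-1}) - I$ has no zero eigenvalue on $\frg_{[\lambda]}$. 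Consequently the corresponding endomorphism of the torus $G_{[\lambda]}$, namely $g \mapsto y^{-1} g y g^{-1}$, is surjective, exactly as in Lemma \ref{L:transvection1}.

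Next, given $\xi \in R^{\vee}(\lambda) = \Hom(A/\ker\lambda, G_{[\lambda]} \cap A)$, I would use this surjectivity to find $n \in G_{[\lambda]}$ with $y^{-1} n y n^{-1} = \xi(y)$, equivalently $n y n^{-1} = y \xi(y)$. Because $G_{[\lambda]}$ centralizes $\ker\lambda$ (its Lie algebra lies in $\frg_{[\lambda]}$, on which $\ker\lambda$ acts trivially) and $\xi|_{\ker\lambda} = 1$, conjugation by $n$ fixes $\ker\lambda$ pointwise and sends $y \mapsto y\xi(y)$; writing an arbitrary $a \in A$ as a power of $y$ times an element of $\ker\lambda$ then yields $n a n^{-1} = a\xi(a)$ for all $a \in A$. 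This shows $n \in N_{G_{[\lambda]}}(A)$ and $\Ad(n)|_A = s_{\lambda,\xi}$.

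I do not expect any serious obstacle, since this is essentially a verbatim generalization of Lemma \ref{L:transvection1} from a single finite root to a generalized finite root; the only point requiring a moment's care is confirming that $G_{[\lambda]}$ centralizes $\ker\lambda$ and that $\xi(a) \in G_{[\lambda]} \cap A$ commutes with $a$ (which holds as $G_{[\lambda]}$ is a torus, being abelian by Lemma \ref{L:closed2}), so that the relation $n a n^{-1} = a\xi(a)$ propagates correctly from the generator $y$ to all of $A$. Thus the proof can simply refer to the argument of Lemma \ref{L:transvection1}.
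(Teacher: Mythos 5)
Your proposal is correct and is exactly the paper's approach: the paper's proof of Lemma \ref{L:transvection2} consists of the single sentence that it is the same as the proof of Lemma \ref{L:transvection1}, and your write-up is precisely that argument with $\alpha$ replaced by the generalized finite root $\lambda$, with the eigenvalue and propagation steps correctly filled in.
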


\begin{proof}
The proof is the same as the proof for Lemma \ref{L:transvection1}.
\end{proof}

By Lemma \ref{L:transvection2}, $s_{\lambda,\xi}$ ($\xi\in R^{\vee}(\lambda)$) is an automorphism of $A$. From
this, the map $$R^{\vee}(\lambda)\rightarrow W_{large}(G,A),\quad \xi\mapsto s_{\lambda,\xi}$$ gives an injection
from $R^{\vee}(\lambda)$ to $W_{large}(G,A)$. By this, we also use $R^{\vee}(\lambda)$ to denote the subgroup
$\{s_{\lambda,\xi}:\xi\in R^{\vee}(\lambda)\}$ of $W_{large}(G,A)$. We call $R^{\vee}(\lambda)$ the {\it coroot
group} of $\lambda$.

\begin{lemma}\label{L:rootSubgroup2}
We have $A\cap G_{[\lambda]}\subset\ker\lambda$.
\end{lemma}
\begin{proof}
The same as in the proof for Lemma \ref{L:rootSubgroup1}, we have $[k\lambda](a)=1$ for any $a\in A\cap
G_{[\lambda]}$ and any $k\in\mathbb{Z}-\{0\}$ with $k\lambda\in R_{0}(G,A)$. As such $k\lambda$ generate
$\mathbb{Z}\lambda$ by the definition of generalized finite root, we get $\lambda(a)=1$ for any
$a\in A\cap G_{[\lambda]}$. Thus, $A\cap G_{[\lambda]}\subset\ker\lambda$.
\end{proof}

\begin{theorem}[Vogan's counting formula]\label{T:Vogan2}
Let $A$ be maximal abelian subgroup of $G$, and $\lambda$ be a generalized finite root of $A$ with order $n$.
Then \begin{equation}\label{Eq:order5}|R^{\vee}(\lambda)|=\prod_{p^{k}|n, p\textrm{ prime}, k\geq 1}
p^{\dim\frg_{\frac{n}{p^{k}}\lambda}}.\end{equation}
\end{theorem}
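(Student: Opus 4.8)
The plan is to transcribe the proof of Theorem \ref{T:Vogan1} almost verbatim, replacing the finite root $\alpha$ by the generalized finite root $\lambda$ and invoking the generalized auxiliary results (Lemmas \ref{L:closed2} and \ref{L:rootSubgroup2} in place of their counterparts). First I would reduce the computation of $|R^{\vee}(\lambda)|$ to the cardinality of $A\cap G_{[\lambda]}$. Since $\lambda$ has order $n$, the character $\lambda$ identifies $A/\ker\lambda$ with the cyclic group $C_{n}$, so that
$$R^{\vee}(\lambda)=\Hom(A/\ker\lambda,\,A\cap G_{[\lambda]})\cong (A\cap G_{[\lambda]})[n],$$
the subgroup of elements killed by $n$. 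Recall from the proof of Lemma \ref{L:closed2} that, for a generator $y\in A$ of $A/\ker\lambda$ with conjugation action $y_{\ast}$, the subgroup $G_{[\lambda]}$ lies in the kernel of $z_{\ast}=I+y_{\ast}+\cdots+y_{\ast}^{n-1}$. For any $x\in A\cap G_{[\lambda]}$, abelianness of $A$ gives $y_{\ast}x=x$, whence $x^{n}=z_{\ast}(x)=1$. Thus $A\cap G_{[\lambda]}$ has exponent dividing $n$, so $(A\cap G_{[\lambda]})[n]=A\cap G_{[\lambda]}$ and $|R^{\vee}(\lambda)|=|A\cap G_{[\lambda]}|$.

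Next I would show $A\cap G_{[\lambda]}=(G_{[\lambda]})^{y}$. Because $y$ generates $A/\ker\lambda$, we have $A=\langle\ker\lambda,\,y\rangle$, and therefore
$$(G^{[\lambda]})^{y}=Z_{G}(\ker\lambda)\cap Z_{G}(y)=Z_{G}(A)=A,$$
the final equality using that $A$ is a maximal abelian subgroup. Since $G_{[\lambda]}\subset (G^{[\lambda]})^{0}$, this yields $(G_{[\lambda]})^{y}\subset A\cap G_{[\lambda]}$; the reverse inclusion is immediate, as $y\in A$ centralizes $A\cap G_{[\lambda]}$. Combined with the previous paragraph, $|R^{\vee}(\lambda)|=|(G_{[\lambda]})^{y}|$.

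The final step applies Lemma \ref{L:cardinality1} to the torus $H=G_{[\lambda]}$ and the finite-order automorphism $y$, giving $|(G_{[\lambda]})^{y}|=\prod_{p^{k}\mid n}p^{m_{y}(p^{k})}$. It then remains to identify the eigenvalue multiplicities. On the root space $\frg_{m\lambda}$ the automorphism $y_{\ast}$ acts by the scalar $\lambda(y)^{m}$, and $\frg_{m\lambda}$ depends only on $m\bmod n$. Since $\lambda(y)$ is a primitive $n$-th root of unity, the eigenspace of $y_{\ast}$ on $\frg_{[\lambda]}$ attached to the fixed primitive $p^{k}$-th root $\lambda(y)^{n/p^{k}}$ is exactly $\frg_{\frac{n}{p^{k}}\lambda}$. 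Hence $m_{y}(p^{k})=\dim\frg_{\frac{n}{p^{k}}\lambda}$, and the asserted formula follows.

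I expect the only genuinely new content, relative to Theorem \ref{T:Vogan1}, to be the eigenvalue bookkeeping of the last paragraph, since for a generalized finite root the root spaces $\frg_{m\lambda}$ run over all residues $m\bmod n$ rather than a single orbit; the other steps are a direct translation using the generalized lemmas. The point I would verify with care is that $\lambda(y)$ is a \emph{primitive} $n$-th root of unity, which holds because $y$ generates the cyclic group $A/\ker\lambda$ of order $n=\ord(\lambda)$, and that the generating condition in the definition of a generalized finite root is exactly what makes Lemma \ref{L:rootSubgroup2} available to guarantee $A\cap G_{[\lambda]}\subset\ker\lambda$ where it is needed.
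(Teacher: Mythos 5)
Your proof is correct and follows essentially the same route as the paper: reduce $|R^{\vee}(\lambda)|$ to $|A\cap G_{[\lambda]}|$, identify $A\cap G_{[\lambda]}$ with $(G_{[\lambda]})^{y}$ using maximality of $A$, and apply Lemma \ref{L:cardinality1}. The only difference is that you spell out two steps the paper leaves implicit (that $A\cap G_{[\lambda]}$ has exponent dividing $n$ so $\Hom(A/\ker\lambda,A\cap G_{[\lambda]})$ has the same order as $A\cap G_{[\lambda]}$, and the identification $m_{y}(p^{k})=\dim\frg_{\frac{n}{p^{k}}\lambda}$), and both of these are carried out correctly.
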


\begin{proof}
Since $A$ is a maximal abelian subgroup of $G$, we have $Z_{G}(A)=A$. Choose $y\in A$ generate $A/\ker\lambda$.
Then, $$(G_{[\lambda]})^{y}\subset Z_{G}(A)=A.$$ Hence, $(G_{[\lambda]})^{y}\subset A\cap G_{[\lambda]}$.
As $A$ is abelian, we have $A\cap G_{[\alpha]}\subset(G_{[\alpha]})^{y}.$ Thus, $A\cap G_{[\alpha]}=
(G_{[\alpha]})^{y}$. The conclusion follows from Lemma \ref{L:cardinality1}.
\end{proof}

Define $W_{middle}(G,A)$ as the subgroup of $W_{large}(G,A)$ generated by $s_{\alpha}$ ($\alpha\in R(G,A)-
R_{0}(G,A)\}$ and $\{s_{\lambda,\xi}\}$ ($\lambda$ any generalized finite root, $\xi\in R^{\vee}(\lambda)$).
We call $W_{middle}(G,A)$ the {\it middle Weyl group} of $A$.

\smallskip

\begin{lemma}\label{L:coroot-contain}
Let $\lambda$ and $\lambda'$ be two finite characters of $A$. If $\lambda'$ is a multiple of $\lambda$,
then $R^{\vee}(\lambda')\subset R^{\vee}(\lambda)$.
\end{lemma}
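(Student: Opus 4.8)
The plan is to reduce the containment to two elementary inclusions and then match transvections. Write $\lambda'=k\lambda$ for some $k\in\mathbb{Z}-\{0\}$. First I would observe that
$$\frg_{[\lambda']}=\bigoplus_{m\in\mathbb{Z}-\{0\}}\frg_{mk\lambda}\subseteq\bigoplus_{j\in\mathbb{Z}-\{0\}}\frg_{j\lambda}=\frg_{[\lambda]},$$
since the nonzero multiples of $k$ form a subset of $\mathbb{Z}-\{0\}$. Intersecting with $\frg_0$ gives $\frg_0\cap\frg_{[\lambda']}\subseteq\frg_0\cap\frg_{[\lambda]}$, and since $G_{[\lambda']}$ and $G_{[\lambda]}$ are by definition the connected subgroups with these Lie algebras, the smaller subalgebra yields $G_{[\lambda']}\subseteq G_{[\lambda]}$, hence $G_{[\lambda']}\cap A\subseteq G_{[\lambda]}\cap A$. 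Second, since $\lambda'(a)=\lambda(a)^{k}$ for all $a\in A$, we have $\ker\lambda\subseteq\ker\lambda'$, so there is a natural surjection $\pi\colon A/\ker\lambda\to A/\ker\lambda'$.

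With these two maps in hand, the comparison is a direct composition. Given any $\xi'\in R^{\vee}(\lambda')=\Hom(A/\ker\lambda',G_{[\lambda']}\cap A)$, I would form
$$\xi=\iota\circ\xi'\circ\pi\colon A/\ker\lambda\longrightarrow G_{[\lambda]}\cap A,$$
where $\iota\colon G_{[\lambda']}\cap A\hookrightarrow G_{[\lambda]}\cap A$ is the inclusion supplied by the first step. As a composite of homomorphisms, $\xi$ is a homomorphism, so $\xi\in R^{\vee}(\lambda)=\Hom(A/\ker\lambda,G_{[\lambda]}\cap A)$. Finally I would verify that $\xi$ and $\xi'$ induce the same transvection: for $a\in A$ with images $\bar a\in A/\ker\lambda$ and $\bar a'\in A/\ker\lambda'$ one has $\pi(\bar a)=\bar a'$, whence
$$s_{\lambda,\xi}(a)=a\,\xi(\bar a)=a\,\xi'(\bar a')=s_{\lambda',\xi'}(a).$$
Thus $s_{\lambda',\xi'}=s_{\lambda,\xi}$, so $s_{\lambda',\xi'}$ lies in $R^{\vee}(\lambda)$ viewed as a subgroup of $W_{large}(G,A)$, and $R^{\vee}(\lambda')\subseteq R^{\vee}(\lambda)$ follows.

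The only genuinely delicate point is keeping the two incarnations of a coroot group straight — as the $\Hom$ group on one side and as the subgroup of transvections $\{s_{\lambda,\xi}\}$ in $W_{large}(G,A)$ on the other — and confirming that the composite $\xi$ really takes values in $G_{[\lambda]}\cap A$ rather than merely in $A$. That is exactly what the first containment $G_{[\lambda']}\cap A\subseteq G_{[\lambda]}\cap A$ guarantees, so once it is established everything else is a routine unwinding of the definitions and no further structural input is needed.
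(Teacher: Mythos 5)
Your argument is exactly the paper's: the containment $\frg_{[\lambda']}\subseteq\frg_{[\lambda]}$ of root-space sums forces $G_{[\lambda']}\subseteq G_{[\lambda]}$ and hence the inclusion of coroot groups. The paper states this in one line; your additional bookkeeping with $\ker\lambda\subseteq\ker\lambda'$ and the matching of transvections in $W_{large}(G,A)$ is a correct and welcome expansion of the same proof.
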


\begin{proof}
In this case $\frg_{[\lambda']}\subset\frg_{[\lambda]}$. Thus, $G_{[\lambda']}\subset G_{[\lambda]}$,
and $R^{\vee}(\lambda')\subset R^{\vee}(\lambda)$.
\end{proof}

We call a generalized finite root $\lambda$ a {\it proper generalized finite root} if it is not the proper
multiple of any other generalized finite root. By Lemma \ref{L:coroot-contain}, it suffices to use infinite
roots and proper generalized finite root in defining $W_{middle}(G,A)$. Note that, a finite root is always
a generalized finite root, but not necessarily a proper generalized finite root.

\begin{question}\label{Q:ast2-ast3}
For a generalized finite root $\lambda$, is it always that $R^{\vee}(\lambda)\subset W_{small}(G,A)$? Equivalently,
is $W_{small}(G,A)=W_{middle}(G,A)$?
\end{question}

\begin{question}\label{Q:coroot-subgroup}
Does it hold that \begin{equation}\label{Eq:saturated}R^{\vee}(\lambda)=\{\xi\in\ker\lambda: s_{\lambda,\xi}
\in W_{middle}(G,A)\}\end{equation} for any generalized finite root $\lambda$?

Does it hold that \begin{equation}\label{Eq:saturated2}R^{\vee}(\alpha)=\{\xi\in\ker\alpha: s_{\alpha,\xi}
\in W_{small}(G,A)\}.\end{equation} for any finite root $\alpha$?
\end{question}


\begin{question}
Suppose $A$ is a maximal abelian subgroup of a compact semisimple Lie group $G$. Is it $R^{\vee}(\alpha)\neq 1$
for any finite root $\alpha\in R_{0}(G,A)$?
\end{question}


\begin{theorem}\label{T:middle-small}
If $A$ is a maximal abelian subgroup of a compact Lie group $G$, then $$W_{middle}(G,A)=W_{small}(G,A).$$
\end{theorem}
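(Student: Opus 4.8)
The plan is to prove the two inclusions separately, and to see that only one of them uses maximality of $A$. Since every finite root is in particular a generalized finite root, each generator $s_{\alpha}$ ($\alpha$ infinite) and $s_{\alpha,\xi}$ ($\alpha$ finite, $\xi\in R^{\vee}(\alpha)$) of $W_{small}(G,A)$ occurs among the generators of $W_{middle}(G,A)$; hence $W_{small}(G,A)\subseteq W_{middle}(G,A)$ with no hypothesis on $A$. Everything therefore reduces to showing that each transvection $s_{\lambda,\xi}$, for $\lambda$ a generalized finite root of order $n$ and $\xi\in R^{\vee}(\lambda)$, already lies in $W_{small}(G,A)$ --- in fact in the subgroup $W_{f}$ generated by transvections of finite roots.

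First I would record two elementary facts about the assignment $\xi\mapsto s_{\lambda,\xi}$. Using Lemma \ref{L:rootSubgroup2} ($A\cap G_{[\lambda]}\subseteq\ker\lambda$), one checks directly that this assignment is an injective \emph{homomorphism} from the finite abelian group $R^{\vee}(\lambda)$ into $W_{large}(G,A)$, so it suffices to treat $\xi$ ranging over a generating set of $R^{\vee}(\lambda)$. Moreover, if $\mu=c\lambda$ is an integer multiple of $\lambda$, then the inclusion $R^{\vee}(\mu)\subseteq R^{\vee}(\lambda)$ of Lemma \ref{L:coroot-contain} is compatible with transvections: since $\ker\lambda\subseteq\ker\mu$, the image of $\eta\in R^{\vee}(\mu)$ satisfies $s_{\lambda,\eta}=s_{\mu,\eta}$. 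Consequently, if $R^{\vee}(\lambda)$ is generated by the subgroups $R^{\vee}(\mu)$ attached to finite roots $\mu$ that are multiples of $\lambda$, then every $s_{\lambda,\xi}$ is a product of finite-root transvections, hence lies in $W_{f}\subseteq W_{small}(G,A)$, and we are done.

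The heart of the argument is therefore to exhibit such a generating set, and this is where maximality of $A$ enters through Vogan's counting formula. Decompose the finite abelian group $R^{\vee}(\lambda)$ into its Sylow $p$-subgroups, one for each prime $p\mid n$. By Theorem \ref{T:Vogan2},
\[|R^{\vee}(\lambda)|=\prod_{p^{k}\mid n,\,p\text{ prime},\,k\geq 1} p^{\dim\frg_{(n/p^{k})\lambda}},\]
so the $p$-part of $|R^{\vee}(\lambda)|$ equals $\prod_{k=1}^{v_{p}(n)} p^{\dim\frg_{(n/p^{k})\lambda}}$, where $v_{p}$ is the $p$-adic valuation; in particular only the prime-power multiples $(n/p^{k})\lambda$ contribute. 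Let $j_{p}$ be the largest $j$ for which $(n/p^{j})\lambda$ is a root, and set $\mu_{p}=(n/p^{j_{p}})\lambda$, a genuine finite root of order $p^{j_{p}}$. Applying Theorem \ref{T:Vogan2} to $\mu_{p}$ and using the identity $(p^{j_{p}}/p^{i})\mu_{p}=(n/p^{i})\lambda$, one computes $|R^{\vee}(\mu_{p})|=\prod_{i=1}^{j_{p}} p^{\dim\frg_{(n/p^{i})\lambda}}$, which coincides with the $p$-part of $|R^{\vee}(\lambda)|$ because the terms with $i>j_{p}$ vanish by maximality of $j_{p}$. Since $\mu_{p}$ is a multiple of $\lambda$, Lemma \ref{L:coroot-contain} embeds the $p$-group $R^{\vee}(\mu_{p})$ into $R^{\vee}(\lambda)$; having the order of the full Sylow $p$-subgroup, it must be that Sylow $p$-subgroup. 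Ranging over $p$ yields $R^{\vee}(\lambda)=\bigoplus_{p\mid n} R^{\vee}(\mu_{p})$ with each $\mu_{p}$ a finite root, which is the required generating set.

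The step I expect to be the main obstacle is the cardinality matching in the last paragraph: it hinges on applying Vogan's formula both to $\lambda$ and to the auxiliary root $\mu_{p}$, and on the bookkeeping identity $(p^{j_{p}}/p^{i})\mu_{p}=(n/p^{i})\lambda$ together with the vanishing of $\frg_{(n/p^{i})\lambda}$ for $i>j_{p}$. Both uses of Theorem \ref{T:Vogan2} rely essentially on $A$ being a maximal abelian subgroup, so that $Z_{G}(A)=A$ and $A\cap G_{[\lambda]}=(G_{[\lambda]})^{y}$; for a general $(\ast)$-subgroup this matching can break down, which is precisely why the corresponding statement is only raised as Question \ref{Q:ast2-ast3} in that generality.
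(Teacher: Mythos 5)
Your proposal is correct and follows essentially the same route as the paper's proof: reduce to showing $R^{\vee}(\lambda)\subset W_{small}(G,A)$, then use Vogan's counting formula to identify the Sylow $p$-subgroup of $R^{\vee}(\lambda)$ with $R^{\vee}\bigl(\frac{n}{p^{a_p}}\lambda\bigr)$ for the largest prime power $p^{a_p}$ with $\frac{n}{p^{a_p}}\lambda$ a genuine finite root. Your write-up is somewhat more careful than the paper's (you make explicit the compatibility $s_{\lambda,\eta}=s_{\mu,\eta}$ under the inclusion of Lemma \ref{L:coroot-contain} and the homomorphism property of $\xi\mapsto s_{\lambda,\xi}$), but the key idea is identical.
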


\begin{proof}
If suffices to show $$R^{\vee}(\lambda)\subset W_{small}(G,A)$$ for any generalized finite root $\lambda$.
Let $n$ be the order of $\lambda$ and $p_1,\dots,p_{k}$ be all distinct prime factors of $|R^{\vee}(\lambda)|$.
Then, each $p_{i}$ is a prime factor of $n$ by Theorem \ref{T:Vogan2}. For a prime $p_{i}$, let
$p_{i}^{a_{i}}$ be the largest power of $p_{i}$ such that $\frac{m}{p^{a_{i}}}\lambda$ is a root. By Theorem
\ref{T:Vogan2}, the order of the Sylow $p_{i}$-subgroup of $R^{\vee}(\lambda)$ is equal to
$$\sum_{1\leq j\leq a_{i}}\frg_{\frac{m}{p^{j}}\lambda}.$$ Hence, the Sylow $p_{i}$-subgroup of $R^{\vee}
(\lambda)$ is equal to $R^{\vee}(\frac{m}{p^{a_{i}}}\lambda)$, which is contained in $W_{small}(G,A)$. Thus,
$R^{\vee}(\lambda)\subset W_{small}(G,A)$.
\end{proof}

By Theorem \ref{T:middle-small}, Question \ref{Q:ast2-ast3} has an affirmative answer if $A$ is a maximal
abelian subgroup.

\subsection{Saturated $(\ast)$-subgroups}

Let $A$ be an abelian subgroup of a compact Lie group $G$. Set $$Sat(A)=\{g\in G:\Ad(g)|_{\frg_{\alpha}}
\textrm{ is a scalar },\forall\alpha\in X^{\ast}(A)\}.$$ We call $Sat(A)$ the saturation of $A$.
Apparently, $Sat(A)\supset A$. We call $A$ a {\it saturated abelian subgroup} if $Sat(A)=A$.

\begin{lemma}\label{L:saturated1}
Let $G$ be a compact Lie group of adjoint type. Then for any abelian subgroup $A$ of $G$, $Sat(A)$ is still
an abelian group.
\end{lemma}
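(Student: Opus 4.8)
The plan is to realize the ambient group linearly and to exploit the fact that scalar operators commute. Because $G$ is of adjoint type, the adjoint homomorphism $\pi=\Ad\colon G\to\Aut(\frg_0)$ is injective, so I may regard $G$ as a subgroup of $\GL(\frg)$ through $\Ad$. The whole argument will rest on transporting the desired relation $gh=hg$ in $G$ to the equality $\Ad(gh)=\Ad(hg)$ of linear operators on $\frg$, where it becomes transparent.

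First I would record the $A$-weight decomposition $\frg=\bigoplus_{\lambda\in X^{\ast}(A)}\frg_{\lambda}$ arising from the (semisimple) conjugation action of the compact abelian group $A$, exactly as at the opening of this section. Next, for $g\in Sat(A)$ the defining condition says that $\Ad(g)$ carries each weight space into itself and acts there by a scalar; I would name this scalar $c_{\lambda}(g)\in\bbC^{\times}$, so that $\Ad(g)|_{\frg_{\lambda}}=c_{\lambda}(g)\,\id$ for every $\lambda$. A short check then yields closure: for $g,h\in Sat(A)$ one has $\Ad(gh)|_{\frg_{\lambda}}=c_{\lambda}(g)c_{\lambda}(h)\,\id$ and $\Ad(g^{-1})|_{\frg_{\lambda}}=c_{\lambda}(g)^{-1}\,\id$, both scalars, so $Sat(A)$ is a subgroup of $G$ containing $A$.

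The key step is commutativity itself. Given $g,h\in Sat(A)$, on each weight space $\frg_{\lambda}$ the operators $\Ad(gh)$ and $\Ad(hg)$ act by the scalars $c_{\lambda}(g)c_{\lambda}(h)$ and $c_{\lambda}(h)c_{\lambda}(g)$ respectively; since these scalars agree, $\Ad(gh)$ and $\Ad(hg)$ coincide on every summand and hence on all of $\frg$. Injectivity of $\Ad$ (the adjoint-type hypothesis) then gives $gh=hg$, so $Sat(A)$ is abelian.

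I do not expect a genuine obstacle here, as the content is essentially linear-algebraic. The only point demanding care is the role of the adjoint-type assumption: without it one would obtain merely $\Ad([g,h])=\id$, that is $[g,h]\in\ker\pi$, so that $Sat(A)$ would only be abelian modulo the (central) kernel of $\Ad$. Thus the hypothesis is used exactly once, but essentially, to upgrade commutativity of the $\Ad$-images to commutativity in $G$.
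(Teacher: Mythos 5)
Your proof is correct and follows essentially the same route as the paper's: both use the $A$-weight decomposition of $\frg$, observe that elements of $Sat(A)$ act by scalars on each weight space so their adjoint images commute, and then invoke injectivity of $\Ad$ (the adjoint-type hypothesis) to conclude commutativity in $G$. The extra verification of closure under products and inverses is a harmless addition not present in the paper.
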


\begin{proof}
Let $g_1,g_2\in Sat(A)$. By definition, for any character $\alpha$ of $A$, $\Ad(g_{i})|_{\frg_{\alpha}}$
($i=1,2$) is a scalar. Thus, $\Ad([g_1,g_2])=[\Ad(g_1),\Ad(g_2)]=1$. As $G$ is of adjoint type, the map
$\Ad:G\rightarrow\Aut(\frg)$ is injective. Thus, $[g_1,g_2]=1$. Hence, $Sat(A)$ is abelian.
\end{proof}

\begin{proposition}\label{P:saturated2}
Let $G$ be a compact Lie group of adjoint type. Then any maximal abelian subgroup $A$ is saturated.
\end{proposition}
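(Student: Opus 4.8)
The plan is to show that $Sat(A)$ is a closed abelian subgroup of $G$ containing $A$, after which the maximality of $A$ immediately forces $Sat(A)=A$. In fact the essential content has already been extracted in Lemma \ref{L:saturated1}, so the proof of this proposition should be a short corollary.

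First I would check that $Sat(A)$ is a closed subgroup of $G$. The key observation is that a scalar operator on a root space $\frg_{\alpha}$ in particular preserves $\frg_{\alpha}$; hence for $g\in Sat(A)$ the operator $\Ad(g)$ maps each $\frg_{\alpha}$ into itself. Consequently, for $g_1,g_2\in Sat(A)$ the restriction $\Ad(g_1g_2)|_{\frg_{\alpha}}=\Ad(g_1)|_{\frg_{\alpha}}\circ\Ad(g_2)|_{\frg_{\alpha}}$ is a composite of two scalars, again a scalar, and $\Ad(g^{-1})|_{\frg_{\alpha}}$ is the inverse scalar; the identity trivially belongs to $Sat(A)$. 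So $Sat(A)$ is closed under products and inverses. Closedness as a topological subset follows because acting by a scalar on each finite-dimensional root space $\frg_{\alpha}$ is a closed condition on $g$. Finally $A\subseteq Sat(A)$ by construction, since each $a\in A$ acts on $\frg_{\alpha}$ by the scalar $\alpha(a)$.

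Next I invoke Lemma \ref{L:saturated1}: because $G$ is of adjoint type, $Sat(A)$ is abelian. Combining the previous paragraph, $Sat(A)$ is therefore a closed abelian subgroup of $G$ containing $A$. Since $A$ is a maximal abelian subgroup, no abelian subgroup properly contains it, and we conclude $Sat(A)=A$; that is, $A$ is saturated.

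I do not anticipate any genuine obstacle here. The only place the hypotheses do real work is the commutativity of $Sat(A)$, where the adjoint-type assumption enters through the injectivity of $\Ad$ used in Lemma \ref{L:saturated1}; this has already been handled. What remains is the purely formal verification that $Sat(A)$ is a closed subgroup and the one-line appeal to maximality, both of which are routine.
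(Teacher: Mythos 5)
Your proof is correct and follows essentially the same route as the paper: apply Lemma \ref{L:saturated1} to conclude $Sat(A)$ is abelian and then invoke maximality of $A$. The extra verification that $Sat(A)$ is a closed subgroup containing $A$ is routine and is left implicit in the paper's one-line argument.
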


\begin{proof}
By Lemma \ref{L:saturated1}, $Sat(A)$ is abelian. As $A$ is a maximal abelian subgroup, we get $Sat(A)=A$,
i.e., $A$ is saturated.
\end{proof}

\begin{example}\label{E:saturated1}
Let $G=\Spin(7)$ and $$A=\langle-1,e_1e_2e_3e_4,e_1e_2e_5e_6,e_1e_3e_5e_7\rangle\subset G.$$ Then, $A$ is
a maximal abelian subgroup. In this case, $$Sat(A)=\langle-1,e_1e_2,e_1e_3,e_1e_4,e_1e_5,e_1e_6,e_1e_7
\rangle.$$ From this, we see that: (1), the saturation $Sat(A)$ of an $(\ast)$-subgroup is not necessarily
an abelian subgroup; (2), a maximal abelian subgroup is not necessarily a saturated abelian subgroup.
\end{example}

\begin{example}\label{E:saturated2}
Let $G=\Int(\fre_6)$, and $A=F_2\subset G$ be as defined in \cite[Page 238, Eq. (79)]{Yu-maximalE}.
Then, $A$ is a saturated $(\ast)$-subgroup of $G$, but not a maximal abelian subgroup. Thus, a saturated
$(\ast)$-subgroup is not necessarily a maximal abelian subgroup.
\end{example}

\begin{proposition}\label{P:saturated3}
Let $G$ be a compact Lie group of adjoint type, and $A$ be a $(\ast)$-subgroup of $G$. Write $B=Sat(A)$.
Then, $$W_{large}(G,A)=\Stab_{W_{large}(G,B)}(A).$$
\end{proposition}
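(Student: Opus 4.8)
The plan is to realize $\Stab_{W_{large}(G,B)}(A)$ as the image of $N_{G^{0}}(A)$ under the quotient homomorphism $q\colon N_{G^{0}}(B)\to W_{large}(G,B)=N_{G^{0}}(B)/Z_{G^{0}}(B)$, and then to identify this image with $W_{large}(G,A)=N_{G^{0}}(A)/Z_{G^{0}}(A)$ by computing the kernel of $q|_{N_{G^{0}}(A)}$. Everything reduces to two structural facts relating $A$ to its saturation $B=Sat(A)$: that they have the same centralizer, and that every element normalizing $A$ also normalizes $B$.

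First I would prove $Z_{G}(A)=Z_{G}(B)$. Since $A\subset B$, the inclusion $Z_{G}(B)\subset Z_{G}(A)$ is automatic. Conversely, take $g\in Z_{G}(A)$: then $\Ad(g)$ preserves each weight space $\frg_{\alpha}$ ($\alpha\in X^{\ast}(A)$), while by the definition of $Sat(A)$ every $b\in B$ acts on $\frg_{\alpha}$ as a scalar. Hence $\Ad(g)$ and $\Ad(b)$ commute on each $\frg_{\alpha}$, so $[\Ad(g),\Ad(b)]=1$ on all of $\frg$; as $G$ is of adjoint type, $\Ad$ is injective (the mechanism already used in Lemma \ref{L:saturated1}), so $[g,b]=1$ and $g\in Z_{G}(B)$. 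Intersecting with $G^{0}$ gives $Z_{G^{0}}(A)=Z_{G^{0}}(B)$.

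The key step, and the one I would set up most carefully, is $N_{G^{0}}(A)\subset N_{G^{0}}(B)$, which expresses that the saturation is canonically attached to $A$. If $n$ normalizes $A$, then conjugation by $n$ permutes the characters in $X^{\ast}(A)$ and correspondingly $\Ad(n)$ permutes the weight spaces $\frg_{\alpha}$ among themselves. Consequently, for $g\in Sat(A)$ and any $\alpha$, the operator $\Ad(ngn^{-1})=\Ad(n)\Ad(g)\Ad(n)^{-1}$ still acts on $\frg_{\alpha}$ by a scalar: it carries $\frg_{\alpha}$ to another weight space on which $\Ad(g)$ is scalar and then back. Hence $nBn^{-1}=Sat(nAn^{-1})=Sat(A)=B$, so $n\in N_{G}(B)$, and restricting to $G^{0}$ gives the claim.

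Finally I would assemble these. Because $N_{G^{0}}(A)\subset N_{G^{0}}(B)$, the restriction $q|_{N_{G^{0}}(A)}$ is defined; its kernel is $N_{G^{0}}(A)\cap Z_{G^{0}}(B)=Z_{G^{0}}(B)=Z_{G^{0}}(A)$, using $Z_{G^{0}}(B)\subset Z_{G^{0}}(A)\subset N_{G^{0}}(A)$ together with the first step. By the isomorphism theorem the image $q(N_{G^{0}}(A))$ is $N_{G^{0}}(A)/Z_{G^{0}}(A)=W_{large}(G,A)$. It then remains to check that this image is exactly $\Stab_{W_{large}(G,B)}(A)$: an element $nZ_{G^{0}}(B)$ of $W_{large}(G,B)$ fixes the subgroup $A\subset B$ if and only if $nAn^{-1}=A$, i.e. $n\in N_{G^{0}}(A)$, and this condition is well defined on cosets since $Z_{G^{0}}(B)$ centralizes $B\supset A$. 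Thus $\Stab_{W_{large}(G,B)}(A)=q(N_{G^{0}}(A))=W_{large}(G,A)$, as claimed.
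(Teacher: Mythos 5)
Your proposal is correct and follows essentially the same route as the paper's own proof: show $Z_{G}(A)=Z_{G}(B)$ and $N_{G}(A)\subset N_{G}(B)$ using that elements of $B$ act as scalars on the $A$-weight spaces (with injectivity of $\Ad$ from the adjoint-type hypothesis), then identify $W_{large}(G,A)$ with the image of $N_{G^{0}}(A)$ in $W_{large}(G,B)$, which is $\Stab_{W_{large}(G,B)}(A)$. You simply spell out the kernel computation and the identification of the image more explicitly than the paper does.
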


\begin{proof}
The action of any element $g\in N_{G}(A)$ permutes $A$-eigenspaces. Thus, $g$ normalizes $Sat(A)=B$. This
gives $N_{G}(A)\subset N_{G}(B)$. Any element $g\in Z_{G}(A)$ stabilizes all $A$-eigenspaces. Thus, $g$
commutes with $Sat(A)=B$. This shows $Z_{G}(A)=Z_{G}(B)$. Thus, we have an injection $$W_{large}(G,A)
\hookrightarrow W_{large}(G,B).$$ Apparently, the image is $\Stab_{W_{large}(G,B)}(A)$. Thus,
$W_{large}(G,A)=\Stab_{W_{large}(G,B)}(A).$
\end{proof}

With Proposition \ref{P:saturated3}, when $G$ is of adjoint type, we can calculate the large Weyl group
of a $(\ast)$-subgroup through calculating its saturation and the Weyl group of it. This is our purpose
of defining saturation and saturated abelian subgroup. We remark that based on the classification of
$(\ast)$-subgroups in \cite{Yu-maximalE} and reduction in Subsection \ref{SS:ast-semisimple}, it is not
hard to classify saturated $(\ast)$-subgroups.

When $G$ is not of adjoint type, we do not have yet a generalization of Proposition \ref{P:saturated3}
which enables us to reduce the calculation of large Weyl groups of general $(\ast)$-subgroups to that
of more canonical $(\ast)$-subgroups.

By Proposition \ref{P:saturated3}, we have an injection $$W_{small}(G,A)\hookrightarrow\Stab_{W_{small}
(G,B)}(A).$$ It looks to us that this is not an equality in general.

\begin{question}\label{Q:saturated}
Does Vogan's counting formula in Theorem \ref{T:Vogan2} hold for saturated $(\ast)$-subgroups?
\end{question}

\subsection{Prime decomposition}

In this subsection we always assume $A$ is a finite $(\ast)$-subgroup of a compact Lie group $G$.

Write $R=R(G,A)$. Write $\pi: G\rightarrow\Aut(\frg_0)$ for the adjoint homomorphism of $G$. Let $n$ be
the order of $\pi(A)$. For a prime $p|n$, let $k=k_{p}(A)\geq 1$ be the largest integer such that $p^{k}|n$.
Set $$R_{(p)}=\{\alpha\in R: [p^{k}\alpha]=1\},$$ $$A_{(p)}=\{g\in A:\pi(g)^{\frac{n}{p^{k}}}=1\},$$ and
$$G_{(p)}=Z_{G}(A_{(p)}).$$ Then, the complexified Lie algebra of $G_{(p)}$ is $$\frg_{(p)}=
\bigoplus_{\alpha\in R_{(p)}}\frg_{\alpha}.$$

For a positive integer $m$, set $$R^{(m)}=\{\alpha\in R:[m^{k}\alpha]=1\textrm{ for some }k\geq 1\},$$
$$A^{(m)}=\{g\in A: g^{\frac{n}{\overline{m}}}=1\},$$ where $\overline{m}=\gcd(n,m^{k})$ ($k>>0$). Set
$$G^{(m)}=Z_{G}(A^{(m)}).$$ Then, the complexified Lie algebra of $G^{(m)}$ is $$\frg^{(m)}=
\bigoplus_{\alpha\in R^{(m)}}\frg_{\alpha}.$$ It is clear that each of $R^{(m)}$, $A^{(m)}$, $G^{(m)}$
depends only on the set of prime factors of $\gcd(m,n)$.

By Lemma \ref{L:abelian2}, if $m,m'$ are two coprime positive integers, then $$[\frg^{(m)},\frg^{(m')}]
=0.$$ Particularly for two distinct prime factors $p$ and $q$ of $n$, we have $$[\frg_{(p)},\frg_{(q)}]
=0.$$ By this, $\sum_{p|n}\frg_{(p)}$ is a direct product of Lie algebras. Set $$\frg'=
\sum_{p|n}\frg_{(p)}=\bigoplus_{p|n}\frg_{(p)}.$$ This shows the following theorem.




\begin{theorem}\label{T:pdecom}
$\frg'$ is a reductive subalgebra of $\frg$. It is stable under the conjugation action of $A$, and
$\frg'\cap\frg_0$ is a compact real form of it.
\end{theorem}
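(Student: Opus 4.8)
The plan is to read off all three assertions from the decomposition $\frg'=\bigoplus_{p\mid n}\frg_{(p)}$ already in hand, in which each $\frg_{(p)}$ is the complexified Lie algebra of the closed subgroup $G_{(p)}=Z_{G}(A_{(p)})$ and $[\frg_{(p)},\frg_{(q)}]=0$ for distinct primes $p,q$. The $A$-stability is the cheapest part and I would dispatch it first: $\frg'$ is a sum of root spaces $\frg_{\alpha}$, and $\Ad(a)$ acts on each $\frg_{\alpha}$ by the scalar $\alpha(a)$, so $\Ad(A)$ visibly preserves $\frg'$.

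To see that $\frg'$ is a subalgebra, I would note that each $\frg_{(p)}$, being the complexified Lie algebra of the group $G_{(p)}$, is closed under the bracket, while the cross terms vanish by the recorded relation $[\frg_{(p)},\frg_{(q)}]=0$. That relation itself comes from Lemma \ref{L:abelian3}, which applies because $A$ is finite, so that every root is a finite root and the roots in $R_{(p)}$ and $R_{(q)}$ have coprime (prime-power) orders. Combined with the directness of the sum (the sets $R_{(p)}$ are pairwise disjoint, a nonzero character having order simultaneously a power of $p$ and of $q$ being impossible), this gives $[\frg',\frg']\subseteq\frg'$.

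For reductivity together with the compact real form I would descend to the real level via the conjugation $\sigma$ of $\frg$ relative to $\frg_{0}$. Since $G_{(p)}$ is a closed subgroup of the compact group $G$, it is compact, and $\frg_{(p)}=\Lie(G_{(p)})\otimes_{\bbR}\bbC$ is the complexification of the real subalgebra $\Lie(G_{(p)})\subseteq\frg_{0}$; in particular each $\frg_{(p)}$ is $\sigma$-stable (equivalently $R_{(p)}=-R_{(p)}$, as $\alpha\mapsto-\alpha$ preserves the order of a character), hence so is $\frg'$. Therefore $\frg'=(\frg'\cap\frg_{0})\otimes_{\bbR}\bbC$, and $\frg'\cap\frg_{0}$ is a subalgebra of the compact Lie algebra $\frg_{0}$, so it is itself of compact type and reductive. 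Its complexification $\frg'$ is then reductive, and $\frg'\cap\frg_{0}$ is the asserted compact real form.

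I do not expect a substantial obstacle: the theorem is essentially a packaging of the prior lemmas into one statement. The only spot needing genuine attention is the identification of the real form, where one must check that $\frg'$ is conjugation-stable so that $(\frg')^{\sigma}$ equals $\frg'\cap\frg_{0}$ and $\frg'$ is its complexification rather than something larger; this rests on $R_{(p)}$ being closed under negation, which is immediate. Everything else follows formally from the fact that $\frg'$ is a direct sum of commuting reductive pieces sitting inside a compact Lie algebra.
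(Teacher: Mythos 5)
Your proposal is correct and follows essentially the same route as the paper: the cross-brackets $[\frg_{(p)},\frg_{(q)}]=0$ come from the coprime-order lemma, and reductivity plus the compact real form are read off from the fact that each $\frg_{(p)}$ is the complexification of the Lie algebra of the compact subgroup $G_{(p)}=Z_{G}(A_{(p)})$. Your extra care about $\sigma$-stability and $R_{(p)}=-R_{(p)}$ only makes explicit what the paper leaves implicit.
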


\begin{proof}
For each prime $p$, $\frg_{(p)}\cap\frg_{0}$ is the Lie algebra of the compact subgroup $G_{(p)}$. Thus,
$\frg_{(p)}\cap\frg_{0}$ is a compact real form of $\frg_{p}$. Then, $\frg'\cap\frg_0$ is a compact real
form of $\frg'$, and $\frg'$ is a reductive subalgebra.
\end{proof}

\smallskip

Assume $m|n$. Write $n=\prod_{1\leq i\leq s}p_{i}^{a_{i}}$ ($p_{i}\neq p_{j}$, $a_{i}\geq 1$) for the prime 
factorization of $n$. For each positive integer $m|n$, write $m=\prod_{1\leq j\leq t}p_{i_{j}}^{b_{j}}$
($1\leq i_1<\cdots<i_{t}\leq s$, $b_{j}\geq 1$) for the prime factorization of $m$. For each $j$, let 
$m_{j}=\frac{m}{p_{i_{j}}^{b_{j}}}$. Set $$R_{{m}}=R^{(m)}-\bigcup_{1\leq j\leq t}R^{(m_{j})},$$ and 
$$M_{(m)}=\bigoplus_{\alpha\in R_{(m)}}\frg_{\alpha}.$$ Let $m'=\prod_{i\not\in\{i_{j}:1\leq j\leq t\}}
p_{i}.$

\begin{lemma}\label{L:pdecom1}
We have $[\frg^{(m')},M_{(m)}]=0.$ For each prime $p|m$, $[\frg_{(p)},M_{(m)}]\subset M_{(m)}$.
\end{lemma}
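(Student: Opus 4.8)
The statement has two parts, and the plan is to reduce both to the elementary weight relation $[\frg_\alpha,\frg_\beta]\subset\frg_{\alpha+\beta}$ (which holds because $A$ acts by automorphisms, so $\Ad(a)$ scales $[\frg_\alpha,\frg_\beta]$ by $(\alpha+\beta)(a)$), together with the commuting relation $[\frg^{(m)},\frg^{(m')}]=0$ for coprime $m,m'$ recorded just before the lemma and with Lemma \ref{L:abelian2}. Throughout I will use that, since $A$ is finite, every root has finite order, and the identification of $R_{(m)}$ as exactly the set of roots $\alpha$ whose order has prime support equal to $P_m:=\{p_{i_1},\dots,p_{i_t}\}$: indeed $R^{(m)}$ is the set of roots with prime support contained in $P_m$, while removing the $R^{(m_j)}$ deletes precisely those roots missing some $p_{i_j}$, leaving support exactly $P_m$.

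For the first identity I would note that $R_{(m)}\subset R^{(m)}$ gives $M_{(m)}\subset\frg^{(m)}$. Since $m'$ is a product of primes not dividing $m$, the integers $m$ and $m'$ are coprime, so the commuting relation yields $[\frg^{(m')},M_{(m)}]\subset[\frg^{(m')},\frg^{(m)}]=0$. This part is immediate.

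For the second identity I would fix a prime $p\mid m$, a root $\alpha\in R_{(p)}$ (of $p$-power order) and a root $\beta\in R_{(m)}$ (of order with prime support exactly $P_m$), and analyze $\gamma:=\alpha+\beta$ via $[\frg_\alpha,\frg_\beta]\subset\frg_\gamma$. Working in the primary decomposition of the finite group $X^{\ast}(A)$, for every prime $q\neq p$ the $q$-primary component of $\gamma$ equals that of $\beta$ (because $\alpha$ has $p$-power order), so $q\mid\ord\gamma$ if and only if $q\in P_m$. Hence the prime support of $\ord\gamma$ is either $P_m$ or $P_m\setminus\{p\}$. In the first case $\gamma\in R_{(m)}$ whenever $\gamma$ is a root (and $\frg_\gamma=0$ otherwise), so $[\frg_\alpha,\frg_\beta]\subset M_{(m)}$; summing over $\alpha\in R_{(p)}$ and $\beta\in R_{(m)}$ would then give the claim.

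The main obstacle is the second case, where $p$ drops out of the support of $\gamma$ (this also absorbs the degenerate possibility $\gamma=0$). The plan there is to observe that the vanishing of the $p$-primary component of $\gamma$ forces $\alpha=-(\beta)_p$, where $(\beta)_p$ denotes the $p$-primary component of $\beta$; since in a cyclic subgroup this component is an integer multiple of $\beta$, this makes $\alpha$ a nonzero multiple of $\beta$. Consequently both $\frg_\alpha$ and $\frg_\beta$ lie in $\frg_{[\beta]}$, which is abelian by Lemma \ref{L:abelian2}, so $[\frg_\alpha,\frg_\beta]=0$ and the inclusion into $M_{(m)}$ holds trivially. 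I expect the only point needing care is the bookkeeping of primary components, namely that $(\beta)_p\in\mathbb{Z}\beta$ and that $\alpha\neq 0$ keeps $\frg_\alpha$ inside $\frg_{[\beta]}$; the rest is formal.
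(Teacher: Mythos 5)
Your proof is correct and follows essentially the same route as the paper: the first identity via $M_{(m)}\subset\frg^{(m)}$ and the coprimality vanishing (Lemma \ref{L:abelian3}), and the second via the same dichotomy on whether the $p$-primary part of $\alpha+\beta$ survives, with the degenerate case $\alpha=-(\beta)_p$ handled by observing that $\alpha$ is then a nonzero integer multiple of $\beta$ so both root spaces lie in the abelian subalgebra $\frg_{[\beta]}$ of Lemma \ref{L:abelian2}. The paper phrases the primary decomposition as $\beta=\beta_1+\beta_2$ rather than in terms of $q$-primary components, but the argument is the same.
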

\begin{proof}
By Lemma \ref{L:abelian3}, $$[\frg^{(m')},M_{(m)}]=0.$$

Let $\alpha\in R_{(p)}$ and $\beta\in R_{(m)}$. Write $\beta=\beta_1+\beta_2$ with $\beta_1$ a finite
character of order a $p$-power, and $\beta_2$ a finite character of order coprime to $p$. If $\beta_1\neq
-\alpha$, then $\beta_1+\alpha$ is again a finite character of order a $p$-power. Thus, $\alpha+\beta
\not\in R$ or $\alpha+\beta=(\beta_1+\alpha)+\beta_2\in R_{(m)}$. Hence, $$[\frg_{\alpha},\frg_{\beta}]
\subset\frg_{\alpha+\beta}\subset M_{(m)}.$$ If $\beta_1=-\alpha$, then $\alpha=-\beta_{1}$ is a
multiple of $\beta$. Thus, $\frg_{\alpha},\frg_{\beta}\subset\frg_{[\beta]}$. By Lemma \ref{L:abelian2},
$\frg_{[\beta]}$ is abelian. Hence, $[\frg_{\alpha},\frg_{\beta}]=0.$
\end{proof}

Set $$R'_{m}=R-(R^{(m)}\cup R^{(m')})$$ and $$M'_{(m)}=\bigoplus_{\alpha\in R'_{(m)}}\frg_{\alpha}.$$ Then, 
we have a direct sum decomposition \begin{equation}\label{Eq6}\frg=(\frg^{(m)}\oplus\frg^{(m')})\oplus
M'_{(m)}.\end{equation}

\begin{lemma}\label{L:pdecom3}
$\frg^{(m)}\oplus\frg^{(m')}$ is direct product of Lie algebras, and $M'_{(m)}$ is a module of it.
\end{lemma}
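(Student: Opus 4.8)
The plan is to exploit the primary decomposition of the finite character group $X^{\ast}(A)$ with respect to the two complementary sets of primes $P=\{p_{i_{j}}:1\leq j\leq t\}$ (those dividing $m$) and $P'$ (those dividing $n$ but not $m$). First I would record that, since this subsection assumes $A$ finite, the zero-weight space $\frg_{0}=\fra$ vanishes, so that $\frg=\bigoplus_{\alpha\in R}\frg_{\alpha}$ and $R=R^{(m)}\sqcup R^{(m')}\sqcup R'_{m}$ is a genuine partition. Indeed, each $\alpha\in R$ splits uniquely as $\alpha=\alpha_{P}+\alpha_{P'}$ into its $P$- and $P'$-primary components, and membership in $R^{(m)}$, $R^{(m')}$, $R'_{m}$ is dictated by which of $\alpha_{P},\alpha_{P'}$ vanish; in particular $R^{(m)}\cap R^{(m')}=\emptyset$. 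This confirms that (\ref{Eq6}) is a vector-space direct sum and reduces the lemma to bracket relations among root spaces.

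For the first assertion I would argue that $\frg^{(m)}$ and $\frg^{(m')}$ are subalgebras, being the complexified Lie algebras of the centralizers $G^{(m)}=Z_{G}(A^{(m)})$ and $G^{(m')}=Z_{G}(A^{(m')})$. Since $m$ and $m'$ have disjoint prime factors they are coprime, so the vanishing $[\frg^{(m)},\frg^{(m')}]=0$ recorded just before Theorem \ref{T:pdecom} applies; combined with $\frg^{(m)}\cap\frg^{(m')}=0$ this exhibits $\frg^{(m)}\oplus\frg^{(m')}$ as a direct product of Lie algebras.

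For the module property it suffices, by the symmetry $m\leftrightarrow m'$, to prove $[\frg^{(m)},M'_{(m)}]\subset M'_{(m)}$, hence to check $[\frg_{\alpha},\frg_{\beta}]\subset M'_{(m)}$ for $\alpha\in R^{(m)}$ and $\beta\in R'_{m}$; here $\alpha=\alpha_{P}$ while $\beta_{P}\neq0\neq\beta_{P'}$. The argument mirrors the proof of Lemma \ref{L:pdecom1}: if $\beta_{P}\neq-\alpha$, then $(\alpha+\beta)_{P}=\alpha+\beta_{P}\neq0$ and $(\alpha+\beta)_{P'}=\beta_{P'}\neq0$, so whenever $\alpha+\beta$ is a root it lies in $R'_{m}$, giving $[\frg_{\alpha},\frg_{\beta}]\subset\frg_{\alpha+\beta}\subset M'_{(m)}$. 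The one delicate point, which I expect to be the main obstacle, is the degenerate case $\beta_{P}=-\alpha$, where this support count collapses: there I would use that the $P$-primary component $\beta_{P}$ is always an integer multiple of $\beta$, whence $\alpha=-\beta_{P}\in\mathbb{Z}\beta$ forces $\frg_{\alpha},\frg_{\beta}\subset\frg_{[\beta]}$, and $\frg_{[\beta]}$ is abelian by Lemma \ref{L:abelian2}, so $[\frg_{\alpha},\frg_{\beta}]=0\subset M'_{(m)}$.
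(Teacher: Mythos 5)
Your proposal is correct and follows essentially the same route as the paper's own proof: the first assertion via the coprimality bracket-vanishing from Lemma \ref{L:abelian3}, and the module property by splitting $\beta\in R'_{m}$ into its $P$- and $P'$-primary components (the paper's $\beta_{1}+\beta_{2}$) and treating the degenerate case $\beta_{P}=-\alpha$ through $\frg_{[\beta]}$ being abelian (Lemma \ref{L:abelian2}). The only addition is your preliminary verification that the decomposition (\ref{Eq6}) is a genuine direct sum, which the paper states without comment.
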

\begin{proof}
By Lemma \ref{L:abelian3}, $$[\frg^{(m)},\frg^{(m')}]=0.$$ Then, the first statement follows.

Let $\alpha\in R^{(m)}$ and $\beta\in R'_{(m)}$. Write $\beta=\beta_1+\beta_2$ with $\beta_1$ a finite
character of order coprime to $m'$, and $\beta_2$ a finite character of order coprime to $m$. If $\beta_1
\neq-\alpha$, then $\beta_1+\alpha$ is again a finite character of order coprime to $m'$. Thus,
$\alpha+\beta\not\in R$ or $\alpha+\beta=(\beta_1+\alpha)+\beta_2\in R'_{(m)}$. Hence, $$[\frg_{\alpha},
\frg_{\beta}]\subset\frg_{\alpha+\beta}\subset M'_{(m)}.$$ If $\beta_1=-\alpha$, then $\alpha=-\beta_{1}$
is a multiple of $\beta$. Thus, $\frg_{\alpha},\frg_{\beta}\subset\frg_{[\beta]}$. By Lemma \ref{L:abelian2},
$\frg_{[\beta]}$ is abelian. Hence, $[\frg_{\alpha},\frg_{\beta}]=0.$ This shows $[\frg^{(m)},M'_{(m)}]
\subset M'_{(m)}.$ Similarly one shows $[\frg^{(m')},M'_{(m)}]\subset M'_{(m)}.$
\end{proof}

For each $\emptyset\neq I\subset\{1,\dots,s\}$, write $$m_{I}=\prod_{i\in I}p_{i},$$ and $M_{I}=
M_{(m_{I})}$. Then, we have a direct sum decomposition \begin{equation}\label{Eq5}\frg=
\bigoplus_{I}M_{(I)}.\end{equation}

\begin{lemma}\label{L:pdecom2}
Each $M_{I}$ is a $\frg'$ module, and it is stable under the conjugation action of $A$. If $I\cap J=\emptyset$,
then $[M_{I},M_{J}]=0$.
\end{lemma}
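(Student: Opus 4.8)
The plan is to deduce all three assertions directly from the two preceding lemmas, once we record the arithmetic content of the index set $R_{(m_I)}$. Recall that $M_I=M_{(m_I)}$ with $m_I=\prod_{i\in I}p_i$ (so each exponent is $1$ and $m_j=m_I/p_{i_j}$), and that $R_{(m_I)}=R^{(m_I)}-\bigcup_{i\in I}R^{(m_I/p_i)}$. One checks that an element $\alpha\in R^{(m_I)}$ lies outside every $R^{(m_I/p_i)}$ precisely when each $p_i$ ($i\in I$) divides $\ord\alpha$; combined with $\alpha\in R^{(m_I)}$ this says that the set of primes dividing $\ord\alpha$ is \emph{exactly} $\{p_i:i\in I\}$. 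So $R_{(m_I)}$ consists of the roots whose order has prime support exactly $\{p_i:i\in I\}$. This identification is the only real bookkeeping in the argument.

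The $A$-stability is immediate: $A$ acts on each root space $\frg_\alpha$ through the scalar $\alpha(a)$, so $\Ad(a)$ preserves every $\frg_\alpha$ and hence preserves $M_I=\bigoplus_{\alpha\in R_{(m_I)}}\frg_\alpha$. For the bracket relation I would argue as follows. Take $\alpha\in R_{(m_I)}$ and $\beta\in R_{(m_J)}$ with $I\cap J=\emptyset$. By the identification above, the prime supports of $\ord\alpha$ and $\ord\beta$ are $\{p_i:i\in I\}$ and $\{p_j:j\in J\}$, which are disjoint, so $\ord\alpha$ and $\ord\beta$ are coprime. Since $A$ is finite we have $A^0=\{1\}$, hence every root is a finite root and $R_0(G,A)=R(G,A)$; thus Lemma \ref{L:abelian3} applies and yields $[\frg_\alpha,\frg_\beta]=0$. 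Summing over all such $\alpha,\beta$ gives $[M_I,M_J]=0$.

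Finally I would show $M_I$ is a $\frg'$-module. Since $\frg'=\bigoplus_{p\mid n}\frg_{(p)}$, it suffices to prove $[\frg_{(p)},M_I]\subset M_I$ for each prime $p\mid n$, and I split into two cases. Write $m_I'=\prod_{i\notin I}p_i$, which is the integer $m'$ attached to $m=m_I$ in Lemma \ref{L:pdecom1}. If $p=p_i$ with $i\in I$, then $p\mid m_I$ and the second assertion of Lemma \ref{L:pdecom1} gives $[\frg_{(p)},M_{(m_I)}]\subset M_{(m_I)}$. If instead $p=p_i$ with $i\notin I$, then $p\mid m_I'$; every root of $p$-power order then has prime support $\{p\}\subset\{p_i:i\notin I\}$, so $R_{(p)}\subset R^{(m_I')}$ and hence $\frg_{(p)}\subset\frg^{(m_I')}$, whence the first assertion of Lemma \ref{L:pdecom1} gives $[\frg_{(p)},M_I]\subset[\frg^{(m_I')},M_{(m_I)}]=0$. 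The two cases together prove $[\frg',M_I]\subset M_I$. I do not expect any genuine obstacle here: the entire statement reduces to the prime-support description of $R_{(m_I)}$ plus Lemmas \ref{L:abelian3} and \ref{L:pdecom1}. The only point needing slight care is the case split in the module argument, specifically the inclusion $\frg_{(p)}\subset\frg^{(m_I')}$ for primes $p\nmid m_I$, which I would justify via $R_{(p)}\subset R^{(m_I')}$.
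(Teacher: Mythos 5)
Your proposal is correct and follows essentially the same route as the paper: $A$-stability because $M_I$ is a sum of root spaces, the module property via the case split on whether $p\mid m_I$ using Lemma \ref{L:pdecom1}, and the vanishing $[M_I,M_J]=0$ from coprimality of orders (you invoke Lemma \ref{L:abelian3} directly where the paper routes through the first assertion of Lemma \ref{L:pdecom1}, which is itself proved by Lemma \ref{L:abelian3}). Your explicit prime-support description of $R_{(m_I)}$ and the inclusion $\frg_{(p)}\subset\frg^{(m_I')}$ for $p\nmid m_I$ just make precise what the paper leaves implicit.
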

\begin{proof}
By Lemma \ref{L:pdecom1}, if $p|m_{I}$, then $[\frg_{(p)},M_{I}]\subset M_{I}$; if $p\not|m_{I}$, then
$[\frg_{(p)},M_{I}]=0$. Thus, $M_{I}$ is a $\frg'$ module. As $M_{I}$ is a sum of root spaces, it is stable
under the conjugation action of $A$. By Lemma \ref{L:pdecom1}, $[\frg^{(m')},M_{(m_{I})}]=0,$ where $m'=
\prod_{i\not\in I}p_{i}$. If $I\cap J=\emptyset$, then $M_{J}\subset\frg^{(m')}$. Thus, $[M_{I},M_{J}]=0$.
\end{proof}

For each prime $p|n$, $A\subset G_{(p)}$. Apparently, $A$ is a $(\ast)$-subgroup of $G_{(p)}$. Due to
$[\frg_{(p)},\frg_{(q)}]=0$ for primes $p\neq q$, we have $[G_{(p)}^{0},G_{(q)}^{0}]=1$. Set $$G'=\langle
A,G_{(p_{1})}^{0},\dots,G_{(p_{s})}^{0}\rangle.$$ Then, $G'$ is a compact subgroup of $G$, and its Lie
algebra is equal to $\frg'\cap\frg_0$. The tuple $(A,G_{(p)})$ (or $(A,\frg_{(p)})$) gives a
{\it decomposition} of $(A,G')$ (or $(A,\frg')$).

\begin{theorem}\label{T:pdecom2}
If $A$ is a finite maximal abelian subgroup of $G$, then $$W_{small}(G',A)=W_{small}(G,A).$$
\end{theorem}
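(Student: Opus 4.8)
The plan is to prove the two inclusions separately. Since $A$ is finite we have $A^{0}=\{1\}$, so every root is a finite root and both $W_{small}(G,A)$ and $W_{small}(G',A)$ are generated purely by transvections $s_{\alpha,\xi}$; note also that $A$ is genuinely a $(\ast)$-subgroup of $G'$, because $\frg'$ is a sum of nonzero weight spaces and hence carries no $A$-fixed vectors. The key elementary observation is that $R(G',A)$ consists exactly of those roots $\alpha\in R(G,A)$ whose order is a prime power, these being the roots whose root spaces lie in $\frg'=\bigoplus_{p\mid n}\frg_{(p)}$.

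For the inclusion $W_{small}(G',A)\subseteq W_{small}(G,A)$, take a root $\alpha\in R(G',A)$, say of order $p^{a}$. Every nonzero multiple $k\alpha$ again has $p$-power order, so $\frg_{[\alpha]}=\bigoplus_{k\neq0}\frg_{k\alpha}$ lies in $\frg_{(p)}\subseteq\frg'$; consequently $G_{[\alpha]}$ computed inside $G'$ agrees with $G_{[\alpha]}$ computed inside $G$, and $R^{\vee}(\alpha)=\Hom(A/\ker\alpha,A\cap G_{[\alpha]})$ is literally the same coroot group for $(G',A)$ and for $(G,A)$. Hence every generating transvection of $W_{small}(G',A)$ is one of $W_{small}(G,A)$.

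The substantive direction is $W_{small}(G,A)\subseteq W_{small}(G',A)$. Let $s_{\alpha,\xi}$ be a generator, with $\alpha$ a finite root of order $n$ and $\xi\in R^{\vee}(\alpha)$. As $R^{\vee}(\alpha)$ is a finite abelian group I decompose $\xi=\prod_{p\mid n}\xi_{p}$ into $p$-primary components; since $A\cap G_{[\alpha]}\subseteq\ker\alpha$ by Lemma \ref{L:rootSubgroup2}, the transvections $s_{\alpha,\xi_{p}}$ commute and $s_{\alpha,\xi}=\prod_{p\mid n}s_{\alpha,\xi_{p}}$. For each $p$ with $\xi_{p}\neq1$ let $a_{p}$ be the largest integer with $\frac{n}{p^{a_{p}}}\alpha$ a root, and put $\beta_{p}=\frac{n}{p^{a_{p}}}\alpha$, a root of prime-power order $p^{a_{p}}$, hence a root of $(G',A)$. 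I claim the Sylow $p$-subgroup $R^{\vee}(\alpha)_{p}$ equals $R^{\vee}(\beta_{p})$: since $\beta_{p}$ is a multiple of $\alpha$, Lemma \ref{L:coroot-contain} gives $R^{\vee}(\beta_{p})\subseteq R^{\vee}(\alpha)$, and Theorem \ref{T:Vogan2} shows $R^{\vee}(\beta_{p})$ is a $p$-group, so $R^{\vee}(\beta_{p})\subseteq R^{\vee}(\alpha)_{p}$; applying Theorem \ref{T:Vogan2} to both $\alpha$ and $\beta_{p}$, using $\frac{p^{a_{p}}}{p^{k}}\beta_{p}=\frac{n}{p^{k}}\alpha$ together with the vanishing $\frg_{\frac{n}{p^{k}}\alpha}=0$ for $k>a_{p}$ (immediate from the maximality of $a_{p}$), yields $|R^{\vee}(\beta_{p})|=|R^{\vee}(\alpha)_{p}|$, whence equality. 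Thus $\xi_{p}\in R^{\vee}(\beta_{p})$, and as $\beta_{p}$ has prime-power order its coroot group coincides in $G'$ and $G$ by the same reasoning as for the first inclusion; therefore $s_{\alpha,\xi_{p}}=s_{\beta_{p},\xi_{p}}$ is a generator of $W_{small}(G',A)$, and $s_{\alpha,\xi}=\prod_{p}s_{\beta_{p},\xi_{p}}\in W_{small}(G',A)$.

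The heart of the argument, and the step I expect to require the most care, is the order comparison identifying $R^{\vee}(\alpha)_{p}$ with $R^{\vee}(\beta_{p})$: this is exactly where Vogan's counting formula is indispensable, since it encodes the coroot group entirely through the dimensions $\dim\frg_{\frac{n}{p^{k}}\alpha}$ and thereby lets an arbitrary finite root be replaced, prime by prime, by roots of prime-power order, precisely the roots that persist in $G'$. The other ingredients, namely the containment of $\frg_{[\alpha]}$ in $\frg'$ for prime-power $\alpha$ and the commuting factorization of $s_{\alpha,\xi}$, are routine given Lemmas \ref{L:rootSubgroup2} and \ref{L:coroot-contain}.
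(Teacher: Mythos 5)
Your proof is correct and rests on the same two pillars as the paper's own argument: Vogan's counting formula and the observation that only prime-power-order root spaces contribute to coroot groups, all of which are contained in $\frg'$. The paper argues slightly more directly, showing $A\cap G_{[\lambda]}=A\cap G'_{[\lambda]}$ for every finite character $\lambda$ by comparing the two instances of the counting formula, whereas you route the harder inclusion through the Sylow decomposition of $R^{\vee}(\alpha)$ exactly as in the paper's proof of Theorem \ref{T:middle-small}; the two packagings are interchangeable and lead to the same conclusion.
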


\begin{proof}
It suffices to show that $A\cap G^{[\lambda]}\subset A\cap G'^{[\lambda]}$ for any finite character
$\lambda$ of $A$. By Vogan's counting formula (Theorem \ref{T:Vogan2}), we have formulas for both,
and all root multiplicities contribute in both formulas are those from finite roots with order a
prime power. As $\frg'$ contains root spaces of such roots, we get $|A\cap G^{[\lambda]}|=
|A\cap G'^{[\lambda]}|$. Thus, $A\cap G^{[\lambda]}=A\cap G'^{[\lambda]}$.
\end{proof}

\begin{question}\label{Q:pdecom1}
Suppose $\frg$ is a simple Lie algebra. Is each $\frg_{(p)}$ (and $\frg^{(m)}$) a simple Lie algebra?
\end{question}

By the description of the possible $(A,\frg)$ in Example \ref{E:pdecom1} below, the above question has
an affirmative answer. However, we prefer a Lie-theoretic proof without using the classification of
$(\ast)$-subgroups. As a corollary, it follows that $\frg'$ is a semisimple Lie algebra.

\smallskip

The finite root data associated to $A$ as a $(\ast)$-subgroup of $G$ (and of $G'$) are the same. Thus,
the finite root datum of $(G,A)$ {\it does not determine the Lie algebra} of $G$.

\begin{example}\label{E:pdecom1}
When $\frg_0$ is a compact simple Lie algebra and $G$ is of adjoint type, there are few examples
of finite $(\ast)$-subgroups $A$ with $|A|$ a composition number, \begin{itemize}
\item[1,] for $n\geq 1$, let $G=\PSU(n)$. Let $A$ be an $n$-Hessenberg group of $G$. Write $$n=
\prod_{1\leq i\leq s}p_{i}^{a_{i}}$$ ($s\geq 2$) for the prime factorization of $n$. In this case,
$$G'\cong\PSU(p_{1}^{a_{1}})\times\cdots\times\PSU(p_{s}^{a_{s}}).$$ For each prime $i$, $$G_{(p_{i})}
=(G_{(p_{i})})^{0}\cdot A,\quad G_{(p_{i})}^{0}\cong\PSU(p_{i}^{a_{i}}),$$ and $A\cap(G_{(p_{i})})^{0}$
is a $p_{i}^{a_{i}}$-Hessenberg group. For a general finite $(\ast)$-subgroup $A$ of $G$, the groups
$G'$ is the same, and $A\cap(G_{(p_{i})})^{0}$ is a finite $(\ast)$-subgroup of $(G_{(p_{i})})^{0}$.
\item [2,] write $\frg_0=\mathfrak{so}(8)$. Note that it has an order 3 outer automorphism $\theta$
such that $\frg_{0}^{\theta}\cong\mathfrak{g}_{2}$. Let $G=\Int(\frg_0)\rtimes\langle\theta\rangle$.
Then $(G^{0})^{\theta}\cong\G_2$. Take a subgroup $B$ of $(\G^0)^{\theta}$ corresponding to the group
$F_1$ ($\cong (C_2)^{3}$) of $\G_2$ as in \cite[Page 234]{Yu-maximalE}. Set $A=B\times\langle\theta
\rangle$. Then, $A$ is a maximal abelian subgroup of $G$, $|A|=3\cdot 2^{3}$. For $A$, $$G_{(2)}=
G^{\theta}\cong\G_2\times\langle\theta\rangle,\quad G_{(3)}=A.$$ Thus, $\frg_{(2)}=\frg^{\theta}$,
and $\frg_{(3)}=0$.
\item[3,] write $\frg_0=\mathfrak{e}_{6}$. Note that it has an outer involutive automorphism $\theta$
such that $\frg_{0}^{\theta}\cong\mathfrak{f}_{4}$. Let $G=\Aut(\frg_0)$. Then $(G^{0})^{\theta}\cong
\F_4$. Take a subgroup $B$ of $(\G^0)^{\theta}$ corresponding to the group $F_2$($\cong(C_3)^{3}$) of
$\F_4$ as in \cite[Page 233]{Yu-maximalE}. Set $A=B\times\langle\theta\rangle$. Then, $A$ is a maximal
abelian subgroup of $G$, $|A|=2\cdot 3^{3}$. For $A$, $$G_{(2)}=A,\quad G_{(3)}=G^{\theta}\cong\F_4
\times\langle\theta\rangle.$$ Thus, $\frg_{(2)}=0$, and $\frg_{(3)}=\frg^{\theta}$.
\item[4,] write $G=\E_8$. Take a subgroup $A$ of $G$ corresponding to the finite $(\ast)$-subgroup
$F_1$($\cong(C_2)^{2}\times(C_{3})^{3}$) of $G$ as in \cite[Page 253]{Yu-maximalE}. Then, $|A|=
2^{2}\cdot 3^{3}$. We have $$G_{(2)}=G_{(2)}^{0}\cdot A,\quad G_{(2)}^{0}\cong\SO(3),$$ $A\cap
G_{(2)}^{0}$ corresponds to the subgroup of diagonal matrices in $\SO(3)$; $$G_{(3)}=G_{(3)}^{0}
\cdot A,\quad G_{(3)}^{0}\cong\F_4,$$ $A\cap G_{(3)}^{0}$ corresponds to a maximal abelian subgroup
of $\F_4$ isomorphic to $(C_3)^{3}$. Thus, $\frg_{(2)}\cong\mathfrak{so}(3,\mathbb{C})$, and
$\frg_{(3)}=\mathfrak{f}_{4}(\mathbb{C})$.
\item[5,] write $G=\E_8$. Take a subgroup $A$ of $G$ corresponding to the finite $(\ast)$-subgroup
$F_2$($\cong(C_2)^{3}\times(C_{3})^{3}$) of $G$ as in \cite[Page 261]{Yu-maximalE}. Then, $|A|=
2^{3}\cdot 3^{3}$. We have $$G_{(2)}=G_{(2)}^{0}\cdot A,\quad G_{(2)}^{0}\cong\G_2,$$ $A\cap
G_{(2)}^{0}$ corresponds to a maximal abelian subgroup of $\G_2$ isomorphic to $(C_2)^{3}$; $$G_{(3)}
=G_{(3)}^{0}\cdot A,\quad G_{(3)}^{0}\cong\F_4,$$ $A\cap G_{(3)}^{0}$ corresponds to a maximal abelian
subgroup of $\F_4$ isomorphic to $(C_3)^{3}$. Thus, $\frg_{(2)}\cong\mathfrak{g}_{2}(\mathbb{C})$,
and $\frg_{(3)}=\mathfrak{f}_{4}(\mathbb{C})$.
\end{itemize}
\end{example}

\begin{example}\label{E:pdecom2}
The following is a list of finite $(\ast)$-groups which are $p$-groups ($p>2$) in a compact simple Lie group
of adjoint type, \begin{itemize}
\item [1,] for $n=p^{a}$ with $p>2$ a prime and $a\in\mathbb{Z}_{>0}$, let $G=\PSU(n)$. In this case $A$
corresponds to a partition of $a$, $a=\sum_{1\leq i\leq k} b_{i}$, $b_{1}\geq\cdots\geq b_{k}\geq 1$,
$$A\cong\bigoplus_{1\leq i\leq k}(C_{p^{b_{i}}})^{2}$$ (\cite[Page 224, Prop. 3.1]{Yu-maximalE}).
\item [2,] for $\fru_0=\frf_4$, $G=\Aut(\fru_0)$, let $A$ be the subgroup $F_2$($\cong(C_3)^{3}$) of $G$
as in \cite[Page 233, Eq. (75)]{Yu-maximalE}.
\item [3,] for $\fru_0=\fre_6$, $G=\Aut(\fru_0)$, let $A$ be the subgroup $F_2$($\cong(C_3)^{3}$) or
$F_3$($\cong(C_3)^{4}$) of $G$ as in \cite[Page 238, Eqs. (79) and (80)]{Yu-maximalE}. Write $G^{sc}$ for
a universal covering of $G$, the pre-images $F_2$ in $G^{sc}$ is also a $(\ast)$-subgroup.
\item [4,] for $\fru_0=\fre_8$, $G=\Aut(\fru_0)$, let $A$ be the subgroup $F_1$($\cong(C_5)^{3}$) or $F_3$
($\cong(C_3)^{5}$) of $G$ as in \cite[Page 261, Eqs. (117) and (119)]{Yu-maximalE}.
\end{itemize}
\end{example}


\section{Strips and FRD}

In this section we always assume that $\frg_0$ is a compact semisimple Lie algebra, and $A$ is a $(\ast)$-subgroup
of $\Aut(\frg_0)$. Write $\frg=\frg_0\otimes_{\mathbb{R}}\mathbb{C}$.

\subsection{$A$-simple Lie algebras and $A$-decompositions}

\begin{definition}\label{D:Aideal}
We call a real ideal $\frh_0$ of $\frg_0$ (or a complex ideal $\frh$ of $\frg$) an $A$-ideal is it is stable
under the conjugation action of $A$.

We call $\frg_0$ (or $\frg$) an $A$-simple Lie algebra if it has no nonzero and proper real (or complex) $A$-ideal.
\end{definition}

The following two statements hold true clearly.

\begin{proposition}\label{P:Asimple1}
$\frg_0$ (or $\frg$) is $A$-simple if and only if the action of $A$ permutes the simple factors of it.
\end{proposition}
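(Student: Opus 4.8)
The plan is to reduce the statement to the elementary combinatorics of how $A$ acts on the finite set of simple factors. First I would fix the decomposition $\frg_0=\bigoplus_{1\leq i\leq n}\frs_i$ of the compact semisimple Lie algebra into simple ideals $\frs_i$. The key structural input is the standard fact that the ideals of $\frg_0$ are exactly the direct sums $\bigoplus_{i\in S}\frs_i$ over subsets $S\subseteq\{1,\dots,n\}$; this is seen from the observation that each intersection $\frh_0\cap\frs_i$ is an ideal of the simple algebra $\frs_i$, hence $0$ or all of $\frs_i$, and that $\frh_0$ is the sum of those $\frs_i$ it contains. Since any automorphism of $\frg_0$ carries minimal ideals to minimal ideals, the conjugation action of $A$ permutes the set $\{\frs_1,\dots,\frs_n\}$; this is the permutation action referred to in the statement. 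Consequently an ideal $\frh_0=\bigoplus_{i\in S}\frs_i$ is $A$-stable precisely when the subset $S$ is a union of $A$-orbits.

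With this dictionary in place both directions are immediate, and I would phrase the equivalence through its contrapositive. If $A$ does not act transitively, pick a proper nonempty orbit $O\subsetneq\{\frs_1,\dots,\frs_n\}$; then $\bigoplus_{\frs_i\in O}\frs_i$ is a nonzero proper $A$-ideal, so $\frg_0$ is not $A$-simple. Conversely, if $A$ acts transitively then the only $A$-invariant index sets are $\emptyset$ and the full set, so the only $A$-ideals are $0$ and $\frg_0$, i.e.\ $\frg_0$ is $A$-simple. Here ``permutes the simple factors'' is understood to mean ``permutes them transitively'', since an automorphism permutes them in any case.

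For the complex statement I would observe that the complexification $\frg=\frg_0\otimes_{\mathbb{R}}\mathbb{C}$ decomposes as $\bigoplus_i(\frs_i\otimes_{\mathbb{R}}\mathbb{C})$, and each $\frs_i\otimes_{\mathbb{R}}\mathbb{C}$ is a complex simple Lie algebra, so the complex simple factors are in $A$-equivariant bijection with the real ones; the identical combinatorial argument then applies verbatim. There is essentially no hard step: the only points requiring care are the structural fact that every ideal of a semisimple Lie algebra is a sum of simple factors (so that $A$-ideals are governed entirely by $A$-orbits of factors), together with the remark that a compact simple Lie algebra stays simple after complexification. Both are standard and require no appeal to the classification of $(\ast)$-subgroups.
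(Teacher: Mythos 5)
Your argument is correct and is exactly the standard reasoning the paper has in mind: the paper offers no proof at all (it merely declares Propositions \ref{P:Asimple1} and \ref{P:Asimple2} to ``hold true clearly''), and your identification of $A$-ideals with unions of $A$-orbits of simple factors, together with the reading of ``permutes'' as ``permutes transitively'', is precisely what makes the statement clear. Nothing is missing.
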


\begin{proposition}\label{P:Asimple2}
There is a unique direct sum decomposition of $\frg_0$ (or $\frg$) into $A$-simple ideas, $\frg_{0}=
\bigoplus_{1\leq j\leq s}\frg_{0,j}$ (or $\frg=\bigoplus_{1\leq j\leq s}\frg_{j}$) with $\frg_{j}=\frg_{0,j}
\otimes_{\mathbb{R}}\mathbb{C}$ and $\frg_{0,j}=\frg_{j}\cap\frg_{0}$.
\end{proposition}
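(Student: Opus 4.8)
The plan is to reduce everything to the canonical decomposition of a semisimple Lie algebra into its simple factors and to the fact that automorphisms permute those factors. First I would invoke the standard structure theory: a compact semisimple Lie algebra $\frg_0$ decomposes uniquely as $\frg_0=\bigoplus_{1\leq i\leq n}\fru'_i$ into simple ideals, and every ideal of $\frg_0$ is the direct sum of a subset of these simple factors (the $\fru'_i$ being precisely the minimal ideals). Since each element of $A$ acts as an automorphism of $\frg_0$, it permutes the set $\{\fru'_1,\dots,\fru'_n\}$; thus $A$ acts on this finite set, and the whole argument rests on analyzing its orbits.

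For existence, I would partition $\{\fru'_1,\dots,\fru'_n\}$ into $A$-orbits $O_1,\dots,O_s$ and set $\frg_{0,j}=\bigoplus_{\fru'_i\in O_j}\fru'_i$. Each $\frg_{0,j}$ is an ideal (a sum of ideals), it is $A$-stable (a sum over an $A$-orbit), and $A$ acts transitively on its simple factors by construction, so Proposition \ref{P:Asimple1} shows $\frg_{0,j}$ is $A$-simple. Evidently $\frg_0=\bigoplus_{1\leq j\leq s}\frg_{0,j}$ is then the desired decomposition into $A$-simple ideals.

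For uniqueness, suppose $\frg_0=\bigoplus_k\frh_k$ is any decomposition into $A$-simple ideals. Each $\frh_k$ is an ideal of $\frg_0$, hence a sum of some of the $\fru'_i$; being $A$-stable, the set of factors it contains is a union of $A$-orbits, and being $A$-simple it is a single $A$-orbit by Proposition \ref{P:Asimple1}. Therefore the $\frh_k$ coincide with the $\frg_{0,j}$ up to reindexing. The complex case is handled identically, applying Proposition \ref{P:Asimple1} to $\frg$ and its simple factors.

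Finally, for compatibility with complexification I would use that the complexification of a compact simple Lie algebra is a complex simple Lie algebra, so the simple factors of $\frg$ are exactly the $\fru'_i\otimes_{\mathbb{R}}\mathbb{C}$ and $A$ permutes them in the same way as on $\frg_0$. Hence the $A$-orbit decomposition of $\frg$ matches that of $\frg_0$, yielding $\frg_j=\frg_{0,j}\otimes_{\mathbb{R}}\mathbb{C}$. The identity $\frg_{0,j}=\frg_j\cap\frg_0$ then follows by comparing the real decomposition $X=\sum_k X_k$ ($X_k\in\frg_{0,k}$) of an element $X\in\frg_0$ with its complex one: since $\frg_{0,k}\subset\frg_k$, membership of $X$ in $\frg_j$ forces $X_k=0$ for $k\neq j$. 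I expect this comparison of the real and complex decompositions to be the only point needing care, which is why I would record the ``complexification of simple is simple'' fact explicitly rather than declaring the two cases formally identical.
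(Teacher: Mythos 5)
Your argument is correct and is exactly the standard reasoning the paper has in mind: it offers no proof at all (both Propositions \ref{P:Asimple1} and \ref{P:Asimple2} are stated as holding "clearly"), and the orbit decomposition you describe is the same one the paper already uses in Step 1 of Subsection \ref{SS:ast-semisimple}. Your extra care with the complexification (simplicity of $\fru'_i\otimes_{\mathbb{R}}\mathbb{C}$ and the comparison of real and complex components) is sound and fills in the only detail the paper leaves implicit.
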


By Proposition \ref{P:Asimple1}, $\frg_0$ is $A$-simple if and only if $\frg$ is $A$-simple.



For each $\alpha'\in R'(G,A)$, set $$\frg'_{\alpha'}=\bigoplus_{\alpha\in R(G,A),\alpha|_{A^{0}}=\alpha'}
\frg_{\alpha}.$$ Set $$\frg_{\infty}=(\bigoplus_{\alpha'\in R'(G,A)}\frg'_{\alpha'})\bigoplus(\sum_{\alpha'
\in R'(G,A)}[\frg'_{\alpha'},\frg'_{-\alpha'}]),$$ and let $\frg_{f}$ be the centralizer of $\frg_{\infty}$
in $\frg$.

\begin{theorem}\label{T:decomposition}
Both $\frg_{\infty}$ and $\frg_{f}$ are $A$-ideals of $\frg$, and $$\frg=\frg_{\infty}\oplus\frg_{f}.$$
Moreover, $\frg_0\cap\frg_{\infty}$ (or $\frg_0\cap\frg_{f}$) is a compact real form of $\frg_{\infty}$
(or $\frg_{f}$).
\end{theorem}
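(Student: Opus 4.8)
The plan is to exhibit $\frg_{\infty}$ as an ideal of $\frg$, deduce the splitting from the semisimplicity of $\frg$, and obtain the statements about compact real forms from stability under complex conjugation. Throughout I work with the grading of $\frg$ by $X^{\ast}(A^{0})$ coming from the $A^{0}$-action: write $\frg[\mu]$ for the $\mu$-weight space of $A^{0}$, so that $\frg[0]=Z_{\frg}(A^{0})$, $\frg[\alpha']=\frg'_{\alpha'}$ for $\alpha'\in R'(G,A)$, and $\frg[\mu]=0$ for every other nonzero $\mu$ (a nonzero $A^{0}$-weight occurring in $\frg$ is by definition an element of $R'(G,A)$). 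This is a grading by the abelian group $X^{\ast}(A^{0})$, so $[\frg[\mu],\frg[\nu]]\subseteq\frg[\mu+\nu]$; in particular $\frg[0]$ preserves each strip $\frg'_{\alpha'}$.

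First I would check that $\frg_{\infty}$ is a subalgebra, in fact the subalgebra generated by the strips. Writing $\frn=\sum_{\alpha'\in R'(G,A)}[\frg'_{\alpha'},\frg'_{-\alpha'}]\subseteq\frg[0]$, the brackets to control are $[\frg'_{\alpha'},\frg'_{\beta'}]$, $[\frg[0],\frg'_{\alpha'}]$ and $[\frg[0],\frn]$. By the grading, $[\frg'_{\alpha'},\frg'_{\beta'}]\subseteq\frg[\alpha'+\beta']$, which lies in $\frg'_{\alpha'+\beta'}\subseteq\frg_{\infty}$ when $\alpha'+\beta'\in R'(G,A)$, in $\frn$ when $\beta'=-\alpha'$, and vanishes otherwise; and $[\frg[0],\frg'_{\alpha'}]\subseteq\frg'_{\alpha'}$. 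For the last type I use the Jacobi identity: for $H\in\frg[0]$ and $X\in\frg'_{\alpha'}$, $Y\in\frg'_{-\alpha'}$,
\[[H,[X,Y]]=[[H,X],Y]+[X,[H,Y]]\in[\frg'_{\alpha'},\frg'_{-\alpha'}]\subseteq\frn,\]
since $[H,X]\in\frg'_{\alpha'}$ and $[H,Y]\in\frg'_{-\alpha'}$; hence $[\frg[0],\frn]\subseteq\frn$. A further application of Jacobi, bracketing a strip against $\frn$ and reducing to the first two types, shows that $\frg_{\infty}$ is closed under bracket.

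The same computations give the ideal property directly. Decomposing $\frg=\frg[0]\oplus\bigoplus_{\beta'}\frg'_{\beta'}$ and $\frg_{\infty}=\bigoplus_{\alpha'}\frg'_{\alpha'}\oplus\frn$, every bracket in $[\frg,\frg_{\infty}]$ is of one of the forms just analysed and lands in $\frg_{\infty}$; thus $[\frg,\frg_{\infty}]\subseteq\frg_{\infty}$. Since each root space is $A$-stable and $A$ acts by automorphisms, $\frg_{\infty}$ is stable under the conjugation action of $A$, so it is an $A$-ideal. As $\frg_0$ is compact semisimple, $\frg$ is complex semisimple, its Killing form $\kappa$ is non-degenerate, and $\frg_{\infty}^{\perp}$ is the complementary ideal: $\frg_{\infty}\cap\frg_{\infty}^{\perp}$ is an ideal on which $\kappa$ vanishes, hence is zero by Cartan's criterion, giving $\frg=\frg_{\infty}\oplus\frg_{\infty}^{\perp}$. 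Because the two summands are commuting ideals and an ideal of a semisimple algebra is centreless, one checks in the standard way that $Z_{\frg}(\frg_{\infty})=\frg_{\infty}^{\perp}$; that is, $\frg_{f}=\frg_{\infty}^{\perp}$, which is an $A$-ideal (the centraliser of an $A$-stable subalgebra is $A$-stable) and satisfies $\frg=\frg_{\infty}\oplus\frg_{f}$.

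Finally, for the compact real forms, let $\tau$ denote complex conjugation of $\frg$ with respect to $\frg_0$. Since $A\subseteq\Aut(\frg_0)$ commutes with $\tau$ and characters of the compact group $A$ are unitary, $\tau(\frg_{\alpha})=\frg_{-\alpha}$, hence $\tau(\frg'_{\alpha'})=\frg'_{-\alpha'}$ and $\tau(\frn)=\frn$; therefore $\frg_{\infty}$ is $\tau$-stable, and so is $\frg_{f}=Z_{\frg}(\frg_{\infty})$. A $\tau$-stable complex subalgebra $\frh$ satisfies $\frh=(\frh\cap\frg_0)\otimes_{\mathbb{R}}\mathbb{C}$, and $\frh\cap\frg_0$, being a subalgebra of the compact Lie algebra $\frg_0$, is compact; thus $\frg_0\cap\frg_{\infty}$ and $\frg_0\cap\frg_{f}$ are compact real forms of $\frg_{\infty}$ and $\frg_{f}$. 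I expect the main obstacle to be the verification that $\frg_{\infty}$ is closed under bracket, and specifically that the weight-zero brackets remain inside $\frn$; this is exactly where the grading property ``$\frg[0]$ preserves each strip'' together with the Jacobi identity does the work, after which the passage to $\frg_{f}$ and to compact real forms is formal.
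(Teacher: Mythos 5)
Your proof is correct, and its core — the $X^{\ast}(A^{0})$-grading of $\frg$ together with the Jacobi identity to show that $\frg_{\infty}$ is closed under bracket and is an ideal, plus $A$-stability of root spaces — is exactly the argument the paper gives. You diverge from the paper in two places, both to your advantage in terms of completeness. First, the paper leaves the direct sum $\frg=\frg_{\infty}\oplus\frg_{f}$ essentially implicit once the ideal property is established; you make it explicit via the Killing-form orthogonal complement and the identification $Z_{\frg}(\frg_{\infty})=\frg_{\infty}^{\perp}$, which is the right way to justify that step. Second, for the compact real forms the paper argues through the rank-one subalgebras $\mathfrak{sl}_{\alpha}=\frg_{\alpha}\oplus\frg_{-\alpha}\oplus[\frg_{\alpha},\frg_{-\alpha}]$ of Lemma \ref{L:infinity1}, asserting that each $\mathfrak{sl}_{\alpha}\cap\frg_0$ is a compact real form and then passing to $\frg_{\infty}$; your route via stability of $\frg_{\infty}$ under the conjugation $\tau$ of $\frg$ with respect to $\frg_0$ (using $\tau(\frg_{\alpha})=\frg_{-\alpha}$, which holds because characters of the compact group $A$ are unitary) is cleaner and avoids the extra assertion about generating $\frg_{\infty}$ from the $\mathfrak{sl}_{\alpha}$'s. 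Both approaches are valid; yours trades the structural information about the $\mathfrak{sl}_2$-triples for a shorter, purely formal verification.
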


\begin{proof}
Write $$\frg'_{0}=(\bigoplus_{\alpha\in R_{0}(G,A)}\frg_{\alpha})\bigoplus\frg_{0}.$$ Then, $$\frg=
(\bigoplus_{\alpha'\in R'(G,A)}\frg'_{\alpha'})\bigoplus\frg'_{0}.$$ Set $\frg'_{\gamma'}=0$ for $\gamma'
\in X^{\ast}(A^{0})-(R'(G,A)\cup\{0\})$. Then, $$[\frg'_{\gamma'},\frg'_{\mu'}]\subset\frg'_{\gamma'+\mu'}$$
for any $\gamma',\mu'\in X^{\ast}(A^{0})$. By this, using Jacobi identity one can show that the adjoint
action of each $\frg'_{\alpha'}$ ($\alpha'\in R'(G,A)$) and of $\frg'_{0}$ on $\frg_{\infty}$ stabilize
it. Thus, $\frg_{\infty}$ is an ideal of $A$. Then, so is $\frg_{f}$. It is clear that $\frg_{\infty}$ is
$A$-stable. Then, so is $\frg_{f}$.

For each $\alpha\in R(G,A)-R_{0}(G,A)$, set $$\mathfrak{sl}_{\alpha}=\frg_{\alpha}\oplus\frg_{-\alpha}
\oplus[\frg_{\alpha},\frg_{-\alpha}].$$ By Lemma \ref{L:infinity1}, $\mathfrak{sl}_{\alpha}\cong
\mathfrak{sl}(2,\mathbb{C})$. One can show that $\mathfrak{sl}_{\alpha}\cap\frg_0$ is a compact real
form of $\mathfrak{sl}_{\alpha}$. Thus, $\frg_0\cap\frg_{\infty}$ is a compact real form of $\frg_{\infty}$.
Taking centralizer, it follows that $\frg_0\cap\frg_{f}$ is a compact real form of $\frg_{f}$.
\end{proof}

\begin{proposition}\label{P:Asimple3}
Assume $\dim A>0$ and $\frg$ is $A$-simple. Then, the infinite root spaces generate $\frg$.
\end{proposition}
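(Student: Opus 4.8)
The plan is to recognize this proposition as essentially the $A$-simple case of Theorem \ref{T:decomposition}. First I would identify the Lie subalgebra $\mathfrak{s}$ generated by the infinite root spaces $\frg_{\alpha}$ ($\alpha\in R(G,A)-R_{0}(G,A)$) with $\frg_{\infty}$. On the one hand, each $\frg'_{\alpha'}$ ($\alpha'\in R'(G,A)$) is by definition the sum of the infinite root spaces restricting to $\alpha'$, so $\mathfrak{s}$ contains every $\frg'_{\alpha'}$ as well as every bracket $[\frg'_{\alpha'},\frg'_{-\alpha'}]$; hence $\mathfrak{s}\supseteq\frg_{\infty}$. On the other hand, $\frg_{\infty}$ is a subalgebra of $\frg$ (indeed an ideal, by Theorem \ref{T:decomposition}) containing all the infinite root spaces, so $\mathfrak{s}\subseteq\frg_{\infty}$. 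Thus $\mathfrak{s}=\frg_{\infty}$, and the assertion reduces to proving $\frg_{\infty}=\frg$.

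Next I would invoke Theorem \ref{T:decomposition} to the effect that $\frg_{\infty}$ is an $A$-ideal of $\frg$. Since $\frg$ is $A$-simple, $\frg_{\infty}$ must be either $0$ or all of $\frg$, so it remains only to exclude $\frg_{\infty}=0$. Note that $\frg_{\infty}=0$ is equivalent to $R'(G,A)=\emptyset$, i.e.\ to the statement that every nonzero weight $\lambda$ of $A$ on $\frg$ satisfies $\lambda|_{A^{0}}=0$; equivalently, that $A^{0}$ acts trivially on $\frg$.

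This is exactly where the hypothesis $\dim A>0$ enters. Since $A$ is a subgroup of $\Aut(\frg_{0})$ it acts faithfully on $\frg_{0}$, hence on $\frg=\frg_{0}\otimes_{\mathbb{R}}\mathbb{C}$; in particular its identity component $A^{0}$, which is a torus of positive dimension, acts faithfully and therefore nontrivially. A nontrivial action of a torus has a nonzero weight, so there is some $\lambda\in R(G,A)$ with $\lambda|_{A^{0}}\neq 0$, i.e.\ an infinite root exists and $\frg_{\infty}\neq 0$. Hence $\frg_{\infty}=\frg$, as required. The argument is short, its only real content being the identification $\mathfrak{s}=\frg_{\infty}$ together with the faithfulness step; I do not expect a genuine obstacle here, since the proposition is in effect a corollary of Theorem \ref{T:decomposition} and the definition of $A$-simplicity.
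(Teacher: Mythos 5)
Your proof is correct and follows essentially the same route as the paper: identify the subalgebra generated by the infinite root spaces with $\frg_{\infty}$, use Theorem \ref{T:decomposition} to see that $\frg_{\infty}$ is an $A$-ideal, and use $\dim A>0$ to rule out $\frg_{\infty}=0$ so that $A$-simplicity forces $\frg_{\infty}=\frg$. The paper merely asserts that $\dim A>0$ implies $R(G,A)\neq R_{0}(G,A)$, whereas you justify it via faithfulness of the $A^{0}$-action; that is a harmless (and welcome) elaboration.
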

\begin{proof}
As $\dim A>0$, we have $R(G,A)\neq R_{0}(G,A)$. Thus, $\frg_{\infty}\neq 0$. Since $\frg$ is $A$-simple
and $\frg_{\infty}$ is an $A$-ideal, we get $\frg_{\infty}=\frg$. This just means: infinite root spaces
generate $\frg$.
\end{proof}

The following result indicates that $R'(G,A)$ is important in studying the $A$-action on $\frg$.

\begin{theorem}\label{T:R'-simple}
Suppose $\frg=\frg_{\infty}$. Then, $\frg$ is $A$-simple if and only if $R'(G,A)$ is an irreducible root
system.
\end{theorem}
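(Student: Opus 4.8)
The plan is to establish a correspondence between $A$-ideals of $\frg$ and unions of irreducible components of $R'(G,A)$, and to read the theorem off from it. Throughout I use that $\frg$ is semisimple, so $z(\frg)=0$, and that the hypothesis $\frg=\frg_\infty$ means $\frg$ carries the $X^\ast(A^0)$-grading $\frg=\big(\bigoplus_{\alpha'\in R'(G,A)}\frg'_{\alpha'}\big)\oplus\frz$, where $\frz=\sum_{\alpha'\in R'(G,A)}[\frg'_{\alpha'},\frg'_{-\alpha'}]$ is the zero $A^0$-weight space. I will repeatedly invoke three facts. (i) Any $A$-ideal $\frk$, being $A$-stable, is a sum of weight spaces, $\frk=\bigoplus_\lambda(\frk\cap\frg_\lambda)$. (ii) For an infinite root $\alpha$ we have $\dim\frg_\alpha=1$ and $\mathfrak{sl}_\alpha=\frg_\alpha\oplus\frg_{-\alpha}\oplus[\frg_\alpha,\frg_{-\alpha}]\cong\mathfrak{sl}_2(\bbC)$ (Lemmas \ref{L:infinity3} and \ref{L:infinity1}), so a nonzero coroot vector $h_{\alpha'}$ spans $[\frg_\alpha,\frg_{-\alpha}]$ and acts on $\frg_\beta$ by the scalar $\langle\beta',\check{\alpha'}\rangle=\tfrac{2(\alpha',\beta')}{(\alpha',\alpha')}$ (Lemma \ref{L:infinity2}); in particular $\frg_\alpha\subset\frk$ forces $h_{\alpha'}\in\frk$. (iii) Any nonzero $A$-ideal $\frk$ contains an infinite root space: otherwise $\frk$ lies in the zero $A^0$-weight space $\frz$ by (i) and (ii), so for every infinite $\alpha$ the bracket $[\frk,\frg_\alpha]$ has $A^0$-weight $\alpha'\neq0$ yet lies in $\frk$, hence vanishes; as the infinite root spaces generate $\frg_\infty=\frg$, this gives $\frk\subset z(\frg)=0$.

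For the implication ``not $A$-simple $\Rightarrow R'(G,A)$ reducible'', I would decompose $\frg=\frk\oplus\frk'$ into two nonzero $A$-ideals (the orthogonal complement of an $A$-ideal for the $A$-invariant Killing form is again an $A$-ideal). By (i) and (ii) each infinite root space lies entirely in $\frk$ or in $\frk'$, so I partition $R'(G,A)=R'_1\cup R'_2$ according to where $\frg'_{\alpha'}$ sits. If $\alpha'\in R'_1$ then $h_{\alpha'}\in\frk$; if $\beta'\in R'_2$ then $\frg_\beta\subset\frk'$ for some $\beta$ above $\beta'$, and $[h_{\alpha'},\frg_\beta]=\langle\beta',\check{\alpha'}\rangle\,\frg_\beta\subset[\frk,\frk']=0$ forces $(\alpha',\beta')=0$. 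Taking $\beta'=\alpha'$, where $\langle\alpha',\check{\alpha'}\rangle=2\neq0$, shows no strip can split, so $R'_1\cap R'_2=\emptyset$; both are nonempty by (iii). Thus $R'_1\perp R'_2$ is a genuine orthogonal splitting and $R'(G,A)$ is reducible.

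For the converse ``$R'(G,A)$ reducible $\Rightarrow$ not $A$-simple'', I would start from an orthogonal decomposition $R'(G,A)=R'_1\sqcup R'_2$ into nonempty unions of components and form $\frh_i=\big(\bigoplus_{\alpha'\in R'_i}\frg'_{\alpha'}\big)\oplus\frz_i$ with $\frz_i=\sum_{\alpha'\in R'_i}[\frg'_{\alpha'},\frg'_{-\alpha'}]$. Orthogonality gives $\alpha'+\beta'\notin R'(G,A)\cup\{0\}$ for $\alpha'\in R'_1,\beta'\in R'_2$, hence $[\frg'_{\alpha'},\frg'_{\beta'}]=0$, and a short Jacobi computation upgrades this to $[\frh_1,\frh_2]=0$. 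Since $R'(G,A)=R'_1\cup R'_2$ we have $\frz=\frz_1+\frz_2$, so $\frg=\frh_1+\frh_2$. The pivotal point is that the sum is direct and each $\frh_i$ is a subalgebra: any element of $\frz_1\cap\frz_2$ annihilates every $\frg'_{\gamma'}$ (by orthogonality and Jacobi) and is therefore central, hence zero, so $\frz=\frz_1\oplus\frz_2$; the same centrality argument shows the $\frz_2$-component of $[\frz_1,\frz_1]$ is central, so $[\frz_1,\frz_1]\subset\frz_1$ and $\frh_1$ is closed. Commuting complementary subalgebras with $\frg=\frh_1\oplus\frh_2$ are automatically ideals, both proper and nonzero, so $\frg$ is not $A$-simple.

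The main obstacle in both directions is the zero $A^0$-weight space $\frz$, which is neither abelian nor visibly compatible with the $R'_1/R'_2$ splitting. The uniform device that resolves it is the single observation that \emph{an element of $\frz$ killing all infinite root spaces is central, hence zero by semisimplicity of $\frg$}. I would isolate this as one lemma and invoke it at every delicate step: non-splitting of strips and nonemptiness in the first direction, and $\frz_1\cap\frz_2=0$ together with closure of $\frh_i$ in the second. The purely root-theoretic input, namely that distinct components of $R'(G,A)$ are mutually orthogonal and that $\alpha'+\beta'$ is never a root across components, is standard and is available because $R'(G,A)$ is a genuine root system by Theorem \ref{T:restrictedRS1}.
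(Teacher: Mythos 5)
Your proof is correct and follows essentially the same route as the paper: for the direction ``$R'(G,A)$ reducible $\Rightarrow$ $\frg$ not $A$-simple'' you build exactly the paper's ideals $\frg_{i}=(\sum_{\alpha|_{A^{0}}\in R'_{i}}\frg_{\alpha})\oplus(\sum [\frg_{\alpha},\frg_{-\beta}])$ and verify the Jacobi computations the paper leaves implicit, while also supplying the directness of the sum and closure under bracket, which the paper asserts with ``apparently.'' For the converse the paper simply cites Proposition \ref{P:Asimple1}; your self-contained argument via the Killing-orthogonal complement, the one-dimensionality of infinite root spaces, the coroot element $h_{\alpha'}$ preventing a strip from splitting between the two ideals, and the observation that an element of the zero $A^{0}$-weight space killing all infinite root spaces is central (hence zero since $\frg=\frg_{\infty}$ is semisimple) is a sound and more complete substitute.
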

\begin{proof}
Sufficiency follows from Proposition \ref{P:Asimple1}.

Necessarity. Suppose $R'(G,A)$ is not irreducible. Write $$R'(G,A)=R'_{1}\bigsqcup R'_2$$ with $R'_1,R'_2$
two nonempty orthogonal sub-root systems. Here orthogonal means for any $\alpha'_{i}\in R'_{i}$ ($i=1,2$),
$\alpha'_1+\alpha'_2$ is not a root. Let $$\frg_{i}=(\sum_{\alpha|_{A^{0}}\in R'_{i}}\frg_{\alpha})\bigoplus
(\sum_{\alpha|_{A^{0}}=\beta|_{A^{0}}\in R'_{i}}[\frg_{\alpha},\frg_{-\beta}])$$ ($i=1,2$). Note that, for
any $\alpha,\beta\in R(G,A)$, $\beta\neq-\alpha$, we have $[\frg_{\alpha},\frg_{\beta}]\subset\frg_{\alpha+
\beta}$ if $\alpha+\beta\in R(G,A)$, and $=\{0\}$ if $\alpha+\beta\not\in R(G,A)$). Using Jacobi identify,
one can show that $\frg_{i}$ is stable under the adjoint action of any $\frg_{\alpha}$ ($\alpha\in R_{0}(G,A)$),
$\frg_{\alpha}$ ($\alpha|_{A^{0}}\in R'_{1}$), $\frg_{\alpha}$ ($\alpha|_{A^{0}}\in R'_{2}$). Thus, $\frg_{i}$
($i=1,2$) is a nonzero ideal of $\frg$. Apparently, they are distinct nonzero and proper ideas and are both
$A$-stable. This is in contradiction with $\frg$ is $A$-simple.
\end{proof}

\subsection{Simply laced root systems of rank $\geq 2$.}

\begin{lemma}\label{L:strip0}
For any $\alpha,\beta\in R(G,A)-R_{0}(G,A)$ with $\beta\in W_{tiny}\alpha$, we have $R_{0,\alpha}=R_{0,\beta}$.
\end{lemma}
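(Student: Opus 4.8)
The plan is to exhibit, for any $w\in W_{tiny}$ with $w\alpha=\beta$, that $w$ carries the strip $R_{1,\alpha}$ onto $R_{1,\beta}$ while fixing each offset $\delta-\alpha$, so that $R_{0,\alpha}=R_{0,\beta}$ follows at once. Since $\beta\in W_{tiny}\alpha$ means precisely that such a $w$ exists, and $w^{-1}\in W_{tiny}$ carries $\beta$ back to $\alpha$, it will suffice to prove one inclusion and then invoke symmetry; in particular no induction over the generating reflections of $W_{tiny}$ is needed.

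First I would record two properties of $W_{tiny}$. By Proposition \ref{P:W-split} we have $W_{tiny}\subset W_{1}$, so every $w\in W_{tiny}$ acts as the identity on $A/A^{0}$ and hence fixes every finite character of $A$, i.e.\ every element of $X^{\ast}_{0}(A)=X^{\ast}(A/A^{0})$. Secondly, as a subgroup of $W_{large}(G,A)$, the group $W_{tiny}$ preserves $R(G,A)$, and the restriction map $X^{\ast}(A)\to X^{\ast}(A^{0})$ intertwines the two $W_{tiny}$-actions. Consequently $w$ permutes strips: if $\delta\in R_{1,\alpha}$, i.e.\ $\delta|_{A^{0}}=\alpha|_{A^{0}}$, then $w\delta\in R(G,A)$ and $(w\delta)|_{A^{0}}=(w\alpha)|_{A^{0}}=\beta|_{A^{0}}$, so $w\delta\in R_{1,\beta}$.

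With these in hand the computation is short. Take $\gamma\in R_{0,\alpha}$ and write $\gamma=\delta-\alpha$ with $\delta\in R_{1,\alpha}$. Then $\gamma|_{A^{0}}=0$, so $\gamma$ is a finite character and the first property gives $w\gamma=\gamma$. On the other hand $w\gamma=w\delta-w\alpha=w\delta-\beta$, and $w\delta\in R_{1,\beta}$ by the second property, whence $w\gamma\in R_{0,\beta}$. Combining, $\gamma=w\gamma\in R_{0,\beta}$, so $R_{0,\alpha}\subset R_{0,\beta}$; applying the identical argument to $w^{-1}$, $\beta$, $\alpha$ yields the reverse inclusion.

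There is no serious obstacle here: the whole content is the identity $w(\delta-\alpha)=w\delta-\beta$ combined with the two bookkeeping facts that $W_{tiny}$ preserves $R(G,A)$ compatibly with restriction to $A^{0}$, and that $W_{tiny}\subset W_{1}$ fixes finite characters. The only point demanding a moment's care is the equivariance of $X^{\ast}(A)\to X^{\ast}(A^{0})$, which is exactly what guarantees that $w$ sends the $\alpha$-strip to the $\beta$-strip rather than to some unrelated strip.
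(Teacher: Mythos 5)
Your proof is correct and follows essentially the same route as the paper's: the paper likewise writes $\beta=w\alpha$, observes $R_{\beta}=wR_{\alpha}$, and concludes from the fact that $W_{tiny}$ acts trivially on $X^{\ast}_{0}(A)$ (which is exactly your $W_{tiny}\subset W_{1}$ observation) that $R_{0,\beta}=R_{0,\alpha}$. You have merely expanded the two bookkeeping steps that the paper leaves implicit.
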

\begin{proof}
Write $\beta=w\alpha$ for some $w\in W_{tiny}$. Then, $R_{\beta}=wR_{\alpha}$. As $W_{tiny}$ acts trivially on
$X^{\ast}_{0}(A)$, we get $R_{0,\beta}=R_{0,\alpha}$.
\end{proof}

\begin{lemma}\label{L:strip1}
For any $\alpha,\beta\in R(G,A)-R_{0}(G,A)$ with $(\beta|_{A^{0}},\alpha|_{A^{0}})<0$, we have $\alpha+\beta
\in R(G,A)\cup\{0\}$.
\end{lemma}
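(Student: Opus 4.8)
The plan is to run the classical $\mathfrak{sl}_2$-string argument inside the twisted setting. Fix the infinite root $\alpha$ and form the rank-one subalgebra $\mathfrak{sl}_\alpha = \frg_\alpha \oplus \frg_{-\alpha} \oplus [\frg_\alpha, \frg_{-\alpha}]$; by Lemma \ref{L:infinity1} (together with $\dim\frg_{\pm\alpha} = 1$ and the fact that $k\alpha$ is a root only for $k = \pm 1$, both from Lemma \ref{L:infinity3}) this is exactly $(\frg^{[\alpha]})_{der}$ and is isomorphic to $\mathfrak{sl}_2(\mathbb{C})$. Choosing generators $E_{\pm\alpha}$ of $\frg_{\pm\alpha}$ and letting $H_\alpha \in \fra \cap \mathfrak{sl}_\alpha$ be the element with $\ad(H_\alpha)|_{\frg_\gamma} = \gamma(\check{\alpha})\cdot\id$ for all $\gamma$ (the normalization $\alpha(\check{\alpha}) = 2$ makes $(E_\alpha, H_\alpha, E_{-\alpha})$ a standard triple), I obtain a copy of $\mathfrak{sl}_2(\mathbb{C})$ acting on $\frg$ by the adjoint action.

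Next I would consider the $\alpha$-string through $\beta$, that is, the finite-dimensional subspace $V = \bigoplus_{k\in\mathbb{Z}} \frg_{\beta + k\alpha}$ of $\frg$. Since $\ad(E_{\pm\alpha})$ shifts the $A$-weight by $\pm\alpha$ while $\ad(H_\alpha)$ preserves each $\frg_{\beta+k\alpha}$, the space $V$ is a module over $\mathfrak{sl}_\alpha \cong \mathfrak{sl}_2(\mathbb{C})$, with $\frg_{\beta+k\alpha}$ the $\ad(H_\alpha)$-eigenspace of eigenvalue $(\beta+k\alpha)(\check{\alpha}) = \beta(\check{\alpha}) + 2k$. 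Now the hypothesis enters: by Lemma \ref{L:infinity2}, $\beta(\check{\alpha}) = 2(\beta|_{A^{0}},\alpha|_{A^{0}})/(\alpha|_{A^{0}},\alpha|_{A^{0}})$, which is a negative integer because $(\beta|_{A^{0}},\alpha|_{A^{0}}) < 0$ (integrality being automatic from the pairing $X^{\ast}(A) \times X_{\ast}(A) \to \mathbb{Z}$). Write $p = \beta(\check{\alpha}) < 0$.

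Thus $\frg_\beta$ is a nonzero weight space of the finite-dimensional $\mathfrak{sl}_2$-module $V$ of strictly negative weight $p$. The elementary fact I would invoke is that in any finite-dimensional $\mathfrak{sl}_2$-module, if the weight-$m$ space is nonzero and $m < 0$, then the weight-$(m+2)$ space is also nonzero: decomposing into irreducible summands of highest weights $n\geq 0$, any summand whose weights include $m < 0$ has $-n \le m \le -1$ with $n \geq 1$, and therefore also contains $m+2 \le 1 \le n$. Applying this with $m = p$ yields $\frg_{\beta+\alpha} \neq 0$. If $\alpha + \beta \neq 0$ this says precisely that $\alpha + \beta \in R(G,A)$; if $\alpha + \beta = 0$ the conclusion is immediate. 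In either case $\alpha + \beta \in R(G,A) \cup \{0\}$, as desired. I expect no genuine obstacle here; the only care needed is the bookkeeping that $\ad(H_\alpha)$ acts by the scalar $\gamma(\check{\alpha})$ on $\frg_\gamma$ (which is forced by $\alpha(\check{\alpha}) = 2$ and the definition of the character--cocharacter pairing) and phrasing the $\mathfrak{sl}_2$ input in the precise negative-weight form rather than the more familiar unbroken-string version.
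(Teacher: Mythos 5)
Your argument is correct and is essentially the paper's own proof: the paper also forms $\mathfrak{sl}_{\alpha}=\frg_{\alpha}\oplus\frg_{-\alpha}\oplus[\frg_{\alpha},\frg_{-\alpha}]\cong\mathfrak{sl}_{2}(\mathbb{C})$ via Lemma \ref{L:infinity1} and invokes $\mathfrak{sl}_2$-representation theory on the $\alpha$-string through $\beta$, using $(\beta|_{A^{0}},\alpha|_{A^{0}})<0$ to force a negative weight. You have merely made explicit the normalization of $H_\alpha$, the integrality of $\beta(\check{\alpha})$ via Lemma \ref{L:infinity2}, and the precise negative-weight fact the paper leaves implicit.
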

\begin{proof}.
By Lemma \ref{L:infinity1}, there is an $\mathfrak{sl}_{2}$-subalgebra $\mathfrak{sl}_{\alpha}=\frg_{\alpha}
\oplus\frg_{-\alpha}\oplus[\frg_{\alpha},\frg_{-\alpha}]$ corresponds to the root $\alpha$. By the theory of
representations of $\mathfrak{sl}_{2}$, $(\beta|_{A^{0}},\alpha|_{A^{0}})<0$ implies that $[\frg_{\alpha},
\frg_{\beta}]\neq 0$. Thus, $\alpha+\beta\in R(G,A)\cup\{0\}$.
\end{proof}

\begin{lemma}\label{L:strip2}
For any $\alpha\in R(G,A)-R_{0}(G,A)$, we have $-\alpha\in R(G,A)-R_{0}(G,A)$, $R_{\alpha}=R_{-\alpha}=
-R_{\alpha}$, and $R_{0,\alpha}-R_{0,\alpha}\subset R_0\cup\{0\}$.
\end{lemma}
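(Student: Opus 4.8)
The plan is to treat the three assertions separately: the first two are formal consequences of the rank-one ($\mathfrak{sl}_{2}$) structure attached to an infinite root recorded in Lemma~\ref{L:infinity1} and Lemma~\ref{L:infinity3}, while the third is where the bracket estimate of Lemma~\ref{L:strip1} does the real work. Throughout I would use that $X^{\ast}(A^{0})$ is a torsion-free lattice, so that for $\alpha|_{A^{0}}\neq 0$ the restricted characters $\pm\alpha|_{A^{0}},\pm 2\alpha|_{A^{0}}$ are nonzero and pairwise distinct.

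First I would check that $-\alpha$ is again an infinite root. Since $\alpha\in R(G,A)-R_{0}(G,A)$ we have $\alpha|_{A^{0}}\neq 0$, and Lemma~\ref{L:infinity3} gives $-\alpha\in R(G,A)$; as $(-\alpha)|_{A^{0}}=-\alpha|_{A^{0}}\neq 0$, the root $-\alpha$ is infinite, i.e.\ $-\alpha\in R(G,A)-R_{0}(G,A)$. For the symmetry of the strip I would use the involution $\beta\mapsto-\beta$, which by Lemma~\ref{L:infinity3} maps $R(G,A)-R_{0}(G,A)$ to itself. If $\beta\in R_{1,\alpha}$ then $\beta|_{A^{0}}=\alpha|_{A^{0}}$, so $-\beta$ is an infinite root with $(-\beta)|_{A^{0}}=(-\alpha)|_{A^{0}}$, whence $-\beta\in R_{1,-\alpha}$; running the same computation with $2\alpha|_{A^{0}}$ in place of $\alpha|_{A^{0}}$ treats $R_{2,\alpha}$ identically. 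Reading these equivalences in both directions yields $R_{1,-\alpha}=-R_{1,\alpha}$ and $R_{2,-\alpha}=-R_{2,\alpha}$, and taking the union gives $R_{-\alpha}=-R_{\alpha}$; thus negation identifies the $\alpha$-strip package $R_{\alpha}$ with $R_{-\alpha}$.

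Finally, for the inclusion I would write a typical element of $R_{0,\alpha}-R_{0,\alpha}$ as $(\beta_{1}-\alpha)-(\beta_{2}-\alpha)=\beta_{1}-\beta_{2}$ with $\beta_{1},\beta_{2}\in R_{1,\alpha}$, so that $\beta_{1}|_{A^{0}}=\beta_{2}|_{A^{0}}=\alpha|_{A^{0}}$ and hence $(\beta_{1}-\beta_{2})|_{A^{0}}=0$. If $\beta_{1}=\beta_{2}$ the difference is $0$. Otherwise I would apply Lemma~\ref{L:strip1} to the two infinite roots $\beta_{1}$ and $-\beta_{2}$ (the latter infinite by the first step): since
\[
(\beta_{1}|_{A^{0}},(-\beta_{2})|_{A^{0}})=-(\alpha|_{A^{0}},\alpha|_{A^{0}})<0,
\]
Lemma~\ref{L:strip1} gives $\beta_{1}-\beta_{2}=\beta_{1}+(-\beta_{2})\in R(G,A)\cup\{0\}$; being a root whose restriction to $A^{0}$ vanishes, it lies in $R_{0}(G,A)\cup\{0\}$, as wanted. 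The only genuinely non-formal input here is Lemma~\ref{L:strip1}, so the main point to get right is the verification of its hypotheses, namely that both $\beta_{1}$ and $-\beta_{2}$ are infinite roots and that their restricted inner product is strictly negative; once this is arranged the third conclusion is immediate.
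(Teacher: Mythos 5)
Your first and third steps are correct and essentially identical to the paper's argument: $-\alpha$ is an infinite root by the $\mathfrak{sl}_2$-structure of Lemma \ref{L:infinity3}, and the inclusion $R_{0,\alpha}-R_{0,\alpha}\subset R_{0}\cup\{0\}$ follows by applying Lemma \ref{L:strip1} to $\beta_{1}$ and $-\beta_{2}$ exactly as you do.

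The gap is in the middle assertion. Note first that the equality $R_{\alpha}=R_{-\alpha}$ cannot be meant literally as an equality of subsets of $R(G,A)$: elements of $R_{\alpha}$ restrict to $\alpha|_{A^{0}}$ or $2\alpha|_{A^{0}}$ on $A^{0}$, while elements of $R_{-\alpha}$ restrict to $-\alpha|_{A^{0}}$ or $-2\alpha|_{A^{0}}$, so the two sets are disjoint. What the lemma is actually recording (and what is used later, e.g.\ ``$R_{0,\alpha}=-R_{0,\alpha}$'' in Proposition \ref{P:strip4} and ``$R_{0,\beta}=\pm R_{0,\alpha}=R_{0,\alpha}$'' in Theorem \ref{T:strip1}) is the equality of the translated strips, $R_{0,\alpha}=R_{0,-\alpha}=-R_{0,\alpha}$. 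Your negation map $\beta\mapsto-\beta$ only yields the tautological half $R_{-\alpha}=-R_{\alpha}$, i.e.\ $R_{0,-\alpha}=-R_{0,\alpha}$; the substantive half, $R_{0,\alpha}=R_{0,-\alpha}$ (equivalently $R_{0,\alpha}=-R_{0,\alpha}$), is a genuine symmetry of the strip that does not follow from negation alone, and your phrase ``negation identifies $R_{\alpha}$ with $R_{-\alpha}$'' does no work toward it. The missing input is the reflection $s_{\alpha}\in W_{tiny}$: since $s_{\alpha}(\alpha)=-\alpha$ and $s_{\alpha}(\beta)=\beta-\beta(\check{\alpha})\alpha=\beta-2\alpha$ for $\beta\in R_{1,\alpha}$, the element $s_{\alpha}$ carries $R_{1,\alpha}$ onto $R_{1,-\alpha}$ while preserving the offset $\beta-\alpha\mapsto s_{\alpha}(\beta)-(-\alpha)=\beta-\alpha$; this is exactly Lemma \ref{L:strip0} applied with $w=s_{\alpha}$, and it gives $R_{0,-\alpha}=R_{0,\alpha}$. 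Combining this with your $R_{0,-\alpha}=-R_{0,\alpha}$ then yields $R_{0,\alpha}=-R_{0,\alpha}$, which is the form of the statement the rest of Section 5 relies on.
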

\begin{proof}
By Lemma \ref{L:infinity1}, we have $-\alpha\in R(G,A)-R_{0}(G,A)$. We also have $-\alpha'\in R(G,A)-
R_{0}(G,A)$ for any $\alpha'\in R_{\alpha}$. Thus, $R_{-\alpha}=-R_{\alpha}$. Due to $s_{\alpha}(\alpha)
=-\alpha$, by Lemma \ref{L:strip0} we get $R_{-\alpha}=R_{\alpha}$. For any $\alpha_1,\alpha_2 \in
R_{\alpha}$, we have $(-\alpha_{2}|_{A^{0}},\alpha_{1}|_{A^{0}})<0$. By Lemma \ref{L:strip1}, we get
$\alpha_1-\alpha_2\in R_{0}\cup\{0\}$. Thus, $R_{0,\alpha}-R_{0,\alpha}\subset R_0\cup\{0\}$.
\end{proof}

\begin{lemma}\label{L:strip3}
For any $\alpha,\beta\in R(G,A)-R_{0}(G,A)$ with $(\beta|_{A^{0}},\alpha|_{A^{0}})<0$ and $\alpha|_{A^0}+
\beta|_{A^0}\neq 0$, we have $\alpha+\beta\in R(G,A)-R_{0}(G,A)$ and $R_{0,\alpha}+R_{0,\beta}\subset
R_{0,\alpha+\beta}$.
\end{lemma}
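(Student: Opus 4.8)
The plan is to derive everything from Lemma \ref{L:strip1}, which already supplies the crucial fact that two infinite roots whose $A^{0}$-projections have negative inner product have a sum lying in $R(G,A)\cup\{0\}$. The role of the two hypotheses is then to upgrade ``$\cup\{0\}$'' to ``a genuine infinite root'': since the $A^{0}$-projection of the sum is nonzero, the sum can be neither $0$ nor a finite root.

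First I would establish that $\alpha+\beta\in R(G,A)-R_{0}(G,A)$. Applying Lemma \ref{L:strip1} to the pair $(\alpha,\beta)$ gives $\alpha+\beta\in R(G,A)\cup\{0\}$. Because $(\alpha+\beta)|_{A^{0}}=\alpha|_{A^{0}}+\beta|_{A^{0}}\neq 0$ by hypothesis, we have $\alpha+\beta\neq 0$, so $\alpha+\beta\in R(G,A)$; and as its restriction to $A^{0}$ is nonzero, it is an infinite root. This disposes of the first assertion.

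For the inclusion $R_{0,\alpha}+R_{0,\beta}\subset R_{0,\alpha+\beta}$, I would take arbitrary $\gamma\in R_{0,\alpha}$ and $\delta\in R_{0,\beta}$ and write them, by the definition of the strips, as $\gamma=\alpha'-\alpha$ and $\delta=\beta'-\beta$, where $\alpha',\beta'\in R(G,A)$ satisfy $\alpha'|_{A^{0}}=\alpha|_{A^{0}}$ and $\beta'|_{A^{0}}=\beta|_{A^{0}}$. The key observation is that the inner product depends only on $A^{0}$-projections, so $(\beta'|_{A^{0}},\alpha'|_{A^{0}})=(\beta|_{A^{0}},\alpha|_{A^{0}})<0$ and both $\alpha',\beta'$ are again infinite roots; hence Lemma \ref{L:strip1} applies a second time and yields $\alpha'+\beta'\in R(G,A)\cup\{0\}$. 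Since $(\alpha'+\beta')|_{A^{0}}=(\alpha+\beta)|_{A^{0}}\neq 0$, the sum $\alpha'+\beta'$ is a nonzero root lying in the strip $R_{1,\alpha+\beta}$. Therefore $\gamma+\delta=(\alpha'+\beta')-(\alpha+\beta)\in R_{0,\alpha+\beta}$, as required.

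I do not expect a serious obstacle here: the argument is two applications of Lemma \ref{L:strip1} together with bookkeeping of $A^{0}$-projections. The only point deserving care is verifying that the chosen strip-representatives $\alpha',\beta'$ still meet the hypotheses of Lemma \ref{L:strip1}, namely that they are infinite and that their projections have negative inner product. This is automatic, because passing to the strip alters a root only by a finite character and thus leaves its $A^{0}$-projection, hence all relevant inner products and the infinite/finite dichotomy, untouched.
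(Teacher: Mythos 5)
Your proof is correct and follows essentially the same route as the paper: both derive the first assertion by applying Lemma \ref{L:strip1} and using the nonvanishing of the $A^{0}$-projection to rule out $0$ and finite roots, and both obtain the strip inclusion by applying Lemma \ref{L:strip1} again to arbitrary representatives $\alpha'\in R_{1,\alpha}$, $\beta'\in R_{1,\beta}$, whose hypotheses persist because they depend only on $A^{0}$-projections. Your write-up is in fact slightly more explicit than the paper's about this last verification.
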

\begin{proof}
By Lemma \ref{L:strip1}, $\alpha+\beta\in R(G,A)$. As $\alpha|_{A^0}+\beta|_{A^0}\neq 0$, we have $\alpha
+\beta\in R(G,A)-R_{0}(G,A)$. For any $\alpha'\in R_{\alpha}$ and $\beta'\in R_{\beta}$, we also have
$\alpha'+\beta'\in R(G,A)-R_{0}(G,A)$ by Lemma \ref{L:strip1}. Thus, $R_{0,\alpha}+R_{0,\beta}\subset
R_{0,\alpha+\beta}$.
\end{proof}

\begin{proposition}\label{P:strip4}
For any $\alpha,\beta\in R(G,A)-R_{0}(G,A)$ with $\langle\alpha,\beta\rangle=\langle\beta,\alpha\rangle=-1$,
we have $R_{0,\alpha}=R_{0,\beta}$ and they are subgroups of $X^{\ast}_{0}(A)$.
\end{proposition}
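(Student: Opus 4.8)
The plan is to exploit the $A_2$-configuration forced by the hypothesis. Set $\gamma=\alpha+\beta$. Since $\langle\alpha,\beta\rangle=\langle\beta,\alpha\rangle=-1$, the restrictions $\alpha|_{A^0}$ and $\beta|_{A^0}$ are linearly independent with $(\alpha|_{A^0},\beta|_{A^0})<0$; in particular $\gamma|_{A^0}=\alpha|_{A^0}+\beta|_{A^0}\neq 0$. First I would record the two reflection identities: by Lemma \ref{L:infinity2} the Cartan integers computed here agree with $\beta(\check\alpha)=\langle\beta,\alpha\rangle=-1$ and $\alpha(\check\beta)=\langle\alpha,\beta\rangle=-1$, so the induced action of $s_\alpha$ on $X^{\ast}(A)$ gives $s_\alpha(\beta)=\beta-\beta(\check\alpha)\alpha=\alpha+\beta=\gamma$, and likewise $s_\beta(\alpha)=\gamma$. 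Since $s_\alpha,s_\beta\in W_{tiny}$, this shows $\gamma\in W_{tiny}\beta$ and $\gamma\in W_{tiny}\alpha$; in particular $\gamma$ is an infinite root (also guaranteed by Lemma \ref{L:strip3}).

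From here the equality is immediate: applying Lemma \ref{L:strip0} to the pairs $(\beta,\gamma)$ and $(\alpha,\gamma)$ yields $R_{0,\beta}=R_{0,\gamma}$ and $R_{0,\alpha}=R_{0,\gamma}$, hence $R_{0,\alpha}=R_{0,\beta}$. Write $S=R_{0,\alpha}=R_{0,\beta}=R_{0,\gamma}$. For the subgroup claim I would invoke Lemma \ref{L:strip3} with the pair $(\alpha,\beta)$, whose hypotheses $(\alpha|_{A^0},\beta|_{A^0})<0$ and $\alpha|_{A^0}+\beta|_{A^0}\neq 0$ have just been verified: it gives $R_{0,\alpha}+R_{0,\beta}\subset R_{0,\alpha+\beta}$, i.e. $S+S\subset S$. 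Since $\alpha\in R_{1,\alpha}$ we have $0=\alpha-\alpha\in S$, and $S$ is finite because $R(G,A)$ is finite. A finite subset of the abelian group $X^{\ast}_0(A)$ that contains $0$ and is closed under addition is automatically a subgroup (for each $s\in S$ the set of positive multiples of $s$ is finite, so some positive multiple of $s$ equals $0$, whence $-s\in S$). Hence $S$ is a subgroup of $X^{\ast}_0(A)$.

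I expect the only points requiring care to be bookkeeping rather than genuine obstacles: one must make sure the reflection identities $s_\alpha(\beta)=\gamma$ and $s_\beta(\alpha)=\gamma$ hold at the level of full characters in $X^{\ast}(A)$ and not merely after restriction to $A^0$ (this is what lets Lemma \ref{L:strip0}, stated for $W_{tiny}$ acting on genuine roots, apply), and one must confirm $\gamma$ is infinite before quoting Lemmas \ref{L:strip0} and \ref{L:strip3}. The passage from ``closed under addition'' to ``subgroup'' is the one slightly non-formal step, and it is precisely finiteness of $R(G,A)$ that makes it work.
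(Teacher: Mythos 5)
Your proof is correct, and it reaches the same $\A_2$-triangle $\{\alpha,\beta,\alpha+\beta\}$ that the paper exploits, but the two arguments differ in mechanism at both steps. For the equality $R_{0,\alpha}=R_{0,\beta}$, you use the reflection identities $s_{\alpha}(\beta)=s_{\beta}(\alpha)=\alpha+\beta$ (valid on all of $X^{\ast}(A)$ by Lemma \ref{L:infinity2}) together with the $W_{tiny}$-orbit invariance of Lemma \ref{L:strip0}; the paper instead sets $\gamma=-(\alpha+\beta)$ and derives the three cyclic inclusions $R_{0,\alpha}+R_{0,\beta}\subset R_{0,\gamma}$, $R_{0,\alpha}+R_{0,\gamma}\subset R_{0,\beta}$, $R_{0,\beta}+R_{0,\gamma}\subset R_{0,\alpha}$ from Lemmas \ref{L:strip2} and \ref{L:strip3}, and concludes equality because $0$ lies in each set. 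Your route is arguably cleaner here, since it isolates the equality as a pure Weyl-group statement. For closure under inversion, the paper quotes $R_{0,\alpha}=-R_{0,\alpha}$ from Lemma \ref{L:strip2}, whereas you observe that a finite additively closed subset of the torsion group $X^{\ast}_{0}(A)$ containing $0$ is automatically a subgroup; your argument is more elementary and bypasses Lemma \ref{L:strip2} entirely, at the (negligible) cost of invoking finiteness of $X^{\ast}(A/A^{0})$. Both steps of your write-up check out, including the two points you flag yourself: $\gamma|_{A^{0}}\neq 0$ does follow from $\langle\alpha,\beta\rangle=-1$ (proportionality with ratio $-1$ would force $\langle\alpha,\beta\rangle=-2$), and the reflection formula does act on full characters, so Lemma \ref{L:strip0} applies as you use it.
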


\begin{proof}
In this case $(\beta|_{A^{0}},\alpha|_{A^{0}})<0$ and $\alpha|_{A^0}+\beta|_{A^0}\neq 0$. By Lemmas
\ref{L:strip2} and \ref{L:strip3}, $\gamma=-(\alpha+\beta)\in R(G,A)-R_{0}(G,A)$. As $\langle\alpha,\beta
\rangle=\langle\beta,\alpha\rangle=-1$, we have $\langle\alpha,\gamma\rangle=\langle\beta,\gamma\rangle=-1$.
By Lemmas \ref{L:strip2} and \ref{L:strip3}, we get $$R_{0,\alpha}+R_{0,\beta}\subset R_{0,-\gamma}=
R_{0,\gamma},$$ $$R_{0,\alpha}+R_{0,\gamma}\subset R_{0,-\beta}=R_{0,\beta},$$ $$R_{0,\beta}+R_{0,\gamma}
\subset R_{0,-\alpha}=R_{0,\alpha}.$$ Thus, $R_{0,\alpha}=R_{0,\beta}=R_{0,\gamma}$. As $R_{0,\alpha}=
-R_{0,\alpha}$ by Lemma \ref{L:strip2}, we get $R_{0,\alpha}-R_{0,\alpha}\subset R_{0,\alpha}$. Thus,
$R_{0,\alpha}$ is a subgroup of $X^{\ast}_{0}(A)$.
\end{proof}

\begin{theorem}\label{T:strip1}
Let $R'$ be a simply-laced irreducible sub-root system of $R'(G,A)$ with rank $\geq 2$. Then $R_{0,\alpha}$
($\alpha\in R(G,A)$, $\alpha|_{A^{0}}\in R'$) is constant and it is a subgroup of $X^{\ast}_{0}(A)$.
\end{theorem}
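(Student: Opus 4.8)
The plan is to deduce the theorem from Proposition~\ref{P:strip4} and Lemma~\ref{L:strip2} by a connectivity argument inside the root system $R'$. The point is that Proposition~\ref{P:strip4} already yields both desired conclusions---the equality $R_{0,\alpha}=R_{0,\beta}$ and the subgroup property---as soon as $\alpha$ and $\beta$ are infinite roots whose restrictions make a $120^{\circ}$ angle, i.e. $\langle\alpha,\beta\rangle=\langle\beta,\alpha\rangle=-1$. So the whole task is to propagate these equalities over all of $R'$.

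First I would record the elementary moves. Since $R'$ is simply-laced, all its roots have the same length; hence for two roots $\alpha',\beta'\in R'$ that are neither equal nor opposite, non-orthogonality forces $\langle\alpha',\beta'\rangle=\langle\beta',\alpha'\rangle=\pm 1$. If this integer is $-1$, Proposition~\ref{P:strip4} directly gives $R_{0,\alpha}=R_{0,\beta}$ and that both are subgroups. If it is $+1$, I would replace $\beta$ by $-\beta$: by Lemma~\ref{L:strip2} the element $-\beta$ is again an infinite root with $R_{0,-\beta}=R_{0,\beta}$, and now $\langle\alpha,-\beta\rangle=\langle-\beta,\alpha\rangle=-1$, so Proposition~\ref{P:strip4} applied to the pair $(\alpha,-\beta)$ gives $R_{0,\alpha}=R_{0,-\beta}=R_{0,\beta}$, again subgroups. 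The opposite case is handled directly by Lemma~\ref{L:strip2}, which gives $R_{0,\alpha}=R_{0,-\alpha}$. Thus whenever $\alpha',\beta'$ are non-orthogonal roots of $R'$ one obtains $R_{0,\alpha}=R_{0,\beta}$; and since the Cartan integers are computed on restrictions to $A^{0}$, these identities hold for every choice of infinite-root lifts of $\alpha'$ and $\beta'$, so $R_{0,\alpha}$ depends only on $\alpha'=\alpha|_{A^{0}}$.

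Next I would invoke connectivity. Let $\Gamma$ be the graph whose vertices are the roots of $R'$ and whose edges join pairs with non-zero inner product (in particular every pair $\{\gamma',-\gamma'\}$ is an edge). Irreducibility of $R'$ means precisely that its root set is not a union of two non-empty mutually orthogonal subsets; equivalently $\Gamma$ is connected, a classical fact I would simply cite. By the previous paragraph the assignment $\alpha\mapsto R_{0,\alpha}$ is constant along every edge of $\Gamma$, hence constant over all of $R'$. Finally, because $\rank R'\ge 2$ and $R'$ is irreducible and simply-laced, its Dynkin diagram has at least one edge, so there is a pair of roots with $\langle\alpha',\beta'\rangle=-1$; for this pair Proposition~\ref{P:strip4} shows the common value is a subgroup of $X^{\ast}_{0}(A)$. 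This gives the theorem.

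The only genuinely delicate bookkeeping is passing from the $120^{\circ}$ hypothesis of Proposition~\ref{P:strip4} to an arbitrary non-orthogonal pair, that is, the $60^{\circ}$ (Cartan integer $+1$) case, where Lemma~\ref{L:strip2} is used to flip the sign of one root; everything else is routine or the standard connectivity statement. The hypothesis $\rank R'\ge 2$ enters only to guarantee an actual $120^{\circ}$ pair, without which the common set could fail to be a subgroup (in rank one only Lemma~\ref{L:strip2} is available).
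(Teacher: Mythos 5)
Your proposal is correct and follows essentially the same route as the paper: both rest on Proposition~\ref{P:strip4} for pairs with Cartan integers $-1$, use the sign flip $\beta\mapsto-\beta$ (via Lemma~\ref{L:strip2}/Lemma~\ref{L:strip0}) to handle the $+1$ case, and then propagate the equality $R_{0,\alpha}=R_{0,\beta}$ across $R'$. The only cosmetic difference is that the paper shows any two roots of $R'$ are at distance at most two in your non-orthogonality graph (exhibiting a common neighbour $\gamma$ with $\langle\alpha,\gamma\rangle=\langle\beta,\gamma\rangle=-1$ when $\alpha\perp\beta$), whereas you invoke general connectivity of that graph; both are valid.
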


\begin{proof}
Let $\alpha\in R(G,A)$ be with $\alpha|_{A^{0}}\in R'$. Due to $R'$ is simply-laced with rank $\geq 2$,
there exists $\beta\in R(G,A)$ such that $\langle\alpha,\beta\rangle=\langle\beta,\alpha\rangle=-1$. By
Proposition \ref{P:strip4}, $R_{0,\alpha}$ is a subgroups of $X^{\ast}_{0}(A)$.

For any $\beta\in R(G,A)$ with $\beta|_{A^{0}}\in R'$, if $(\beta,\alpha)\neq 0$, then $\langle\alpha,\beta
\rangle=\langle\beta,\alpha\rangle=\pm{1}$. By Proposition \ref{P:strip4}, $R_{0,\beta}=\pm{R_{0,\alpha}}=
R_{0,\alpha}$. If $(\beta,\alpha)=0$, then there exists $\gamma\in R(G,A)$ with $\gamma|_{A^{0}}\in R'$ such
that $$\langle\alpha,\gamma\rangle=\langle\gamma,\alpha\rangle=\langle\beta,\gamma\rangle=\langle\gamma,
\beta\rangle=-1.$$ By Proposition \ref{P:strip4}, $R_{0,\beta}=R_{0,\gamma}=R_{0,\alpha}$.
\end{proof}

\subsection{Rank one root systems}

\begin{lemma}\label{L:strip5}
Let $\alpha\in R(G,A)-R_{0}(G,A)$ and $\lambda\in X^{\ast}_{0}(A)$. If $2\alpha+\lambda\in R(G,A)$, then for any
$k\in\mathbb{Z}$, $\alpha+k\lambda,2\alpha+(2k+1)\lambda\in R(G,A)$.
\end{lemma}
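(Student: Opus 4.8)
The plan is to reduce to a rank-one situation and then read off the roots from an explicit small model. First I would replace $G$ by $G'=Z_{G}(\ker\alpha\cap\ker\lambda)$. Since $\lambda|_{A^{0}}=0$ we have $\ker\alpha\cap\ker(2\alpha+\lambda)=\ker\alpha\cap\ker\lambda$, so by Lemma~\ref{L:generator}, $R(G',A)=\{m\alpha+j\lambda:m,j\in\mathbb{Z}\}\cap R(G,A)$. As every character in the conclusion has this form, it suffices to prove the statement for $(G',A)$. Passing to $\frh_0=(\frg'_{0})_{der}$, $H'=\Aut(\frh_0)$ and $B=\phi(A)$ as in Subsection~\ref{SS:TRD-bracket}, I obtain a $(\ast)$-subgroup $B$ of $H'$ with $X^{\ast}(B)=\span_{\mathbb{Z}}\{\alpha,\lambda\}$, one generator infinite and one finite.

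Next I would pin down $(\frh_0,B)$ using the classification recalled in Subsection~\ref{SS:TRD-bracket}. The restricted root system is generated by $\alpha|_{A^{0}}$ and $(2\alpha+\lambda)|_{A^{0}}=2\alpha|_{A^{0}}$, hence has rank one and, by hypothesis, contains both $\alpha|_{A^{0}}$ and $2\alpha|_{A^{0}}$; it is of type $BC_1$. Since $\alpha$ is infinite we have $\dim B=1$, which rules out the rank-two torus case and the finite Hessenberg cases; and the $\mathfrak{su}(2)^{m}$ case is excluded because its restricted system is the reduced $A_1$, with no weight-$2$ root. Thus $2\alpha+\lambda\in R$ forces the outer $\mathfrak{su}(3)$ case: $\frh_0\cong\mathfrak{su}(3)^{m}$ with $n=2m$ the order of $\lambda$, and $B=\Delta(T)\rtimes\langle y\rangle$, where $y$ realizes the cyclic shift of the $m$ factors twisted by the outer involution $\sigma(X)=J\overline{X}J^{-1}$, exactly as in Case~(2) of Lemma~\ref{L:W0}, with $T$ a maximal torus of $\Int(\mathfrak{su}(3))^{\sigma}$ and $\lambda(y)=e^{2\pi i/n}$.

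The heart of the argument is the explicit decomposition of $\mathfrak{sl}(3,\mathbb{C})^{m}$ under $B$. Under $T=\{\diag(t,1,t^{-1})\}$ the algebra $\mathfrak{sl}(3,\mathbb{C})$ has $T$-weights $0,\pm1,\pm2$, the weight-$1$ space being two-dimensional (spanned by $E_{12},E_{23}$) and the weight-$2$ space one-dimensional (spanned by $E_{13}$); weight $1$ corresponds to $\alpha$ and weight $2$ to $2\alpha$. A direct computation of $\sigma$ gives $\sigma(E_{13})=-E_{13}$, while on the weight-$1$ plane $\sigma$ has one $+1$-eigenvector and one $-1$-eigenvector. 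Diagonalizing the twisted shift, an eigenvector built from a $\sigma$-eigenvector with eigenvalue $\varepsilon$ has $y$-eigenvalue $\zeta$ with $\zeta^{m}=\varepsilon$, i.e.\ $\zeta=e^{2\pi ik/n}$ with $k$ even when $\varepsilon=1$ and $k$ odd when $\varepsilon=-1$. Consequently the weight-$1$ root spaces (both eigenvalues present) realize all $n$-th roots of unity, giving $\frg_{\alpha+k\lambda}\neq 0$ for every $k$; and the weight-$2$ root spaces ($\varepsilon=-1$ only) realize exactly the odd-type roots of unity, giving $\frg_{2\alpha+j\lambda}\neq 0$ precisely when $j$ is odd, i.e.\ $j=2k+1$. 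This is exactly the assertion, and it is consistent with the hypothesis since $(2\alpha+\lambda)(y)^{m}=\alpha(y)^{n}e^{\pi i}=-1$.

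The main obstacle I expect is this third step: carrying out the $\sigma$-eigenvalue bookkeeping correctly — in particular that $\sigma$ acts by $-1$ on the weight-$2$ line while mixing the weight-$1$ plane — and matching the resulting $y$-spectrum to the arithmetic of $\alpha+k\lambda$ and $2\alpha+(2k+1)\lambda$ with the normalization $\lambda(y)=e^{2\pi i/n}$. The parity phenomenon, which is what confines the weight-$2$ roots to odd multiples $2\alpha+(2k+1)\lambda$, lives entirely inside this computation.
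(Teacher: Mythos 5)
Your argument is correct, but it takes a genuinely different and much heavier route than the paper. The paper's proof is a short Weyl-group computation: writing $\beta=2\alpha+\lambda$, one has $\check{\alpha}=2\check{\beta}$, and the composite $s_{\beta}s_{\alpha}\in W_{tiny}$ acts on $X^{\ast}(A)$ by $\alpha\mapsto\alpha+\lambda$ while fixing $\lambda$ (reflections in infinite roots act trivially on $X^{\ast}_{0}(A)$); iterating $(s_{\beta}s_{\alpha})^{\pm k}$ and using that $W_{tiny}\subset W_{large}(G,A)$ preserves $R(G,A)$ yields $\alpha+k\lambda$ and $2\alpha+(2k+1)\lambda\in R(G,A)$ in one stroke, with no classification input beyond Lemma \ref{L:reflection}. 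Your route instead mimics the proof of Lemma \ref{L:W0}: reduce to $G'=Z_{G}(\ker\alpha\cap\ker\lambda)$, invoke the list of Subsection \ref{SS:TRD-bracket} to pin down the outer $\mathfrak{su}(3)^{m}$ model, and read off the $y$-spectrum from the $\sigma$-eigenvalues ($-1$ on the weight-$2$ line $\mathbb{C}E_{13}$, both $\pm1$ on the weight-$1$ plane); I have checked this eigenvalue bookkeeping and it is right. Your approach buys more than the lemma asserts --- in the rank-one model it identifies the weight-$2$ strip as \emph{exactly} the odd multiples $2\alpha+(2k+1)\lambda$ --- but at the cost of the classification of $(\ast)$-subgroups with $2$-generated character group, and you should make two glossed points explicit: first, that all infinite roots of $G'$ lie in a single $A$-transitive component of $(\frg'_{0})_{der}$ (components on which the projection of $A$ is finite contribute only finite roots, and the condition $(\ast)$ forces exactly one component to carry the one-dimensional part of $B$); second, that since $\alpha$ and $2\alpha+\lambda$ both factor through that component, $X^{\ast}(B)=\span_{\mathbb{Z}}\{\alpha,\lambda\}$ forces $X^{\ast}_{0}(B)=\mathbb{Z}\lambda$ of order exactly $n=2m$, which is what legitimizes your normalization $\lambda(y)=e^{2\pi i/n}$.
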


\begin{proof}
Write $\beta=2\alpha+\lambda$. Then, $\check{\alpha}=2\check{\beta}$. For any $a\in A$,
\begin{eqnarray*}s_{\beta}s_{\alpha}(a)&=& s_{\beta}(a\check{\alpha}(\alpha(a)^{-1}))
\\&=&s_{\beta}(a)s_{\beta}(\check{\alpha}(\alpha(a)^{-1}))
\\&=&a\check{\beta}(\beta(a)^{-1})\check{\alpha}(\alpha(a))
\\&=&a\check{\beta}(\beta(a)^{-1}\alpha(a)^{2})
\\&=&a\check{\beta}(\lambda(a)^{-1}).
\end{eqnarray*}
By this $s_{\beta}s_{\alpha}(\alpha)=\alpha+\lambda$. As $s_{\beta}s_{\alpha}\in W_{tiny}$, we have
$s_{\beta}s_{\alpha}(\lambda)=\lambda$. By these, we get $\alpha+k\lambda,2\alpha+(2k+1)\lambda\in R(G,A)$
for any $k\in\mathbb{Z}$.
\end{proof}

\begin{lemma}\label{L:strip6}
Let $\alpha\in R(G,A)-R_{0}(G,A)$ and $\lambda\in X^{\ast}_{0}(A)$. If $\alpha+\lambda\in R(G,A)$, then for any
$k\in\mathbb{Z}$, $\alpha+k\lambda\in R(G,A)$.
\end{lemma}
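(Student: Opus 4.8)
The plan is to imitate the proof of Lemma~\ref{L:strip5}: produce an element of $W_{tiny}$ that shifts the $\alpha$-strip along $\lambda$, and then iterate it. Set $\beta=\alpha+\lambda$. Since $\lambda\in X^{\ast}_{0}(A)$ restricts trivially to $A^{0}$, we have $\beta|_{A^{0}}=\alpha|_{A^{0}}\neq 0$, so the hypothesis $\alpha+\lambda\in R(G,A)$ says precisely that $\beta$ is again an \emph{infinite} root; moreover $\check{\beta}=\check{\alpha}$, because the coroot of an infinite root depends only on its restriction to $A^{0}$ (the discussion following Lemma~\ref{L:infinity2}). Consequently $\alpha(\check\alpha)=\beta(\check\alpha)=2$, while $\lambda(\check\alpha)=0$ since $\lambda$ is finite and $\check\alpha$ is a cocharacter of $A^{0}$.

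Next I would form $w=s_{\beta}s_{\alpha}\in W_{tiny}$ and compute its induced action on $X^{\ast}(A)$. Using $s_{\alpha}(\mu)=\mu-\mu(\check\alpha)\alpha$, the equality $\check\beta=\check\alpha$ (so $s_{\beta}(\mu)=\mu-\mu(\check\alpha)\beta$), and $s_{\beta}(\alpha)=\alpha-2\beta$, a short formal expansion gives
\begin{equation*}
w(\mu)=\mu+\mu(\check\alpha)\,\lambda,\qquad\forall\,\mu\in X^{\ast}(A).
\end{equation*}
Because $\lambda(\check\alpha)=0$, the map $w$ fixes $\lambda$, and hence by induction $w^{k}(\mu)=\mu+k\,\mu(\check\alpha)\,\lambda$ for every $k\in\mathbb{Z}$ (the negative powers work as well since $w^{-1}(\mu)=\mu-\mu(\check\alpha)\lambda$).

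Finally I would evaluate on the two roots already in hand. Since $\alpha(\check\alpha)=\beta(\check\alpha)=2$, we get $w^{k}(\alpha)=\alpha+2k\lambda$ and $w^{k}(\beta)=\beta+2k\lambda=\alpha+(2k+1)\lambda$. As $w\in W_{tiny}\subset W_{large}(G,A)$ preserves $R(G,A)$ and both $\alpha,\beta\in R(G,A)$, every element of these two orbits is a root. The even multiples $\alpha+2k\lambda$ come from the orbit of $\alpha$ and the odd multiples $\alpha+(2k+1)\lambda$ from the orbit of $\beta$; together they exhaust $\alpha+m\lambda$ for all $m\in\mathbb{Z}$, which is the claim.

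The one point requiring care—and the only genuine difference from Lemma~\ref{L:strip5}—is that a single application of $w$ advances by $2\lambda$ rather than $\lambda$, so $w$ alone produces merely the \emph{even} translates. The hard part is therefore not any computation but the bookkeeping observation that one must also follow the $w$-orbit of $\beta=\alpha+\lambda$ to recover the odd translates; the reflection formulas together with $\check\beta=\check\alpha$ make the determination of $w$ entirely formal, so I expect no further obstacle.
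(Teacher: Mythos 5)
Your proposal is correct and follows essentially the same route as the paper: form $w=s_{\alpha+\lambda}s_{\alpha}$, use $\check{(\alpha+\lambda)}=\check{\alpha}$ to see that $w$ translates by $\mu(\check{\alpha})\lambda$ and fixes $\lambda$, and then apply powers of $w$ to both $\alpha$ and $\alpha+\lambda$ to obtain all translates. Your explicit remark that the $w$-orbit of $\alpha$ alone yields only the even translates, so that one must also track the orbit of $\alpha+\lambda$, is a point the paper's proof leaves implicit.
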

\begin{proof}
Write $\beta=\alpha+\lambda$. Then, $\check{\alpha}=\check{\beta}$. For any $a\in A$, one can show that
$$s_{\beta}s_{\alpha}(a)=a\check{\alpha}(\lambda(a)^{-1}).$$ By this $s_{\beta}s_{\alpha}(\alpha)=\alpha+
2\lambda$. As $s_{\beta}s_{\alpha}\in W_{tiny}$, we have $s_{\beta}s_{\alpha}(\lambda)=\lambda$. By these,
we get $\alpha+k\lambda\in R(G,A)$ for any $k\in\mathbb{Z}$.
\end{proof}

\begin{lemma}\label{L:strip7}
Let $\alpha\in R(G,A)-R_{0}(G,A)$ and $\lambda,\mu\in X^{\ast}_{0}(A)$. If $\alpha+\lambda\in R(G,A)$ and
$\alpha+\mu\in R(G,A)$, then $\alpha+\mu+2\lambda\in R(G,A)$; if $2\alpha+\lambda\in R(G,A)$ and $\alpha+
\mu\in R(G,A)$, then $\alpha+\mu+\lambda\in R(G,A)$ and $2\alpha+4\mu+\lambda\in R(G,A)$; if $2\alpha+
\lambda\in R(G,A)$ and $2\alpha+\mu\in R(G,A)$, then $2\alpha+\mu+2\lambda\in R(G,A)$.
\end{lemma}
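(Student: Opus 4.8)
The plan is to deduce all four memberships from the same device used in Lemmas~\ref{L:strip5} and~\ref{L:strip6}, namely from the action of suitable elements of $W_{tiny}$ on $X^{\ast}(A)$. Recall from those proofs that, for the infinite root $\alpha$ and a finite character $\nu\in X^{\ast}_{0}(A)$, whenever $\alpha+\nu\in R(G,A)$ the composite $s_{\alpha+\nu}s_{\alpha}$ lies in $W_{tiny}$ and satisfies $s_{\alpha+\nu}s_{\alpha}(\alpha)=\alpha+2\nu$, while whenever $2\alpha+\nu\in R(G,A)$ the composite $s_{2\alpha+\nu}s_{\alpha}$ lies in $W_{tiny}$ and satisfies $s_{2\alpha+\nu}s_{\alpha}(\alpha)=\alpha+\nu$. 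Two features of such an element $w\in W_{tiny}$ will be used repeatedly: it preserves $R(G,A)$ (since $W_{tiny}\subset W_{small}\subset W_{large}$), and, because $W_{tiny}\subset W_{1}$, it fixes $X^{\ast}_{0}(A)=X^{\ast}(A/A^{0})$ pointwise; moreover $w$ acts $\mathbb{Z}$-linearly on $X^{\ast}(A)$.

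With this in hand, each assertion is obtained by applying one such $w$ to one of the hypothesized roots. For the first statement I would take $w=s_{\alpha+\lambda}s_{\alpha}$, so that $w(\alpha)=\alpha+2\lambda$ and $w(\mu)=\mu$; then $w(\alpha+\mu)=\alpha+\mu+2\lambda$, which lies in $R(G,A)$ because $\alpha+\mu\in R(G,A)$ and $w$ preserves $R(G,A)$. For the second statement, applying $w=s_{2\alpha+\lambda}s_{\alpha}$ (which sends $\alpha\mapsto\alpha+\lambda$ and fixes $\mu$) to $\alpha+\mu$ gives $\alpha+\mu+\lambda\in R(G,A)$, while applying instead $w'=s_{\alpha+\mu}s_{\alpha}$ (which sends $\alpha\mapsto\alpha+2\mu$ and fixes $\lambda$) to $2\alpha+\lambda$ gives $2\alpha+4\mu+\lambda\in R(G,A)$, using $w'(2\alpha)=2w'(\alpha)=2\alpha+4\mu$. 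For the third statement I would apply $w=s_{2\alpha+\lambda}s_{\alpha}$ (again $\alpha\mapsto\alpha+\lambda$, with $\mu$ fixed) to $2\alpha+\mu$, obtaining $w(2\alpha+\mu)=2\alpha+\mu+2\lambda\in R(G,A)$.

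There is no serious obstacle here: the whole argument is a direct application of Lemmas~\ref{L:strip5} and~\ref{L:strip6}. The only point requiring care is the bookkeeping of the factor-of-two scalings, namely keeping straight that a product of reflections built from an $\alpha+\nu$-type root doubles the shift on $\alpha$, whereas one built from a $2\alpha+\nu$-type root shifts $\alpha$ by exactly $\nu$, and choosing, in each of the four cases, which composite to form and which of the two hypothesized roots to feed into it. Verifying that each resulting character indeed equals the claimed one is then a one-line linear computation.
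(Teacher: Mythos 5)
Your proposal is correct and follows exactly the route the paper takes: the paper's proof is a one-line reference to computing the action of $s_{\alpha+\lambda}s_{\alpha}$ (resp.\ $s_{2\alpha+\lambda}s_{\alpha}$, $s_{\alpha+\mu}s_{\alpha}$) on $X^{\ast}(A)$, which is precisely the computation you carry out, using that these elements of $W_{tiny}$ preserve $R(G,A)$ and fix $X^{\ast}_{0}(A)$ pointwise. Your case-by-case bookkeeping of which composite to apply to which hypothesized root is accurate in all four instances.
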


\begin{proof}
The proof is similar to that of Lemmas \ref{L:strip5} and \ref{L:strip6}, all statements follow from
calculating the action of $s_{\alpha+\lambda}s_{\alpha}$ (or of $s_{2\alpha+\lambda}s_{\alpha}$,
$s_{\alpha+\mu}s_{\alpha}$) on $X^{\ast}(A)$.
\end{proof}


\begin{proposition}\label{P:strip8}
Let $\alpha\in R(G,A)-R_{0}(G,A)$. Then $kR_{0,\alpha}\subset R_{0,\alpha}$ for any $k\in\mathbb{Z}$,
and $$2R_{0,\alpha}+R_{0,\alpha}\subset R_{0,\alpha}.$$

Furthermore if $R_{2,\alpha}\neq\emptyset$, then $(2k+1)Z_{0,\alpha}\subset Z_{0,\alpha}$ for any $k\in
\mathbb{Z}$, \footnote{Since $0\in R_{0,\alpha}$, $Z_{0,\alpha}+R_{0,\alpha}\subset R_{0,\alpha}$ implies
that $Z_{0,\alpha}\subset R_{0,\alpha}.$}
$$Z_{0,\alpha}+R_{0,\alpha}\subset R_{0,\alpha},$$
$$4R_{0,\alpha}+Z_{0,\alpha}\subset Z_{0,\alpha},$$ $$2Z_{0,\alpha}+Z_{0,\alpha}\subset Z_{0,\alpha}.$$
\end{proposition}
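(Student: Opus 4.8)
The plan is to reduce every one of the six containments to a single application of one of the preceding Lemmas \ref{L:strip5}, \ref{L:strip6}, \ref{L:strip7} by unwinding the definitions of $R_{0,\alpha}$ and $Z_{0,\alpha}$. Recall that $\lambda\in R_{0,\alpha}$ means precisely that $\alpha+\lambda\in R(G,A)$, and $\lambda\in Z_{0,\alpha}$ means precisely that $2\alpha+\lambda\in R(G,A)$; in both cases the defining condition $\beta|_{A^{0}}=\alpha|_{A^{0}}$ (resp. $=2\alpha|_{A^{0}}$) forces $\lambda\in X^{\ast}_{0}(A)$ automatically, so the standing hypotheses of the three lemmas on the ambient characters are met for free. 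Under this dictionary each displayed inclusion becomes an assertion that a certain $\mathbb{Z}$-combination of $\alpha$ and such characters is again a root.

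First I would dispose of the two unconditional claims. The inclusion $kR_{0,\alpha}\subset R_{0,\alpha}$ says: whenever $\alpha+\lambda\in R(G,A)$, one has $\alpha+k\lambda\in R(G,A)$ for all $k\in\mathbb{Z}$; this is exactly Lemma \ref{L:strip6}. The inclusion $2R_{0,\alpha}+R_{0,\alpha}\subset R_{0,\alpha}$ says: whenever $\alpha+\lambda,\alpha+\mu\in R(G,A)$, one has $\alpha+\mu+2\lambda\in R(G,A)$, which is the first assertion of Lemma \ref{L:strip7}.

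Next, under the hypothesis $R_{2,\alpha}\neq\emptyset$ --- equivalently $Z_{0,\alpha}\neq\emptyset$, which merely makes the remaining claims non-vacuous --- I would treat the four statements involving $Z_{0,\alpha}$ in the same manner. The inclusion $(2k+1)Z_{0,\alpha}\subset Z_{0,\alpha}$ translates to: $2\alpha+\lambda\in R(G,A)$ implies $2\alpha+(2k+1)\lambda\in R(G,A)$, which is Lemma \ref{L:strip5}. The three remaining inclusions $Z_{0,\alpha}+R_{0,\alpha}\subset R_{0,\alpha}$, $4R_{0,\alpha}+Z_{0,\alpha}\subset Z_{0,\alpha}$, and $2Z_{0,\alpha}+Z_{0,\alpha}\subset Z_{0,\alpha}$ translate respectively to the two conclusions of the middle assertion of Lemma \ref{L:strip7} (with $2\alpha+\lambda$ and $\alpha+\mu$ roots, yielding that $\alpha+\mu+\lambda$ and $2\alpha+4\mu+\lambda$ are roots) and to the third assertion of Lemma \ref{L:strip7} (with $2\alpha+\lambda$ and $2\alpha+\mu$ roots, yielding that $2\alpha+\mu+2\lambda$ is a root).

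There is no genuine obstacle here: the entire content already sits in Lemmas \ref{L:strip5}--\ref{L:strip7}, and the only point requiring care is the bookkeeping --- correctly matching each set-theoretic inclusion, together with its coefficients $1$, $2$, or $4$, to the lemma whose conclusion has exactly that shape, and observing that a single character may legitimately be substituted into both slots of a lemma, so that the set-sums are covered in full generality rather than only on the diagonal.
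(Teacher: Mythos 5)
Your proposal is correct and follows exactly the paper's own proof, which likewise derives the two unconditional containments from Lemmas \ref{L:strip6} and \ref{L:strip7}, the odd-multiple statement from Lemma \ref{L:strip5}, and the remaining three containments from the three assertions of Lemma \ref{L:strip7}. Your version merely spells out the bookkeeping that the paper leaves implicit.
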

\begin{proof}
Statements in the first part follow from Lemmas \ref{L:strip6} and \ref{L:strip7}.

Suppose $R_{2,\alpha}\neq\emptyset$. For any $\beta=2\alpha+\lambda\in R_{2,\alpha}$, by Lemma \ref{L:strip5}
we have $2\alpha+(2k+1)\lambda\in R(G,A)$ for any $k\in\mathbb{Z}$. Thus, $(2k+1)Z_{0,\alpha}\subset
Z_{0,\alpha}$. Other statements in the second part follow from Lemma \ref{L:strip7}.
\end{proof}

\smallskip

For each prime $p$, set $$R_{0,p}=\{\lambda\in X^{\ast}_{0}(A): [p^{k}\lambda]=1\textrm{ for some }k\geq 1\},$$
which is the $p$-torsion subgroup of $X^{\ast}_{0}(A)$.

\begin{theorem}\label{T:strip2}
Let $\alpha\in R(G,A)-R_{0}(G,A)$. Then there exists a subset $Y_{2}$ of $R_{0,2}$ and a subgroup $Y_{p}$
of $R_{0,p}$ for each prime $p>2$ such that $$R_{0,\alpha}=\big\{\sum_{p}\lambda_{p}:\lambda_{p}\in Y_{p}
\big\}.$$ The subset $Y_{2}$ satisfies $kY_{2}\subset Y_{2}$ for any $k\in\mathbb{Z}$, and $2Y_{2}+Y_{2}
\subset Y_{2}$.

Furthermore if $R_{2,\alpha}\neq\emptyset$, then there exists another subset $Z_{2}$ of $R_{0,2}$ such that
$$Z_{0,\alpha}=\big\{\sum_{p}\lambda_{p}:\lambda_{2}\in Z_{2},\ \lambda_{p}\in Y_{p} (\forall p>2)\big\}.$$
The subset $Z_{2}$ satisfies $(2k+1)Z_{2}\subset Z_{2}$ for any $k\in\mathbb{Z}$, $Z_{2}+Y_{2}\subset
Y_{2}$, $4Y_{2}+Z_{2}\subset Z_{2}$, $2Z_{2}+Z_{2}\subset Z_{2}$, and $2Y_{2}\cap Z_{2}=\emptyset.$
\end{theorem}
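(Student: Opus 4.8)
The plan is to work entirely inside the finite abelian group $X^{\ast}_{0}(A)$ and exploit its decomposition $X^{\ast}_{0}(A)=\bigoplus_{p}R_{0,p}$ into $p$-primary components. The key observation is that the projection $e_{p}\colon X^{\ast}_{0}(A)\to R_{0,p}$ onto the $p$-primary part is given by multiplication by a single integer $k_{p}$ (chosen by the Chinese Remainder Theorem so that $k_{p}\equiv 1$ modulo the exponent $p^{a_{p}}$ of $R_{0,p}$ and $k_{p}\equiv 0$ modulo the exponent of the complementary part), and that this $k_{p}$ is \emph{odd} precisely when $p=2$ and \emph{even} when $p$ is odd. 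This parity matches exactly the closure relations in Proposition \ref{P:strip8}: $R_{0,\alpha}$ is stable under all integer multiples, whereas $Z_{0,\alpha}$ is only known to be stable under odd multiples. Consequently $e_{p}(R_{0,\alpha})\subset R_{0,\alpha}$ for every $p$, while $e_{2}(Z_{0,\alpha})\subset Z_{0,\alpha}$. I therefore set $Y_{p}=R_{0,\alpha}\cap R_{0,p}=e_{p}(R_{0,\alpha})$ and $Z_{2}=Z_{0,\alpha}\cap R_{0,2}=e_{2}(Z_{0,\alpha})$.

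First I would treat $R_{0,\alpha}$. For odd $p$ the set $Y_{p}$ is a subgroup: it contains $0$ and is closed under integer multiplication, hence under negation, and for $\mu,\nu\in Y_{p}$ one writes $\mu=2(k\mu)$ with $2k\equiv 1$ modulo the exponent of $R_{0,p}$ (possible as $p$ is odd), so that $2(k\mu)+\nu=\mu+\nu$ lies in $R_{0,\alpha}$ by $2R_{0,\alpha}+R_{0,\alpha}\subset R_{0,\alpha}$, and being in $R_{0,p}$ it lies in $Y_{p}$. For $p=2$ only the weaker relations $kY_{2}\subset Y_{2}$ and $2Y_{2}+Y_{2}\subset Y_{2}$ survive, which is all that is claimed. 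The inclusion $R_{0,\alpha}\subset\{\sum_{p}\lambda_{p}:\lambda_{p}\in Y_{p}\}$ is immediate from $\lambda=\sum_{p}e_{p}(\lambda)$. For the reverse inclusion I build up a general element one primary component at a time: to add an odd component $\lambda_{q}$ to an element $\mu$ already known to lie in $R_{0,\alpha}$, pick $a=k\lambda_{q}\in Y_{q}$ with $2a=\lambda_{q}$ and use $2a+\mu=\lambda_{q}+\mu\in R_{0,\alpha}$; to adjoin the $2$-component $\lambda_{2}$ to an odd sum $\mu$, instead double $\mu$ by choosing $a=k\mu\in R_{0,\alpha}$ with $2a=\mu$ (now $2$ is invertible on the odd part) so that $2a+\lambda_{2}=\mu+\lambda_{2}\in R_{0,\alpha}$. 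This yields $R_{0,\alpha}=\{\sum_{p}\lambda_{p}:\lambda_{p}\in Y_{p}\}$.

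The argument for $Z_{0,\alpha}$ runs in parallel, with odd multiples replacing arbitrary integer multiples and $4$ replacing $2$. Since $Z_{0,\alpha}\subset R_{0,\alpha}$, the odd-prime components of $Z_{0,\alpha}$ already lie in the $Y_{p}$, so only the $2$-slot changes; the description $Z_{0,\alpha}=\{\sum_{p}\lambda_{p}:\lambda_{2}\in Z_{2},\ \lambda_{p}\in Y_{p}\ (p>2)\}$ follows by the same one-component-at-a-time reattachment, using $4R_{0,\alpha}+Z_{0,\alpha}\subset Z_{0,\alpha}$ with $4$ invertible on the odd part to reattach the odd part (which lies in $R_{0,\alpha}$ by the previous paragraph). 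The listed closure properties $(2k+1)Z_{2}\subset Z_{2}$, $Z_{2}+Y_{2}\subset Y_{2}$, $4Y_{2}+Z_{2}\subset Z_{2}$ and $2Z_{2}+Z_{2}\subset Z_{2}$ are then read off directly from Proposition \ref{P:strip8} by intersecting the corresponding relations with $R_{0,2}$.

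The one step that is not pure arithmetic — and the part I expect to be the main obstacle — is the disjointness $2Y_{2}\cap Z_{2}=\emptyset$, which cannot follow from the additive relations alone (those are all satisfied by $Z_{2}=Y_{2}$, for instance). Here I return to the root-theoretic definitions. If some $y\in Y_{2}$ had $2y\in Z_{2}$, then writing $y=\beta_{1}-\alpha$ with $\beta_{1}\in R(G,A)$, $\beta_{1}|_{A^{0}}=\alpha|_{A^{0}}$, and $2y=\beta_{2}-2\alpha$ with $\beta_{2}\in R(G,A)$, $\beta_{2}|_{A^{0}}=2\alpha|_{A^{0}}$, one gets $\beta_{2}=2\beta_{1}$. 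But $\beta_{1}$ is an infinite root, and by Lemma \ref{L:infinity3} no multiple $k\beta_{1}$ with $k\neq 0,\pm 1$ is a root; in particular $2\beta_{1}\notin R(G,A)$, contradicting $\beta_{2}=2\beta_{1}\in R(G,A)$. This establishes $2Y_{2}\cap Z_{2}=\emptyset$ and completes the proof.
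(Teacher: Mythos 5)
Your proposal is correct and follows essentially the same route as the paper: primary decomposition of $X^{\ast}_{0}(A)$ with the projections realized as integer multiplications of the right parity, $Y_{p}$ and $Z_{2}$ defined as the projected images, the closure relations imported from Proposition \ref{P:strip8}, and the disjointness $2Y_{2}\cap Z_{2}=\emptyset$ deduced from Lemma \ref{L:infinity3} via the observation that $2R_{0,\alpha}\cap Z_{0,\alpha}=\emptyset$. Your write-up is in fact more detailed than the paper's (which leaves the one-component-at-a-time reattachment and the subgroup property of $Y_{p}$ for odd $p$ implicit), but the underlying argument is identical.
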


\begin{proof}
For any $\lambda=\sum_{p}\lambda_{p}\in R_{0,\alpha}$. Due to $kR_{0,\alpha}\subset R_{0,\alpha}$ for any
$k\in\mathbb{Z}$, we get $\lambda_{p}\in R_{0,\alpha}$ for any prime $p$. Write $Y_{p}$ for the image of
projection of $R_{0,\alpha}$ to $R_{0,p}$. Then, $$R_{0,\alpha}\subset\big\{\sum_{p}\lambda_{p}:\lambda_{p}
\in Y_{p}, \forall p\geq 2\big\},$$ and $Y_{p}\subset R_{0,\alpha}$ for any $p>2$. Using $2R_{0,\alpha}+
R_{0,\alpha}\subset R_{0,\alpha}$ and $Y_{p}\subset R_{0,\alpha}$ ($p>2$), we get $$R_{0,\alpha}=
\big\{\sum_{p}\lambda_{p}:\lambda_{p}\in Y_{p},\forall p\geq 2\big\}.$$

For any $\mu=\sum_{p}\mu_{p}\in Z_{0,\alpha}$. Due to $Z_{0,\alpha}\subset R_{0,\alpha}$, we have $\mu_{p}
\in Y_{p}$ for any prime $p$. Write $Z_{2}$ for the image of projection of $Z_{0,\alpha}$ to $R_{2,p}$.
Then, $$Z_{0,\alpha}\subset\big\{\sum_{p}\lambda_{p}:\lambda_{2}\in Z_{2},\ \lambda_{p}\in Y_{p}
(\forall p>2)\big\}.$$ Using $4R_{0,\alpha}+Z_{0,\alpha}\subset Z_{0,\alpha}$ and $Y_{p}\subset
R_{0,\alpha}$ ($p>2$), we get $$Z_{0,\alpha}=\big\{\sum_{p}\lambda_{p}:\lambda_{2}\in Z_{2},
\ \lambda_{p}\in Y_{p} (\forall p>2)\big\}.$$

The set $Y_{p}$ ($p>2$) is a group is due to $2R_{0,\alpha}+R_{0,\alpha}\subset R_{0,\alpha}$. The
conditions for $Y_{2},Z_{2}$ follow from that for $R_{0,\alpha},Z_{0,\alpha}$ in Proposition \ref{P:strip9}.

Finally, Lemma \ref{L:infinity3} indicates that $2R_{0,\alpha}\cap Z_{0,\alpha}=\emptyset$, which implies
that $2Y_{2}\cap Z_{2}=\emptyset.$
\end{proof}

\begin{question}\label{Q:Y-Z}
Whether the classification of $(Y_2,Z_2)$ using conditions in Theorem \ref{T:strip2} and the precise list
of all possible $(Y_2,Z_2)$ using the classification in \cite{Yu-maximalE} and the reduction in Subsection
\ref{SS:ast-semisimple} give the same list of pairs $(Y_{2},Z_{2})$.
\end{question}

\smallskip

In case $A/A^{0}$ is cyclic, the following proposition indicates that the strips are quite restrictive.
\begin{proposition}\label{P:strip9}
Assume $A/A^{0}$ is cyclic. For any $\alpha\in R(G,A)-R_{0}(G,A)$, $R_{0,\alpha}$ is a cyclic group.
Furthermore if $R_{2,\alpha}\neq\emptyset$, then there exists $\lambda\in X^{\ast}_{0}(A)$ of even degree
such that $$R_{0,\alpha}=\{k\lambda:k\in\mathbb{Z}\}\textrm{ and }Z_{0,\alpha}=\{(2k+1)\lambda:k\in
\mathbb{Z}\}.$$
\end{proposition}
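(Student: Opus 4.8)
The plan is to deduce everything from the structural description in Theorem \ref{T:strip2}, exploiting that $A/A^{0}$ cyclic forces $X^{\ast}_{0}(A)=X^{\ast}(A/A^{0})$ to be a finite cyclic group. Consequently each $p$-torsion subgroup $R_{0,p}$ is a cyclic $p$-group, so the lattice of its subgroups is a chain. Theorem \ref{T:strip2} writes $R_{0,\alpha}=\{\sum_{p}\lambda_{p}:\lambda_{p}\in Y_{p}\}$, where for $p>2$ the piece $Y_{p}$ is already a subgroup of $R_{0,p}$, hence cyclic; the only piece that is a priori merely a subset is $Y_{2}\subseteq R_{0,2}$, subject to $kY_{2}\subseteq Y_{2}$ for all $k\in\mathbb{Z}$ and $2Y_{2}+Y_{2}\subseteq Y_{2}$.

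First I would show $Y_{2}$ is a cyclic subgroup. The condition $kY_{2}\subseteq Y_{2}$ means that for each $y\in Y_{2}$ the cyclic group $\mathbb{Z}y=\langle y\rangle$ is contained in $Y_{2}$, so $Y_{2}=\bigcup_{y\in Y_{2}}\langle y\rangle$. Since the $\langle y\rangle$ are subgroups of the cyclic $2$-group $R_{0,2}$, they form a chain under inclusion, so this finite union equals its largest member, a single subgroup. Hence $Y_{2}$ is cyclic. Then $R_{0,\alpha}=\bigoplus_{p}Y_{p}$ is a direct sum of cyclic groups of pairwise coprime prime-power orders, so it is cyclic by the Chinese Remainder Theorem; this proves the first assertion, and I fix a generator $\lambda$.

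For the furthermore part I would analyse the $2$-part. Assuming $R_{2,\alpha}\neq\emptyset$, Theorem \ref{T:strip2} gives a nonempty $Z_{2}\subseteq R_{0,2}$ with $Z_{2}+Y_{2}\subseteq Y_{2}$, $(2k+1)Z_{2}\subseteq Z_{2}$, $4Y_{2}+Z_{2}\subseteq Z_{2}$, $2Y_{2}\cap Z_{2}=\emptyset$, and $Z_{0,\alpha}=\{\sum_{p}\lambda_{p}:\lambda_{2}\in Z_{2},\ \lambda_{p}\in Y_{p}\ (p>2)\}$. First $Y_{2}\neq 0$: otherwise $Z_{2}\subseteq\{0\}=2Y_{2}$ would contradict $2Y_{2}\cap Z_{2}=\emptyset$; so $Y_{2}=\langle\mu\rangle$ has even order $2^{a}$ with $a\geq 1$. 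The condition $2Y_{2}\cap Z_{2}=\emptyset$ forces every element of $Z_{2}$ to be an odd multiple of $\mu$; picking $z_{0}\in Z_{2}$, it is then a generator of $Y_{2}$, and $(2k+1)z_{0}\in Z_{2}$ sweeps out all odd multiples of $\mu$, so $Z_{2}$ is exactly $Y_{2}\setminus 2Y_{2}$. Taking generators $\nu_{p}$ of $Y_{p}$ for $p>2$ and setting $\lambda=\mu+\sum_{p>2}\nu_{p}$, which has even order, gives $R_{0,\alpha}=\mathbb{Z}\lambda$; and because $2$ is invertible modulo each odd prime power, as $k$ ranges over $\mathbb{Z}$ the residue $2k+1$ runs over all residues modulo $\prod_{p>2}p^{\,b_{p}}$ while staying odd modulo $2^{a}$, so $(2k+1)\lambda$ runs over exactly $Z_{2}\oplus\bigoplus_{p>2}Y_{p}=Z_{0,\alpha}$.

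The main obstacle is the prime $2$: for odd primes the pieces $Y_{p}$ are handed to us as honest subgroups, whereas $Y_{2}$ and $Z_{2}$ are only subsets constrained by congruence-type closure conditions, and the content of the argument is that in a cyclic $2$-group these conditions rigidify $Y_{2}$ into a subgroup and $Z_{2}$ into its nontrivial odd coset $Y_{2}\setminus 2Y_{2}$. The final bookkeeping reconciling the odd multiples of the global generator $\lambda$ with the primary decomposition of $Z_{0,\alpha}$ is then a routine Chinese Remainder count, which I would not spell out in detail.
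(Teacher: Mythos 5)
Your proposal is correct and follows essentially the same route as the paper: both reduce to Theorem \ref{T:strip2}, use that $R_{0,2}$ is a cyclic $2$-group (so its subgroups form a chain, equivalently the element of largest order generates) to rigidify $Y_2$ into a subgroup, and then pin down $Z_2$ as the odd coset via $2Y_2\cap Z_2=\emptyset$ and $(2k+1)Z_2\subset Z_2$. The only difference is that you spell out the $Y_2\neq 0$ observation and the final Chinese Remainder bookkeeping, which the paper leaves implicit.
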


\begin{proof}
For the first statement, by Theorem \ref{T:strip2} it suffices to show that $Y_{2}$ is a group. Choose
$\lambda\in Y_{2}$ of largest order. Then, $\mathbb{Z}\lambda\subset Y_{2}$ by Proposition
\ref{P:strip8}. As $A/A^{0}$ is cyclic, $\lambda$ is of largest order in $Y_{2}$ implies that $Y_{2}
\subset\mathbb{Z}\lambda$. Thus, $Y_{2}=\mathbb{Z}\lambda$ is a group.

For the second statement, again by Theorem \ref{T:strip2} it suffices to consider $Y_{2}$ and $Z_{2}$.
As $A/A^{0}$ is a cyclic group, so is $R_{0,2}$. By Proposition \ref{P:strip8}, we have $Z_{2}+Y_{2}
\subset Y_{2}$, $4Y_{2}+Z_{2}\subset Z_{2}$, $2Z_{2}+Z_{2}\subset Z_{2}$, and $2Y_{2}\cap Z_{2}=
\emptyset.$ These conditions imply that $Z_{2}=\{(2k+1)\lambda:k\in\mathbb{Z}\}$.
\end{proof}


\subsection{Relation between strips and FRD}

\begin{proposition}\label{P:strip-FRD1}
For each infinite root $\alpha$, we have $R_{0,\alpha}\subset R_{0}(G,A)\cup\{0\}$.
\end{proposition}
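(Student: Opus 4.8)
The plan is to reduce the statement to a single nonvanishing claim about a root space, and then settle that claim using the $\mathfrak{sl}_2$-theory attached to $\alpha$. First I would note that every element of $R_{0,\alpha}$ has the form $\beta-\alpha$ with $\beta\in R(G,A)$ and $\beta|_{A^{0}}=\alpha|_{A^{0}}$, so the restriction $(\beta-\alpha)|_{A^{0}}$ vanishes identically. Hence $\beta-\alpha$ is automatically either $0$ (when $\beta=\alpha$) or a finite \emph{character}; the only thing left in question is whether a nonzero $\beta-\alpha$ is an actual root, i.e.\ whether $\frg_{\beta-\alpha}\neq 0$. Thus the whole statement reduces to the claim: if $\beta\in R(G,A)-R_{0}(G,A)$ satisfies $\beta|_{A^{0}}=\alpha|_{A^{0}}$ and $\beta\neq\alpha$, then $\frg_{\beta-\alpha}\neq 0$, which forces $\beta-\alpha\in R_{0}(G,A)$.

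Next I would bring in the $\mathfrak{sl}_2$-triple of $\alpha$. By Lemma \ref{L:infinity1}, $(\frg^{[\alpha]})_{der}\cong\mathfrak{sl}_{2}(\mathbb{C})$, spanned by a root vector $E\in\frg_{\alpha}$, a root vector $F\in\frg_{-\alpha}$ (nonzero since $-\alpha$ is a root by Lemma \ref{L:infinity3}), and $H=[E,F]=d\check{\alpha}$. For any root $\gamma$, a nonzero vector of $\frg_{\gamma}$ is an $\ad(H)$-eigenvector of eigenvalue $\gamma(\check{\alpha})$, and the lowering operator $F$ carries $\frg_{\gamma}$ into $[\frg_{-\alpha},\frg_{\gamma}]\subset\frg_{\gamma-\alpha}$. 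The key computation is the $\ad(H)$-weight of $\frg_{\beta}$: by Lemma \ref{L:infinity2}, $\beta(\check{\alpha})=\frac{2(\beta,\alpha)}{(\alpha,\alpha)}$, and since the inner product on $X^{\ast}(A)$ factors through restriction to $A^{0}$, the hypothesis $\beta|_{A^{0}}=\alpha|_{A^{0}}$ gives $(\beta,\alpha)=(\alpha|_{A^{0}},\alpha|_{A^{0}})=(\alpha,\alpha)$, whence $\beta(\check{\alpha})=2$.

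With the weight pinned to $2$, the conclusion follows from elementary $\mathfrak{sl}_2$-representation theory applied to $\frg$ as a module over $(\frg^{[\alpha]})_{der}$. A weight vector of strictly positive weight cannot be a lowest-weight vector of any irreducible summand, since the lowest weight of an irreducible summand is $\leq 0$; hence the lowering operator $F$ does not annihilate it. Concretely, taking a nonzero $v\in\frg_{\beta}$, which has $\ad(H)$-weight $2>0$, I get $0\neq F\cdot v\in\frg_{\beta-\alpha}$, so $\frg_{\beta-\alpha}\neq 0$ and $\beta-\alpha\in R_{0}(G,A)$. Together with the trivial case $\beta=\alpha$, this gives $R_{0,\alpha}\subset R_{0}(G,A)\cup\{0\}$. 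I expect the only genuinely substantive step to be the weight identity $\beta(\check{\alpha})=2$; the place to be careful is the identification $H=d\check{\alpha}$ and the verification that the $\ad(H)$-weight of $\frg_{\gamma}$ is exactly $\gamma(\check{\alpha})$, which is what makes Lemma \ref{L:infinity2} directly applicable. Once that is in place the lowering-operator argument is immediate.
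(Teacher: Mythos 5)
Your proof is correct and follows essentially the same route as the paper: the paper deduces the statement in one line from Lemma \ref{L:strip1} (applied to $\alpha'\in R_{1,\alpha}$ and $-\alpha$, noting $(\alpha'|_{A^{0}},-\alpha|_{A^{0}})<0$), and Lemma \ref{L:strip1} is itself proved by exactly the $\mathfrak{sl}_{2}$-representation-theory argument you give. You have simply inlined that lemma's proof, with the sharper observation that the relevant $\ad(H)$-weight equals $2$ in this special case.
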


\begin{proof}
As $(\alpha'_{A^{0}},-\alpha|_{A^{0}})<0$ for any $\alpha'\in R_{1,\alpha}$, by Lemma \ref{L:strip1} we get
$$R_{0,\alpha}=R_{1,\alpha}+(-\alpha)\subset R_{0}(G,A)\sqcup\{0\}.$$
\end{proof}

By Proposition \ref{P:strip-FRD1}, we have $$|R_{0,\alpha}|\leq |R_{0}(G,A)|+1\leq |X^{\ast}(A/A^{0})|=
|A/A^{0}|.$$

\begin{theorem}\label{T:strip-FRD1}
Suppose $\frg=\frg_{\infty}$ and $R'(G,A)$ is an irreducible root system which is simply-laced, or is of
type $\C_{n}$ ($n\geq 3$), $\F_4$ or $\G_2$. Then $R_{0,\alpha}=R_{0}(G,A)\cup\{0\}$ for any short root
$\alpha\in R(G,A)-R_{0}(G,A)$.
\end{theorem}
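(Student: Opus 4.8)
The plan is to prove the two inclusions of $R_{0,\alpha}=R_{0}(G,A)\cup\{0\}$ separately. The inclusion $R_{0,\alpha}\subset R_{0}(G,A)\cup\{0\}$ is exactly Proposition \ref{P:strip-FRD1}, and $0\in R_{0,\alpha}$ always holds (take $\beta=\alpha$), so the real content is the reverse inclusion $R_{0}(G,A)\subset R_{0,\alpha}$ for short $\alpha$. The geometric observation driving everything is that in each of the listed types the \emph{short} roots form a simply-laced irreducible sub-root system $R'_{s}\subset R'(G,A)$ of rank $\geq 2$: it is $R'(G,A)$ itself when $R'(G,A)$ is simply-laced of rank $\geq 2$, and it is $\D_{n}$, $\D_{4}$, $\A_{2}$ respectively for $\C_{n}$ ($n\geq 3$), $\F_{4}$, $\G_{2}$. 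The hypothesis $n\geq 3$ is precisely what makes the short system $\D_{n}$ irreducible, since $\D_{2}=\A_{1}\times\A_{1}$ is reducible.

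First I would apply Theorem \ref{T:strip1} to the subsystem $R'_{s}$. This gives that $R_{0,\alpha}$ is one and the same subgroup $Y\subset X^{\ast}_{0}(A)$ for every short infinite root $\alpha$. Next, using $\frg=\frg_{\infty}$, I would record that every finite root lies in some strip: comparing the degree-zero parts of the decompositions $\frg=\frg'_{0}\oplus\bigoplus_{\alpha'}\frg'_{\alpha'}$ and $\frg_{\infty}=(\bigoplus_{\alpha'}\frg'_{\alpha'})\oplus\sum_{\alpha'}[\frg'_{\alpha'},\frg'_{-\alpha'}]$ yields $\frg'_{0}=\sum_{\delta'}[\frg'_{\delta'},\frg'_{-\delta'}]$. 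Hence any finite root $\lambda$ has $\frg_{\lambda}\subset\sum_{\delta'}[\frg'_{\delta'},\frg'_{-\delta'}]$, so $\lambda=\beta-\gamma$ for two infinite roots $\beta,\gamma$ with $\beta|_{A^{0}}=\gamma|_{A^{0}}$, i.e.\ $\lambda\in R_{0,\gamma}$. Thus $R_{0}(G,A)\cup\{0\}=\bigcup_{\delta}R_{0,\delta}$, and it remains only to prove $R_{0,\delta}\subset Y$ for every infinite root $\delta$.

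For short $\delta$ this is the definition of $Y$, so the one remaining case is $\delta$ long. Here I would invoke the additivity Lemma \ref{L:strip3}: choosing a short root $\alpha$ with $(\alpha|_{A^{0}},\delta|_{A^{0}})<0$ and with $(\alpha+\delta)|_{A^{0}}$ short (such $\alpha$ exists in each listed type, every long root being lowerable to a short root by adding a suitable short root, e.g.\ $2e_{1}+(-e_{1}+e_{2})=e_{1}+e_{2}$ in $\C_{n}$), Lemma \ref{L:strip3} shows $\alpha+\delta\in R(G,A)-R_{0}(G,A)$ is short and $R_{0,\alpha}+R_{0,\delta}\subset R_{0,\alpha+\delta}$, that is $Y+R_{0,\delta}\subset Y$. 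Since $0\in Y$, this forces $R_{0,\delta}\subset Y$. Combining with the previous paragraph, every finite root lies in $Y=R_{0,\alpha}$, which together with Proposition \ref{P:strip-FRD1} gives the desired equality.

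The step I expect to be the main obstacle is the reduction of the long-strip case to the short one; the clean resolution above rests on the elementary but type-by-type fact that a long root always sits at the top of a short-root string, so that Lemma \ref{L:strip3} transports $R_{0,\delta}$ into the short subgroup $Y$. A secondary point needing care is the purely simply-laced rank-one case $R'(G,A)=\A_{1}$, to which Theorem \ref{T:strip1} does not apply; there I would instead use the rank-one analysis of Proposition \ref{P:strip9} to pin down $R_{0,\alpha}$ directly, or note that it falls outside the standing rank $\geq 2$ hypothesis on the short system.
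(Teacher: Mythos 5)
Your proposal is correct and follows essentially the same route as the paper's proof: reduce to the simply-laced rank $\geq 2$ subsystem of short roots via Theorem \ref{T:strip1}, use $\frg=\frg_{\infty}$ to write every finite root as a difference $\beta-\gamma$ of infinite roots with $\beta|_{A^{0}}=\gamma|_{A^{0}}$, and then lower long roots to short ones. The only (harmless) variation is in the long-root step, where you invoke the strip additivity of Lemma \ref{L:strip3} to get $Y+R_{0,\delta}\subset Y$, whereas the paper subtracts a common short root from $\beta$ and $\gamma$ so the difference lands in a short strip; your flagging of the $\A_{1}$ case is also apt, since the paper's proof silently assumes the short subsystem has rank $\geq 2$.
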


\begin{proof}
Let $R'$ be the set of short roots in $R'(G,A)$. Under the assumption, $R'$ is a simply-laced irreducible
root system with rank $\geq 2$. By Theorem \ref{T:strip1}, $R_{0,\alpha}$ ($\alpha\in R(G,A)-R_{0}(G,A)$,
$\alpha|_{A^{0}}\in R'$) is constant, and it is a subgroup of $X^{\ast}(A/A^{0})$. Let it be denotes by
$R_{0}$. Since $\frg=\frg_{\infty}$, we have $$(\bigoplus_{\gamma\in R_{0}(G,A)}\frg_{\gamma})\oplus\frg_{0}
\subset\sum_{\alpha,\beta\in R(G,A)-R_{0}(G,A),\alpha|_{A^{0}}=\beta|_{A^{0}}}[\frg_{\alpha},\frg_{-\beta}].$$
When $R'(G,A)$ is simply-laced, the weight of any nonzero element in $[\frg_{\alpha},\frg_{-\beta}]$ is
$\alpha-\beta\in R_{0,\beta}=R_{0}$. Thus, $R_{0}(G,A)\subset R_{0}-\{0\}$. When $R'$ is not simply-laced,
if $\beta$ is a short root, then we also have $\alpha-\beta\in R_{0,\beta}\cup\{0\}=R_{0}\cup\{0\}$. If
$\beta$ is a long root, choose a short root $\alpha_1\in R(G,A)-R_{0}(G,A)$ with $(\alpha_1,\alpha)>0$.
Then, $\langle\alpha,\alpha_1\rangle=2$ and $\alpha_2=\alpha-\alpha_1\in R(G,A)-R_{0}(G,A)$. Set $\beta_1
=\alpha_1$. Similarly we have $\beta_2=\beta-\beta_1\in R(G,A)-R_{0}(G,A)$. All of $\alpha_1,\alpha_2,
\beta_1,\beta_2$ are short roots. Thus, $$\alpha-\beta=(\alpha_1+\alpha_2)-(\beta_1+\beta_2)=\alpha_2-
\beta_2\in R_{0,\beta_2}=R_{0}.$$ Therefore, $R_{0}(G,A)\subset R_{0}-\{0\}.$ By Proposition
\ref{P:strip-FRD1}, $R_{0}\subset R_{0}(G,A)\cup\{0\}$. Thus, $R_{0}=R_{0}(G,A)\cup\{0\}$.
\end{proof}

\begin{proposition}\label{P:strip-FRD2}
For an infinite root root $\alpha$ and a finite character $\lambda$ of order $n$, assume that $\alpha+\lambda$
is a root. Then, $\dim\frg_{[\lambda]}\geq n-1$ and $\frac{n}{\delta}$ divides $|R^{\vee}(\lambda)|,$ where
$\delta=2$ is $n$ is even and $\check{\alpha}(-1)=1$, and $\delta=1$ otherwise.
\end{proposition}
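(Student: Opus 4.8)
The plan is to prove the two assertions separately: the dimension bound from the $\mathfrak{sl}_2$-machinery of infinite roots, and the divisibility by exhibiting an explicit cyclic subgroup of $R^{\vee}(\lambda)$ of order $\tfrac{n}{\delta}$, namely the one generated by $\check\alpha(e^{2\pi i/n})$.

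For the bound $\dim\frg_{[\lambda]}\ge n-1$, I would first note that since $\alpha+\lambda\in R(G,A)$ and $\lambda\in X^{\ast}_{0}(A)$, Lemma \ref{L:strip6} gives $\alpha+k\lambda\in R(G,A)$ for every $k\in\mathbb{Z}$, and each of these is an infinite root because it restricts to $\alpha|_{A^{0}}\ne 0$ on $A^{0}$. For each $j$ with $1\le j\le n-1$ I would apply Lemma \ref{L:strip1} to the pair $\alpha$ and $-(\alpha+j\lambda)$: their $A^{0}$-restrictions pair to $-(\alpha|_{A^{0}},\alpha|_{A^{0}})<0$, so $[\frg_{\alpha},\frg_{-(\alpha+j\lambda)}]\ne 0$, and this bracket lies in $\frg_{-j\lambda}$; hence $\frg_{-j\lambda}\ne 0$. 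As $j$ runs over $1,\dots,n-1$ the characters $-j\lambda$ (equivalently $(n-j)\lambda$) run over the $n-1$ distinct nonzero multiples of $\lambda$, each contributing at least one dimension, so $\dim\frg_{[\lambda]}\ge n-1$.

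For the divisibility, the point is to locate $\check\alpha(e^{2\pi i/n})$ inside $A\cap G_{[\lambda]}$. Setting $\alpha_{1}=\alpha$ and $\alpha_{2}=\alpha+\lambda$, these are infinite roots with equal $A^{0}$-restriction and $\alpha_{2}-\alpha_{1}=\lambda$ of order $n$, so I am precisely in the first situation of Lemma \ref{L:W0}, whose proof establishes \eqref{Eq4}, namely $\check\alpha(e^{2\pi i/n})\in A\cap G_{[\lambda]}$. Since $\lambda$ has order $n$ we have $A/\ker\lambda\cong\mathbb{Z}/n$, so the coroot group $R^{\vee}(\lambda)=\Hom(A/\ker\lambda,A\cap G_{[\lambda]})$ is canonically the $n$-torsion of the finite abelian group $A\cap G_{[\lambda]}$; thus the homomorphism sending a generator $y$ with $\lambda(y)=e^{2\pi i/n}$ to $\check\alpha(e^{2\pi i/n})$ is a well-defined element $\xi\in R^{\vee}(\lambda)$ whose order equals the order of $\check\alpha(e^{2\pi i/n})$.

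It remains to compute that order, for which I would use that the rank-one group $G_{[\alpha]}$ is either $\SU(2)$ or $\SO(3)$, as in the proof of Lemma \ref{L:reflection}. Correspondingly $\check\alpha:\U(1)\to A^{0}$ is injective (the $\SU(2)$ case, equivalent to $\check\alpha(-1)\ne 1$) or has kernel $\{\pm 1\}$ (the $\SO(3)$ case, equivalent to $\check\alpha(-1)=1$); writing $d\in\{1,2\}$ for the order of this kernel, the order of $\check\alpha(e^{2\pi i/n})$ is $n/\gcd(n,d)$. Checking the three cases shows $\gcd(n,d)=\delta$ with $\delta$ as in the statement, so $\langle\xi\rangle$ is a cyclic subgroup of $R^{\vee}(\lambda)$ of order $\tfrac{n}{\delta}$, and Lagrange's theorem yields $\tfrac{n}{\delta}\mid|R^{\vee}(\lambda)|$. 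The main obstacle is the membership $\check\alpha(e^{2\pi i/n})\in A\cap G_{[\lambda]}$, which is not formal: it rests on the explicit description of the two-generated pairs $(\frh_{0},B)$ (the $\mathfrak{su}(2)^{n}$ and $\mathfrak{su}(3)^{n/2}$ cases) carried out in Lemma \ref{L:W0}, while everything else is bookkeeping with orders of roots of unity.
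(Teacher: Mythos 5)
Your proposal is correct and follows essentially the same route as the paper: the dimension bound comes from Lemma \ref{L:strip6} (your use of Lemma \ref{L:strip1} on the pairs $\alpha$, $-(\alpha+j\lambda)$ is just an inlining of Proposition \ref{P:strip-FRD1}, which the paper cites directly), and the divisibility comes from recognizing $s_{\alpha}s_{\alpha+\lambda}$ as the transvection $s_{\lambda,\xi}$ with $\xi(a)=\check{\alpha}(\lambda(a))$, whose membership in $R^{\vee}(\lambda)$ rests on Eq.\ (\ref{Eq4}) from the proof of Lemma \ref{L:W0}, exactly as in the paper's appeal to the proof of Theorem \ref{T:W0}. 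Your explicit computation of the order of $\check{\alpha}(e^{2\pi i/n})$ via $\ker\check{\alpha}\subset\{\pm 1\}$ correctly fills in the step the paper leaves implicit.
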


\begin{proof}
By Lemma \ref{L:strip6}, we have $R_{1,\alpha}\supset\{\alpha+k\lambda:0\leq k\leq n-1\}$. By Proposition
\ref{P:strip-FRD1}, we have $k\lambda\in R_{0}(G,A)$ ($0\leq k\leq n-1$). Thus, $\dim\frg_{[\lambda]}\geq
n-1.$ By the proof of Theorem \ref{T:W0}, we have $s_{\alpha}s_{\alpha+\lambda}
\in R^{\vee}(\lambda)$. Note that $$s_{\alpha}s_{\alpha+\lambda}(a)=a\check{\alpha}(\lambda(a)),\ \forall a
\in A.$$ Thus, $\frac{n}{\delta}||R^{\vee}(\lambda)|$.
\end{proof}

When $\beta=2\alpha+\lambda$ is a root, $n$ is even and $\check{\alpha}(-1)=1$. In this case $$s_{\alpha}
s_{\alpha+\lambda}(a)=a\check{\beta}(\lambda(a)), \forall a\in A.$$ Thus, $n|R^{\vee}(\lambda)$.

\smallskip

We propose the following assumption.

\textbf{$R_{0}$-assumption}: $\frg=\frg_{\infty}$, $R'(G,A)$ is an irreducible root system, and $R_{0,\alpha}$
for short infinite roots $\alpha$ in $R(G,A)$ is constant, and is a subgroup of $X^{\ast}(A/A^{0})$, denoted
by $R_0$.

\begin{proposition}\label{P:strip-FRD3}
Under the $R_{0}$-assumption, $R_{0}(G,A)=R_{0}-\{0\}$, and $$\{a\in A:\lambda(a)=1,\forall\lambda\in R_{0}\}
=A^{0}.$$
\end{proposition}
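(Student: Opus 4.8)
The plan is to treat the two assertions in turn: the first by a direct bracket computation reusing the mechanism of Theorem \ref{T:strip-FRD1}, and the second by reducing it to the statement that the finite roots generate the character lattice $X^{\ast}_{0}(A)=X^{\ast}(A/A^{0})$.

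For the first assertion, the inclusion $R_{0}-\{0\}\subset R_{0}(G,A)$ is immediate from Proposition \ref{P:strip-FRD1}, since $R_{0}=R_{0,\alpha}\subset R_{0}(G,A)\cup\{0\}$ for a short infinite root $\alpha$. For the reverse inclusion I would argue exactly as in the proof of Theorem \ref{T:strip-FRD1}: because $\frg=\frg_{\infty}$, every finite root space $\frg_{\gamma}$ lies in $\sum_{\alpha'\in R'(G,A)}[\frg'_{\alpha'},\frg'_{-\alpha'}]$, so each finite root $\gamma$ can be written as $\gamma=\beta-\beta'$ with $\beta,\beta'$ infinite roots and $\beta|_{A^{0}}=\beta'|_{A^{0}}$, whence $\gamma\in R_{0,\beta'}$. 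Using irreducibility of $R'(G,A)$ to rewrite a long root $\beta'|_{A^{0}}$ as a sum of short roots (as in Theorem \ref{T:strip-FRD1}), one reduces to the case that $\beta'$ is short, where $R_{0,\beta'}=R_{0}$; thus $R_{0}(G,A)\subset R_{0}-\{0\}$ and the first equality follows.

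For the second assertion, write $P=\{a\in A:\lambda(a)=1,\ \forall\lambda\in R_{0}\}$. Since $R_{0}\subset X^{\ast}_{0}(A)$ consists of characters trivial on $A^{0}$, we have $A^{0}\subset P$ at once, and by Pontryagin duality for the finite group $A/A^{0}$ the equality $P=A^{0}$ is equivalent to $R_{0}=X^{\ast}_{0}(A)$ (recall $R_{0}$ is a subgroup and $R_{0}=R_{0}(G,A)\cup\{0\}$ by the first part). So it suffices to prove $X^{\ast}_{0}(A)\subset R_{0}$, and here I would combine two inputs. First, since $\frg_{0}$ is semisimple and $A\subset\Aut(\frg_{0})$ acts faithfully on $\frg$, the intersection $\bigcap_{\alpha\in R(G,A)}\ker\alpha$ is trivial, so by Lemma \ref{L:generator} the roots $R(G,A)$ generate $X^{\ast}(A)$. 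Second, I would invoke the canonical lift $R'$ of $R'(G,A)$ and the subgroup $B=\bigcap_{i}\ker\alpha_{i}$ introduced before Proposition \ref{P:W0}: restriction gives an isomorphism $R'\xrightarrow{\sim}R'(G,A)$, and since $A=A^{0}\cdot B$ a character trivial on $B$ is determined by its restriction to $A^{0}$, so restriction is injective on $\langle R'\rangle\subset X^{\ast}(A/B)$.

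Given $\mu\in X^{\ast}_{0}(A)$, the first input lets me write $\mu=\sum_{j}c_{j}\gamma_{j}+\sum_{i}d_{i}\alpha_{i}$ with $\gamma_{j}$ finite and $\alpha_{i}$ infinite roots. Replacing each $\alpha_{i}$ by its lift $\alpha''_{i}\in R'$ costs the element $\alpha_{i}-\alpha''_{i}\in R_{0,\alpha''_{i}}\subset R_{0}(G,A)\cup\{0\}=R_{0}$ (Proposition \ref{P:strip-FRD1} and the first part); restricting the resulting expression to $A^{0}$ kills the finite roots and the correction terms, leaving $\sum_{i}d_{i}\alpha''_{i}|_{A^{0}}=\mu|_{A^{0}}=0$. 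By the injectivity of restriction on $\langle R'\rangle$ this forces $\sum_{i}d_{i}\alpha''_{i}=0$, hence $\sum_{i}d_{i}\alpha_{i}\in R_{0}$, and therefore $\mu\in R_{0}$. This yields $X^{\ast}_{0}(A)=R_{0}$ and so $P=A^{0}$. The step I expect to be most delicate is controlling the $\mathbb{Z}$-linear relations among infinite roots modulo finite roots; the device that makes it clean is precisely the lift $R'$, on which restriction to $A^{0}$ is injective, so that every relation among the $\alpha''_{i}$ over $A^{0}$ is already a relation inside $\langle R'\rangle$.
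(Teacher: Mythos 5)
Your proposal is correct and follows essentially the same route as the paper: both parts rest on Proposition \ref{P:strip-FRD1}, the mechanism of Theorem \ref{T:strip-FRD1}, the fact that $R(G,A)$ generates $X^{\ast}(A)$ in the adjoint case (Lemma \ref{L:generator}), and the lift $R'$ with $R'\cup R_{0}$ generating $X^{\ast}(A)$. The only difference is cosmetic: the paper finishes by exhibiting a nontrivial element $a'$ annihilated by $R'\cup R_{0}$ and deriving a contradiction, while you dualize and show directly that $R_{0}=X^{\ast}_{0}(A)$ before applying Pontryagin duality.
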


\begin{proof}
From the proof of Theorem \ref{T:strip-FRD1}, one can show that $R_{0}(G,A)=R_{0}-\{0\}$.

Choose a simple system $\{\alpha'_{i}:1\leq i\leq l\}$ of $R'(G,A)$. For each $i$, choose $\alpha_{i}\in
R(G,A)$ with $\alpha_{i}|_{A^{0}}=\alpha'_{i}$. Let $R'$ be the sub-root system generated by $\alpha_1,
\dots,\alpha_{l}$. Then, $\alpha\mapsto\alpha|_{A^{0}}$ ($\alpha\in R'$) gives an isomorphism from $R'$
to $R'(G,A)$. As we assume $G$ is a of adjoint type. Thus, $$\{a\in A:\lambda(a)=1,\forall\lambda\in
R(G,A)\}=1.$$ Hence, $R(G,A)$ generates $X^{\ast}(A)$. By the above, $R'$ and $R_0$ generate $X^{\ast}(A)$.
Suppose $\{a\in A:\lambda(a)=1,\forall\lambda\in R_{0}\}\neq A^{0}.$ Then, there exists $a\in A-A^{0}$
such that $aA^{0}\subset\{a\in A:\lambda(a)=1,\forall\lambda\in R_{0}\}$. Furthermore, there exists
$a'\in aA^{0}$ such that $\alpha(a')=1$ for any $\alpha\in R'$. As $a\not\in A^{0}$, we have $a'\neq 1$.
Thus, $$\{a\in A:\lambda(a)=1,\forall\lambda\in R_{0}\}=A^{0}.$$
\end{proof}

Under the $R_{0}$-assumption, for each $\lambda\in R_{0}-\{0\}=R_{0}(G,A)$, set $$R^{\vee}_{0}(\lambda)=
\langle\{s_{\alpha}s_{\alpha+\lambda}:\alpha\in R'\}\rangle.$$ By Theorem \ref{T:W0}, $R^{\vee}_{0}(\lambda)
\subset R^{\vee}(\lambda)$. Write $W'_0$ for the subgroup of $W_{small}(G,A)$ generated by $R^{\vee}_{0}
(\lambda)$. Then, $$W'_{0}\subset W'\subset\Hom(A/A^{0},A^{0}),$$ and $$W_{tiny}=W'_{0}\rtimes W_{R'}.$$
There is a natural bi-additive map $$\phi:X^{\ast}_{0}(A)\times X_{\ast}(A^{0})\rightarrow\Hom(A/A^{0},
A^{0}).$$ Write $L'=\span_{\mathbb{Z}}\{\check{\alpha'}:\alpha'\in R'(G,A)\}$. Using the formula
$$s_{\alpha}s_{\alpha+\lambda}(a)=a\check{\alpha}(\lambda(a))(\forall a\in A),$$ we see that $W'_0$ could
be identified with $\phi(X^{\ast}_{0}(A)\otimes L')\subset\Hom(A/A^{0},A^{0})$. In general $W_{small}\neq
W_{1}$, which forces $W_{small}\neq W_{tiny}$. Hence, there may exist $\lambda\in R_{0}-\{0\}$ such that
$R^{\vee}_{0}(\lambda)\neq R^{\vee}(\lambda)$.

\begin{question}\label{Q:strip-FRD1}
Under the $R_{0}$-assumption, is $W'=W'_{0}$?
\end{question}

\begin{question}\label{Q:strip-FRD2}
Under the $R_{0}$-assumption, can one determine the twisted root system $(R(G,A),R^{\vee})$ and the Lie
algebra $\frg$ from the root system $R'(G,A)$ and the group $R_0$? Precisely, this consists of determining
$\dim\frg_{\lambda}$ ($\lambda\in R_{0}-\{0\}$), $R^{\vee}(\lambda)$ ($\lambda\in R_{0}-\{0\}$), $W_{small}$,
and Lie bracket relations between root spaces.
\end{question}

With the classification of $(\ast)$-subgroups in \cite{Yu-maximalE}, we have a precise list of $(\ast)$-subgroups
satisfying the $R_{0}$-assumption (cf. Table 1). By that we can verify Question \ref{Q:strip-FRD1}. However we
prefer a Lie theoretical proof. To do this, besides tools developed in this paper, we might need to discover more
structures of twisted root system associated to $(\ast)$-subgroups.

Assume $\frg$ is simple. From Table 1, we see that there is only one example where $A$ satisfies the
$R_0$-assumption but is not a maximal abelian subgroup. That happens for the $(\ast)$-subgroup $F_{18}$ in
$\Aut(\fre_7)$ as defined in \cite[Page259, Eq.(115)]{Yu-maximalE}.

\begin{question}\label{Q:Weyl}
Suppose $\frg_0$ is a compact simple Lie algebra and $A$ is a maximal abelian subgroup of $\Aut(\frg_0)$
with positive dimension. Is $W_{large}(G,A)=W_{small}(G,A)$?
\end{question}

We can verify Question \ref{Q:Weyl} with the classification of maximal abelian subgroups and the calculation
of their large Weyl groups in \cite{Yu-maximalE}. However, a Lie theoretical study is preferred.

\subsection{A List of the root systems $R'(G,A)$}

Now assume $\fru_0$ is a compact simple Lie algebra, and $A$ is a $(\ast)$-subgroup of $\Aut(\fru_0)$ of positive
dimension. We show how to {\it calculate the root system $R'(\Aut(\fru_0),A)$}.

When $A\subset\Int(\fru_0)$ ({\it the inner case}), write $G=\Int(\fru_0)$. By \cite[Lemma 2.3]{Yu-maximalE} we
have $A^{0}=Z(L)^{0}$, where $L=Z_{G}(A^{0})$ is a Levi subgroup of $G$. We may assume that the root system of
$\frl=Z_{\frg}(A^{0})$ has a simple system $\Pi'$ contained in a simple system $\Pi$ of $\frg$. Write $\Lambda,
\Lambda'$ for the root lattices of $\frg,\frl$ respectively. Then, we could identify the dual space of $\fra_0$
with the orthogonal complement of $V'=\Lambda'\otimes_{\mathbb{Z}}\mathbb{R}$ in $V=\Lambda\otimes_{\mathbb{Z}}
\mathbb{R}$, denoted by $V''$. Let $\phi: V\rightarrow V''$ be the projection map. Then, $\Pi''=\phi(\Pi-\Pi'')$
is a simple system of $R'(G,A)$. By Theorem \ref{T:R'-simple}, $R'(G,A)$ is an irreducible root system. By
calculating cusp products of elements in $\Pi''$ and checking whether the double of a short element in $\Pi''$
is contained in $R'(G,A)$ (this is used in distinguishing type $\BC_{n}$ from type $\B_{n}$), we identified
the root system $R'(G,A)$.

When $A\not\subset\Int(\fru_0)$ ({\it the outer case}), write $G=\Aut(\fru_0)$. By \cite[Page 247,Lemma 5.1]
{Yu-maximalE} and its analogue for any pair $(\fru_0,\tau)$ with $\fru_0$ a compact simple Lie algebra and
$\tau$ a pinned automorphism\footnotemark,\footnotetext{$\tau$ is called a pinned automorphism of $\fru_0$
if the induced automorphism of $\tau$ on $\frg$ stabilizes a tuple $(\frt,\Delta^{+},\{X_{\alpha}:\alpha\in
\Delta^{+}\})$. Particularly for each $\alpha\in\Delta^{+}$, $\tau(X_{\alpha})=X_{\alpha}$ whenever
$\tau(\alpha)=\alpha$. Here $\frt$ is a Cartan subalgebra of $\frg$, $\Delta^{+}$ is a positive system of
the root system $\Delta(\frg,\frt)$, and $X_{\alpha}$ is a root vector for the root $\alpha$. The proof of
\cite[Page 247, Lemma 5.1]{Yu-maximalE} is valid for any pinned automorphism.}
$\fra$ is conjugate to the center of a Levi subalgebra of the symmetric subalgebra $\frk=\frg^{\tau}$. We may
assume that $\fra$ is equal to the center of a Levi subalgebra $\frl'$ of $\frk=\frg^{\tau}$, and the root
system of $\frl'$ has a simple system $\Pi'$ contained in a simple system $\Pi$ of $\frk$. Then, a similar
way of calculation by studying the projection of $\Pi''=\Pi-\Pi'$ to the orthogonal complement of
$\mathbb{R}\cdot\Pi'$ in $\mathbb{R}\cdot\Pi$ as in the inner case applies to identify $R'(G,A)$.

\smallskip

The following lemma gives a way of calculating $|R_{0}(G,A)|$ and $|R_{1,\alpha}|$, $|R_{2,\alpha}|$ for any
$\alpha\in R(G,A)-R_{0}(G,A)$.

\begin{lemma}\label{L:alpha-dim}
Let $A$ be a $(\ast)$-subgroup of a compact Lie group $G$. Then $$\dim Z_{\frg}(A^{0})=\dim A+\sum_{\alpha
\in R_{0}(G,A)}\dim\frg_{\alpha}.$$ For any infinite root $\alpha$, we have $$\dim Z_{\frg}
(\ker\alpha|_{A^{0}})-\dim Z_{\frg}(A^{0})=2|R_{\alpha}|,$$ $$\dim Z_{\frg}(\ker 2\alpha|_{A^{0}})-
\dim Z_{\frg}(A^{0})=2|R_{2,\alpha}|.$$
\end{lemma}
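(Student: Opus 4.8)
The plan is to read off each centralizer as a sum of weight spaces and then count dimensions, using only the weight decomposition, the fact that $R'(G,A)$ is a root system, and that infinite root spaces are one-dimensional. First I would dispose of the first identity, which is immediate: since $A$ satisfies $(\ast)$ the zero weight space is $\frg_{0}=\fra$, so $\dim\frg_{0}=\dim A$; and an element of $\frg_{\lambda}$ is fixed by all of $A^{0}$ exactly when $\lambda|_{A^{0}}=0$, which for a root means $\lambda$ is a finite root. Hence
$$Z_{\frg}(A^{0})=\fra\oplus\bigoplus_{\alpha\in R_{0}(G,A)}\frg_{\alpha},$$
and taking dimensions yields the first formula.

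For the remaining two identities I would first isolate the one toric fact needed: if $T$ is a torus and $\chi\in X^{\ast}(T)$, then a character $\psi\in X^{\ast}(T)$ is trivial on $\ker\chi$ if and only if $\psi\in\mathbb{Z}\chi$. This is standard — for $\chi\neq 0$ the map $\chi$ induces an isomorphism $T/\ker\chi\cong\U(1)$, whose character group is generated by $\chi$ — and it is deliberately stated for the full kernel, since $\ker(\alpha|_{A^{0}})$ need not be connected. Applying it with $T=A^{0}$ (and noting that the restriction of any $\beta$ to the subgroup $\ker(\alpha|_{A^{0}})\subset A^{0}$ depends only on $\beta|_{A^{0}}$), a root $\beta$ satisfies $\frg_{\beta}\subset Z_{\frg}(\ker(\alpha|_{A^{0}}))$ exactly when $\beta|_{A^{0}}\in\mathbb{Z}(\alpha|_{A^{0}})$, and $\frg_{\beta}\subset Z_{\frg}(\ker(2\alpha|_{A^{0}}))$ exactly when $\beta|_{A^{0}}\in\mathbb{Z}(2\alpha|_{A^{0}})$. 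Thus each of the two centralizers is the sum of $Z_{\frg}(A^{0})$ together with the infinite root spaces whose $A^{0}$-projection is a prescribed set of nonzero multiples of $\alpha'=\alpha|_{A^{0}}$.

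It then remains to enumerate those multiples and count. By Theorem \ref{T:restrictedRS1}, $R'(G,A)$ is a root system in $X^{\ast}(A^{0})$, so the only nonzero integer multiples of $\alpha'$ that can be roots are $\pm\alpha'$ and, when present, $\pm2\alpha'$. For the first formula I would therefore observe that $\dim Z_{\frg}(\ker(\alpha|_{A^{0}}))-\dim Z_{\frg}(A^{0})$ is the total dimension of the $\frg_{\beta}$ with $\beta|_{A^{0}}\in\{\pm\alpha',\pm2\alpha'\}$; by Lemma \ref{L:infinity3} each such space is one-dimensional, and $\beta\mapsto-\beta$ is a bijection of $R(G,A)$ matching the $\alpha'$-fibre with the $(-\alpha')$-fibre and the $2\alpha'$-fibre with the $(-2\alpha')$-fibre, so these four fibres contribute $|R_{1,\alpha}|+|R_{1,\alpha}|+|R_{2,\alpha}|+|R_{2,\alpha}|=2|R_{\alpha}|$. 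The third identity is the same computation for $\chi=2\alpha'$, where the only admissible multiples are $\pm2\alpha'$, giving $2|R_{2,\alpha}|$.

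The only real point to get right is this enumeration step: checking that the root-system axioms for $R'(G,A)$ genuinely forbid any multiple beyond $\pm\alpha',\pm2\alpha'$, and handling the non-reduced ($\BC$-type) case correctly, where $2\alpha'$ may itself be a root and $R_{2,\alpha}$ nonempty while a half-multiple $\tfrac12\alpha'$, even if it is a root, does not lie in $\mathbb{Z}\alpha'$ and so contributes nothing. Everything past that is dimension bookkeeping and the $\beta\mapsto-\beta$ symmetry.
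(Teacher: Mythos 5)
Your proposal is correct and follows essentially the same route as the paper: decompose each centralizer as $\fra$ plus the root spaces $\frg_{\beta}$ it contains, characterize those $\beta$ (finite roots for $Z_{\frg}(A^{0})$, and additionally $\beta$ with $\beta|_{A^{0}}\in\{\pm\alpha',\pm2\alpha'\}$ resp.\ $\{\pm2\alpha'\}$ for the other two), and count using the one-dimensionality of infinite root spaces. The paper simply asserts these characterizations, whereas you justify them via the standard fact that characters of a torus trivial on $\ker\chi$ are exactly $\mathbb{Z}\chi$ together with the root-system axioms for $R'(G,A)$; this is a welcome filling-in of detail, not a different argument.
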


\begin{proof}
Each of $\dim Z_{\frg}(A^{0})$, $Z_{\frg}(\ker\alpha|_{A^{0}})$ and $Z_{\frg}(2\ker\alpha|_{A^{0}})$ is a
direct sum of $\fra$ and some root spaces. For a root $\beta\in R(G,A)$, $\frg_{\beta}\subset Z_{\frg}(A^{0})$
if and only if $\beta\in R_{0}(G,A)$; $\frg_{\beta}\subset Z_{\frg}(\ker\alpha|_{A^{0}})$ if and only if
$\beta\in R_{0}(G,A)\cup \pm{R_{\alpha}}$; $\frg_{\beta}\subset Z_{\frg}(\ker2\alpha|_{A^{0}})$ if and
only if $\beta\in R_{0}(G,A)\cup\pm R_{2,\alpha}$. From this, we get $$\dim Z_{\frg}(A^{0})=\dim A+
\sum_{\alpha\in R_{0}(G,A)}\dim\frg_{\alpha},$$
$$\dim Z_{\frg}(\ker\alpha|_{A^{0}})-\dim Z_{\frg}(A^{0})=2|R_{\alpha}|,$$
$$\dim Z_{\frg}(\ker 2\alpha|_{A^{0}})-\dim Z_{\frg}(A^{0})=2|R_{2,\alpha}|.$$
\end{proof}


\begin{proposition}\label{P:strip-FRD4}
Suppose $\frg=\frg_{\infty}$ and $R'(G,A)$ is of type $\B_{n}$ ($n\geq 3$) or $\BC_{n}$ ($n\geq 3$),
then $R_{0,\alpha}$ for long roots (in $\B_{n}$ case) or middle length roots (in $\BC_{n}$ case)
is constant and it is a subgroup of $X^{\ast}(A/A^{0})$.
\end{proposition}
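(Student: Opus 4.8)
The plan is to deduce the proposition as a direct instance of Theorem \ref{T:strip1}, which already yields the conclusion --- constancy of $R_{0,\alpha}$ together with the subgroup property --- for any simply-laced irreducible sub-root system of $R'(G,A)$ of rank at least two. The work therefore reduces to locating such a subsystem inside $\B_n$ and $\BC_n$. The observation I would record first is purely root-theoretic: in the standard realization of $\B_n$ in $\mathbb{R}^{n}$, the long roots are $\{\pm e_i \pm e_j : 1 \leq i < j \leq n\}$, and this set is a sub-root system of type $\D_n$; in $\BC_n$, the roots of the three lengths are $\{\pm e_i\}$, $\{\pm e_i \pm e_j\}$ and $\{\pm 2 e_i\}$, and the middle-length roots $\{\pm e_i \pm e_j\}$ again form a sub-root system of type $\D_n$. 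That these subsets are genuinely sub-root systems, and not merely subsets, follows because reflections preserve root length, so each such set is stable under the reflections $s_\beta$ attached to its own members.

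Writing $R'$ for this $\D_n$ subsystem in either case, I would then note that for $n \geq 3$ it is simply-laced, irreducible, and of rank $n \geq 3 \geq 2$, so it meets precisely the hypotheses of Theorem \ref{T:strip1}. Applying that theorem with this $R'$ shows at once that $R_{0,\alpha}$ is constant and is a subgroup of $X^{\ast}_{0}(A) = X^{\ast}(A/A^{0})$ as $\alpha$ ranges over all roots of $R(G,A)$ with $\alpha|_{A^{0}} \in R'$. In the $\B_n$ case these are exactly the long roots, and in the $\BC_n$ case exactly the middle-length roots, which is the statement to be proved.

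I do not expect a genuine obstacle here: the entire content is the recognition that the long (resp. middle-length) roots furnish a $\D_n$ to which Theorem \ref{T:strip1} applies, and the hypothesis $n \geq 3$ is exactly what is needed, since for $n = 2$ one has $\D_2 \cong \A_1 \times \A_1$, which is reducible and so lies outside the scope of Theorem \ref{T:strip1}. The only point deserving explicit (though routine) care is the verification that the fixed-length subset is closed under its own reflections and hence is a bona fide sub-root system; I would dispatch this by the explicit $\D_n$ description above. The standing hypothesis $\frg = \frg_{\infty}$ is not used directly in this argument beyond placing us in the setting where $R'(G,A)$ is an irreducible root system of the stated type.
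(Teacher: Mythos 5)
Your proposal is correct and is essentially identical to the paper's own proof: the paper likewise observes that the long roots (in the $\B_{n}$ case) or middle-length roots (in the $\BC_{n}$ case) form a sub-root system of type $\D_{n}$, which is simply-laced and irreducible of rank $\geq 3$, and then invokes Theorem \ref{T:strip1}. Your additional remarks on closure of the fixed-length subset under its own reflections and on why $n\geq 3$ is needed (since $\D_2\cong\A_1\times\A_1$ is reducible) are correct and merely make explicit what the paper leaves implicit.
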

\begin{proof}
Under the assumption, long roots in the $\B_{n}$ ($n\geq 3$) case or middle length roots in the $\BC_{n}$
($n\geq 3$) case is a root system of type $D_{n}$, which is simply-laced and of rank $\geq 3$. The
conclusion follows from Theorem \ref{T:strip1}.
\end{proof}

Let $R'_{0}$ denote the constant group $R_{0,\alpha}$ in Proposition \ref{P:strip-FRD4}.

\begin{proposition}\label{P:strip-FRD5}
In the $\B_{n}$ ($n\geq 3$) case, for any short root $\beta$, $R_{0,\beta}\supset R'_{0}$. In the $\BC_{n}$
($n\geq 3$) case, for any short root $\beta$, $R_{0,\beta}\supset R'_{0}$; for any long root $\beta$,
$R_{0,\beta}\subset R'_{0}$.
\end{proposition}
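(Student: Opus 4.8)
The plan is to deduce every containment from the additive inclusion of Lemma \ref{L:strip3}, namely $R_{0,\alpha}+R_{0,\delta}\subset R_{0,\alpha+\delta}$ whenever $(\alpha|_{A^0},\delta|_{A^0})<0$ and $\alpha|_{A^0}+\delta|_{A^0}\neq 0$, combined with two facts already at hand: that $R_{0,\gamma}=R'_0$ for \emph{every} lift $\gamma$ of a root in the $D_n$-subsystem (the long roots in the $\B_n$ case, the middle-length roots in the $\BC_n$ case) by Proposition \ref{P:strip-FRD4}, and that $0\in R_{0,\alpha}$ for every infinite root $\alpha$ (immediate from $R_{0,\alpha}=\{\beta-\alpha:\beta\in R_{1,\alpha}\}$ and $\alpha\in R_{1,\alpha}$). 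I normalise the inner product so that short roots $e_i$ satisfy $(e_i,e_i)=1$, whence $(e_i,e_j)=0$ for $i\neq j$, middle roots $e_i\pm e_j$ have square-length $2$, and (in the $\BC_n$ case) long roots $2e_i$ have square-length $4$. All decompositions used below exist because $n\geq 3$ provides a second index.

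First I would settle the long-root inclusion in the $\BC_n$ case. Fix a long root $\beta$ with $\beta|_{A^0}=2e_i$, pick $j\neq i$, and let $\delta$ be any lift of the middle root $-e_i-e_j$. Since $(2e_i,-e_i-e_j)=-2<0$ and $2e_i+(-e_i-e_j)=e_i-e_j\neq 0$, Lemma \ref{L:strip3} gives $\beta+\delta\in R(G,A)-R_0(G,A)$ with $(\beta+\delta)|_{A^0}=e_i-e_j$ a middle root, together with $R_{0,\beta}+R_{0,\delta}\subset R_{0,\beta+\delta}$. Both $\delta$ and $\beta+\delta$ are lifts of middle roots, so $R_{0,\delta}=R_{0,\beta+\delta}=R'_0$ by Proposition \ref{P:strip-FRD4}; hence $R_{0,\beta}+R'_0\subset R'_0$, and substituting $0\in R'_0$ yields $R_{0,\beta}\subset R'_0$.

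For the short-root inclusion (identical in the $\B_n$ and $\BC_n$ cases, reading ``$D_n$-root'' as ``long'' in $\B_n$ and ``middle'' in $\BC_n$) I would fix a short root $\beta$ with $\beta|_{A^0}=e_j$ and an arbitrary $\mu\in R'_0$, and aim to show $\beta+\mu\in R(G,A)$. Choose $i\neq j$ and a lift $\gamma$ of the $D_n$-root $e_i-e_j$. Since $\mu\in R'_0=R_{0,\gamma}$, by the definition of the strip $\gamma+\mu$ is an infinite root restricting to $e_i-e_j$. Applying Lemma \ref{L:strip3} to $\beta$ and $\gamma+\mu$, whose restrictions have inner product $(e_j,e_i-e_j)=-1$ and sum to $e_i\neq 0$, shows $\beta+\gamma+\mu\in R(G,A)-R_0(G,A)$ restricting to $e_i$; applying it once more to $\beta+\gamma+\mu$ and $-\gamma$, whose restrictions have inner product $(e_i,e_j-e_i)=-1$ and sum to $e_j\neq 0$, shows $\beta+\mu\in R(G,A)-R_0(G,A)$. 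As it restricts to $e_j=\beta|_{A^0}$, we obtain $\mu=(\beta+\mu)-\beta\in R_{0,\beta}$, and since $\mu\in R'_0$ was arbitrary, $R'_0\subset R_{0,\beta}$.

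I expect the short-root case to be the main obstacle. Unlike the $D_n$-subsystem, the short roots $\pm e_i$ form an orthogonal (reducible) $A_1$-type configuration rather than an irreducible simply-laced subsystem of rank $\geq 2$, so Theorem \ref{T:strip1} does not apply and $R_{0,\beta}$ is a priori neither constant nor a subgroup; the inclusion must be established for the \emph{exact} lift $\beta$. The device that circumvents this is the two-step ``there and back'' addition above: it injects the prescribed $\mu$ into $R_{0,\beta}$ by first carrying $\mu$ along a $D_n$-root strip (where Proposition \ref{P:strip-FRD4} guarantees it lives) up to a neighbouring short root, then returning along the same $D_n$-direction, so that the two $D_n$-root displacements cancel while $\mu$ survives. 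The only point demanding care is to keep each intermediate restriction nonzero so that Lemma \ref{L:strip3} applies at both steps, which is precisely where the hypothesis $n\geq 3$ (the existence of the auxiliary index $i$) enters.
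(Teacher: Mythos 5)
Your proof is correct and follows essentially the same strategy as the paper's: transporting elements between strips via the root-string lemma (Lemmas \ref{L:strip1}/\ref{L:strip3}) combined with the constancy of $R_{0,\gamma}$ on the $\D_{n}$-subsystem supplied by Proposition \ref{P:strip-FRD4}. The only differences are cosmetic --- the paper maps the whole strip $R_{1,\alpha}$ of a long root into $R_{1,\beta}$ at once by subtracting a single auxiliary short root $\gamma=\alpha-\beta$, whereas you carry each $\mu\in R'_{0}$ individually on a two-step round trip through an auxiliary index, and you write out the $\BC_{n}$ long-root case explicitly where the paper dismisses it as ``similar''.
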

\begin{proof}
We show the statement for short roots in the $\B_{n}$ ($n\geq 3$) case. The proof of the other two
statements is similar.

Suppose $R'(G,A)$ is of type $\B_{n}$ ($n\geq 3$). Let $\beta$ be a short infinite root. Choose a long
infinite root $\alpha$ with $(\alpha|_{A^{0}},\beta|_{A^{0}})>0$. Then, $\gamma=\alpha-\beta\in R(G,A)$
by Lemma \ref{L:strip1}. It is clear that $\gamma$ is a short infinite root. For any $\alpha'\in
R_{1,\alpha}$, due to $(\alpha'|_{A^{0}},\gamma|_{A^{0}})>0$, we have $\alpha'-\gamma\in R(G,A)$. It is
clear that $\alpha'-\gamma\in R_{1,\beta}$. Thus, $$R_{0,\alpha}=R_{1,\alpha}-\alpha=(R_{1,\alpha}-
\gamma)-\beta\subset R_{1,\beta}-\beta=R_{0,\beta}.$$ This shows $R_{0,\beta}\supset R'_{0}$.
\end{proof}

By Table 1 below, when $\frg_0$ is a compact simple Lie algebra, $R'(G,A)$ is of type $\B_{n}$ ($n\geq 3$)
or $\BC_{n}$ ($n\geq 3$) only happens when $A$ is a $(\ast)$-subgroup of $G=\PO(n)$, $\PSp(n)$ or
$\Aut(\mathfrak{su}(n))$. In the last case, it requires $A\not\subset G^{0}$. In this case, in
\cite{Yu-maximalE}, it is associated with a bi-multiplicative function with values in $\{\pm{1}\}$ on $A$
(in the $\PO(n)$ and $\PSp(n)$ case) or $A\cap G^{0}$ (in the $\Aut(\mathfrak{su}(n))$ case).

\begin{proposition}\label{P:strip-FRD6}
Suppose $\frg$ is simple and $R'(G,A)$ is of type $\B_{n}$ ($n\geq 3$) or $\BC_{n}$ ($n\geq 3$).
Then, $$\{a\in A:\lambda(a)=1,\forall\lambda\in R'_{0}\}=\ker m.$$
\end{proposition}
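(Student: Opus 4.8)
The plan is to show that $R'_0$, viewed as a subgroup of $X^{\ast}(A/A^{0})$, coincides with the image of the polarization homomorphism
$$\mu:A\rightarrow X^{\ast}(A),\qquad b\mapsto m(\cdot,b),$$
attached to the bi-multiplicative form $m$; once this is established the Proposition follows at once from duality for finite abelian groups. Recall first that by Propositions \ref{P:strip-FRD1} and \ref{P:strip-FRD4} the set $R'_0$ is indeed a subgroup of $X^{\ast}(A/A^{0})$ consisting of finite characters. Since $A^{0}$ is connected and $m$ takes values in $\{\pm1\}$, each $m(\cdot,b)$ is trivial on $A^{0}$, so $\im\mu\subset X^{\ast}(A/A^{0})$ and $A^{0}\subset\ker m$, matching the fact that $A^{0}$ lies in the left-hand side. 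By the definition of the radical, $(\im\mu)^{\perp}=\{a\in A:m(a,b)=1\ \forall b\}=\ker m$. Hence, granting $R'_0=\im\mu$, I would conclude $\{a\in A:\lambda(a)=1,\ \forall\lambda\in R'_0\}=(R'_0)^{\perp}=(\im\mu)^{\perp}=\ker m$, which is exactly the assertion. (In the $\Aut(\mathfrak{su}(n))$ case $m$ is defined only on the index-two subgroup $A\cap G^{0}$; there one runs the same argument after checking that $(R'_0)^{\perp}\subset A\cap G^{0}$, so that $\ker m$ is read inside $A\cap G^{0}$.)

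To identify $R'_0$ with $\im\mu$, I would pass to the explicit models of \cite{Yu-maximalE}. By the discussion preceding the statement, when $\frg$ is simple and $R'(G,A)$ is of type $\B_{n}$ or $\BC_{n}$ the pair $(G,A)$ is, after the reduction of Subsection \ref{SS:ast-semisimple}, one of $\PO(n)$, $\PSp(n)$, or $\Aut(\mathfrak{su}(n))$ with $A\not\subset G^{0}$, and in each case there is a central extension $1\rightarrow\{\pm1\}\rightarrow\wt A\rightarrow A\rightarrow1$ inside the relevant classical group with $m(a,b)=[\wt a,\wt b]$ the commutator pairing, together with the weight decomposition $V=\bigoplus_{\chi}V_{\chi}$ of the defining module $V$ under $\wt A$. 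The long roots of $\B_{n}$ (resp.\ the middle-length roots of $\BC_{n}$), which by Proposition \ref{P:strip-FRD4} form the simply-laced system $\D_{n}$ carrying the constant strip group $R'_0$, correspond to ordered pairs of $A$-weight lines of $V$. Fixing such a root $\beta$, I would compute that conjugating a root vector $X_{\beta}\in\frg_{\beta}$ by $a\in A$ multiplies it by the sign $m(a,c_{\beta})$ for an element $c_{\beta}\in A$ encoding the pair of weight lines; since the $\beta$-strip $R_{1,\beta}$ is indexed exactly by the weight lines sharing the $A^{0}$-restriction of $\beta$, the finite differences $\beta'-\beta$ ($\beta'\in R_{1,\beta}$) run through $\{m(\cdot,c):c\in A\}$ as $\beta$ ranges over the $\D_{n}$-roots. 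This yields $R'_0=\im\mu$.

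Combining the two steps gives the desired equality. As a consistency check one may compare cardinalities: Lemma \ref{L:alpha-dim} computes $|R_{1,\beta}|=|R'_0|$ from $\dim Z_{\frg}(\ker\beta|_{A^{0}})-\dim Z_{\frg}(A^{0})$, while duality forces $|R'_0|=|\im\mu|=[A:\ker m]$, and these must agree. I expect the main obstacle to be the computation in the second paragraph: verifying uniformly across the three families that conjugation of $X_{\beta}$ produces precisely the sign $m(\cdot,c_{\beta})$ and that the weight-line indexing of $R_{1,\beta}$ exhausts $\im\mu$. This is a case-by-case verification in the explicit models of \cite{Yu-maximalE}, and the $\Aut(\mathfrak{su}(n))$ case should be the most delicate, since $m$ lives only on $A\cap G^{0}$ and one must track how the outer part of $A$ permutes the weight lines and interacts with the pairing.
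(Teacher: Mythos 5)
Your proposal is sound and, at bottom, rests on the same foundation as the paper's proof, which consists of the single sentence ``This follows from the precise form of the subgroup $A$ given in \cite{Yu-maximalE}.'' What you add is a genuine argument skeleton that the paper leaves implicit: you factor the statement into (i) the purely formal duality step $\{a\in A:\lambda(a)=1,\ \forall\lambda\in R'_0\}=(R'_0)^{\perp}$ together with $(\im\mu)^{\perp}=\ker m$ for the polarization $\mu(b)=m(\cdot,b)$, and (ii) the identification $R'_0=\im\mu$, which is where the explicit models of $\PO(n)$, $\PSp(n)$ and $\Aut(\mathfrak{su}(n))$ actually enter. Step (i) is correct as stated, and your appeal to Proposition \ref{P:strip-FRD4} for $R'_0$ being a well-defined constant subgroup is the right input. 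Step (ii) is exactly the content the paper compresses into its one-line citation: in these classical cases $A$ lifts to a subgroup $\wt A$ of $\O(n)$, $\Sp(n)$ or $\SU(n)$ with $m$ the commutator pairing, the weight lines of the defining module form a torsor under $\im\mu$, and the finite differences within a long-root (resp.\ middle-length-root) strip sweep out exactly $\im\mu$; your description of this mechanism is the correct one. So the two proofs are not genuinely different routes --- both reduce to the classification --- but yours is the more informative write-up, and the obstacle you flag (the uniform sign computation $a\cdot X_{\beta}=m(a,c_{\beta})X_{\beta}$ across the three families, with the extra care needed in the outer $\Aut(\mathfrak{su}(n))$ case where $m$ lives only on $A\cap G^{0}$) is precisely the verification the paper also does not display. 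One minor point: since the Proposition assumes $\frg$ simple, the reduction of Subsection \ref{SS:ast-semisimple} you invoke is not actually needed here.
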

\begin{proof}
This follows from the precise form of the subgroup $A$ given in \cite{Yu-maximalE}.
\end{proof}

In Table 1, we describe the root system $R'(G,A)$ for every $(\ast)$-subgroup $A$ in $\Aut(\fru_0)$ with
$\fru_0$ a compact simple Lie algebra and $0<\dim A<\rank\fru_0$. We also give the multiplicity of any
root $\alpha'$ in $R'(G,A)$, which depends only on the length of $\alpha'$.

The label ``Y" in the last three columns means $R_{0,\alpha}$ ($\alpha\in R(G,A)$, $\alpha|_{A^{0}}=
\alpha'$) is a group; without labeling ``Y" means it is not a group except for few cases, e.g., for
short roots of $(\ast)$-subgroups in the classical simple Lie algebra case where the integer $s_0=1$.
Note that $R_{0,\alpha}$ depends only on the $W_{tiny}$ orbit $W_{tiny}\cdot\alpha$; and $R_{0,\alpha}$
is a group if and only if $R_{0,\alpha'}=R_{0,\alpha}$ for any $\alpha'\in R_{1,\alpha}$.

The label ``R" (=regular) in the third column means the ``$R_0$-assumption" is satisfied, i.e., $R_{0,\alpha}$
for short infinite roots is a constant subgroup of $X^{\ast}(A/A^{0})$; the label ``HR"(=half-regular) means
$R'(G,A)$ is of type $\B_{n}$ ($n\geq 2$) and $R_{0,\alpha}$ for long infinite roots is a constant subgroup
of $X^{\ast}(A/A^{0})$, or $R'(G,A)$ is of type $\BC_{n}$ ($n\geq 2$) and $R_{0,\alpha}$ for middle length
infinite roots is a constant subgroup of $X^{\ast}(A/A^{0})$; the label ``S"(=singular) means neither of
the above two holds.

The notation $F_{1},\dots,$ and the invariants $(m,n_{1},\dots,n_{s})$, $(k,s_0,s_1)$ are from \cite{Yu-maximalE}.
In Table 1, there is one example of $(\ast)$-subgroup omitted (we is also not discussed in \cite{Yu-maximalE}),
that is a $(\ast)$-subgroup $A$ of $G=\Aut(\mathfrak{so}(8))$ of the form $A=A^{0}\times\langle\theta\rangle$
with $\theta$ a pinned automorphism of order $3$ and $A^{0}$ a maximal torus of $(G^{0})^{\theta}\cong\G_2$.
In this case, $R'(G,A)$ is of type $\G_2$; $|R_{0,\alpha}|=3$ for short infinite roos, and $|R_{0,\alpha}|=1$
for long infinite roots. Then, it satisfies the $R_{0}$-assumption.



\begin{table}[ht]
\caption{Root system $R'(G,A)$} \centering
\begin{tabular}{|c |c |c |c |c |c|c|}\hline
$G$ & $A$  & $R'(G,A)$ & short & long & med \\ [0.3ex] \hline
$\PU(n)$& $\frac{n}{m}\!=$ & $\A_{m-1}$,R & $(\frac{n}{m})^{2}$,\!Y &&\\
($n\geq 2$)&$\!n_{1}\!\cdots\!n_{s}$&&&&\\\hline
$\PO(n)$\footnotemark & $\frac{n}{2^{k}}\!=\!$ & $\D_{s_1}$($k\!=\!s_0\!=\!0$),R & $2^{2k}$,\!Y &&\\
($n\geq 5$)& $s_0\!+\!2s_1$ &$\C_{s_1}$\!($k\!>\!0,\!s_0\!=\!0$\!),\!R&$2^{2k}$,\!Y&$2^{k\!-\!1}\!(2^{k}\!-\!1\!)$& \\
 &  &$\!\B_{s_1}$\!($\!k\!=0,\!s_0\!>\!0\!$\!),\!HR\!& $s_0$ & $1$,Y & \\
 &  & $\BC_{s_1}$($ks_{0}\!>\!0$),HR &$\!s_{0}\!\cdot\!2^{2k}\!$&$2^{k\!-\!1}\!(2^{k}\!-\!1\!)$& $2^{2k}$,\!Y\\\hline
$\PSp(n)$& $\frac{n}{2^{k}}\!=\!$ & $\C_{s_1}$($s_0\!=\!0$),R &$2^{2k}$,Y &$2^{k\!-\!1}\!(2^{k}\!+\!1\!)$&\\
($n\geq 2$)& $s_0\!+\!2s_1$ & $\BC_{s_1}$($s_{0}\!>\!0$),HR &$\!s_{0}\!\cdot\!2^{2k}\!$&$2^{k\!-\!1}\!(2^{k}\!+\!1\!)$& $2^{2k}$,\!Y\\\hline
$\!\Aut\!(\mathfrak{su}(n\!)\!)\!$&$\frac{n}{2^{k}}\!=\!$ &$\C_{s_1}$($s_0\!=\!0$),R&$\!2^{2k\!+\!1}\!$,\!Y\!& $\!2^{2k}$,\!Y\!&\\
($n\geq 3$)& $s_0\!+\!2s_1$ & $\BC_{s_1}$($s_{0}\!>\!0$),HR &$\!s_{0}\!\cdot\!2^{2k}\!$&$2^{2k}$,Y &$\!2^{2k\!+\!1}\!$,\!Y\!\\\hline
$\Aut(\fre_6)$ & $F_4$ & $\G_2$,R & $9$,Y & $1$,Y &\\
& $F_5$ & $\A_2$,R & $8$,Y & &\\
& $F_{13}$ & $\BC_1$,R & $16$,Y & $8$,Y &\\
& $F_{14}$ & $\BC_2$,R & $8$ & $1$,Y & $6$ \\
& $F_{15}$ & $\BC_1$,S & $20$ & $1$,Y &\\
& $F_{16}$ & $\F_4$,R & $2$,Y & $1$,Y &\\\hline
$\Aut(\fre_7)$ & $F_{13}$ & $\F_4$,R & $4$,Y & $1$,Y &\\
& $F_{14}$ & $\C_3$,R & $8$,Y & $1$,Y &\\
& $F_{15}$ & $\BC_2$,R & $16$,Y & $1$,Y & $8$\\
& $F_{16}$ & $\BC_1$,R & $32$,Y & $10$ & $6$ \\
&$\!F_{\!17\!}\!-\!F_{\!19\!}\!$ & $\BC_1$,R or S\footnotemark & $32$,Y & $1$,Y &\\
& $F_{20}$ & $\A_1$,R & $27$,Y & &\\\hline
$\Aut(\fre_8)$ & $F_{14}$ & $\F_4$,R & $8$,Y & $1$,Y &\\
& $F_{15}$ & $\BC_2$,R & $32$,Y & $1$,Y& $12$ \\
& $F_{16}$ & $\BC_1$,R & $64$,Y & $14$  &\\
& $F_{17}$ & $\G_2$,R & $27$,Y & $1$,Y &\\
& $F_{18},\!F_{19}$ & $\BC_1$,S & $56$ & $1$,Y &\\\hline
$\Aut(\frf_4)$ & $F_{3}$ & $\BC_1$,R & $8$,Y & $7$ &\\\hline
\end{tabular}
\end{table}

\footnotetext{In the $\PO(n)$ case there is an exception when $s_1=1$ and $s_{0}>0$. In this case the two
''HR" should be changed to ``S". Precisely, when $k=0$ and $s_0>0$, $R'(G,A)$ is of type $\B_1$, and there
are no long roots; when $ks_0>0$, $R'(G,A)$ is of type $\BC_1$, and there are no middle length roots.
In the $\E_7$ case, $F_{17},F_{18}$ are regular, but $F_{19}$ is singular.}

\section{Other subjects}\label{S:openQ}

Besides subjects discussed in the main body of the paper, we discuss some other subjects in this final section.
We also pose a few questions on these subjects.

\subsection{Isogenous subgroups}\label{SS:isogeny}

Let $A_1$ (or $A_2$) be a $(\ast)$-subgroup of a compact Lie group $G_1$ (or $G_2$). We say the pairs
$(A_1,G_2)$ and $(A_2,G_2)$ have {\it basic isogeny relation} if one of the following or their converse
by switching $(A_1,G_2)$ and $(A_2,G_2)$ holds, \begin{itemize}
\item[1,] there is a surjective isogeny $\phi: G_1\rightarrow G_2$ such that $\phi(A_1)=A_2$.
\item[2,] there is an injective isogeny $\phi: G_1\hookrightarrow G_2$ such that $\phi(A_1)=A_2$.
\item[3,] there is an isomorphism $\phi: G_1\rightarrow G_2$ such that $\phi(A_1)\subset A_2$.
\end{itemize}

\begin{definition}\label{D:isogeny2}
Let $A$ (or $A'$) be a $(\ast)$-subgroup of $G$ (or $G'$). We say the pair $(A,G)$ is isogenous to
$(A',G')$ if there is a chain of $(\ast)$-subgroups $(A_{i},G_{i})$ ($0\leq i\leq s$) such that $(A_1,G_1)
=(A,G)$, $(A_{s},G_{s})=(A',G')$, and for each $i$ ($1\leq i\leq s$), $(A_{i-1},G_{i-1})$ and
$(A_{i},G_{i})$ have basic isogeny relation.
\end{definition}

\begin{lemma}\label{L:isogeny1}
Suppose $\phi: G_1\rightarrow G_2$ gives a basic isogeny relation between a $(\ast)$-subgroup $A_1$ of
$G_1$ and a $(\ast)$-subgroup $A_2$ of $G_2$. Then, the induced map $$\phi^{\ast}: R(G_{2},A_{2})\!-
\!R_{0}(G_{2},A_{2})\longrightarrow R(G_{1},A_{1})\!-\!R_{0}(G_{1},A_{1})$$ is a bijection.
\end{lemma}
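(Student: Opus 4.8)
The plan is to exploit that a basic isogeny relation always induces an isomorphism $d\phi\colon\frg_1\to\frg_2$ of complexified Lie algebras (both isogenies and honest isomorphisms do), intertwining the adjoint actions of $A_1$ and $A_2$ through $\phi|_{A_1}$, together with the rigidity that infinite root spaces are one-dimensional by Lemma~\ref{L:infinity3}. Since a bijection read backwards is again a bijection, it suffices to treat the three basic types; for their converses the same correspondence, taken in the opposite direction, is the asserted map. I would separate the argument into the two essentially different situations: the isogeny cases (1) and (2), where $\phi(A_1)=A_2$, and the isomorphism case (3), where $\phi(A_1)$ may sit properly inside $A_2$.

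For the isogeny cases, $\phi|_{A_1}\colon A_1\to A_2$ is surjective with finite kernel $K=\ker\phi\cap A_1$. The key point is that $K$ acts trivially on $\frg_1$: for $k\in K$ one has $d\phi\circ\Ad(k)=\Ad(\phi(k))\circ d\phi=d\phi$ since $\phi(k)=e$, and $d\phi$ is injective, so $\Ad(k)=\id$. Hence every $\alpha\in R(G_1,A_1)$ is trivial on $K$ and therefore equals $\phi^{\ast}\beta$ for a unique $\beta\in X^{\ast}(A_2)$. Transporting weight spaces by $d\phi$ gives $\frg_{2,\beta}\cong\frg_{1,\phi^{\ast}\beta}$, so $\beta$ is a root of $A_2$ precisely when $\phi^{\ast}\beta$ is a root of $A_1$; and since $\phi$ restricts to an isogeny $A_1^{0}\to A_2^{0}$ of tori, $(\phi^{\ast}\beta)|_{A_1^{0}}$ vanishes iff $\beta|_{A_2^{0}}$ does. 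Thus $\phi^{\ast}$ respects the finite/infinite dichotomy and restricts to a bijection on infinite roots.

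For the isomorphism case I would use $\phi$ to identify $G_1$ with $G_2$, reducing to two $(\ast)$-subgroups $A\subset B$ of a single group $G$, with $\phi^{\ast}$ becoming restriction $\beta\mapsto\beta|_{A}$. First I would show $A^{0}=B^{0}$: because $A\subset B$ gives $\frg_0^{B}\subset\frg_0^{A}$, and both satisfy $(\ast)$, we get $\dim B=\dim\frg_0^{B}\le\dim\frg_0^{A}=\dim A\le\dim B$, forcing equality, whence the connected tori $A^{0}\subseteq B^{0}$ of equal dimension coincide. Consequently $\frg_\mu^{A}=\bigoplus_{\beta|_{A}=\mu}\frg_\beta^{B}$, and every $\beta$ in this sum satisfies $\beta|_{B^{0}}=\mu|_{A^{0}}$. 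For an infinite root $\mu$ of $A$, Lemma~\ref{L:infinity3} gives $\dim\frg_\mu^{A}=1$, so exactly one $\beta$ with $\beta|_{A}=\mu$ has $\frg_\beta^{B}\neq0$; this $\beta$ is then an infinite root of $B$ with $\frg_\beta^{B}=\frg_\mu^{A}$. This yields a two-sided inverse to restriction on infinite roots, giving the bijection.

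The main obstacle is case (3), where $\phi(A_1)$ can be a proper subgroup of $A_2$ and one cannot merely transport weight spaces across $\phi$; instead one must control the fibers of the restriction map on characters. The one-dimensionality of infinite root spaces (Lemma~\ref{L:infinity3}) is exactly what prevents two distinct $B$-roots from restricting to the same infinite $A$-root, and is the crux of the argument. I would also take care to verify the trivial-$K$-action claim in the isogeny cases without assuming that $G_1$ is connected, and to check at each step that the infinite/finite labeling is preserved.
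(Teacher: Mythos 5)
Your proof is correct and follows essentially the same route as the paper: the isogeny cases are handled by transporting weight spaces through the Lie algebra isomorphism, and the isomorphism case hinges on the one-dimensionality of infinite root spaces together with the equality $A_1^{0}=\phi^{-1}(A_2^{0})$ forced by the $(\ast)$-condition. Your write-up is in fact slightly more complete than the paper's, which declares the isogeny cases ``clear'' and leaves the identification of the connected components implicit before its surjectivity-plus-cardinality count.
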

\begin{proof}
For surjective isogeny and injective isogeny, this statement is clear. Now assume that $\phi$ is an
isomorphism and $\phi(A_1)\subset A_2$. Since infinite root spaces generate
$\frg_{i}/Z_{\frg_{i}}(A_{i}^{0})$ and each infinite root space has dimension one by Lemma
\ref{L:infinity1}, it follows that $$\phi^{\ast}:R(G_{2},A_{2})\!-\!R_{0}(G_{2},A_{2})\longrightarrow
R(G_{1},A_{1})\!-\!R_{0}(G_{1},A_{1})$$ is a surjection. As $$\dim\frg_{1}/Z_{\frg_{1}}(A_{1}^{0})=
\dim\frg_{2}/Z_{\frg_{2}}(A_{2}^{0}),$$ we have $|R(G_{2},A_{2})\!-\!R_{0}(G_{2},A_{2})|=
|R(G_{1},A_{1})\!-\!R_{0}(G_{1},A_{1})|.$ Thus, $\phi^{\ast}$ is a bijection.
\end{proof}

Roughly to say, Lemma \ref{L:isogeny1} means ``isogenous $(\ast)$-subgroups have same infinite roots".
This is stronger than the following statement.

\begin{lemma}\label{L:isogeny2}
If $(A_1,G_2)$ and $(A_2,G_2)$ are isogenous, then there is an isomorphism $\phi: A_{1}^{0}\rightarrow
A_{2}^{0}$ such that $$\phi^{\ast}(R'(G_2,A_2))=R'(G_1,A_1).$$
\end{lemma}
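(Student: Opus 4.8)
The plan is to reduce the claim for a general isogeny to the case of a single basic isogeny relation, and then to handle each of the three basic types. Since $(A_1,G_1)$ and $(A_2,G_2)$ being isogenous means there is a finite chain $(A_i,G_i)$ connected by basic isogeny relations, and since the conclusion (existence of an isomorphism $A_1^0\to A_2^0$ carrying one restricted root system to the other) is visibly transitive under composition of such isomorphisms, it suffices to prove the statement when $(A_1,G_1)$ and $(A_2,G_2)$ have a basic isogeny relation. So first I would set up this reduction explicitly: a basic isogeny relation is given by a homomorphism $\phi\colon G_1\to G_2$ of one of the three types, and I claim each produces an isomorphism $A_1^0\to A_2^0$ intertwining the restricted root systems; composing these along the chain (and inverting where the converse direction occurs) gives the general case.

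Next I would extract the needed isomorphism of identity components from each $\phi$. In cases 1 and 2 (surjective or injective isogeny with $\phi(A_1)=A_2$), the map $\phi$ restricts to an isogeny $A_1\to A_2$, hence to a covering $A_1^0\to A_2^0$ which is in particular surjective with finite kernel; the induced map on Lie algebras $\fra_{1,0}\to\fra_{2,0}$ is an isomorphism, and dually we get an isomorphism of the real spans of $X^\ast(A_1^0)$ and $X^\ast(A_2^0)$. In case 3 ($\phi$ an isomorphism with $\phi(A_1)\subset A_2$), I would use the condition $(\ast)$: both $A_1$ and $A_2$ satisfy $\dim Z_{\frg}(A)=\dim A$, and since $\phi$ is an isomorphism of Lie groups identifying $\frg_1$ with $\frg_2$, the inclusion $\phi(A_1^0)\subset A_2^0$ forces equality of dimensions, hence $\phi(A_1^0)=A_2^0$ as subtori. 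In every case I obtain a linear isomorphism, which I will call $\phi^\ast$ (or its inverse), between $X^\ast(A_2^0)\otimes\bbR$ and $X^\ast(A_1^0)\otimes\bbR$.

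The crux is then to verify that this isomorphism carries $R'(G_2,A_2)$ onto $R'(G_1,A_1)$. Here I would invoke Lemma \ref{L:isogeny1}, which is precisely the statement that $\phi^\ast$ gives a bijection $R(G_2,A_2)-R_0(G_2,A_2)\to R(G_1,A_1)-R_0(G_1,A_1)$ on the level of infinite roots. Since $R'(G,A)$ is by definition the set of restrictions $\{\alpha|_{A^0}:\alpha\in R(G,A)-R_0(G,A)\}$, and since restriction to the identity component commutes with $\phi^\ast$ (the map on characters of $A$ and on characters of $A^0$ are compatible under the identification coming from $\phi|_{A^0}$), the bijection on infinite roots descends to a bijection on their restrictions. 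Thus $\phi^\ast$ sends $R'(G_2,A_2)$ to $R'(G_1,A_1)$, as required.

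The main obstacle I expect is the bookkeeping around which direction the isomorphism of tori and the pullback of characters go, especially in case 3 where $\phi$ only gives an inclusion a priori and one must argue equality of identity components before Lemma \ref{L:isogeny1} applies. A secondary subtlety is checking that restriction from $X^\ast(A)$ to $X^\ast(A^0)$ is genuinely intertwined by the maps induced from $\phi$; this is routine but must be stated, since $R'$ is defined via restriction and the lemma is phrased on the full root sets. Once these compatibilities are in place the argument is essentially immediate, since all the real content is already packaged in Lemma \ref{L:isogeny1}.
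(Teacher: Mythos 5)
Your proposal is correct and matches the paper's (implicit) argument: the paper states this lemma without proof, presenting it as an immediate consequence of Lemma \ref{L:isogeny1} combined with transitivity along the chain of basic isogeny relations, which is exactly what you spell out. One remark: in cases 1 and 2 the induced map $A_1^0\to A_2^0$ is only an isogeny, so what your argument (honestly) delivers is a linear isomorphism of the real spans $X^{\ast}(A_i^0)\otimes\bbR$ matching the two restricted root systems, not an isomorphism of the character lattices themselves --- e.g.\ for the maximal tori of $\SU(2)$ and $\SO(3)$ no lattice isomorphism carries one root system to the other --- so your formulation is, if anything, more careful than the literal wording of the lemma.
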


\begin{lemma}\label{L:isogeny3}
Suppose $\frg=\frg_{\infty}$. Then, $$R_{0}(G,A)\sqcup\{0\}=\bigcup_{\alpha\in R(G,A)-R_{0}(G,A)}
R_{0,\alpha}.$$
\end{lemma}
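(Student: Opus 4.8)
The plan is to prove the two inclusions separately. The inclusion $\bigcup_{\alpha} R_{0,\alpha} \subseteq R_{0}(G,A)\sqcup\{0\}$ is immediate from Proposition \ref{P:strip-FRD1}, which gives $R_{0,\alpha} \subseteq R_{0}(G,A)\cup\{0\}$ for every infinite root $\alpha$; since $0$ is not a root, $R_{0}(G,A)\cup\{0\}$ is the disjoint union $R_{0}(G,A)\sqcup\{0\}$. So the whole content lies in the reverse inclusion, where the element $0$ is trivial: for any infinite root $\alpha$ we have $\alpha\in R_{1,\alpha}$, hence $0=\alpha-\alpha\in R_{0,\alpha}$.

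For the reverse inclusion it remains to show that every finite root $\gamma\in R_{0}(G,A)$ lies in $R_{0,\alpha}$ for some infinite root $\alpha$; equivalently, that there is an infinite root $\alpha$ with $\gamma+\alpha\in R(G,A)$. This is where I would invoke the hypothesis $\frg=\frg_{\infty}$. Decomposing $\frg$ by $A^{0}$-weight gives $\frg=(\bigoplus_{\alpha'\in R'(G,A)}\frg'_{\alpha'})\oplus\frg'_{0}$, where $\frg'_{0}$ is the zero-$A^{0}$-weight part and contains every finite root space. Since $\sum_{\alpha'}[\frg'_{\alpha'},\frg'_{-\alpha'}]\subseteq\frg'_{0}$, the equality $\frg=\frg_{\infty}$ forces $\frg'_{0}=\sum_{\alpha'\in R'(G,A)}[\frg'_{\alpha'},\frg'_{-\alpha'}]$. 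In particular $\frg_{\gamma}\subseteq\sum_{\alpha'}[\frg'_{\alpha'},\frg'_{-\alpha'}]$ for any finite root $\gamma$.

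Next I would do the weight bookkeeping. Fix a nonzero $Z\in\frg_{\gamma}$ and write it as a finite sum of brackets $[X,Y]$ with $X\in\frg'_{\alpha'}$, $Y\in\frg'_{-\alpha'}$. Expanding $X=\sum_{\alpha|_{A^{0}}=\alpha'}X_{\alpha}$ and $Y=\sum_{\beta|_{A^{0}}=-\alpha'}Y_{\beta}$ into root vectors and using $[\frg_{\alpha},\frg_{\beta}]\subseteq\frg_{\alpha+\beta}$, I project the whole expression onto the weight space $\frg_{\gamma}$, keeping only the contributions with $\alpha+\beta=\gamma$ as characters of $A$. Because $Z\neq 0$, at least one such contribution is nonzero, so there exist infinite roots $\alpha,\beta$ with $\alpha+\beta=\gamma$ and $[\frg_{\alpha},\frg_{\beta}]\neq 0$. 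Then $-\alpha$ is again an infinite root by Lemma \ref{L:infinity3}, and $(-\alpha)+\gamma=\beta\in R(G,A)$ with $\beta|_{A^{0}}=(-\alpha)|_{A^{0}}$, so $\beta\in R_{1,-\alpha}$ and hence $\gamma=\beta-(-\alpha)\in R_{0,-\alpha}$. This completes the reverse inclusion and the lemma.

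The step I expect to need the most care is the projection argument: one must take the decomposition of $\frg_{\gamma}$ into brackets strictly inside the single $A$-weight $\gamma$, not merely inside the $A^{0}$-weight $0$, so that the extracted roots satisfy $\alpha+\beta=\gamma$ on all of $A$ rather than only on $A^{0}$. This is legitimate because $\Ad(A)$ acts semisimply and each bracket $[\frg_{\alpha},\frg_{\beta}]$ is homogeneous of $A$-weight $\alpha+\beta$, so projection onto $\frg_{\gamma}$ is a well-defined $A$-equivariant map that annihilates every homogeneous component of weight different from $\gamma$.
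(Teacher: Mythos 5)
Your proof is correct and follows essentially the same route as the paper: the paper's (very terse) proof simply asserts that $\frg=\frg_{\infty}$ forces the zero-$A^{0}$-weight part of $\frg$ to equal $\sum[\frg_{\alpha},\frg_{\beta}]$ over pairs of infinite roots with $\alpha|_{A^{0}}=-\beta|_{A^{0}}$ and says "this just means" the set equality, which is exactly the projection/bookkeeping step you spell out, combined with Proposition \ref{P:strip-FRD1} for the easy inclusion.
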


\begin{proof}
Since $\frg=\frg_{\infty}$, we have $$\fra\oplus\fra_{0}\bigoplus_{\lambda\in R_{0}(G,A)}\frg_{\lambda}=
\sum_{\alpha,\beta\in R(G,A)-R_{0}(G,A),\alpha|_{A^{0}}=-\beta|_{A^{0}}}[\frg_{\alpha},\frg_{\beta}].$$
This just means, $$R_{0}(G,A)\sqcup\{0\}=\bigcup_{\alpha\in R(G,A)-R_{0}(G,A)}R_{0,\alpha}.$$
\end{proof}

Lemma \ref{L:isogeny3} gives some hope to compare finite roots and twisted root system of isogenous
$(\ast)$-subgroups. When $G$ is semisimple, $(A,G)$ is isogenous to $(\pi(A),\Aut(\frg_0))$, where
$\pi: G\rightarrow\Aut(\frg_0)$ is the adjoint homomorphism. Then, we could stick to $(\ast)$-subgroups
to $\Aut(\frg_0)$, and the consideration becomes a bit easier. 

When $\frg_0$ is simple, the following statement indicates that in every class of isogenous subgroups
of $\Aut(\frg_0)$ with positive dimension there is a unique maximal abelian subgroup. It is a pity
that we can verify Proposition \ref{P:isogeny4} only by using the classification in \cite{Yu-maximalE},
and we don't know how to formulate a corresponding statement for isogenous subgroups in general compact
Lie groups. We have't checked if Proposition \ref{P:isogeny4} holds for finite $(\ast)$-subgroups.

\begin{proposition}\label{P:isogeny4}
Let $\frg_0$ be compact simple Lie algebra, and $A\subset\Aut(\frg_0)$ be a positive-dimensional $(\ast)$-subgroup.
Then there is a unique conjugacy class of maximal abelian subgroups in the isogeny class of $A$.
\end{proposition}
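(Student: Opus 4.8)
The plan is to fix the ambient group, then treat existence and uniqueness separately. First I would reduce to the adjoint setting: since $\frg_0$ is semisimple, the remark following Lemma \ref{L:isogeny3} shows that every pair $(A,G)$ in the isogeny class is isogenous to $(\pi(A),\Aut(\frg_0))$, so it suffices to compare maximal abelian subgroups inside $G=\Aut(\frg_0)$, which is of adjoint type since $\frg_0$ is simple (so $\Ad$ is injective and Proposition \ref{P:saturated2} applies). For \emph{existence}, every abelian subgroup of $G$ lies in some maximal abelian subgroup $M$; because $A\subset M$ and both are $(\ast)$-subgroups, the identity map realizes basic isogeny relation (3), so $(A,G)$ and $(M,G)$ are isogenous. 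As $A$ is positive-dimensional, so is $M$, and hence the isogeny class of $A$ contains a positive-dimensional maximal abelian subgroup.

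For \emph{uniqueness}, let $M_1,M_2$ be two maximal abelian subgroups of $G=\Aut(\frg_0)$ lying in the isogeny class of $A$; the goal is to show they are conjugate. By Lemma \ref{L:isogeny2} there is an isomorphism $M_1^{0}\cong M_2^{0}$ carrying $R'(G,M_1)$ onto $R'(G,M_2)$, and by Lemma \ref{L:isogeny1} the infinite roots of the two pairs correspond; in particular $\dim M_1^{0}=\dim M_2^{0}$ and the restricted root systems agree. I would then try to show that these isogeny-invariant data, together with the isomorphism type of $M_i/M_i^{0}$ and the bracket relations of the twisted root system, determine the conjugacy class of a positive-dimensional maximal abelian subgroup. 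Concretely, setting $L_i=Z_{G}(M_i^{0})$, each $M_i$ is a maximal abelian subgroup of the Levi subgroup $L_i$ with $Z(L_i)^{0}=M_i^{0}$, and $M_i\cap (L_i)_{der}$ is a finite maximal abelian subgroup of the semisimple part; the task would reduce to matching the Levi up to conjugacy from $R'$ and then matching the finite maximal abelian subgroups of $(L_i)_{der}$.

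The hard part will be exactly this rigidity step: no classification-free argument is known showing that the isogeny invariants $R'(G,A)$, the infinite roots, and $\dim A^{0}$ pin down a maximal abelian subgroup up to conjugacy. A priori the two Levis $L_1,L_2$ need not be conjugate even when $R'$ coincides, and the finite maximal abelian subgroups of a fixed semisimple group need not be unique within their isogeny class. I therefore expect to fall back on the explicit classification of positive-dimensional $(\ast)$-subgroups of $\Aut(\frg_0)$ in \cite{Yu-maximalE}: going through Table 1, each choice of $\frg_0$ together with the data $R'(G,A)$, the root multiplicities, and $\dim A^{0}$ that are preserved by isogeny corresponds to a single conjugacy class of maximal abelian subgroup, and one checks directly that every other $(\ast)$-subgroup sharing these invariants embeds into it. This case-by-case verification, rather than a uniform Lie-theoretic argument, is what I expect to establish the proposition, and finding an intrinsic proof of the uniqueness step remains the natural open problem.
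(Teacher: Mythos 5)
Your proposal lands on the same argument the paper actually uses: the paper's proof is precisely the classification-based case check you fall back on, observing that for $\PU(n)$ every positive-dimensional $(\ast)$-subgroup is already maximal abelian, that in the $\PO(n)$, $\PSp(n)$, $\Aut(\mathfrak{su}(n))$ cases the infinite roots determine the invariants $k,s_0,s_1$ and hence the conjugacy class, and that in the exceptional cases $A$ is itself maximal except for $F_{18}$ in type $\E_7$, where $F_{19}$ is the unique maximal class. The paper likewise laments the absence of an intrinsic, classification-free proof, so your identification of the rigidity step as the genuine open difficulty is consistent with the source.
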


\begin{proof}
When $A$ is a $(\ast)$-subgroup of $\PU(n)$, $A$ itself is a maximal abelian subgroup. In the $\PO(n)$, $\PSp(n)$,
$\Aut(\mathfrak{su}()n)$ case, by Table 1, we see that infinite roots of $A$ determine the invariants $k,s_{0},s_{1}$.
Thus, the conclusion follows. When $\frg_0$ is an exceptional simple Lie algebra, $A$ itself is maximal except when
$\frg_0$ is of type $\E_7$ and $A$ is conjugate to the subgroup $F_{18}$ defined in
\cite[Page 259, Eq.(115)]{Yu-maximalE}. In this case, $F_{19}$ represents the unique conjugacy class of maximal
abelian subgroups in the isogenous class of $F_{18}$.
\end{proof}

Isogeny classes of finite $(\ast)$-subgroups are more complicated.  A particular question is as follows.
\begin{question}\label{Q:isogeny1}
Let $G$ be a compact Lie group, and $A_1,A_2$ be finite $(\ast)$-subgroups which are isogenous. Are the prime
decompositions associated to them the same.
\end{question}

Write $\frg_{1,p}$ (or $\frg_{2,p}$) be the sum of root spaces of $A_1$ (or $A_2$) of $p$-power order roots.
We say the prime decompositions are the same if there is $\phi\in\Aut(\frg)$ such that $\phi(\frg_{1,p})=
\frg_{2,p}$ for any prime $p$.

\subsection{Good $(\ast)$-subgroups}\label{SS:good}

Let $A$ be a $(\ast)$-subgroup of a compact Lie group $G$. We call $A$ a {\it good $(\ast)$-subgroup} if the
following conditions hold, \begin{enumerate}
\item for any finite character $\lambda$ of $A$, if there exists $1\neq\xi\in\ker\lambda$ such that
$s_{\lambda,\xi}\in W_{large}(G,A)$, then $\frg_{[\lambda]}\neq 0$.
\item for any generalized finite root $\lambda$, $\frg_{\lambda}\neq 0$, i.e., $\lambda$ is a finite root.
\item for any finite root $\alpha$, \[R^{\vee}(\alpha)=\{\xi\in\ker\alpha: s_{\alpha,\xi}\in W_{small}(G,A)\}.\]
\end{enumerate}
If $A$ is a good $(\ast)$-subgroup, then $W_{middle}(G,A)=W_{small}(G,A)$, all generalized finite roots are finite
roots, and each $R^{\vee}(\alpha)$ (for $\alpha$ a root) is determined by $W_{small}(G,A)$.

The conditions posed above for ``good abelian subgroup" are natural. When $G=\Aut(\frg_0)$ with $\frg_0$ a compact
simple Lie algebra, we verified most maximal abelian subgroups and some non-maximal ones satisfy these conditions.
But we haven't checked all cases, particularly we are not certain if all maximal abelian subgroups are good.

\subsection{Semisimplicity}\label{SS:semisimple}

\begin{definition}\label{D:semisimple1}
Let $A$ be a $(\ast)$-subgroup of a compact Lie group $G$. We say $A$ (or the pair $(A,G)$) semisimple if
$$\{a\in A:\alpha(a)=1,\forall\alpha\in R(G,A)\}$$ is a finite group. We say $A$ (or the pair $(A,G)$) of
adjoint type if $$\{a\in A:\alpha(a)=1,\forall\alpha\in R(G,A)\}=1.$$
\end{definition}

If $G$ itself is a semisimple Lie group, then any $(\ast)$-subgroup of it is semisimple, but not vice versa.
For example, any finite $(\ast)$-subgroup is semisimple. If $G$ itself is of adjoint type, then any
$(\ast)$-subgroup of it is semisimple, again not vice versa. For example, $\E_{6}^{sc}$ has a $(\ast)$-subgroup
isomorphic to $(C_{3})^{3}$ (cf. the group $F_2$ in \cite[Page 243, Table 4]{Yu-maximalE}) which is of adjoint
type.

Note that a root system $\Phi$ in a lattice $L$ is said to be semisimple if $\rank\Phi=\rank L$, i.e., if
$\Phi$ spans a sublattice of $L$ with finite index.

\begin{lemma}\label{L:semisimple2}
Let $A$ be a $(\ast)$-subgroup of a compact Lie group $G$. Then $A$ is semisimple if and only if $R'(G,A)
=\emptyset$ or $R'(G,A)$ is a semisimple root system in the lattice $X^{\ast}(A^{0})$.
\end{lemma}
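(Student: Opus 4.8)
The plan is to translate the group-theoretic condition in Definition \ref{D:semisimple1} into a rank condition on $R'(G,A)$ by means of a single dimension count. Write $\pi:G\to\Aut(\frg_0)$ for the adjoint homomorphism and set $K=\{a\in A:\alpha(a)=1,\ \forall\alpha\in R(G,A)\}$. First I would note that, since $\Ad(a)$ fixes the zero–weight space $\frg_0=\fra$ for every $a\in A$, an element $a$ lies in $K$ exactly when $\Ad(a)$ acts as the identity on each root space $\frg_\alpha$, that is, exactly when $\pi(a)=\id$. Hence $K=A\cap\ker\pi$, and $A$ is semisimple if and only if $\dim K=0$.

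Next I would pass to the identity component. As $\ker\pi$ is closed, the connected group $(A\cap\ker\pi)^{0}$ is contained in $A^{0}$, and since $A^{0}\cap\ker\pi\subset K$ this yields $\dim K=\dim(A^{0}\cap\ker\pi)$. For $a\in A^{0}$ every finite root $\alpha$ satisfies $\alpha|_{A^{0}}=0$ and so gives $\alpha(a)=1$ automatically; thus only the infinite roots cut out $A^{0}\cap\ker\pi$, and restricting infinite roots to $A^{0}$ produces precisely the characters of $R'(G,A)$. Therefore
\[
A^{0}\cap\ker\pi\;=\;\bigcap_{\alpha'\in R'(G,A)}\ker\alpha'\ \subset\ A^{0}.
\]

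The final step is the standard duality for the torus $A^{0}$: for a subset $S$ of $X^{\ast}(A^{0})$ the common kernel $\bigcap_{\chi\in S}\ker\chi$ is dual to $X^{\ast}(A^{0})/\langle S\rangle$, and hence has dimension $\rank X^{\ast}(A^{0})-\rank\langle S\rangle$. Taking $S=R'(G,A)$ and using that $\rank\langle R'(G,A)\rangle$ equals the rank of the root system $R'(G,A)$ (which is a root system in $X^{\ast}(A^{0})$ by Theorem \ref{T:restrictedRS1}), I obtain
\[
\dim K\;=\;\rank X^{\ast}(A^{0})-\rank R'(G,A).
\]
Consequently $A$ is semisimple if and only if $\rank R'(G,A)=\rank X^{\ast}(A^{0})$, i.e. $R'(G,A)$ spans a finite-index sublattice of $X^{\ast}(A^{0})$ and so is a semisimple root system. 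The alternative $R'(G,A)=\emptyset$ recorded in the statement is the case $\rank X^{\ast}(A^{0})=0$, i.e. $A^{0}=\{1\}$, where $A$ is finite and the equivalence holds trivially with the empty root system regarded as semisimple in the zero lattice.

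I expect the only genuine care to be in the bookkeeping rather than in any deep step. The two points that must be stated precisely are the inclusion $(A\cap\ker\pi)^{0}\subset A^{0}$, which legitimizes computing $\dim K$ inside the torus $A^{0}$, and the handling of the degenerate case: emptiness of $R'(G,A)$ should be read through the dimension formula (so that it is semisimple exactly when $X^{\ast}(A^{0})=0$), corresponding to $A$ finite. The torus-duality dimension formula is classical, and the reduction to infinite roots is immediate from $\alpha|_{A^{0}}=0$ for finite roots, so no substantial obstacle arises beyond making these conventions explicit.
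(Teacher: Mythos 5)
Your proof is correct and follows the same route the paper intends: the paper's own ``proof'' is the one-line remark that the fixed-point group $\{a\in A:\alpha(a)=1,\ \forall\alpha\in R(G,A)\}$ is finite exactly under the stated condition on $R'(G,A)$, and your argument simply supplies the missing details (reduction to $A^{0}$, discarding finite roots, and the torus-duality rank count). Your explicit handling of the degenerate case $R'(G,A)=\emptyset$ --- reading it as $X^{\ast}(A^{0})=0$, since otherwise an empty $R'(G,A)$ with $\dim A>0$ would give a non-semisimple $A$ --- is a point the paper glosses over and is worth keeping.
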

\begin{proof}
Just note that $$\{a\in A:\alpha(a)=1,\forall\alpha\in R(G,A)\}$$ is a finite group if and only if $R'(G,A)
=\emptyset$ or $R'(G,A)$ is a semisimple root system in the lattice $X^{\ast}(A^{0})$.
\end{proof}

\subsection{The action of $W_{f}$ on $R(G,A)$}\label{SS:Wf}

The following example indicates that the action of $W_{f}$ on $R(G,A)$ and root spaces gives interesting examples
of group action. We might wish that it also reveal more structures of $R(G,A)$.

\begin{example}\label{E:E8}
Let $G=\Aut(\fre_8)$, and $A=F_1$ be as defined in \cite[Page 261, Eq. (117)]{Yu-maximalE}. Then, $A\cong
(\mathbb{Z}/5\mathbb{Z})^{3}$ and it is a maximal abelian subgroup. By the proof of
\cite[Proposition 7.1]{Yu-maximalE}, we have $$W_{small}(G,A)\cong\SL(2,\mathbb{F}_{5}).$$ Write
$W=W_{small}(G,A)$. Take any $\alpha\in R(G,A)$. Then, $\Stab_{W}(\alpha)\cong\SL(2,\mathbb{F}_{5}).$ From
these, we see that $\dim\frg_{\alpha}=2$ and the action of $\Stab_{W}(\alpha)$ on $\frg_{\alpha}$ gives
the binary icosahedral subgroup of $\SL(2,\mathbb{C})$.
\end{example}

By Theorems \ref{T:W0}, Theorem \ref{T:strip-FRD1}, etc, we exhibited finite roots and coroot groups from
infinite roots. On the converse direction, one may study the action of $W_{f}$ on infinite roots to show
restriction of infinite roots from finite root datum. However, the formula for the action of a transvection
$s_{\alpha,\xi}$ on $X^{\ast}(A^{0})$ looks complicated, which causes difficulty to fulfilling this study.

\subsection{Singular root systems}\label{SS:singular}

The case when $R'(G,A)$ is of type $\A_{1}$, $\BC_{1}$, $\B_2$ or $\BC_{2}$ is most singular. In this case
playing reflections gives few informaiton for the structure of strips and finite root datum. In order to
understand general twisted root systems, we need to study further in case $R'(G,A)$ is one of the types:
$\A_{1}$, $\BC_{1}$, $\B_2$, $\BC_{2}$.

\subsection{Root system in a lattice}\label{SS:rootSystem}

Let $G$ be a compact Lie group with a biinvariant Riemannian metric, and $\theta$ be an involutive automorphism
of $G$. We call a closed abelian subgroup $A$ of $G$ a {\it symmetric $(\ast)$-subgroup\footnotemark} if
$\theta(a)=a^{-1}$ for any $a\in A$, and $\fra_0$ is a maximal abelian subspace of $$(\frg_0^{A})^{-\theta}=
\{X\in\frg_0:\Ad(g)X=X (\forall g\in A)\textrm{ and }\theta(X)=-X\}.$$ Note that $(\frg_0^{A},\theta)$ is a
symmetric pair. The above condition is equivalent to: $Z_{(\frg_0^{A})^{-\theta}}(\fra_0)=\fra_0.$
\footnotetext{We might omit the requirement that $\theta$ is an involutive automorphism of $G$, but only
require that $\theta$ is an automorphism of $G$ with $\theta(a)=a^{-1}$ ($\forall a\in A$) and $\fra_0$ is
a maximal abelian subspace of $(\frg_0^{A})^{-\theta}$. Theorem \ref{T:restrictedRS2} still holds with this
generalization.}

\begin{theorem}\label{T:restrictedRS2}
Let $A$ be a symmetric $(\ast)$-subgroup. Then the set $R(G,A^{0})$ of non-zero characters of $A^{0}$ for
the conjugation action of $A^{0}$ on $\frg$ is a root system in the lattice $X^{\ast}(A^{0})$.
\end{theorem}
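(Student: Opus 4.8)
The plan is to imitate the first proof of Theorem~\ref{T:restrictedRS1}: for each $\beta\in R(G,A^{0})$ I will produce a coroot $\check\beta\in X_{\ast}(A^{0})$ together with a reflection $s_{\beta}$ realised in $N_{G}(A^{0})/Z_{G}(A^{0})$, and then read off the two defining properties of a root system in the lattice $X^{\ast}(A^{0})$ in the sense of \cite[Definition 2.2]{Yu-dimension}, namely $s_{\beta}$-invariance of $R(G,A^{0})$ and strong integrality. First I would record the elementary but crucial fact that, since $A$ is abelian, $\Ad(A)$ preserves every $A^{0}$-weight space; hence the nonzero $A^{0}$-weights are exactly the restrictions $\alpha|_{A^{0}}$ of the roots $\alpha\in R(G,A)$ with $\alpha|_{A^{0}}\neq 0$, and the $\beta$-weight space of $A^{0}$ is the sum of the genuine $A$-root spaces $\frg_{\alpha}$ with $\alpha|_{A^{0}}=\beta$.

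The heart of the argument is the construction of $\check\beta$ using the involution. Fix $\beta\in R(G,A^{0})$, choose $\alpha\in R(G,A)$ with $\alpha|_{A^{0}}=\beta$, and let $\sigma$ denote the conjugation of $\frg$ with respect to the compact real form $\frg_{0}$; since $\theta(a)=a^{-1}$ on $A$ we have $\Ad(a)\,\theta=\theta\,\Ad(a^{-1})$, so $\theta$ carries $\frg_{\alpha}$ into $\frg_{-\alpha}$, and $\theta$ commutes with $\sigma$. Normalising via the compact structure (choosing $0\neq X\in\frg_{\alpha}$ with $\theta X=\sigma X$, so that $H:=[X,\theta X]=[X,\sigma X]\neq 0$), I observe that $H$ has $A$-weight $\alpha+(-\alpha)=0$, whence $H\in\frg^{A}=Z_{\frg}(A)$, while $\theta(H)=[\theta X,X]=-H$, so $H\in(\frg^{A})^{-\theta}$; being $A^{0}$-fixed it commutes with $\fra$. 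Now the maximality hypothesis, which says $Z_{(\frg_{0}^{A})^{-\theta}}(\fra_{0})=\fra_{0}$, forces $H\in\fra$. Running the usual $\mathfrak{sl}_{2}$-normalisation exactly as in Lemma~\ref{L:infinity1} then yields a distinguished cocharacter $\check\beta\in X_{\ast}(A^{0})$ with $\beta(\check\beta)=2$.

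With $\check\beta$ in hand I would work inside $Z_{G}(S)$ for $S=(\ker\beta|_{A^{0}})^{0}$, whose rank-one derived subgroup $G_{[\beta]}$ has $\Im\check\beta$ as maximal torus; as in Lemma~\ref{L:reflection} an element $n_{\beta}\in N_{G_{[\beta]}}(A^{0})$ acts on $A^{0}$ by $s_{\beta}(a)=a\,\check\beta(\beta(a))^{-1}$, i.e.\ by the reflection $\lambda\mapsto\lambda-\lambda(\check\beta)\beta$ on $X^{\ast}(A^{0})$. Since $n_{\beta}\in N_{G}(A^{0})$ permutes the $A^{0}$-weights, $s_{\beta}$ preserves $R(G,A^{0})$; and the $s_{\beta}$-invariance of the inner product gives $\lambda(\check\beta)=2(\lambda,\beta)/(\beta,\beta)$ for every $\lambda\in X^{\ast}(A^{0})$ just as in Lemma~\ref{L:infinity2}, which is at once the integrality $\lambda(\check\beta)\in\mathbb{Z}$ and the strong integrality required of a root system in a lattice. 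Equivalently, one may package this by checking $R(G,A^{0})=\Psi'_{A^{0}}$, the set of \cite[Corollary 3.4]{Yu-dimension}, each $\beta$ being a root of $G_{[\beta]}$ for which $A^{0}$ is a maximal torus.

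The step I expect to be the crux is precisely the placement of the coroot $H$ inside $\fra$. In the classical restricted-root situation this is automatic because $\fra_{0}$ is maximal abelian in the full eigenspace $\frg_{0}^{-\theta}$; here $\fra_{0}$ is only assumed maximal abelian in the smaller space $(\frg_{0}^{A})^{-\theta}$, which for disconnected $A$ can be strictly smaller than $(Z_{\frg_{0}}(A^{0}))^{-\theta}$. The point that must be got right, and which resolves the disconnected case, is that one should bracket genuine $A$-root vectors $\frg_{\alpha}$ rather than whole $A^{0}$-weight spaces: only then does $[X,\theta X]$ carry $A$-weight $0$ and hence lie in $(\frg^{A})^{-\theta}$, exactly where the stated maximality applies. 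A secondary technical point is the non-reduced case, where $\beta$ and $2\beta$ both occur (the types $\BC_{n}$ in Table~1): there I would need to verify that the $\mathfrak{sl}_{2}$ assembled from $\frg_{\alpha}\oplus\frg_{2\alpha}$ still produces a coroot in $\fra$ and a single reflection $s_{\beta}$, so that the resulting system is the expected (possibly non-reduced) root system and not something larger.
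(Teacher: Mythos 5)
Your proposal follows essentially the same route as the paper's proof, which explicitly models itself on \cite[Prop.~6.52]{Knapp}: take a genuine $A$-root vector $E_{\alpha}\in\frg_{\alpha}$ (for $\alpha\in R(G,A)-R_{0}(G,A)$ restricting to $\beta$), form the $\mathfrak{sl}_{2}$-triple $\{H_{\alpha},E_{\alpha},\theta E_{\alpha}\}$, place $H_{\alpha}$ in $i\fra_{0}$, and integrate to a reflection in the normalizer. You have correctly isolated the crux that the paper leaves implicit: one must bracket $A$-root vectors rather than whole $A^{0}$-weight spaces, so that $[X,\theta X]$ lies in $(\frg^{A})^{-\theta}$, which is exactly where the maximality hypothesis $Z_{(\frg_{0}^{A})^{-\theta}}(\fra_{0})=\fra_{0}$ applies. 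That part of your argument matches the paper and is sound.

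The one step that would fail as written is your construction of the reflecting element. You work inside $Z_{G}(S)$ for $S=(\ker\beta|_{A^{0}})^{0}$ and assert that its derived subgroup is of rank one, invoking Lemma~\ref{L:reflection}. But Lemma~\ref{L:reflection} rests on Lemma~\ref{L:infinity1}, whose proof uses the hypothesis $\dim Z_{G}(A)=\dim A$; a symmetric $(\ast)$-subgroup need not satisfy this, and $Z_{\frg}(S)$ contains every $\frg_{\gamma}$ with $\gamma|_{A^{0}}\in\mathbb{Q}\beta$ together with all of $Z_{\frg}(A^{0})$. Whenever the $\beta$-strip has length greater than one (restricted-root multiplicity $>1$, which is the generic situation in Table~1 and already occurs for classical symmetric pairs such as $\SU(2n)/\Sp(n)$), the derived subgroup of $Z_{G}(S)$ has rank strictly greater than one, so there is no single ``$G_{[\beta]}$ with $\Im\check\beta$ as maximal torus'' in your sense. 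The paper avoids this either by exponentiating the triple directly, $k_{\alpha}=\exp\bigl(\tfrac{\pi}{2}(E_{\alpha}+\theta E_{\alpha})\bigr)\in N_{(G^{0})^{\theta}}(A)$, or by observing that $((\frg_{[\alpha]})_{0},\theta)$ is a \emph{rank-one symmetric pair} (not a rank-one group) and quoting the classification of such pairs. The repair is immediate from what you already have: since $X$ and $\theta X$ commute with $\ker\beta|_{A^{0}}$, the rank-one subgroup generated by your $\mathfrak{sl}_{2}$-triple itself normalizes $A^{0}$, fixes $\ker\beta|_{A^{0}}$ pointwise, and its nontrivial Weyl element realizes $s_{\beta}$; but the statement about $(Z_{G}(S))_{der}$ should be deleted, not proved.
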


\begin{proof}
The proof is by combining ideas in the proofs of Theorem \ref{T:restrictedRS1} and \cite[Prop. 6.52]
{Knapp}. Write $R(G,A)$ for the set of non-zero characters of $A$ in $\frg$ for the conjugation action of
$A$ on $\frg$, and write $$R_{0}(G,A)=\{\alpha\in R(G,A):\alpha|_{A^{0}}=1\}.$$ Then, $R(G,A^{0})$ is the
image of $R(G,A)-R_{0}(G,A)$ under the restriction map $X^{\ast}(A)\rightarrow X^{\ast}(A^{0})$. For each
$\alpha\in R(G,A)-R_{0}(G,A)$, define $\check{\alpha}\in X_{\ast}(A^{0})\otimes_{\mathbb{Z}}\mathbb{R}$ by
$$\lambda(\check{\alpha})=\frac{2(\lambda,\alpha|_{A^{0}})}{(\alpha|_{A^{0}},\alpha|_{A^{0}})},\ \forall
\lambda\in X^{\ast}(A^{0}).$$ Set $s_{\alpha}\in\fra_0\rightarrow\fra_0$ by $$s_{\alpha}(X)=X-\alpha(X)
\check{\alpha},\ \forall X\in\fra_0.$$

Analogous to the proof of \cite[Proposition 6.52]{Knapp}, we can construct an $\mathfrak{sl}_{2}$-triple
$\{H_{\alpha},E_{\alpha},\theta E_{\alpha}\}$ such that $H_{\alpha}\in i\fra_0$, $E_{\alpha}\in\frg_{\alpha}$,
and $\theta E_{\alpha}\in\frg_{-\alpha}$. Normalize $E_{\pm{\alpha}}$ appropriately, we can make $$k_{\alpha}
=\exp(\frac{\pi}{2}(E_{\alpha}+\theta E_{\alpha}))\in N_{(G^{0})^{\theta}}(A)$$ and $\Ad(k_{\alpha})|_{A}
=s_{\alpha}$. By this, $s_{\alpha}$ integrates to a reflection $s_{\alpha}: A\rightarrow A$, and it
preserves $R(G,A)$. This implies that $R(G,A^{0})$ is a root system in the lattice $X^{\ast}(A^{0})$.

Alternatively, let $$G_{[\alpha]}=(Z_{G}(\ker\alpha)^{0})_{der}.$$ Then, $G_{[\alpha]}$ is connected
semisimple subgroup of $G$, stable under the conjugation of $A$ and the action of $\theta$, and
$\ker\alpha$ commutes with $G_{[\alpha]}$. Set $T_{\alpha}=(A\cap G_{[\alpha]})^{0}$. Then, $T_{\alpha}$
is one-dimensional torus and $A=\ker\alpha\cdot T_{\alpha}$. Moreover, $\Lie T_{\alpha}$ is a maximal
abelian subspace of $(\frg_{[\alpha]})_{0}^{-\theta}$. Hence, $((\frg_{[\alpha]})_{0},\theta)$ is a
symmetric pair of rank one. By referring to the proof of \cite[Proposition 6.52]{Knapp}, or using the
classification of symmetric pairs of rank one, one constructs $k_{\alpha}\in(G_{[\alpha]})^{\theta}$
such that $\Ad(k_{\alpha})|_{\fra_0}=s_{\alpha}$. By this, $s_{\alpha}$ integrates to a reflection
$s_{\alpha}:A\rightarrow A$, and it preserves $R(G,A)$. It follows that $R(G,A^{0})$ is a root
system in the lattice $X^{\ast}(A^{0})$.
\end{proof}


Note that when $A=\{1\}$, the root system $R(G,A^{0})$ is the restricted root system for the compact
symmetric pair $(\frg_0,\theta)$ (cf. \cite[Corollary 6.53]{Knapp}). When there is an element $g\in G$
such that $\theta g\theta^{-1}=g^{-1}$ and $A=\overline{\langle g\rangle}$, $R(G,A^{0})$ is the restricted
root system associated to two involutions $\theta,g\theta$ as defined in \cite{Oshima-Sekiguchi}.

\smallskip

Theorems \ref{T:restrictedRS1} and \ref{T:restrictedRS2} give a vast examples of tori $B$ in a compact Lie
group $G$ such that the set $R(G,B)$ of non-zero characters of $B$ for the conjugation action of $B$ on
$\frg$ is a root system in the lattice $X^{\ast}(B)$. This is equivalent to: $R(G,B)=\Psi'_{B}$; also
equivalent to: for any $\alpha\in R(G,B)$, there exists a rank one connected closed subgroup $K$ of $G$
with $B$ a maximal torus of $K$ and with $\Delta(K,B)=\{\pm{\alpha}\}$.

\begin{question}\label{Q:RS}
Let $B$ be a torus in a compact Lie group $G$. Shall one give a general criterion for the set $R(G,B)$ of
non-zero characters of $B$ for the conjugation action of $B$ on $\frg$ is a root system in the lattice
$X^{\ast}(B)$?
\end{question}

\subsection{Abstract twisted root system and $(\ast)$-subgroups}\label{SS:abstractTRS}

We give a definition of abstract twisted root system (called TRS for short) as follows, which is subject
to be refined in future.

\begin{definition}\label{D:abstractTRS}
Let $A$ be a compact abelian Lie group with an inner product on its character group, denoted by $(\cdot,\cdot)$.
A twisted root system on $A$ is a pair $(R,R^{\vee})$ subject to the following requirements,
\begin{itemize}
\item[(1)]{$R\subset X^{\ast}(A)$. An element $\alpha\in R-X^{\ast}_{0}(A)$ is called an infinite root;
an element $\alpha\in R\cap X^{\ast}_{0}(A)$ is called a finite root.}
\item[(2)]{(Strong integrality) For an infinite root $\alpha$,
$$\frac{2(\lambda,\alpha)}{(\alpha,\alpha)}\in\mathbb{Z},\ \forall\lambda\in X^{\ast}(A).$$ }
\item[(3)]{For a finite root $\alpha$ of order $m$, $R^{\vee}(\alpha)$ is a subgroup of $X_{\ast1}(\ker\alpha)(n)$
and is called the coroot group of $\alpha$.}
\item[(4)]{If $\alpha$ is an infinite root, define $\check{\alpha}\in X_{\ast}(A)$ by $$\lambda(\check{\alpha})
=\frac{2(\lambda,\alpha)}{(\alpha,\alpha)},\ \forall\lambda\in X^{\ast}(A).$$ Define the reflection
$s_{\alpha}$ by $$s_{\alpha}(a)=a\check{\alpha}(\alpha(a)^{-1}),\ \forall a\in A.$$ Then, the induced action of
$s_{\alpha}$ on characters and finite cocharacters preserves roots and coroot groups. That means $s_{\alpha}(R)=R$,
and $$s(\alpha)(R^{\vee}(\beta))=R^{\vee}(s(\alpha)(\beta))$$ for any finite root $\beta$..}
\item[(5)]{If $\alpha$ is a finite root of order $n$, and $k$ is relatively prime to $n$, then $k\alpha$ is also
a finite root and $R^{\vee}(k\alpha)=R^{\vee}(\alpha)$.}
\item[(6)]{If $\alpha$ is a finite root and $\xi\in R^{\vee}(\alpha)$, define $s_{\alpha,\xi}$ by
$$s_{\alpha,\xi}(a)=a\xi(\alpha(a)),\ \forall a\in A.$$ Then, $s_{\alpha,\xi}$ is an automorphism of $A$ and its
induced action on characters, infinite cocharacters and finite cocharacters induces an automorphism of $\Psi$.
That means $s_{\alpha,\xi}(R)=R$, and $$s(\alpha,\xi)(R^{\vee}(\beta))=R^{\vee}(s(\alpha,\xi)(\beta))$$ for any
finite root $\beta$.}
\end{itemize}
\end{definition}

When $A$ is a finite group, Definition \ref{D:abstractTRS} is the definition of finite root datum given in
\cite{Han-Vogan}. A big difficulty in studying abstract TRS is due to lacking sufficient understanding of
FRD. We have seen that FRD does not recover the Lie algebra (cf. Theorem \ref{T:pdecom}).

Using root-coroot duality, we could also define a notion of twisted root datum by generalizing that in
\cite{Springer-reductive} with incorporating finite roots and their coroot groups. We do not bother ourselves
to give the details here.

Note that we do not require that $R$ is reduced, i.e., it is not required that $2\alpha\not\in R$ if $\alpha
\in R-X^{\ast}_{0}(A)$. We could also define sub-TRS. Let $G$ be a compact Lie group and $A$ be a compact
abelian subgroup, like in \cite[Definition 3.1]{Yu-dimension}, we could associate with $A$ a TRS $\Psi_{A}$
and a sub-TRS $\Psi'_{A}$ of it.

Some arguments used in the main body of this paper enable us to show many properties of abstract twisted root
system. Some of these include: the set $R'=\{\alpha|_{A^{0}}:\alpha\in R-R_{0}\}$ is a root system in the
lattice $X^{\ast}(A^{0})$; shape of the set $\span_{\mathbb{Z}}\{\alpha,\lambda\}$ for $\alpha\in R-R_{0}$
and $\lambda\in X_{0}^{\ast}(A)$; constancy of length of strips when $R'$ is a simply-laced irreducible
root system of rank $\geq 2$.

Our understanding to abstract TRS is still very limited. In the future we wish to refine the axioms of abstract
TRS by further investigating the structure of TRS from a $(\ast)$-subgroup of a compact Lie group. A particular
question is to show the TRS $(R,R^{\vee})$ is very restrictive if we assume it is irreducible (omit the precise
definition now) and $R'$ is an exceptional irreducible root system. The optimal goal is to have a classification
of abstract TRS and to show that they all come from $(\ast)$-subgroups of compact Lie groups.

\end{document}